\documentclass[10pt,reqno]{amsart}

\usepackage{hyperref}
\usepackage{amssymb}
\usepackage{amsfonts}
\usepackage{amsthm}
\usepackage{amsmath}
\usepackage{caption}
\DeclareCaptionLabelFormat{matrix}{Matrix #2}
\usepackage{nicematrix}
\usepackage{multirow}
\usepackage{bbm}
\usepackage{xcolor}
\usepackage{mathtools}

\usepackage{bigints}
\usepackage[normalem]{ulem} 
\newcommand\redout{\bgroup\markoverwith
{\textcolor{red}{\rule[0.5ex]{2pt}{0.8pt}}}\ULon}
\usepackage{tikz}
\usepackage{calc}
\usepackage{graphicx}

\usepackage{accents}
\newcommand{\dbtilde}[1]{\accentset{\approx}{#1}}

\usepackage{adjustbox}
\usepackage{stmaryrd}
\newcommand{\parallelsum}{\mathrel{\mkern-5mu\clipbox{0 0.75ex 0 0}{${\sslash}$}\!}}

\DeclareMathOperator{\diag}{diag}

\numberwithin{equation}{section}
\allowdisplaybreaks

\newtheorem{theorem}{Theorem}[section]
\newtheorem{proposition}[theorem]{Proposition}

\newtheorem{props}[theorem]{Properties}
\newtheorem{lemma}[theorem]{Lemma}

\newtheorem{corollary}[theorem]{Corollary}

\theoremstyle{definition}
\newtheorem{definition}[theorem]{Definition}
\newtheorem{exmp}[theorem]{Example}

\theoremstyle{remark}
\newtheorem*{remark}{Remark}
\newtheorem*{remarks}{Remarks}

\newcommand{\R}{\mathbb{R}}

\newcommand{\Z}{\mathbb{Z}}

\newcommand{\1}{\mathbbm{1}}
\newcommand{\HH}{\mathbb{H}}
\newcommand{\I}{\mathbb{I}}
\newcommand{\Q}{\mathbb{Q}}

\newcommand{\Id}{\text{Id}}
\newcommand{\Span}{\text{span}}

\renewcommand{\hat}{\widehat}

\newcommand{\scriptB}{\mathcal{B}}
\newcommand{\scriptC}{\mathcal{C}}

\newcommand{\scriptF}{\mathcal{F}}
\newcommand{\scriptG}{\mathcal{G}}

\newcommand{\scriptK}{\mathcal{K}}

\newcommand{\scriptM}{\mathcal{M}}

\newcommand{\scriptP}{\mathcal{P}}

\newcommand{\scriptX}{\mathcal{X}}

\DeclareMathOperator*{\dist}{dist}

\renewcommand{\eqref}[1]{(\ref{#1})}

\begin{document}

\title{Finner-like inequalities in the Heisenberg group}
\author{Kaiyi Huang}
\address{University of Wisconsin--Madison}
\email{khuang247@wisc.edu}

\begin{abstract}
We completely characterize the range of $L^p$-boundedness of certain multilinear Radon-like transforms involving vertical projections in the Heisenberg group.
\end{abstract}

\maketitle


\section{Introduction}\label{S:intro}
In \cite{finner1992generalization}, Finner characterized the range of $L^p$-boundedness of the multilinear form
\begin{equation}\label{E:main ml form} 
    \scriptM(f_1,\dots,f_M)\coloneqq\int_\Sigma\displaystyle\prod_{j=1}^Mf_j\circ\pi_j(x)dx,
\end{equation}
with $\Sigma=\R^n$ and $\pi_j$ being \textit{coordinate projections}, i.e., orthogonal projections onto the subspaces spanned by some subsets of the standard basis (such a subspace is called a \textit{coordinate subspace}). This has been generalized to the Brascamp--Lieb inequality, where the $\pi_j$ are arbitrary linear surjections \cite{BCCTlong,BCCTshort} (see also \cite{Lieb}).

Concurrently, there has been a great deal of interest in characterizing the range of $L^p$-boundedness of the \textit{generalized multilinear Radon-like transform} $\scriptM$ in which the $\Sigma$ is a Riemannian manifold and the $\pi_j$ are nonlinear. Of particular interest is the case that the ensemble $\{\pi_j\}_{j=1}^m$ exhibits ``curvature" in the sense of \cite{christ1999singular,TaoWright,stovall2011improving}, potentially leading to better estimates than where the $\pi_j$ are linear. Motivated by these questions, we sharpen certain results from \cite{huang2024inequalities} and prove an analogue of Finner's inequality in the context of the \textit{Heisenberg group}, completely characterizing the range of $L^p$-boundedness of $\scriptM$, where $\Sigma=\HH^n$ is the Heisenberg group and the $\pi_j$ are \textit{vertical projections} whose linear parts are coordinate projections. The \textit{Heisenberg group} $\HH^n=(\R^{2n+1},\bullet)$ is $(2n+1)$-dimensional Euclidean space endowed with a nonabelian group operation $\bullet$ defined by
\begin{equation}
    (x,y,t) \bullet (x',y',t') = (x+x',y+y',t+\tfrac12(x\cdot y' - y\cdot x')),
\end{equation}
where $(x,y,t),(x',y',t')\in\R^n\times\R^n\times\R$. The \textit{vertical projection} $\pi_V:\HH^n\longrightarrow V\times\R$ associated to any subspace $V$ of $\R^{2n}$ is defined by
\begin{equation}
    \pi_V(x,y,t)\coloneqq(x,y,t)\bullet((x,y)_{V^\perp},t)^{-1},
\end{equation}
where $(x,y)_{V^\perp}$ is the orthogonal component of $(x,y)\in\R^{2n}$ in $V^\perp$, the orthogonal complement of $V$. The unique (up to scaling) left Haar measure on $\HH^n$ is the Lebesgue measure on $\R^{2n+1}$.

While the bounds for generalized multilinear Radon-like transforms are well understood in the case that the $\pi_j$ are of corank-$1$ (i.e., fibers of the $\pi_j$ are $1$-dimensional) \cite{christ1998convolution,TaoWright,stovall2011improving,christ2020endpoint}, the case of arbitrary coranks largely eludes us in part because the fibers can interact in complicated ways. In this paper we introduce new tools to address this issue.
    
Let $n,m,M$ be positive integers with $m<M$. Suppose we are given coordinate subspaces $V_1,\dots,V_M$ of $\R^n$ and their respective orthogonal complements $K_j=V_j^\perp$. Let $L_j:\R^n\longrightarrow V_j$ be the orthogonal projections onto $V_j$ for each $j$. For each $x\in\R^n$, let $x_{K_j}$ denote its orthogonal component in $K_j$. In this article, we consider an analogue of \eqref{E:main ml form} with $\Sigma=\HH^n$ and the vertical projections
\begin{equation} \label{E:pi_j vp}
    \pi_j(x,y,t)\coloneqq
    \begin{cases}
        (L_jx,y,t+\frac{1}{2}x_{K_j}\cdot y_{K_j}), & 1\leq j\leq m, \\
        (x,L_jy,t-\frac{1}{2}x_{K_j}\cdot y_{K_j}), & m<j\leq M.
    \end{cases}
\end{equation}
Our main result is as follows:

\begin{theorem}\label{T:main}
For $\pi_j$ of the form \eqref{E:pi_j vp}, there exists $0<C<\infty$ depending only on $n,\pi_j,p_j$ such that
\begin{equation} \label{E:strong}
    \scriptM(f_1,\ldots,f_M) \leq C\prod_{j=1}^M \|f_j\|_{L^{p_j}}
\end{equation}
holds uniformly over continuous functions $f_1,\dots,f_M$ with compact support if and only if $p_j\in[1,\infty]$ for all $1\leq j\leq M$, and
\begin{gather}
\tag{$A$}\label{A} n+1=\sum_{j=1}^{m}\frac{\dim V_j+1}{p_j}+\sum_{j=m+1}^M\frac{n+1}{p_j},\\[8pt]
\tag{$B$}\label{B} \dim V+1\leq\displaystyle\sum_{j=1}^{m}\frac{\dim V_j\cap V+1}{p_j}+\displaystyle\sum_{j=m+1}^M\frac{\dim V+1}{p_j} \\[8pt]
\tag{$C$}\label{C}
\displaystyle\sum_{i=1}^m\frac{\dim K_i\cap V}{p_i}=\displaystyle\sum_{j=m+1}^M\frac{\dim K_j\cap V}{p_j},
\end{gather}
for arbitrary coordinate subspaces $V$ of $\R^n$.
\end{theorem}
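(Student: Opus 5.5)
We split the argument into necessity, via scaling and Knapp-type examples, and sufficiency, via a reduction to Finner's inequality. The reduction is done by changing variables along the $t$-fibers and interpolating.

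\emph{Necessity.} First, $p_j\ge 1$ is forced by a standard localization argument. Concentrating $f_j$ on small balls in the target of $\pi_j$ shows that a $p_j<1$ would violate \eqref{E:strong}.

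Condition \eqref{A} comes from the Heisenberg dilations $\delta_\lambda(x,y,t)=(\lambda x,\lambda y,\lambda^2 t)$. Note that $\pi_j\circ\delta_\lambda=\delta_\lambda\circ\pi_j$, and that the homogeneous dimension of the target of $\pi_j$ is $2\dim V_j+n+2$ for $j\le m$, and similarly for $j>m$. To get the normalization stated, with $\dim V_j+1$ and $n+1$, we use the anisotropic dilations $(x,y,t)\mapsto(\lambda x,\mu y,\lambda\mu t)$. These also commute with every $\pi_j$, since each quadratic term $x_{K_j}\cdot y_{K_j}$ scales by $\lambda\mu$. Testing with $f_j=\1_{\pi_j(B)}$ for a box $B$ rescaled in this way, and comparing the powers of $\lambda$ and of $\mu$ separately, should give two equations. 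Their sum is (roughly) \eqref{A}, and their difference is \eqref{C} with $V=\R^n$.

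For a general coordinate subspace $V$, we rescale only the coordinates $x_V,y_V,t$, by $\lambda,\mu,\lambda\mu$, and keep $x_{V^\perp},y_{V^\perp}$ at unit scale. The cross terms $x_{K_j\cap V^\perp}\cdot y_{K_j\cap V^\perp}$ are then $O(1)$ and are absorbed if we take $\lambda\mu\gtrsim 1$ or, with a thickened $t$-interval, $\lambda\mu\lesssim 1$. Letting $\lambda\mu\to 0$ or $\infty$ along suitable curves should yield the inequality \eqref{B}. Letting the ratio $\lambda/\mu$ go to $0$ and $\infty$ while $\lambda\mu$ stays fixed yields the two opposite inequalities whose combination is the equality \eqref{C}.

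\emph{Sufficiency.} The key structural observation is as follows. For $j\le m$, the map $\pi_j$ does not depend on $x_{K_j}$ except through $t+\tfrac12 x_{K_j}\cdot y_{K_j}$. We therefore change variables $t\mapsto t-\tfrac12 x\cdot y$ restricted to suitable blocks, or use the symmetric gauge shift $t\mapsto t\pm\tfrac12\sum$. The aim is to write each $\pi_j$ as a linear map $(x,y,t)\mapsto(L_jx,y,s_j)$, where $s_j=t+\tfrac12\langle x,A_jy\rangle$ and $A_j$ is a diagonal coordinate matrix.

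We then freeze $y$ and apply Finner's inequality in the variables $(x,t)$. For fixed $y$, each $\pi_j$ with $j\le m$ becomes an affine projection of $(x,t)$ onto $(L_jx,\,t+\text{linear in }x_{K_j})$, and each $\pi_j$ with $j>m$ becomes a bijection of $(x,t)$. This is a Brascamp--Lieb datum in $\R^{n+1}$ depending on $y$, and its finiteness condition is exactly \eqref{B}-type conditions. The problem is that the Brascamp--Lieb constant degenerates in $y$, and this degeneration must be compensated by integrating in $y$ using the $L^{p_j}$ norms in $y$.

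Rather than follow that route, I would first prove restricted weak-type estimates at the extreme points of the polytope cut out by \eqref{A}--\eqref{C}. I would use the method of refinements (iterated fiber flow, as in Christ and in \cite{huang2024inequalities}): starting from sets $E_j$, we build a point set parametrized by alternately moving along the fibers of the $\pi_j$. The Jacobian of this parametrization is a polynomial whose lower bound uses the nonvanishing of $x_{K_i}\cdot y_{K_j}$-type commutators; this is where \eqref{C} balances the $x$- and $y$-fibers. Strong type then follows by multilinear interpolation in the interior of the polytope. At the boundary faces where some $p_j=1$ or $\infty$, we reduce to lower-dimensional cases by induction on $n$ or $M$.

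I expect the main obstacle to be this sufficiency step at the vertices. Two things are hard here. First, we must identify the extreme points of the polytope in terms of coordinate subspaces. Second, we must produce the Jacobian lower bound uniformly when the kernels $K_j$ intersect in complicated patterns. A further difficulty is upgrading restricted weak type to strong type along edges where interpolation fails, which may require a separate induction-on-scales or Christ--Stovall endpoint argument.
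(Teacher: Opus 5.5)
Your necessity sketch is sound. The paper omits necessity entirely and defers to prior work. The dilations $(\lambda x,\mu y,\lambda\mu t)$ give ($A$) from the $\lambda$-exponent alone (not from a sum), and ($C$) for $V=\R^n$ from the difference of the two exponent equations. Partial Knapp boxes give ($B$) in the regime $\lambda\mu\to\infty$, and ($C$) for general $V$ as $\lambda/\mu\to 0,\infty$.

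Sufficiency, however, has a structural gap. Interpolating restricted weak-type bounds from the vertices of the polytope gives strong type only away from the vertices, yet the theorem asserts the bound at the vertices too. You dispose only of vertices where some $p_j\in\{1,\infty\}$. The remaining vertices are of two kinds, and neither is covered by your plan.
\begin{itemize}
\item If ($B$) is an equality for some nonzero proper coordinate subspace $W$, the paper proves no restricted weak-type bound there. It applies the theorem on $W$ fiberwise and then Euclidean Finner/Brascamp--Lieb on $W^\perp\times W^\perp$ (Lemma \ref{L:induction on n}). Your proposal never mentions critical subspaces.
\item Otherwise, by Lemma \ref{L:base case}, the admissible set is a single rational point satisfying the strict condition ($\widetilde B$). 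There is nothing to interpolate, so restricted weak type must be upgraded to strong type directly. This is most of the paper. Quasiextremal sets are shown to be approximable by paraballs. One then sums dyadic level-set pieces using almost-disjointness of quasiextremals at very different scales, together with the summation lemma that exploits $\sum_j 1/p_j>1$. You list this as an ``obstacle'' but give no mechanism for it.
\end{itemize}

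The restricted weak-type bound at that isolated point is also not supplied. ``Iterated fiber flow with a Jacobian lower bound'' is the Christ/Tao--Wright scheme, and the paper explains why it fails as stated.
\begin{itemize}
\item For $j\le m$ the kernels $\ker D\pi_j=\operatorname{span}\{X_i:e_i\in K_j\}$ overlap, so you cannot build a quasi-injective map from a product of $\pi_j$-fibers.
\item The workaround is to flow along full fibers of $\pi,\pi_*$ and apply Finner inside them. This works only when all weights $Q_i$ coincide. The real role of ($C$) is to make $Q_i$ well defined as the common weight of $X_i$ and $Y_i$, not to ``balance a Jacobian''.
\item Refinement gives only lower bounds on fiber sizes.
\end{itemize}
When the $Q_i$ differ (Example \ref{Ex:aniso}), four ingredients are missing.
\begin{enumerate}
\item Auxiliary projections $\widetilde\pi_j$ with nested kernels $\operatorname{span}\{X_1,\dots,X_{\widetilde k_j}\}$, obtained by ordering the $Q_i$, with exponents $\widetilde q_j$ given by consecutive differences of the $Q_i$.
\item An inflation map over the special parameter set $\Xi$, giving restricted weak type for $\{\widetilde\pi_j\}\cup\{\pi_j\}_{j>m}$ on semiregular sets. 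This uses a H\"older cascade and ellipsoid approximation; the strictness of ($\widetilde B$) is exactly what absorbs the approximation loss.
\item A generalized Finner inequality, valid only for nested kernels under a regularity hypothesis, to return from the $\widetilde\pi_j$ to the $\pi_j$.
\item A dyadic decomposition into semiregular pieces, recombined via paraball approximation.
\end{enumerate}
Without some substitute for these, the ``uniform Jacobian lower bound when kernels intersect'' you call for has no route to a proof.
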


\begin{remark}
    Though the conditions on $\{p_j\}_{j=1}^m$ and $\{p_j\}_{j=m+1}^M$ in \eqref{A} and \eqref{B} appear asymmetric, \eqref{C} further implies that for every coordinate subspace $V$ of $\R^n$,
    \[
    \displaystyle\sum_{j=1}^{m}\frac{\dim V_j\cap V+1}{p_j}+\displaystyle\sum_{j=m+1}^M\frac{\dim V+1}{p_j}=\displaystyle\sum_{j=1}^{m}\frac{\dim V+1}{p_j}+\displaystyle\sum_{j=m+1}^M\frac{\dim V_j\cap V+1}{p_j}.
    \]
\end{remark}

\section{Prior results and innovation}\label{S:prior result and innovation}
\subsection{A historical overview}\label{S:lit}
In the case of general $\pi_j$, there is a long history in the search of $p_j$ such that
\begin{equation}\label{E:general multilinear}
    \scriptM(f_1,\dots,f_M)\leq C\prod_{j=1}^M\|f_j\|_{p_j}
\end{equation}
holds uniformly over $f_j$ for some positive finite constant $C$ depending only on $\Sigma,\pi_j,p_j$. In this section, we review some of the prior literature most directly connected with our problem.

When the $\pi_j$ are linear surjections, \eqref{E:general multilinear} is well studied. Examples include H\"older's inequality\cite{rogers1888extension,holder1889ueber}, the Loomis--Whitney inequality\cite{loomis1949inequality}, and Finner's in-equality\cite{finner1992generalization}. This line of work culminates at the result of the Brascamp--Lieb inequality\cite{BrasLiebAdvances73,BCCTlong,BCCTshort,Lieb}, which concerns arbitrary number of linear surjections $\pi_j$ in Euclidean space and characterizes the $p_j$ such that \eqref{E:general multilinear} holds.

Now that the linear case is completely resolved, it is natural to study the nonlinear case. There are two lines of work in this direction. If one perturbs the linear surjections $\pi_j$, then the $L^p$ inequalities for $\scriptM$ continue to hold, as shown in \cite{BBBCF}. However, perturbation can introduce ``curvature" in the sense of \cite{christ1999singular,TaoWright,stovall2011improving}, which can in some instances lead to new inequalities, though a general theory is far from established.

When fibers of $\pi_j$ are curves, we say that $\pi_j$ have \textit{corank-$1$}. Boundedness of \eqref{E:strong} for the corank-$1$ case is proved up to the endpoints\cite{christ1998convolution,TaoWright,stovall2011improving}, with the endpoint case resolved when $X_j$ satisfy certain nilpotency conditions and have polynomial integral flows of bounded degrees\cite{christ2020endpoint}. In addition, Hickman in \cite{hickman2015topics} characterizes the $L^p$-boundedness of a class of dilated average operators along curves.

It is still unclear how to characterize $p_j$ when $\pi_j$ have arbitrary coranks except for some special cases (see \cite{christ2011quasiextremals,gressman2015,gressman2019generalized,gressman2021improving,gressman2023local,gressman2024generalized}). \cite{christ2011quasiextremals} introduces an inflation map to handle the dimension mismatch. Gressman converts a variant of multilinear Radon-like transforms to a sublevel estimate problem, and borrows ideas from Geometric Invariant Theory to prove a class of bilinear Radon-like transforms with mixed coranks\cite{loomis1949inequality,gressman2023local,gressman2024generalized}. In addition, Duncan in \cite{duncan2022global} proves a class of algebraic Brascamp--Lieb inequalities on a scaling line.

Since truly nonlinear inequalities of Brascamp--Lieb type arise from noncommutativity of vector fields whose integral flows are invariant under $\pi_j$, \cite{TaoWright} suggests that one should study model cases such as nilpotent Lie groups, wherein the Heisenberg group is the simplest nontrivial case. (See \cite{huang2024inequalities} for more connections between Radon-like transforms and Heisenberg groups.) \cite{FPmathz} and \cite{zhang2024loomis} formulate and prove the Loomis--Whitney inequality in the Heisenberg group and corank-$1$ Carnot groups, respectively, which are still in the corank-$1$ case. In \cite{huang2024inequalities}, the authors study a symmetric version of \eqref{E:strong}, called the Brascamp--Lieb inequality in $\HH^n$, where $\pi_j$ have arbitrary coranks. Sufficient conditions and necessary conditions are given, but the conditions are not the same in general. The proof of sufficiency is by adapting an induction argument relying on ``critical subspaces" from \cite{BCCTlong,BCCTshort}. Removing the symmetry assumption, we observe that the base case from the induction consists of a new class of multilinear Radon-like transforms. There is no straightforward adaption of known methods to study these forms. New ideas are needed.

\subsection{Innovation of the paper}
In the prior literature, a common scheme to prove the \textit{strong-type inequality} \eqref{E:general multilinear} starts with the \textit{restricted weak-type inequality}
\begin{equation}\label{E:general rwt}
    |\Omega|\lesssim\displaystyle\prod_{j=1}^M|\pi_j(\Omega)|^\frac{1}{p_j}
\end{equation}
Then we study the geometric structure of quasiextremizers of \eqref{E:general rwt} satisfying
\begin{equation}\label{E:general quasiextremal}
    |\Omega|\geq\varepsilon\displaystyle\prod_{j=1}^M|\pi_j(\Omega)|^\frac{1}{p_j}.
\end{equation}
Usually, such $\Omega$ can be approximated by Carnot--Carath\'eodory balls (that are called the paraballs in this article). These balls are analogues of ellipsoids in the sub-Riemannian setting. In the end, we may obtain \eqref{E:general multilinear} by approximating $\scriptM(f_1,\dots,f_M)$ with almost disjoint paraballs. (See \cite{christ1998convolution,TaoWright,stovall2011improving,christ2011quasiextremals,christ2020endpoint}.)

In our context, however, even proving \eqref{E:general rwt} is too hard. Mixed coranks (i.e., $\ker D\pi_j$ vary) often lead to nontrivial intersection of the $\pi_j$ fibers (i.e., $\ker D\pi_j\cap\ker D\pi_{j^\prime}\neq0$). Hence, it is hard to construct an ``almost" diffeomorphism from a ``product" set of $\pi_j$ fibers to a product set of $\Omega$, which is a common technique in the above literature.

In this paper, we first study a class of ``regular" sets (see Section \ref{S:srq}) by introducing some auxiliary maps $\widetilde\pi_j$ (see Section \ref{S:more notations}) with the following properties:
\begin{enumerate}
    \item The $\widetilde\pi_j$ have \textit{nested kernels}, that is, $\ker d\widetilde\pi_j$ is a proper subspace of $\ker d\widetilde\pi_{j+1}$ for each $j$.

    \item The restriction of the $\pi_j$ to the fibers of the $\widetilde\pi_j$ are coordinate projections.
\end{enumerate}

It turns out much easier to construct an almost diffeomorphism via the $\widetilde\pi_j$ fibers thanks to the nested kernels (see Section \ref{S:inflation}). Therefore, we first prove the restricted weak-type inequality for ``regular" sets involving $\widetilde\pi_j$:
\begin{equation}\label{E:rwt auxiliary}
    |\Omega|\lesssim\displaystyle\prod_{j=1}^M|\widetilde\pi_j(\Omega)|^\frac{1}{p_j}.
\end{equation}
Then we restrict our focus to the fibers of $\widetilde\pi_j$, where the $\pi_j$ are essentially coordinate projections. We generalize Finner's inequality\cite{finner1992generalization} concerning coordinate projections to deduce \eqref{E:general rwt} from \eqref{E:rwt auxiliary} (see Section \ref{S:finner}). We approximate by paraballs the quasiextremizers as in \eqref{E:general quasiextremal} within the class of ``regular" sets, and partition a general measurable set into quasiextremal ``regular" sets corresponding to different $\varepsilon$. Adapting the method in \cite{christ2011quasiextremals}, we are able to prove \eqref{E:general rwt} for general $\Omega$. Adapting the scheme one more time, we can eventaully obtain \eqref{E:general multilinear}.

It is hoped that the method can be applied to cases beyond the Heisenberg group.

\subsection*{Outline}
In Section \ref{S:proof overview}, we introduce Theorem \ref{T:base case}, a superficially weaker version of Theorem \ref{T:main}, and provide a proof overview illustrated by some examples. In Section \ref{S:main to weak} we prove by induction Theorem \ref{T:main} assuming Theorem \ref{T:base case}. To prove Theorem \ref{T:base case}, we start with the restricted weak-type inequality concerning characteristic functions of measurable sets. We first introduce a class of auxiliary maps $\widetilde\pi_j$ with nested kernels along with other notations in Section \ref{S:more notations}. To motivate this, we review Finner's inequality and prove a generalization thereof in Section \ref{S:finner}. In Section \ref{S:refinement}, we refine $\Omega$ by trimming $\Omega\cap\widetilde\pi_j^{-1}(z)$. An almost diffeomorphic inflation map is constructed in Section \ref{S:inflation}. A key ingredient to estimate the image of the inflation map is approximation of arbitrary sets by ellipsoids, which will be presented in Section \ref{S:appr}. In Section \ref{S:srq}, we introduce the notions of semiregular sets, regular sets, and quasiextremal sets. In Section \ref{S:rwt semireg}, we prove the restricted weak-type inequality for semiregular sets using the inflation map. In Section \ref{S:regular set paraball}, we introduce the notion of paraballs, an important property of which is that if two paraballs have drastically different $\pi_j$ images or $\widetilde\pi_j$ fibers, then they are almost disjoint. In the same section, we approximate quasiextremal regular sets with paraballs. Section \ref{S:summing disjoint sets} presents a dyadic scheme to sum over almost disjoint sets. In the first part of Section \ref{S:summing quasiextremals}, we obtain paraball-like structure of quasiextremal semiregular sets by summing over almost disjoint regular sets; in the second part of the section, we show the restricted weak-type inequality of arbitrary measurable sets by summing over almost disjoint quasiextremal semiregular sets, a byproduct being that all quasiextremal sets can be approximated by paraballs; the last part passes from the restricted weak-type inequality to the strong-type inequality concerning general functions by summing over almost disjoint quasiextremal sets.

\subsection*{Notations}
Let $V,W$ be two vector spaces. We write $V\leq W$ if $V$ is a subspace of $W$, and $V<W$ if $V$ is a proper subspace of $W$. If $V\leq\R^d$, then we denote by $V^\perp$ the orthogonal complement of $V$ in $\R^d$. For $x\in\R^d$, we denote by $x_V, x_{V^\perp}$ the unique elements in $V,V^\perp$, respectively, such that $x=x_V+x_{V^\perp}$.

For $1\leq j\leq M$, let $V_j\leq\R^n$, and let $L_j:\R^n\longrightarrow V_j$ be the orthogonal projections as in Section \ref{S:intro}. We write $n_j=\dim V_j$, $K_j=V_j^\perp=\ker L_j$, and $k_j=\dim K_j$.

$A\lesssim B$ ($A\gtrsim B$, respectively) means $A\leq CB$ ($A\geq CB$, respectively) for some finite constant $C>0$ depending only on $n,\pi_j,p_j$.

For $x\in\R^d$, $|x|$ denotes its Euclidean norm. Suppose $E\subset\R^d$ is a Lebesgue measurable subset of $\R^k$ naturally embedded into $\R^d$ for some $k\leq d$. Then $|E|_k$ denotes its $k$-dimensional Lebesgue measure for $k>0$. $|\cdot|_0$ denotes the counting measure. When the context is clear, we may safely drop the subscript $k$.

For any set $S$, $\# S$ denotes the cardinality of $S$.

\subsection*{Acknowledgement}
This work was supported in part by NSF DMS-2246906, the Wisconsin Alumni Research Foundation (WARF), a Bung-Fung Lee Torng Graduate Student Summer Fellowship Fund-112550007, and a UW-Madison Mathematics Hirschfelder Fellowship. The author would like to thank her advisor Betsy Stovall for introducing the topic, being generous with her time discussing the problem, carefully reading many versions of the manuscript, and giving many helpful suggestions for improving its readability. The author would also like to thank Philip Gressman for helpful feedback on an earlier version of the manuscript.

\section{Proof overview}\label{S:proof overview}
In this section, we outline the proof of Theorem \ref{T:main}.
Since the proof of necessity can be easily adapted from \cite{huang2024inequalities}, we omit the proof in this article. We will prove sufficiency by induction on $n,M$ following \cite{BCCTlong,BCCTshort,huang2024inequalities}. The base case is subsumed in the following superficially weaker result:

\begin{theorem}\label{T:base case}
Following the notations in Theorem \ref{T:main}, suppose $(p_1,\dots,p_M)\in([1,\infty]\cap\Q)^M$, and
\begin{gather}
\tag{$A$} n+1=\sum_{j=1}^{m}\frac{\dim V_j+1}{p_j}+\sum_{j=m+1}^M\frac{n+1}{p_j},\\[8pt]
\tag{$\widetilde B$}\label{B twiddle} \dim V+1<\displaystyle\sum_{j=1}^{m}\frac{\dim V_j\cap V+1}{p_j}+\displaystyle\sum_{j=m+1}^M\frac{\dim V+1}{p_j}, \\[8pt]
\tag{$C$}
\displaystyle\sum_{i=1}^m\frac{\dim K_i\cap V}{p_i}=\displaystyle\sum_{j=m+1}^M\frac{\dim K_j\cap V}{p_j},
\end{gather}
for arbitrary proper coordinate subspaces $V<\R^n$. Then
\begin{equation}\label{E:base case}
    \scriptM(f_1,\ldots,f_M) \leq C\prod_{j=1}^M \|f_j\|_{L^{p_j}}
\end{equation}
for all continuous, compactly supported functions $f_1,\ldots,f_M$ with some finite constant $C>0$ depending only on $n,\pi_j,p_j$.
\end{theorem}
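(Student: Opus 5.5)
We prove Theorem \ref{T:base case} by the restricted weak-type / quasiextremal scheme from Section \ref{S:prior result and innovation}, applied first to sets and then to general functions. Rationality of the $p_j$ and the strict inequality \eqref{B twiddle} supply the slack needed at the summation stages. Because the hypotheses are strict, any small perturbation $(\tilde p_j)$ of $(p_j)$ satisfying \eqref{A} and \eqref{C} still satisfies \eqref{B twiddle}. Our aim is therefore the restricted weak-type bound
\[
|\Omega|\lesssim\prod_{j=1}^M|\pi_j(\Omega)|^{1/p_j}
\]
for all measurable $\Omega\subset\HH^n$, together with a structural description of quasiextremizers. From this we would deduce \eqref{E:base case} by a further summation over almost disjoint quasiextremal sets.

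\textbf{Auxiliary maps.} We introduce maps $\widetilde\pi_j$ with nested kernels: order the $K_j$ and build a flag so that $\ker d\widetilde\pi_j\subsetneq\ker d\widetilde\pi_{j+1}$, and so that on each fiber of $\widetilde\pi_j$ the map $\pi_j$ restricts to a coordinate projection. Restricted to a $\widetilde\pi_j$-fiber, the problem is then essentially Euclidean. There we prove a generalized Finner inequality, controlling $|\Omega\cap\widetilde\pi_j^{-1}(z)|$ by products of coordinate projections. The exponents in this inequality are dictated by \eqref{C}, which balances the $x$-kernels against the $y$-kernels so that the twisting $\pm\tfrac12 x_{K_j}\cdot y_{K_j}$ cancels at the level of scaling.

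\textbf{Inflation for semiregular sets.} We call $\Omega$ semiregular if its fiber measures $|\Omega\cap\widetilde\pi_j^{-1}(z)|$ are comparable across $z$; refining $\Omega$ (trimming fibers) costs only constants. For such sets we construct an inflation map in the spirit of \cite{christ2011quasiextremals}. It composes flows along successive $\widetilde\pi_j$-fibers, which is possible because the kernels are nested, so the fibers do not interact badly. We show this map is almost injective on a product of fiber pieces, with Jacobian bounded below by a polynomial in the parameters. To lower-bound the Jacobian integrated over the relevant product set, we approximate arbitrary fiber sets by ellipsoids (a John-type argument). Comparing measures then gives $|\Omega|\lesssim\prod_j|\widetilde\pi_j(\Omega)|^{1/p_j}$. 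Combining this with the fiberwise Finner bound upgrades it to the $\pi_j$ version for semiregular $\Omega$.

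\textbf{Summation and the main obstacle.} For regular quasiextremal sets, i.e.\ those satisfying \eqref{E:general quasiextremal}, we show $\Omega$ is comparable to a paraball, a Carnot--Carath\'eodory-type box. We also show that paraballs with very different $\pi_j$-images or $\widetilde\pi_j$-fibers are almost disjoint. A general $\Omega$ is decomposed dyadically into semiregular pieces indexed by fiber sizes, and these pieces are summed via the almost-disjointness. Here the strict inequality \eqref{B twiddle} yields geometric decay in the eccentricity parameters, which makes the dyadic sums converge. The final passage to $L^{p_j}$ functions repeats this scheme on level sets, together with interpolation near $(p_j)$ permitted by the open conditions.

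I expect the main obstacle to be the inflation-map step: obtaining a quantitative lower bound on the Jacobian that is uniform over semiregular sets, despite the intersecting fibers $\ker D\pi_j\cap\ker D\pi_{j'}$. The first approach to try is to choose the ordering of the flag adaptively to the kernel dimensions, so that each new flow contributes an independent direction.
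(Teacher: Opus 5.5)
Your outline follows the paper's architecture: nested auxiliary maps, refinement, inflation, ellipsoid approximation, paraballs, and Christ-type dyadic summation. But the ingredient that makes it work is specified incorrectly. The $K_j$ need not be nested or orderable into a flag. In Example \ref{Ex:aniso}, $K_1,K_2,K_3$ all have dimension $3$ and none contains another. The flag that actually works, $\Span\{e_1\}<\R^4$, is singled out only by $w(X_1)=\frac{15}{31}>w(X_i)=\frac{10}{31}$ for $i\ge2$. Also, an intermediate bound $|\Omega|\lesssim\prod_j|\widetilde\pi_j(\Omega)|^{1/p_j}$ with one auxiliary map per $\pi_j$ and the old exponents fails the scaling constraints once the kernels change; the exponents must change too.

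The missing idea is the weights. By \eqref{C} with $V=\Span\{e_i\}$, each coordinate carries the same weight on both sides, $Q_i=\sum_{j\le m,\,e_i\in K_j}q_j=\sum_{j>m,\,e_i\in K_j}q_j$. The paper orders coordinates by decreasing $Q_i$ and takes one $\widetilde\pi_j$ per distinct weight level, with kernel spanned by $X_1,\dots,X_{\widetilde k_j}$. Its power $\widetilde q_j$ is the gap to the next level (for the last level, the smallest weight $Q_n$ itself). It keeps the original $\pi_j$, $j>m$, on the $y$-side, and this gives \eqref{E:intermediate}. Exactly this choice turns Finner's balance condition into the identity \eqref{E:char Lj Pj perp finner}. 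That identity lets Corollary \ref{C:Lj Pj perp finner} act on the $y$-fibers. It also lets Corollary \ref{C:specialize gen finner} convert $\prod_j\beta_j^{\widetilde q_j}$ into $\prod_{j\le m}\alpha_j^{q_j}$ on the $x$-fibers, which needs both nested kernels and the semiregularity upper bound (cf. Example \ref{Ex:beta>alpha}).

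Moreover, the inflation map is just $e^{v\cdot\mathbf X}e^{u\cdot\mathbf Y}e^{s\cdot\mathbf X}(z_0)$ in $N$ copies ($N$ is an integer because $\mathbf p$ is rational). It flows only along full $\pi$- and $\pi_*$-fibers, and the nesting enters only through how the base points $s^{j,l}$ are tied together in $\Xi$. Its Jacobian is $\prod_{j,l}\det\mathbf u^{j,l}$ with leading $\widetilde k_j\times\widetilde k_j$ blocks. So the intersecting kernels you name as the main obstacle never reach the inflation map. The mismatched base points are handled by the H\"older scheme of Lemma \ref{L:am-gm} together with semiregularity (Lemma \ref{L:G m twiddle lower bound}).

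Second, you put \eqref{B twiddle} in the wrong place, so your plan has no mechanism to close the inflation estimate. The fiber sets $\scriptF(s)$ are arbitrary measurable sets, so John's theorem does not apply. The paper uses Christ's central-set lemma (Lemma \ref{L:appr}), which yields a balanced convex $\scriptC(s)$ at the price of a factor $(|\scriptF(s)|/|\scriptC(s)|)^{\eta}$ per integration variable in Lemma \ref{L:f_jl bound}. There are $\sum_j\widetilde k_j\dbtilde q_j=N$ such variables. That loss is repaid only because the top level of the inflation contains $\dbtilde q_{\widetilde m}=Q_n-N$ full $n\times n$ determinants, each gaining a factor $|\scriptC(s)|/|\scriptF(s)|$. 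Having $Q_n-N\ge1$ is exactly what the strict bounds $N<Q_i<2N$ of \eqref{B''}, i.e. \eqref{B twiddle}, provide, and the paper states this is its only use of strictness. The dyadic summations do not run on decay from \eqref{B twiddle}; they use $\sum_j1/p_j>1$ together with paraball almost-disjointness.

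Finally, interpolation is not available for the strong-type step. Conditions \eqref{A} and \eqref{C} over all coordinate subspaces typically pin down $\mathbf p$ uniquely, as in every example of Subsection \ref{S:ex}, so there is no open family of admissible exponents. The paper instead shows that every quasiextremal set is approximated by a paraball (Theorem \ref{T:paraball}). From this it deduces that quasiextremal tuples of level sets of very different sizes interact weakly (Lemma \ref{L:little interaction}). It then sums dyadic simple functions with Lemma \ref{L:summing without a priori est}. In your last sentence, ``repeating the scheme on level sets'' is the right part; the interpolation is not.
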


Let's make a technical change in the definition \eqref{E:pi_j vp} by substituting $x\cdot y$ for $x_{K_j}\cdot y_{K_j}$ in the last coordinate of each $\pi_j$. From now on, $\pi_j$ refers to the following:
\begin{equation}\label{E:pi_j}
    \pi_j(x,y,t)\coloneqq
        \begin{cases}
            (L_jx,y,t+\frac{1}{2}x\cdot y), & 1\leq j\leq m, \\
            (x,L_jy,t-\frac{1}{2}x\cdot y), & m<j\leq M.
        \end{cases}
\end{equation}
The inequalities \eqref{E:base case} with the $\pi_j$ given by \eqref{E:pi_j vp} and \eqref{E:pi_j} are equivalent. This can be seen by a change of variables on the domain of $f_j$, under which $\|f_j\|_{p_j}$ are invariant.

Our first step is to prove the \textit{restricted weak-type inequality}
\begin{equation}\label{E:outline rwt}
    |\Omega|\lesssim\displaystyle\prod_{j=1}^M|\pi_j(\Omega)|^\frac{1}{p_j}
\end{equation}
for bounded Lebesgue measurable sets $\Omega\subset\HH^n$. This is a special case of \eqref{E:base case} where the $f_j$ are characteristic functions. Once the restricted weak-type inequality is obtained, the \textit{strong-type inequality} \eqref{E:base case} can be deduced from the geometric properties of quasiextremals of \eqref{E:outline rwt} and careful dyadic analysis (see \cite{christ2011quasiextremals}). We would like to remark that there are examples of multilinear Radon-like transforms with restricted weak-type inequalities only, but not strong-type inequalities (See \cite{christ2020endpoint}), but this is not the case in our setting.

\subsection{Examples}\label{S:ex}
The base case involves a broad family of multilinear Radon-like transforms that does not seem to have been systematically studied before. In each case, conditions \eqref{A}-\eqref{C} determine a unique $M$-tuple $(p_1,\dots,p_M)$ that also lies in \eqref{B twiddle}. We don't know any direct application of known methods to obtain \eqref{E:outline rwt}. For better illustration, we list four examples of the base case here. Then, we will outline the proof of two of them so that the readers can see the new ideas more clearly. Let's first introduce a notion of ``weights" of certain vector fields:

\begin{definition}\label{D:weight}
    Let $m,M,n$ be positive integers with $m<M$, and let $\pi_1,\dots,\pi_M$ be defined as in \eqref{E:pi_j}. Let $X$ be a vector field whose integral flows are invariant under some $\pi_j$; or equivalently, $X\in\ker D\pi_j$ everywhere. Let $\mathbf p\in[1,\infty]^M$. Then, the weight of $X$ with respect to $\mathbf p$ is defined as
    \begin{equation}\label{E:weight}
        w(X)\coloneqq\displaystyle\sum_{j:X\in\ker D\pi_j}\frac{1}{p_j}.
    \end{equation}
\end{definition}

\begin{exmp}\label{Ex:direct sum}
    Consider $\HH^2\simeq\R^5$, $m=2,M=3$. Let
    \[
    \pi_j(x,y,t)=
    \begin{cases}
        (x_2,y,t+\frac{1}{2}x\cdot y), & j=1, \\
        (x_1,y,t+\frac{1}{2}x\cdot y), & j=2, \\
        (x,t-\frac{1}{2}x\cdot y), & j=3.
    \end{cases}
    \]
    We observe that the fibers of $\pi_j$ are foliated by the vector field $X_j$ for each of $j=1,2$, while those of $\pi_3$ are foliated by the vector fields $Y_1,Y_2$, where
    \[
    X_j=\frac{\partial}{\partial x_j}-\frac{1}{2}y_j\frac{\partial}{\partial t},\quad Y_j=\frac{\partial}{\partial y_j}+\frac{1}{2}x_j\frac{\partial}{\partial t},\qquad j=1,2,
    \]
    generate the Lie algebra of left-invariant vector fields of $\HH^2$. By direct calculation, the only admissible exponents -- i.e., those satisfying conditions \eqref{A}, \eqref{B}, and \eqref{C} -- are $p_1=p_2=p_3=\frac{7}{3}$ (they also satisfy \eqref{B twiddle}). Note that $\ker D\pi_j\cap\ker D\pi_{j^\prime}=0$ for distinct $j,j^\prime$, and the weights of $X_j,Y_j$ are the same: $w(X_j)=w(Y_j)=\frac{3}{7}$ (see Table \ref{tab:direct sum}).
\end{exmp}

\begin{table}
    \centering
    \begin{tabular}{|c|c|c|c|}
        \hline
        $\pi_1$ & $X_1$ & & $p_1=\frac{7}{3}$ \\
        \hline
        $\pi_2$ & & $X_2$ & $p_2=\frac{7}{3}$ \\
        \hline
        $\pi_3$ & $Y_1$ & $Y_2$ & $p_3=\frac{7}{3}$ \\
        \hline
        $w(X_i)=w(Y_i)$ & $\frac{3}{7}$ & $\frac{3}{7}$ & \\
        \hline
        \end{tabular}
    \caption{This chart illustrates the weights from Example \ref{Ex:direct sum}. We list on the $j$-th row the vector fields that everywhere form a basis for $\ker D\pi_j$, and observe that $\ker D\pi_j\cap\ker D\pi_{j^\prime}=0$ for distinct $j,j^\prime$. The weights of the vector fields are the same.}
    \label{tab:direct sum}
\end{table}

\begin{exmp}\label{Ex:isoperimetric}
    Let $\HH^3\simeq\R^7$, $m=3,M=4$. Define
    \[
    \pi_j(x,y,t)=
    \begin{cases}
        (x_1,y,t+\frac{1}{2}x\cdot y), & j=1, \\
        (x_2,y,t+\frac{1}{2}x\cdot y), & j=2, \\
        (x_3,y,t+\frac{1}{2}x\cdot y), & j=3, \\
        (x,t-\frac{1}{2}x\cdot y), & j=4, \\
    \end{cases}
    \]
    In this case, for $j=1,2,3$, the fibers of $\pi_j$ are foliated by $\mathbf{\hat X}_j$, while the fibers of $\pi_4$ are foliated by $Y_1,Y_2,Y_3$, where
    \[
    X_j=\frac{\partial}{\partial x_j}-\frac{1}{2}y_j\frac{\partial}{\partial t},\quad Y_j=\frac{\partial}{\partial y_j}+\frac{1}{2}x_j\frac{\partial}{\partial t},\qquad j=1,2,3,
    \]
    generate the Lie algebra of left-invariant vector fields of $\HH^3$, and $\mathbf{\hat X}_j$ is the pair of vector fields from $X_1,X_2,X_3$ that excludes $X_j$. The only admissible exponents are $(p_1,p_2,p_3,p_4)=(\frac{7}{2},\frac{7}{2},\frac{7}{2},\frac{7}{4})$. Note that each $X_j$ has integral flows invariant under two $\pi_j$, so $\ker D\pi_j\cap\ker D\pi_{j^\prime}\neq0$ for some $j\neq j^\prime$ (see Table \ref{tab:even}), but the vector fields all have weights equal to $\frac{4}{7}$.
\end{exmp}

\begin{table}
    \centering
    \begin{tabular}{|c|c|c|c|c|}\hline
        $\pi_1$ & & $X_2$ & $X_3$ & $p_1=\frac{7}{2}$ \\ \hline
        $\pi_2$ & $X_1$ & & $X_3$ & $p_2=\frac{7}{2}$ \\ \hline
        $\pi_3$ & $X_1$ & $X_2$ & & $p_3=\frac{7}{2}$ \\ \hline
        $\pi_4$ & $Y_1$ & $Y_2$ & $Y_3$ & $p_4=\frac{7}{4}$ \\ \hline
        $w(X_i)=w(Y_i)$ & $\frac{4}{7}$ & $\frac{4}{7}$ & $\frac{4}{7}$ & \\
        \hline
    \end{tabular}
    \caption{This chart illustrates the weights from Example \ref{Ex:isoperimetric}. We list on the $j$-th row the vector fields that everywhere form a basis for $\ker D\pi_j$, and observe that $\ker D\pi_j\cap\ker D\pi_{j^\prime}\neq0$ for some $j\neq j^\prime$. The weights of all the vector fields are equal.}
    \label{tab:even}
\end{table}

\begin{exmp}\label{Ex:aniso}
    Let $\HH^4\simeq\R^9$, $m=3,M=5$. Define
    \[
    \pi_j(x,y,t)=
    \begin{cases}
        (x_2,y,t+\frac{1}{2}x\cdot y), & j=1, \\
        (x_3,y,t+\frac{1}{2}x\cdot y), & j=2, \\
        (x_4,y,t+\frac{1}{2}x\cdot y), & j=3, \\
        (x,y_1,t-\frac{1}{2}x\cdot y), & j=4, \\
        (x,y_2,y_3,y_4,t-\frac{1}{2}x\cdot y), & j=5.
    \end{cases}
    \]
    In this case, for each of $j=1,2,3$, the fibers of $\pi_j$ are foliated by $\mathbf{\hat X}_j$, while the fibers of $\pi_4$ are foliated by $Y_2,Y_3,Y_4$, those of $\pi_5$ foliated by $Y_1$, where
    \[
    X_j=\frac{\partial}{\partial x_j}-\frac{1}{2}y_j\frac{\partial}{\partial t},\quad Y_j=\frac{\partial}{\partial y_j}+\frac{1}{2}x_j\frac{\partial}{\partial t},\qquad j=1,2,3,4
    \]
    generate the Lie algebra of left-invariant vector fields of $\HH^4$, and $\mathbf{\hat X}_j$ is the triple of vector fields from $X_1,\dots,X_4$ that excludes $X_{j+1}$. The only admissible exponents are $(p_1,p_2,p_3,p_4,p_5)=(\frac{31}{5},\frac{31}{5},\frac{31}{5},\frac{31}{10},\frac{31}{15})$. Note that $w(X_j)=w(Y_j)=\frac{10}{31}$ for $j=2,3,4$, while $w(X_1)=w(Y_1)=\frac{15}{31}$ (see Table \ref{tab:uneven}).
\end{exmp}

\begin{table}
    \centering
    \begin{tabular}{|c|c|c|c|c|c|}\hline
        $\pi_1$ & $X_1$ & & $X_3$ & $X_4$ & $p_1=\frac{31}{5}$ \\ \hline
        $\pi_2$ & $X_1$ & $X_2$ & & $X_4$ & $p_2=\frac{31}{5}$ \\ \hline
        $\pi_3$ & $X_1$ & $X_2$ & $X_3$ & & $p_3=\frac{31}{5}$ \\ \hline
        $\pi_4$ & & $Y_2$ & $Y_3$ & $Y_4$ & $p_4=\frac{31}{10}$ \\ \hline
        $\pi_5$ & $Y_1$ & & & & $p_5=\frac{31}{15}$ \\ \hline
        $w(X_i)=w(Y_i)$ & $\frac{15}{31}$ & $\frac{10}{31}$ & $\frac{10}{31}$ & $\frac{10}{31}$ & \\
        \hline
    \end{tabular}
    \caption{This chart illustrates the weights from Example \ref{Ex:aniso}. We list on the $j$-th row the vector fields that everywhere form a basis for $\ker D\pi_j$, and observe that $\ker D\pi_j\cap\ker D\pi_{j^\prime}\neq0$ for some $j\neq j^\prime$. In addition, the weights of the vector fields are not always the same.}
    \label{tab:uneven}
\end{table}

\begin{exmp}\label{Ex: aniso 2}
    Let $\HH^5\simeq\R^{11}$, $m=4,M=6$. Define
    \[
    \pi_j(x,y,t)=
    \begin{cases}
        (x_3,y,t+\frac{1}{2}x\cdot y), & j=1, \\
        (x_2,x_4,y,t+\frac{1}{2}x\cdot y), & j=2, \\
        (x_3,x_4,x_5,y,t+\frac{1}{2}x\cdot y), & j=3, \\
        (x_1,x_2,x_5,y,t+\frac{1}{2}x\cdot y), & j=4, \\
        (x,y_1,t-\frac{1}{2}x\cdot y), & y=5, \\
        (x,y_2,y_3,y_4,y_5,t-\frac{1}{2}x\cdot y), & j=6.
    \end{cases}
    \]
    The only admissible exponents are $p_1=p_2=p_3=p_4=\frac{43}{6},p_5=\frac{43}{18},p_6=\frac{43}{12}$. Let
    \[
    X_j=\frac{\partial}{\partial x_j}-\frac{1}{2}y_j\frac{\partial}{\partial t},\quad Y_j=\frac{\partial}{\partial y_j}+\frac{1}{2}x_j\frac{\partial}{\partial t},\qquad j=1,2,3,4,5.
    \]
    Note that $w(X_j)=w(Y_j)=\frac{12}{43}$ for $2\leq j\leq 6$, while $w(X_1)=w(Y_1)=\frac{18}{43}$ (see Table \ref{tab:aniso 2}).
\end{exmp}

\begin{table}
    \centering
    \begin{tabular}{|c|c|c|c|c|c|c|}\hline
        $\pi_1$ & $X_1$ & $X_2$ & & $X_4$ & $X_5$ & $p_1=\frac{43}{6}$ \\ \hline
        $\pi_2$ & $X_1$ & & $X_3$ & & $X_5$ & $p_2=\frac{43}{6}$ \\ \hline
        $\pi_3$ & $X_1$ & $X_2$ & & & & $p_3=\frac{43}{6}$ \\ \hline
        $\pi_4$ & & & $X_3$ & $X_4$ & & $p_4=\frac{43}{6}$ \\ \hline
        $\pi_5$ & & $Y_2$ & $Y_3$ & $Y_4$ & $Y_5$ & $p_5=\frac{43}{18}$ \\ \hline
        $\pi_6$ & $Y_1$ & & & & & $p_6=\frac{43}{12}$ \\ \hline
        $w(X_i)=w(Y_i)$ & $\frac{18}{43}$ & $\frac{12}{43}$ & $\frac{12}{43}$ & $\frac{12}{43}$ & $\frac{12}{43}$ & \\
        \hline
    \end{tabular}
    \caption{This chart illustrates the weights from Example \ref{Ex: aniso 2}. We list on the $j$-th row the vector fields that everywhere form a basis for $\ker D\pi_j$, and observe that $\ker D\pi_j\cap\ker D\pi_{j^\prime}\neq0$ for some distinct $j,j^\prime$. In addition, the weights of the vector fields are not necessarily the same.}
    \label{tab:aniso 2}
\end{table}

We will see that when the weights of $X_i,Y_i$ are the same for all $i$, as in Examples \ref{Ex:direct sum} and \ref{Ex:isoperimetric}, one can easily adapt the refinement technique in \cite{christ2011quasiextremals} and apply Finner's inequality\cite{finner1992generalization} to prove \eqref{E:outline rwt}. However, the weights are in general not the same, as in Examples \ref{Ex:aniso} and \ref{Ex: aniso 2}, wherein there does not seem to be any straightforward adaption of existing methods to prove \eqref{E:outline rwt}.

To better understand the difficulty caused by varying $w(X_i)$, we will outline the proofs of the restricted weak-type inequalities \eqref{E:outline rwt} in Examples \ref{Ex:isoperimetric} and \ref{Ex:aniso}.

\subsection{Proof sketch for Example \ref{Ex:isoperimetric}}\label{S:proof ex iso} Without loss of generality, assume that $\Omega$ is bounded and has finite measure. We observe that \eqref{E:outline rwt} is equivalent to
\begin{equation}\label{E:even rwt avg}
    |\Omega|^3\gtrsim\left(\displaystyle\prod_{j=1}^3\alpha_j^2\right)\alpha_4^4,\qquad\alpha_j=\frac{|\Omega|}{|\pi_j(\Omega)|},\quad j=1,2,3,4.
\end{equation}
Note that $\alpha_j$ is the average size of $\Omega\cap\pi_j^{-1}(z)$.

Inspired by \cite{TaoWright} and \cite{christ2011quasiextremals}, one may hope to construct an almost diffeomorphic inflation map that maps from a ''product" set of the $\pi_j$ fibers to a product of $\Omega$. The author does not know how to construct such an almost diffeomorphism when $\ker D\pi_j\cap\ker D\pi_{j^\prime}\neq0$, but, equipped with Finner's inequality, we can circumvent this issue by working on the following auxiliary projections:
\[
\pi(x,y,t)\coloneqq(y,t+\frac{1}{2}x\cdot y),\qquad\pi_*(x,y,t)\coloneqq(x,t-\frac{1}{2}x\cdot y).
\]
(Note that $\pi_*=\pi_4$.) Write
\begin{equation}\label{E:outline alpha alpha_*}
    \alpha=\frac{|\Omega|}{|\pi(\Omega)|},\qquad\alpha_*=\frac{|\Omega|}{|\pi_*(\Omega)|}.
\end{equation}
The standard $L^\frac{5}{4}\longrightarrow L^5$ estimate for the Radon-like transform on $\R^4$ implies the following inequality\cite{christ2011quasiextremals,christ2014extremizers}:
\begin{equation}\label{E:outline radon even}
    |\Omega|^3\gtrsim\alpha^4\alpha_*^4.
\end{equation}
This can be done by refining $\Omega$, so that $|\Omega\cap\pi^{-1}(z)|\gtrsim\alpha,|\Omega\cap\pi_*^{-1}(z)|\gtrsim\alpha_*$ whenever they are not empty. We use these refined fibers to construct a product-like set
\begin{equation}
    \Omega^\flat\coloneqq\{(s,\mathbf u,\mathbf v):s\in S; \forall1\leq i\leq3,u^i\in\scriptF(s),v^i\in\scriptG(s,u^i)\},
\end{equation}
where $s\in\R^3,\mathbf u=(u^1,u^2,u^3),\mathbf v=(v^1,v^2,v^3)\in\R^{3\times3}$, $S,\scriptF(s),\scriptG(s,u)\subset\R^3$ are essentially $\pi^{-1}(z)\cap\Omega,\pi_*^{-1}(z)\cap\Omega$, and satisfy
\begin{equation}
    |S|,|\scriptG(s,u)|\gtrsim\alpha,\quad|\scriptF(s)|\gtrsim\alpha_*.
\end{equation}
Let $\Phi:\Omega^\flat\longrightarrow\Omega^3$ be defined as
\begin{equation}\label{E:outline diffeo}
    \Phi(s,\mathbf u,\mathbf v)\coloneqq(e^{v^i\cdot\mathbf X}e^{u^i\cdot\mathbf Y}e^{s\cdot\mathbf X}(z))_{i=1}^3
\end{equation}
for some $z\in\Omega$. Here $e^{V}(z)$ denotes the exponential flow along vector field $V$ starting from $z$ (see Section \ref{S:refinement}). $\Phi$ is a quadratic polynomial that is a local diffeomorphism almost everywhere in $\Omega^\flat$ with Jacobian $D\Phi(s,\mathbf u,\mathbf v)=\det(\mathbf u)$, where $\mathbf u$ is treated as a $3\times3$ matrix with $u^i$ being its columns. By B\'ezout's Theorem and lower bound of $\scriptG(s,u^i)$,
\[
|\Omega|^3\geq|\Phi(\Omega^\flat)|\gtrsim\alpha^3\int_S\int_{(\scriptF(s))^3}|\det(\mathbf u)|d\mathbf uds.
\]
Then \eqref{E:outline radon even} can be obtained by approximating $\scriptF(s)$ with ellipsoids and invoking the lower bounds of $S,\scriptF(s)$.

Note that when restricted to $\omega\coloneqq\pi^{-1}(z)\cap\Omega$, $\pi_1|_\omega,\pi_2|_\omega,\pi_3|_\omega$ are affine coordinate projections, and equality of $w(X_j)$ implies the condition under which Finner's inequality holds true (see Theorem \ref{T:finner weak}). If $|\omega|=\alpha$, and $|\omega|/|\pi_j|_\omega(\omega)|=\alpha_j$ for each $\omega$ (this is a little white lie that helps illustrate how Finner's inequality comes into play), then we have $\alpha\gtrsim\prod_{j=1}^3\alpha_j^\frac{1}{2}$. Thus \eqref{E:even rwt avg} can be deduced from \eqref{E:outline radon even}.

\subsection{Proof sketch for Example \ref{Ex:aniso}}\label{S:proof ex aniso} The reason that the inflation map in \eqref{E:outline diffeo} and use of Finner's inequality work in Example \ref{Ex:isoperimetric} is equality of the weights $w(X_i),w(Y_i)$ for all $i$. Indeed, the proof sketch in Subsection \ref{S:proof ex iso} can be generalized to any case with equal weights, but it cannot be applied to Example \ref{Ex:aniso} or \ref{Ex: aniso 2}, where the weights vary. Therefore, new inflation maps and generalization of Finner's inequality are needed. The main obstruction is that the refinement technique only gives a lower bound $\alpha_j$ of each $\pi_j^{-1}(z)\cap\Omega$, while the upper bound can be arbitrarily large. To tackle the issue, we first prove the restricted weak-type inequality \eqref{E:outline rwt} for $\Omega$ satisfying a certain regularity assumption. Then we can adapt the dyadic method in \cite{christ2011quasiextremals} to remove the extra assumption.

Similarly, \eqref{E:outline rwt} in Example \ref{Ex:aniso} is equivalent to
\begin{equation}\label{E:outline rwt aniso}
    |\Omega|^9\gtrsim\left(\displaystyle\prod_{j=1}^3\alpha_j^5\right)\alpha_4^{10}\alpha_5^{15},\qquad\alpha_j=\frac{|\Omega|}{|\pi_j(\Omega)|}.
\end{equation}
Recall $w(X_i)=w(Y_i)=\frac{10}{31},i=1,2,3$, while $w(X_4)=w(Y_4)=\frac{15}{31}$. Note that $\{X_i\}$ ($\{Y_i\}$, respectively) commute. Write $x=(x_1,x^\prime)\in\R\times\R^3$. Consider
\begin{align*}
    \widetilde\pi_1(x,y,t)&=(x^\prime,y,t+\frac{1}{2}x\cdot y), \\
    \widetilde\pi_2(x,y,t)&=(y,t+\frac{1}{2}x\cdot y), \\
    \widetilde\pi_1^*(x,y,t)&=\pi_6(x,y,t)=(x,y^\prime,t-\frac{1}{2}x\cdot  y), \\
    \widetilde\pi_2^*(x,y,t)&=(x,t-\frac{1}{2}x\cdot y).
\end{align*}
Then, $\ker d\widetilde\pi_1=\Span\{X_1\}<\ker d\widetilde\pi_2=\mathbf X,\ker d\widetilde\pi_1^*=\Span\{Y_1\}<\ker d\widetilde\pi_2^*=\mathbf Y$. Let $$\beta_j=\frac{|\Omega|}{|\widetilde\pi_j(\Omega)|},\qquad\beta_j^*=\frac{|\Omega|}{|\widetilde\pi_j^*(\Omega)|},\qquad j=1,2.$$
We first construct an inflation map by flowing along fibers of $\widetilde\pi_j$ to prove
\begin{equation}\label{E:outline avg medium ex aniso}
    |\Omega|^9\gtrsim\displaystyle\beta_1^5\beta_2^{10}\alpha_4^{10}\alpha_5^{15},
\end{equation}
or equivalently,
\begin{equation}\label{E:outline medium ex aniso}
    |\Omega|\lesssim|\widetilde\pi_1(\Omega)|^\frac{5}{31}|\widetilde\pi_2(\Omega)|^\frac{10}{31}|\pi_4(\Omega)|^\frac{10}{31}|\pi_5(\Omega)|^\frac{15}{31}.
\end{equation}
Note that the exponents in \eqref{E:outline medium ex aniso} satisfy the conditions in Theorem \ref{T:base case}. Moreover, the ratio of $w(X_i)$ in Example \ref{Ex:aniso} is $w(X_1):w(X_2):w(X_3):w(X_4)=3:2:2:2$, same as that in \eqref{E:outline medium ex aniso}.

To see \eqref{E:outline avg medium ex aniso}, write $\mathbf s=(s^i)_{i=1}^9$ with $s^i\in\R^4$. The domain $\Omega^\flat$ of the inflation map takes the form
\[
\Omega^\flat\coloneqq\{(\mathbf s,\mathbf u,\mathbf v)\in \Xi\times\R^{2\times4\times9}: u^i\in\scriptF(s^i),v^i\in\scriptG(s^i,u^i)\},
\]
where $\Xi\subset S^9$, and $S,\scriptF(s^i),\scriptG(s^i,u^i)\subset\R^4$ are essentially $(\widetilde\pi_2)^{-1}(z)\cap\Omega,(\widetilde\pi_2^*)^{-1}(z)\cap\Omega$, and after refinements, have sizes
\begin{gather}
    \frac{|S|}{|L_j(S)|}\gtrsim\alpha_j,\quad\frac{|\scriptG|}{|L_j(\scriptG)|}\gtrsim\alpha_j,\qquad1\leq j\leq3; \\
    \frac{|\scriptF|}{|L_j(\scriptF)|}\gtrsim\alpha_j,\qquad j=4,5; \\
    \label{E:outline beta_1 bound}\frac{|S|}{|P_1(S)|}\gtrsim\beta_1,\qquad\frac{|\scriptG|}{|P_1(\scriptG)|}\gtrsim\beta_1; \\
    \label{E:outline beta_1^* bound}\frac{|\scriptF|}{|P_1(\scriptF)|}\gtrsim\beta_1^*; \\[5pt]
    \label{E:outline beta_2 bound}|\scriptF(s^i)|\gtrsim\beta_2^*,|\scriptG(s^i,u^i)|\gtrsim\beta_2.
\end{gather}
$L_j(x)=x_{j+1}$ are the linear parts of $\pi_j$, and $P_1$ the linear part of $\widetilde\pi_1$, or equivalently, the orthogonal projection onto the last $3$ coordinates. $\Xi$ is an embedding into $S^9$ of some product-like set in $\R^9$ with its $9$-dimensional Hausdorff measure $|\Xi|\gtrsim\beta_1^5\beta_2$. To be specific, the embedding maps $\xi=(\xi^i)_{i=1}^6\in\Xi$ with $\xi^i\in\R^1$ for $1\leq i\leq5$ and $\xi^6\in\R^4$ to Matrix \ref{M:ex_Xi}.
\begin{figure}
    \centering
    \includegraphics[width=1\linewidth]{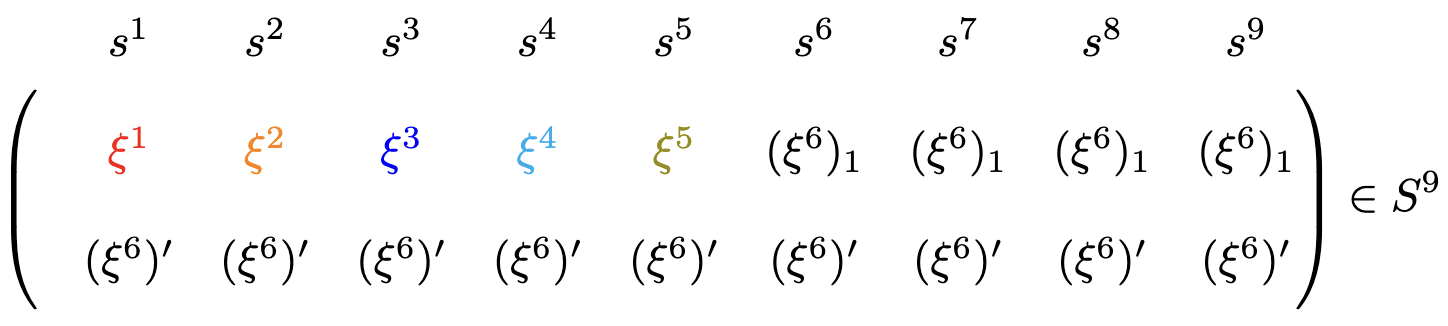}
    \captionsetup{labelformat=matrix}
    \caption{}\label{M:ex_Xi}
\end{figure}

The new inflation map $\Phi:\Omega^\flat\longrightarrow\Omega^9$ is defined as
\begin{equation}
    \Phi(s,\mathbf u,\mathbf v)\coloneqq(e^{v^i\cdot\mathbf X}e^{u^i\cdot\mathbf Y}e^{s^i\cdot\mathbf X}(z))_{i=1}^9
\end{equation}
for some $z\in\Omega$. $\Phi$ is a quadratic polynomial that is a local diffeomorphism almost everywhere in $\Omega^\flat$ with Jacobian $D\Phi(s,\mathbf u,\mathbf v)=\det(\mathbf{u^\prime})\prod_{i=1}^5(u^i)_1$, where $\mathbf{u^\prime}$ is the $4\times4$ matrix with columns $\{u^i\}_{i=6}^9$. B\'ezout's Theorem yields
\[
|\Omega|^9\geq|\Phi(\Omega^\flat)|\gtrsim\beta_2^9\int_\Xi\displaystyle\prod_{i=1}^5\left(\int_{\scriptF(s^i)}|u^i|du^i\right)\left(\int_{(\scriptF(s^6))^4}|\det(\mathbf{u^\prime})|d\mathbf{u^\prime}\right)ds.
\]
If $\scriptF$ are ellipsoids and $s^1=s^2=\cdots=s^9$, then the integrand is $\sim|\scriptF|^{10}|P_1^\perp(\scriptF)|^5$, where $P_1^\perp(x)=x_1$. Finner's inequality (see Corollary \ref{C:Lj Pj perp finner}) and lower bounds of $\scriptF$ imply the integrand is $\gtrsim\alpha_4^{10}\alpha_5^{15}$. For general $\scriptF$, we approximate them by ellipsoids. The new obstruction is that $s^1,\dots,s^9$ are not the same in general. If we further assume ``regularity" -- i.e., we substitute $\sim$ for $\gtrsim$ in \eqref{E:outline beta_1 bound}-\eqref{E:outline beta_2 bound}, then this can be resolved by repeating applications of H\"older's inequality (see Lemma \ref{L:am-gm}).

To pass from \eqref{E:outline avg medium ex aniso} to \eqref{E:outline rwt aniso}, it remains to show
\begin{equation}\label{E:ex avg gen finner}
    \frac{|S|}{|P_1(S)|}|S|^2\gtrsim\displaystyle\prod_{j=1}^3\frac{|S|}{|L_j(S)|},
\end{equation}
We observe that,  \eqref{E:ex avg gen finner} can be deduced from Finner's inequality
\begin{equation}\label{E:outline pseudo finner}
    |P_1(S)|\lesssim\displaystyle\prod_{j=1}^3|L_j(S)|,
\end{equation}
and regularity is not needed in this case. Nevertheless, counterparts of \eqref{E:outline pseudo finner} in other cases, say, in Example \ref{Ex: aniso 2}, cannot be easily deduced from Finner's inequality. A generalized Finner's inequality (see Lemma \ref{L:gen finner})
 is required. This generalization is made possible thanks to the nested kernels of $\widetilde\pi_j$ and regularity.

\section{Reduce Theorem \ref{T:main} to a base case}\label{S:main to weak}
In this section, we will prove by induction Theorem \ref{T:main} assuming Theorem \ref{E:base case}. Note that the conditions $p_j\in[1,\infty]$, \eqref{A}, \eqref{B}, and \eqref{C} characterize a compact convex polytope
\[
\scriptP\coloneqq\{\mathbf{p^{-1}}\in[0,1]^{M}:(p_1,\dots,p_M)\text{ satisfies conditions \eqref{A}, \eqref{B}, and \eqref{C}}\}.
\]
For $\mathbf p=(p_1,\dots,p_M)$, write $\mathbf{p^{-1}}=(p_1^{-1},\dots,p_M^{-1})$.
\begin{definition}
    $\mathbf{p^{-1}}\in\scriptP$ is said to be \textit{extreme} if $\{\mathbf{p^{-1}}+r\mathbf v:|r|<1\}\not\subseteq\scriptP$ for any $\mathbf v\in R^M\setminus\{0\}$.
\end{definition}

\begin{definition}
A coordinate subspace $V\leq\R^n$ is \textit{critical} if equality holds in \eqref{B}.
\end{definition}

\begin{lemma} \label{L:0 critical}
If $\{0\}$ is critical, then \eqref{E:strong} holds true.
\end{lemma}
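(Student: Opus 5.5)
The plan is to show that criticality of $\{0\}$, together with \eqref{A} and \eqref{C}, forces every $\pi_j$ carrying a finite exponent to be a measure-preserving bijection of $\HH^n$. After that, \eqref{E:strong} is just H\"older's inequality. Throughout, $(p_1,\dots,p_M)$ satisfies $p_j\in[1,\infty]$, \eqref{A}, \eqref{B} and \eqref{C}, and $\{0\}$ is critical.

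First I unpack the hypothesis. Taking $V=\{0\}$ in \eqref{B} with equality gives $1=\sum_{j=1}^M \frac1{p_j}$. Multiplying this by $n+1$ and subtracting it from \eqref{A} yields
\[
0=\sum_{j=1}^m\frac{\dim V_j-n}{p_j}.
\]
Every term here is $\le 0$, so each term vanishes. Hence for every $1\le j\le m$ with $p_j<\infty$ we have $V_j=\R^n$, that is, $K_j=\{0\}$.

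Next I apply \eqref{C} with $V=\R^n$. This gives $\sum_{i=1}^m k_i/p_i=\sum_{j=m+1}^M k_j/p_j$. By the previous step the left side is $0$, so $K_j=\{0\}$ for every $m<j\le M$ with $p_j<\infty$ as well.

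For each such $j$, the map $\pi_j$ in \eqref{E:pi_j vp} becomes $(x,y,t)\mapsto (x,y,t\pm\tfrac12 x\cdot y)$, or the identity once the $x_{K_j}\cdot y_{K_j}$ term vanishes. In either case it is a polynomial bijection of $\R^{2n+1}$ with Jacobian determinant $1$. Consequently $\|f_j\circ\pi_j\|_{L^{p_j}(\HH^n)}=\|f_j\|_{L^{p_j}}$.

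Finally I estimate the form. Let $J=\{j: p_j=\infty\}$. Bound $\prod_{j\in J} |f_j\circ\pi_j|\le\prod_{j\in J}\|f_j\|_\infty$ pointwise. Since $\sum_{j\notin J}1/p_j=1$, H\"older's inequality on $\HH^n$ gives
\[
\scriptM(f_1,\dots,f_M)\le \prod_{j\in J}\|f_j\|_\infty\prod_{j\notin J}\|f_j\circ\pi_j\|_{p_j}=\prod_{j=1}^M\|f_j\|_{p_j}.
\]
This is \eqref{E:strong} with $C=1$. Compact support of the $f_j$ is not needed beyond guaranteeing that all quantities are finite.

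There is one degenerate case: if every $p_j$ were infinite, the sum $\sum_j 1/p_j$ would be $0\neq1$, so this cannot occur. I expect no genuine obstacle. The only step needing care is the sign argument that forces the kernels $K_j$ to vanish for the finite-exponent indices.
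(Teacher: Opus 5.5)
Your proof is correct and follows the same route as the paper: criticality of $\{0\}$ gives $\sum_j 1/p_j=1$, which combined with (A) and with (C) at $V=\R^n$ forces $K_j=\{0\}$ for every $j$ with $p_j<\infty$. Those $\pi_j$ are then measure-preserving bijections, and after bounding the $p_j=\infty$ factors by their sup norms the estimate is H\"older's inequality, exactly as in the paper; your write-up simply makes the sign argument and the change of variables more explicit.
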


\begin{proof}
$\{0\}$ being critical implies $\sum_{j=1}^{M} p_j^{-1} = 1$.
Plugging this and \ref{C} into \eqref{A} yields
$$\displaystyle\sum_{j=1}^{M}\frac{n}{p_j}=\displaystyle\sum_{j=1}^m\frac{n_j}{p_j}+\displaystyle\sum_{j=m+1}^M\frac{n}{p_j}=\displaystyle\sum_{j=1}^m\frac{n}{p_j}+\displaystyle\sum_{j=m+1}^M\frac{n_j}{p_j}.$$
Thus, for each $1\leq j\leq M$, we have $p_j=\infty$ or $n_j=n$.  In the former case, $\scriptM$ can be reduced to an $(M-1)$-linear operator. If $p_j<\infty$ for all $j$, then we have $L_j = \mathbb \I_n$, so \eqref{E:strong} becomes H\"older's inequality.
\end{proof}

Thus, we may assume that $\{0\}$ is not a critical subspace.
\begin{lemma} \label{L:base case}
Suppose $\scriptP\neq\emptyset$, and suppose there is no proper critical subspace $W<\R^n$, then either
\begin{itemize}
    \item $\scriptP=\{\mathbf{p^{-1}}\}$ is a singleton with $\mathbf p$ satisfying the assumptions of Theorem \ref{T:base case}; or
    \item for every extreme point $\mathbf p^{-1}$ of $\scriptP$, $p_j=\infty$ for some $1\leq j\leq M$.
\end{itemize}
\end{lemma}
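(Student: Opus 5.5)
The argument is a convex-geometry dichotomy on the polytope $\scriptP$, which is compact, convex, and cut out by finitely many linear (in)equalities in $\mathbf{p^{-1}}$: the box constraints $0\le p_j^{-1}\le 1$, the equality \eqref{A}, the equalities \eqref{C} for every coordinate subspace $V$, and the inequalities \eqref{B} for every coordinate subspace $V$. Since there are finitely many coordinate subspaces, $\scriptP$ is a polytope with finitely many extreme points. We assume the second alternative fails and deduce the first. So suppose some extreme point $\mathbf{p^{-1}}$ has $p_j<\infty$ for all $j$, i.e. $p_j^{-1}>0$ for every $j$.

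\textbf{Step 1: the only active constraints at $\mathbf{p^{-1}}$ are equalities.} By hypothesis no proper $W<\R^n$ is critical, so \eqref{B} holds strictly for every proper coordinate subspace; this is exactly \eqref{B twiddle}. For $V=\R^n$, \eqref{B} reads $n+1\le\sum_{j\le m}(n_j+1)/p_j+\sum_{j>m}(n+1)/p_j$, which holds with equality by \eqref{A}, so it adds nothing beyond \eqref{A}. Among the box constraints, $p_j^{-1}>0$ holds by assumption. We also need $p_j^{-1}<1$, i.e. $p_j>1$. If some $p_j=1$, then \eqref{B} at $V=\{0\}$ gives $1<\sum_i p_i^{-1}$, so another $p_i^{-1}$ is positive. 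I expect to rule out $p_j=1$ by combining \eqref{A} with \eqref{B} at $V=\{0\}$ and at other subspaces, perhaps showing that $\{0\}$ or some $V$ would then be forced critical. This is the delicate point, and I would check it carefully, possibly by noting that \eqref{A} with $p_j=1$ already saturates the budget.

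\textbf{Step 2: extremality forces a singleton.} Once every inequality is strict at $\mathbf{p^{-1}}$, a neighbourhood of $\mathbf{p^{-1}}$ inside the affine subspace $\mathcal A$ cut out by \eqref{A} and \eqref{C} lies in $\scriptP$. Extremality then forces $\mathcal A\cap\scriptP$ to be locally the single point $\mathbf{p^{-1}}$, so $\mathcal A=\{\mathbf{p^{-1}}\}$. Since $\scriptP\subseteq\mathcal A$, we get $\scriptP=\{\mathbf{p^{-1}}\}$. In that case $\mathbf{p^{-1}}$ is the unique extreme point and satisfies \eqref{A}, \eqref{B twiddle} and \eqref{C}.

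\textbf{Step 3: rationality.} $\mathbf{p^{-1}}$ is the unique solution of a linear system, namely \eqref{A} together with the family \eqref{C}, whose coefficients are the integers $\dim V_j+1$, $n+1$ and $\dim K_j\cap V$. It is therefore rational, so $(p_1,\dots,p_M)\in([1,\infty]\cap\Q)^M$, and all assumptions of Theorem \ref{T:base case} hold. This gives the first alternative.

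The main obstacle is Step 1's exclusion of $p_j^{-1}=1$ as an active constraint. If it cannot be excluded outright, I would treat it within the dichotomy: with $p_j=1$, Hölder's inequality lets the remaining $f_i$ be placed in $L^\infty$, so $p_i=\infty$ for $i\ne j$. Then either $M$ forces the second alternative or we reduce to Lemma \ref{L:0 critical}, because $\sum p_i^{-1}=1$ makes $\{0\}$ critical, contradicting \eqref{B twiddle}.
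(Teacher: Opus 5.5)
Your Steps 2 and 3 are sound. If every inequality defining $\scriptP$ is strict at the extreme point, then a neighbourhood of it in the affine set $\mathcal A$ cut out by \eqref{A} and \eqref{C} lies in $\scriptP$, so $\mathcal A$ is a point. A unique solution of a linear system with integer coefficients is rational, which is more explicit than the paper's bare assertion $\mathbf p\in\Q^M$. The gap is in Step 1: you never actually exclude $p_j^{-1}=1$, and neither of your suggestions works as written.

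The remark that \eqref{A} with $p_j=1$ ``saturates the budget'' is true only for $j>m$. For $j\le m$, \eqref{A} leaves the other terms summing to $k_j=n-n_j$, which need not vanish. Also, \eqref{B} at $V=\{0\}$ only gives $\sum_i p_i^{-1}>1$, which is compatible with $p_j=1$. Your fallback is a non sequitur. $\scriptP$ is defined purely by linear conditions, so the fact that H\"older's inequality would let the other $f_i$ sit in $L^\infty$ says nothing about which $\mathbf{p^{-1}}$ lie in $\scriptP$. Likewise Lemma \ref{L:0 critical}, a boundedness statement, has no bearing on this combinatorial lemma.

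The missing input is \eqref{C} with $V=\R^n$, namely $\sum_{i\le m}k_i/p_i=\sum_{i>m}k_i/p_i$. Adding it to \eqref{A} gives the symmetric identity
\[
n+1=\sum_{i\le m}\frac{n+1}{p_i}+\sum_{i>m}\frac{n_i+1}{p_i},
\]
alongside \eqref{A} itself, $n+1=\sum_{i\le m}\frac{n_i+1}{p_i}+\sum_{i>m}\frac{n+1}{p_i}$. If $p_j=1$, use the first identity when $j\le m$ and the second when $j>m$. The remaining terms have coefficients at least $1$ and sum to $0$, so $p_i=\infty$ for all $i\ne j$. Then $\sum_i p_i^{-1}=1$, so $\{0\}$ is critical, contradicting the hypothesis. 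Alternatively, since $M\ge 2$, this contradicts your assumption that all $p_i<\infty$. This is exactly the deduction the paper makes when it says that from \eqref{A} and \eqref{C}, $p_i=\infty$ for all $i\ne j$.

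With that computation inserted, your proof is complete. It differs from the paper's only in being local at one extreme point. The paper instead argues globally: in the non-singleton case $\scriptP=\mathcal A\cap[0,1]^M$, so its extreme points lie on the boundary of the cube.
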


\begin{proof}
Suppose there are no proper critical subspaces, and that $\scriptP=\{\mathbf p^{-1}\}$ is a singleton. Then $\mathbf p\in\Q^M$. Furthermore, by definition of critical subspaces, $\mathbf p$ satisfies the assumptions in Theorem \ref{T:base case}.

Suppose $\scriptP\neq\emptyset$ is not a singleton. We observe that $\scriptP$ is contained in the interior of the set
\[
\{\mathbf{p^{-1}}\in[0,1]^{M}:\mathbf p\text{ satisfies \eqref{B} for all coordinate subspaces }V<\R^n\}.
\]
By continuity and the fact that $\scriptP$ is closed, we have
\[
\scriptP=\{\mathbf{p^{-1}}\in[0,1]^{M}:\text{$\mathbf p$ satisfies \eqref{A} and \eqref{C}}\},
\]
which is the intersection of finitely many affine subspaces of $\R^M$ and the unit cube $[0,1]^M$. Since $\scriptP$ is not a singleton, the solutions to \eqref{A} and \eqref{C} are not unique. Then any extreme point $\mathbf{p^{-1}}$ must lie on the boundary of the unit cube $[0,1]^{M}$; that is, there exists some $1\leq j\leq M$ such that $p_j^{-1}\in\{0,1\}$. Suppose $p_j=1$ for some $1\leq j\leq M$. Then we can deduce from \eqref{A} and \eqref{C} that $p_i=\infty$ for all $i\neq j$.
\end{proof}

Under the assumption of Theorem \ref{T:base case}, Lemma \ref{L:base case} implies that \eqref{E:strong} holds true when there is no proper critical subspace. To close the induction, it remains to consider the case where a nonzero proper critical subspace exists. Let's recall Euclidean Brascamp--Lieb inequalities. For each $j$, we define the linear mapping $\pi_j^\flat$ on $\R^{2n}$ by
$$
\pi_j^\flat(x,y)\coloneqq
\begin{cases}
(L_j x,y), & 1 \leq j \leq m; \\
(x,L_jy), & m<j\leq M.
\end{cases} 
$$

The associated multilinear form is 
$$
\scriptM^\flat(f_1,\ldots,f_{M}) = 
\iint_{\R^{2n}} \prod_{j=1}^{M} f_j \circ \pi_j^\flat(x,y)\, dx\, dy.
$$
In this special Finner case, the Brascamp--Lieb inequality can be rewritten as follows:

\begin{proposition}[\cite{finner1992generalization,BCCTlong,BCCTshort}] \label{P:linear BL}
    Suppose $p_j\in[1,\infty]$ and 
\begin{gather}
   \tag{$A^\flat$}\label{Aflat} 2n=\sum_{j=1}^{M}\frac{n+n_j}{p_j};\\
   \tag{$B^\flat$}\label{Bflat} \dim V \leq \sum_{j=1}^{m} \frac{\dim L_jV_1+\dim V_2}{p_j}+\sum_{j=m+1}^M\frac{\dim V_1+\dim L_jV_2}{p_j},
\end{gather}
where $V=V_1\times V_2\leq\R^n\times\R^n$ for arbitrary coordinate subspaces $V_1,V_2\leq\R^n$. Then
\begin{equation} \label{E:linear BL ineq}
    \scriptM^\flat(f_1,\ldots,f_{M}) \leq \prod_{j=1}^{M}\|f_j\|_{p_j}
    \end{equation}
holds uniformly over nonnegative continuous functions $f_j$ of compact suport.
\end{proposition}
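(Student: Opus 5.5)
The plan is to reduce the statement to the classical Finner inequality for coordinate projections, and then prove that inequality by integrating out one coordinate at a time with H\"older's inequality. Since the $V_j$ are coordinate subspaces, each $\pi_j^\flat$ is the orthogonal projection of $\R^{2n}$ onto the coordinate subspace spanned by a set $S_j$ of standard basis vectors $\{e_1,\dots,e_n,e_1',\dots,e_n'\}$ (unprimed for the $x$-factor, primed for the $y$-factor). For $j\le m$, $S_j$ consists of the basis vectors of $V_j$ in the $x$-factor together with all of $e_1',\dots,e_n'$, so $\#S_j=n_j+n$. The case $j>m$ is symmetric, again with $\#S_j=n+n_j$.

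\textbf{Step 1: reduce the hypotheses to a condition on each coordinate.} For each basis vector $e$ of $\R^{2n}$, set $\sigma(e)\coloneqq\sum_{j:e\in S_j}p_j^{-1}$. I apply \eqref{Bflat} to the one-dimensional coordinate subspace $V=\Span\{e\}$, which is of the form $V_1\times V_2$ with one factor zero. If $e=e_i$ lies in the $x$-factor, the right side of \eqref{Bflat} becomes
\[
\sum_{j\le m}\frac{\dim L_j\Span\{e_i\}}{p_j}+\sum_{j>m}\frac1{p_j}=\sigma(e_i),
\]
so $\sigma(e_i)\ge 1$. The same argument gives $\sigma(e)\ge 1$ for $e$ in the $y$-factor. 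Summing $\sigma(e)$ over all $2n$ basis vectors counts each $j$ exactly $\#S_j=n+n_j$ times. So the total is $\sum_j (n+n_j)/p_j=2n$ by \eqref{Aflat}. Since $2n$ numbers, each at least $1$, sum to $2n$, we conclude
\[
\sigma(e)=1\quad\text{for every coordinate direction } e.
\]

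\textbf{Step 2: prove Finner's inequality under this condition.} The claim is
\[
\int_{\R^d}\prod_j f_j(x_{S_j})\,dx\le\prod_j\|f_j\|_{p_j}
\]
for nonnegative $f_j$, whenever $\sum_{j\ni e}p_j^{-1}=1$ for every coordinate $e$. I argue by induction on $d$. By Tonelli I first integrate in the first coordinate $x_1$. The factors not involving $x_1$ pull out of this inner integral. For the factors involving $x_1$, H\"older's inequality with exponents $p_j$ (whose reciprocals sum to $1$) gives
\[
\int\prod_{j\ni 1}f_j\,dx_1\le\prod_{j\ni1}g_j,\qquad g_j\coloneqq\Bigl(\int|f_j|^{p_j}dx_1\Bigr)^{1/p_j},
\]
with $g_j\coloneqq\sup_{x_1}f_j$ when $p_j=\infty$. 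Each $g_j$ is a function of the coordinates $S_j\setminus\{1\}$ and satisfies $\|g_j\|_{p_j}=\|f_j\|_{p_j}$. The new family on $\R^{d-1}$, consisting of the $g_j$ and the untouched $f_j$ with the same exponents, still satisfies the per-coordinate condition. This is because removing $x_1$ does not change which indices contain a given remaining coordinate. The induction hypothesis then closes the argument. The base case $d=0$ is trivial, and any $f_j$ whose set $S_j$ becomes empty is a constant equal to its own norm. Continuity and compact support are only needed to make the integrals finite and measurable.

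\textbf{Expected obstacle.} I expect no real difficulty. The only point needing care is Step 1: the combination of the one-dimensional cases of \eqref{Bflat} with the scaling identity \eqref{Aflat} is exactly what forces equality for every coordinate. Equality, rather than merely an inequality, is what is needed to apply H\"older with exponents whose reciprocals sum to $1$. The $p_j=\infty$ terms only need the sup convention noted in Step 2.
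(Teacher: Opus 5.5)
Your argument is correct and matches what the paper intends by its citation to Finner. The one-dimensional coordinate instances of $(B^\flat)$, combined with the scaling identity $(A^\flat)$, force $\sum_{j:e\in S_j}p_j^{-1}=1$ for every coordinate direction $e$ of $\R^{2n}$, and Finner's inequality with constant $1$ then follows from iterated H\"older exactly as you describe. The paper gives no argument beyond the references. Your route is the cleaner one: it uses $(B^\flat)$ only on coordinate subspaces, whereas a direct appeal to the Bennett--Carbery--Christ--Tao theorem would need the dimension condition on all subspaces of $\R^{2n}$ and would yield only a finite constant rather than $1$.
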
 

\begin{lemma}\label{L:induction on n}
If $W < \R^n$ is a nonzero proper critical subspace, and Theorem \ref{T:main} holds with $\R^n$ replaced by $W$, then \eqref{E:strong} holds for $L_j:\R^n\longrightarrow V_j$.
\end{lemma}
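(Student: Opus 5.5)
We follow the critical-subspace factorization of \cite{BCCTlong,BCCTshort,huang2024inequalities}. Since $W$ is a coordinate subspace, after permuting coordinates we write $\R^n=W\oplus W^\perp$ with $x=(x_W,x_{W^\perp})$, $y=(y_W,y_{W^\perp})$. The coordinate projections split: $L_j=L_j|_W\oplus L_j|_{W^\perp}$ with $L_j W=V_j\cap W$ and $L_jW^\perp=V_j\cap W^\perp$. The points $(x_W,y_W,t)$ span a copy of $\HH^{\dim W}$ whose symplectic form is the restriction of the full one. Because $x\cdot y=x_W\cdot y_W+x_{W^\perp}\cdot y_{W^\perp}$, we change variables by $t\mapsto t-\frac12 x_{W^\perp}\cdot y_{W^\perp}$ for each fixed $(x_{W^\perp},y_{W^\perp})$, using the form \eqref{E:pi_j}. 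This turns each $\pi_j$ into the map
\[
(x_W,y_W,x_{W^\perp},y_{W^\perp},t)\mapsto\bigl(\pi_j^W(x_W,y_W,t),\,\pi_j^{\flat,W^\perp}(x_{W^\perp},y_{W^\perp})\bigr),
\]
up to a measure-preserving shear in the target depending on the $W^\perp$ variables. Here $\pi_j^W$ is the vertical projection on $\HH^{\dim W}$ attached to $V_j\cap W$, and $\pi_j^{\flat,W^\perp}$ is the linear map of Proposition \ref{P:linear BL} on $W^\perp\times W^\perp$. The target shear is $(x_W,y_W,t)\mapsto(\cdot,\cdot,t\pm\frac12 x_{W^\perp}\cdot y_{W^\perp})$; since $f_j$ is evaluated after this shift, we absorb it into the fibrewise function. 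For fixed $(a,b)=\pi_j^{\flat,W^\perp}(x_{W^\perp},y_{W^\perp})$, set
\[
F_j(a,b)\coloneqq\|f_j(\,\cdot\,,a,b)\|_{L^{p_j}(V_j\cap W\times\cdots\times\R)},
\]
which does not depend on the shift, by translation invariance in $t$.

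Fubini then gives
\[
\scriptM(f_1,\dots,f_M)=\iint_{W^\perp\times W^\perp}\scriptM_W\bigl(f_1(\cdot,\pi_1^\flat(x',y')),\dots\bigr)\,dx'\,dy',
\]
where $\scriptM_W$ is the form of Theorem \ref{T:main} on $\HH^{\dim W}$ with subspaces $V_j\cap W$. We apply the inductive hypothesis to $\scriptM_W$, which bounds it by $C\prod_j F_j(\pi_j^{\flat,W^\perp}(x',y'))$. For this we must check that $\mathbf p$ satisfies \eqref{A}, \eqref{B}, \eqref{C} relative to $W$. Condition \eqref{A} for $W$ is exactly criticality of $W$. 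Condition \eqref{B} for coordinate $V\le W$ is inherited directly, since $V_j\cap W\cap V=V_j\cap V$. Condition \eqref{C} for $V\le W$ also holds directly, provided the complements $K_j$ are taken inside $W$, because $K_j\cap W\cap V=K_j\cap V$.

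It remains to bound $\iint\prod_jF_j\circ\pi_j^{\flat}$ by $\prod\|F_j\|_{p_j}=\prod\|f_j\|_{p_j}$ via Proposition \ref{P:linear BL} on $W^\perp\times W^\perp$, with dimension $\dim W^\perp=n-\dim W$. Condition \eqref{Aflat} follows by subtracting \eqref{A} for $W$ from \eqref{A} for $\R^n$, using $\dim V_j-\dim(V_j\cap W)=\dim(V_j\cap W^\perp)$. That gives $n-\dim W=\sum_{j\le m}\frac{\dim V_j\cap W^\perp}{p_j}+\sum_{j>m}\frac{n-\dim W}{p_j}$. Condition \eqref{C} with $V=W^\perp$ (minus $V=W$ ... in fact \eqref{C} for $V=\R^n$ minus $V=W$) lets us symmetrize this into $2(n-\dim W)=\sum_j\frac{(n-\dim W)+\dim(V_j\cap W^\perp)}{p_j}$.

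The main obstacle is \eqref{Bflat} for $U_1\times U_2\le W^\perp\times W^\perp$, because hypothesis \eqref{B} only controls a single coordinate subspace $V$ with a "+1" structure, while \eqref{Bflat} involves pairs. We would take $V=W\oplus U$ for coordinate $U\le W^\perp$ and subtract the criticality equality for $W$ from \eqref{B} for $V$. This yields $\dim U\le\sum_{j\le m}\frac{\dim V_j\cap U}{p_j}+\sum_{j>m}\frac{\dim U}{p_j}$, together with its mirror image obtained through \eqref{C} (as in the Remark). The two are then combined: adding the $U=U_1$ version of one to the $U=U_2$ version of the other. This gives $\dim U_1+\dim U_2\le\sum_{j\le m}\frac{\dim L_jU_1+\dim U_2\ \text{(or mirror)}}{p_j}+\cdots$. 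Care is needed to match which factor carries $L_j$. Since the inequalities hold separately for each coordinate $U$, using the $x$-version for $U_1$ and the $y$-version for $U_2$ gives exactly \eqref{Bflat}. Finally, density and the reduction to continuous compactly supported functions finish the argument.
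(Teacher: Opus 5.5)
Your proposal is correct and follows the paper's proof essentially step for step. You slice over $(x'',y'')\in W^\perp\times W^\perp$ and absorb the $t$-shift $\pm\frac12 x''\cdot y''$ into fibre functions whose $L^{p_j}$ norms are shift-invariant; you then apply Theorem \ref{T:main} on $W$ (where \eqref{A} for $W$ is exactly criticality), and finally Proposition \ref{P:linear BL} on $W^\perp\times W^\perp$, with \eqref{Bflat} obtained from \eqref{B} for $W\oplus U$ minus the criticality identity, together with its mirror via \eqref{C}. You are in fact slightly more careful than the paper in noting that \eqref{C} is also needed to symmetrize the subtracted identity into \eqref{Aflat}, a step the paper leaves implicit.
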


\begin{proof}
By definition, $W$ is a coordinate subspace, for any coordinate projection $L\longrightarrow V$, the restrictions
$L^\prime=L|_W:W\longrightarrow V^\prime, L^{\prime\prime}=L|_{W^\perp}:W^\perp\longrightarrow V^{\prime\prime}$
of $L$ to $W,W^\perp$, respectively, are coordinate projections to $V^\prime=W\cap V, V^{\prime\prime}=W^\perp\cap V$, respectively. We observe that $V=V^\prime\oplus V^{\prime\prime}$.

For $x\in\R^n$, write $x=(x^\prime,x^{\prime\prime})\in W\times W^\perp$. Then one has $Lx=(L^\prime x^\prime,L^{\prime\prime}x^{\prime\prime})$. For $1\leq j\leq M$, we write $x_j^\prime=L_j^\prime x^\prime,x_j^{\prime\prime}=L_j^{\prime\prime}x^{\prime\prime}$. Furthermore, a coordinate subspace of $W$ is a coordinate subspace of $\R^n$. Hence, the given data $\mathbf p$ satisfy \eqref{A}, \eqref{B}, and \eqref{C} with $R^n$ replaced by $W$. Then $\scriptM(f_1,\dots,f_{M})$ can be expressed as
\[
\begin{array}{cl}
    \int_{W^\perp\times W^\perp}\int_{W\times W\times\R} & \displaystyle\prod_{j=1}^{m}f_j(x_j^\prime,x_j^{\prime\prime},y,t+\frac{1}{2}x\cdot y) \\
    & \displaystyle\prod_{j=m+1}^Mf_j(x,y_j^\prime,y_j^{\prime\prime},t-\frac{1}{2}x\cdot y)dx^\prime dy^\prime dtdx^{\prime\prime}dy^{\prime\prime}
\end{array}
\]

Define auxiliary functions
\[
\begin{array}{rcll}
    f_j^{x^{\prime\prime},y^{\prime\prime}}(x_j^\prime,y^\prime,t) & \coloneqq & f_j(x_j^\prime,x_j^{\prime\prime},y,t+\frac{1}{2}x^{\prime\prime}\cdot y^{\prime\prime}), & 1\leq j\leq m, \\
    f_j^{x^{\prime\prime},y^{\prime\prime}}(x^\prime,y_j^\prime,t) & \coloneqq & f_j(x,y_j^\prime,y_j^{\prime\prime},t-\frac{1}{2}x^{\prime\prime}\cdot y^{\prime\prime}), & m<j\leq M.
\end{array}
\]
By assumption, one has
\[
\scriptM(f_1,\dots,f_{M})\lesssim\int_{W^\perp\times W^\perp}\displaystyle\prod_{j=1}^{M}\|f_j^{x^{\prime\prime},y^{\prime\prime}}\|_{p_j}dx^{\prime\prime}dy^{\prime\prime}
\]
Since $x^{\prime\prime},y^{\prime\prime}$ are constants in the $L^{p_j}$ norms, we have for $1\leq j\leq m$
\begin{align*}
    \|f_j^{x^{\prime\prime},y^{\prime\prime}}\|_{p_j} = & \left(\int_{V_j^\prime\times W\times\R}|f_j(x_j^\prime,x_j^{\prime\prime},y,t+\frac{1}{2}x^{\prime\prime}\cdot y^{\prime\prime})|^{p_j}dx_j^\prime dy^\prime dt\right)^\frac{1}{p_j} \\[5pt]
    =  & \left(\int_{V_j^\prime\times W\times\R}|f_j(x_j^\prime,x_j^{\prime\prime},y,t)|^{p_j}dx_j^\prime dy^\prime dt\right)^\frac{1}{p_j} \eqqcolon \widetilde f_j(x_j^{\prime\prime},y^{\prime\prime}).
\end{align*}
We observe that $\widetilde f_j$ is a function on $L_j^{\prime\prime}W^\perp\times W^\perp$. We can similarly rewrite $\|f_j^{x^{\prime\prime},y^{\prime\prime}}\|_{p_j}$ as a function $\widetilde f_j$ on $W^\perp\times L_j^{\prime\prime}W^\perp$ for $m<j\leq M$.

We apply Proposition \ref{P:linear BL} to linear coordinate projections
\[
\pi_j^\flat=
\begin{cases}
L_j^{\prime\prime}\times\Id_{W^\perp}, & 1\leq j\leq m, \\
\Id_{W^\perp}\times L_j^{\prime\prime}, & m<j\leq M,
\end{cases}
\]
on $W^\perp\times W^\perp$. Since $W$ is critical, subtracting \eqref{B} with $W$ from \eqref{A} gives \eqref{Aflat}
\[
2\dim W^\perp=\displaystyle\sum_{j=1}^{M}\frac{\dim L_j^{\prime\prime}W^\perp+\dim W^\perp}{p_j}
\]

Let $V_1,V_2$ be arbitrary coordinate subspaces of $W^\perp$. Then $V_1\oplus W\leq \R^n$ is coordinate subspace satisfying \eqref{B}:
\[
\dim V_1+\dim W+1\leq\displaystyle\sum_{j=1}^{m}\frac{\dim L_j V_1+\dim L_jW+1}{p_j}+\displaystyle\sum_{j=m+1}^M\frac{\dim V_1+\dim W+1}{p_j}.
\]
On the other hand, \eqref{B} with $W$ gives
\[
\dim W+1=\displaystyle\sum_{j=1}^{m}\frac{\dim L_jW+1}{p_j}+\displaystyle\sum_{j=m+1}^M\frac{\dim W+1}{p_j}.
\]
Taking the difference yields
\begin{equation}\label{E:B flat 1}
    \dim V_1\leq\displaystyle\sum_{j=1}^{m}\frac{\dim L_jV_1}{p_j}+\displaystyle\sum_{j=m+1}^M\frac{\dim V_1}{p_j}.
\end{equation}
Similarly, one has
\begin{equation}\label{E:B flat 2}
    \dim V_2\leq\displaystyle\sum_{j=1}^{m}\frac{\dim V_2}{p_j}+\displaystyle\sum_{j=m+1}^M\frac{\dim L_jV_2}{p_j}.
\end{equation}
Summing \eqref{E:B flat 1} and \eqref{E:B flat 2} gives \eqref{Bflat}. Then by Proposition \ref{P:linear BL}, we conclude that
\begin{align*}
    \scriptM(f_1,\dots,f_{M}) \lesssim & \int_{W^\perp\times W^\perp}\displaystyle\prod_{j=1}^{M}\|f_j^{x^{\prime\prime},y^{\prime\prime}}\|_{p_j}dx^{\prime\prime}dy^{\prime\prime} \\[5pt]
    = & \int_{W^\perp\times W^\perp}\displaystyle\prod_{j=1}^{M}\widetilde f_j\circ\pi_j^\flat(x^{\prime\prime},y^{\prime\prime})dx^{\prime\prime}dy^{\prime\prime} \\[5pt]
   \lesssim & \displaystyle\prod_{j=1}^{M}\|\widetilde f_j\|_{p_j} = \displaystyle\prod_{j=1}^{M} \|f_j\|_{p_j}.
\end{align*}
\end{proof}

\section{Auxiliary maps with nested kernels}\label{S:more notations}
The rest of the article is dedicated to proving Theorem \ref{T:base case}. In this section, we will introduce auxiliary maps with nested kernels. Since the arguments can get very technical, it is good to keep Example \ref{Ex:aniso} in mind while reading.

Recall the notations in Theorems \ref{T:main} and \ref{T:base case}. Without loss of generality, we may assume $p_j<\infty$. Since $p_j\in\Q$, we may write $p_j^{-1}=q_j/q$ for some positive integers $q_j,q$ with $\gcd(q_1,\dots,q_M,q)=1$. Let $N=(\sum_{j=1}^Mq_j)-q$. \eqref{A} implies $N=(\sum_{j=1}^mk_jq_j)/(n+1)\geq0$, where $k_j=\dim K_j$.

If $N=0$, then the $\pi_j$ are diffeomorphisms with $|\det D\pi_j|\equiv1$. Then \eqref{E:base case} boils down to H\"older's inequality. Hence, we may assume $N>0$, i.e.,
\begin{equation}\label{E:Lp improving}
    \displaystyle\sum_{j=1}^M\frac{1}{p_j}>1.
\end{equation}

We observe that the corresponding restricted weak-type inequality is
\begin{equation}\label{E:rwt}
    |\Omega|^N\gtrsim\displaystyle\prod_{j=1}^M\alpha_j^{q_j},\qquad\alpha_j=\frac{|\Omega|}{|\pi_j(\Omega)|}.
\end{equation}
and the conditions \eqref{A}, \eqref{B twiddle}, and \eqref{C} become
\begin{gather}
    \tag{$A^\prime$}\label{A'}\displaystyle\sum_{j=1}^mk_jq_j=\displaystyle\sum_{j=m+1}^Mk_jq_j=(n+1)N; \\[15pt]
    \tag{$B^\prime$}\label{B'}N\dim V<\displaystyle\sum_{i:e_i\in V}\displaystyle\sum_{\substack{1\leq j\leq m\\e_i\in K_j}}q_j=\displaystyle\sum_{i:e_i\in V}\displaystyle\sum_{\substack{m<j\leq M\\e_i\in K_j}}q_j<N(\dim V+1); \\
    \tag{$C^\prime$}\label{C'}\displaystyle\sum_{j=1}^mq_j\dim(K_j\cap V)=\displaystyle\sum_{j=m+1}^Mq_j\dim(K_j\cap V),
\end{gather}
for arbitrary coordinate subspaces $V<\R^n$, where $e_i$ is the $i$-th coordinate. Setting
\begin{equation}\label{E:Q_i}
    Q_i\coloneqq\displaystyle\sum_{\substack{1\leq j\leq m\\e_i\in K_j}}q_j=\displaystyle\sum_{\substack{m<j\leq M\\e_i\in K_j}}q_j,
\end{equation}
we observe that \eqref{B'} implies
\begin{equation}\label{B''}
    \forall i=1,\dots,n\qquad N<Q_i<2N.
\end{equation}

Let $\widetilde m$ be the number of distinct values in $\{Q_i\}_{i=1}^n$. Without loss of generality, we may assume
\begin{equation}\label{E:order of Q_i}
    Q_1=\dots=Q_{\widetilde k_1}>Q_{\widetilde k_1+1}=\dots=Q_{\widetilde k_2}>\dots>Q_{\widetilde k_{\widetilde m-1}+1}=\dots=Q_{\widetilde k_{\widetilde m}},
\end{equation}
where $1\leq\widetilde k_1<\widetilde k_2<\dots<\widetilde k_{\widetilde m}=n$. The $\widetilde k_j$ will be the coranks of our auxiliary maps.

The following two vertical projections $\pi,\pi_*:\HH^n\longrightarrow\R^{n+1}$ are key to our proof:
\begin{equation}\label{E:pi,pi*}
    \pi(x,y,t)=(y,t+\frac{1}{2}x\cdot y),\qquad\pi_*(x,y,t)=(x,t-\frac{1}{2}x\cdot y).
\end{equation}
Define $P_j:\R^n\longrightarrow\R^{n-\widetilde k_j}$ by $P_j(x)=(x_{\widetilde k_j+1},\dots,x_n)$ for $j=1,\dots,\widetilde m$, and define the auxiliary maps
\begin{equation}\label{E:pi j twiddle}
    \begin{array}{rcl}
        \widetilde\pi_j(x,y,t) & = & (P_j(x),y,t+\frac{1}{2}x\cdot y), \\
        \widetilde\pi_j^*(x,y,t) & = & (x,P_j(y),t-\frac{1}{2}x\cdot y).
    \end{array}
\end{equation}

Consider the following left-invariant vector fields of $\HH^n$:
\begin{align*}
    \forall1\leq i\leq n\qquad & X_i(x,y,t)\coloneqq\frac{\partial}{\partial x_i}-\frac{1}{2}y_i\frac{\partial}{\partial t}, \\[5pt]
    & Y_i(x,y,t)\coloneqq\frac{\partial}{\partial y_i}+\frac{1}{2}x_i\frac{\partial}{\partial t}.
\end{align*}
Write $\mathbf X=(X_1,\dots,X_n),\mathbf Y=(Y_1,\dots,Y_n)$. We observe that the only nontrivial Lie bracket of these left-invariant vector fields is
\begin{equation}
    T\coloneqq\frac{\partial}{\partial t}=[X_i,Y_i],\qquad1\leq i\leq n,
\end{equation}
and $X_1,\dots,X_n,Y_1,\dots,Y_n,T$ form a global frame for $\HH^n$.

Note that the fibers of $\widetilde\pi_j,\widetilde\pi_j^*$ have frames $\{X_1,\dots,X_{\widetilde k_j}\},\{Y_1,\dots,Y_{\widetilde k_j}\}$, respectively, while those of $\pi_j$ have frames $\{X_i:e_i\in K_j\}$ for $1\leq j\leq m$ and $\{Y_i:e_i\in K_j\}$ for $m<j\leq M$. Then $\ker d\widetilde\pi_j<\ker d\widetilde\pi_{j+1}$. Since $\widetilde k_{\widetilde m}=n$, we have $\ker D\pi_j\leq\ker d\widetilde\pi_{\widetilde m}=\ker D\pi$ for each $1\leq j\leq m$. One has similar observations about $\widetilde\pi_*,\widetilde\pi_j^*$ and $\pi_j,m<j\leq M$.

Let
\begin{equation}\label{E:qj twiddle}
    \widetilde q_j=
    \begin{cases}
        Q_{\widetilde k_j}-Q_{\widetilde k_{j+1}}, & 1\leq j<\widetilde m \\
        Q_{\widetilde k_{\widetilde m}} & j=\widetilde m.
    \end{cases}
\end{equation}
Note that $\widetilde q_j>0$ for each $1\leq j\leq\widetilde m$. Furthermore, the exponents $\{q^\prime/\widetilde q_j\}_{j=1}^{\widetilde m}\cup\{q^\prime/q_j\}_{j=m+1}^M$ with $q^\prime=Q_1+\sum_{j=m+1}^Mq_j-N$ (\eqref{B''} implies $q^\prime>0$) satisfy the assumptions in Theorem \ref{T:base case} for $\{\widetilde\pi_j\}_{j=1}^{\widetilde m}\cup\{\pi_j\}_{j=m+1}^M$. This corresponds to the restricted weak-type inequality
\begin{equation}\label{E:intermediate}
    |\Omega|^N\gtrsim\displaystyle\prod_{j=1}^{\widetilde m}\beta_j^{\widetilde q_j}\displaystyle\prod_{j=m+1}^M\alpha_j^{q_j},\qquad\beta_j=\frac{|\Omega|}{|\widetilde\pi_j(\Omega)|},
\end{equation}
which will be proven later as an intermediate step toward \eqref{E:rwt}. The main advantage of working on $\widetilde\pi_j,\widetilde\pi_j^*$ is the strictly linear ordering among their kernels $\ker P_1<\ker P_2<\cdots<\ker P_{\widetilde m}=\R^n$, which is in general not enjoyed by the $K_j=\ker L_j$ associated with $\pi_j$. This ordering turns out to be powerful, allowing a generalization of Finner's inequality and the construction of an almost diffeomorphic inflation map.

\begin{remark}
    Recalling the weights of vector fields $X_i,Y_i$ from Definition \ref{D:weight}, we observe that $w(X_i)=w(Y_i)=\frac{Q_i}{q}$ for each $i=1,\dots,n$. In Examples \ref{Ex:direct sum} and \ref{Ex:isoperimetric}, $\widetilde m=1$, while in Examples \ref{Ex:aniso} and \ref{Ex: aniso 2}, $\widetilde m=2$. Moreover, $\widetilde\pi_j,\widetilde\pi_j^*$ defined in Subsection \ref{S:proof ex aniso} are consistent with \eqref{E:pi j twiddle}. In Subsection \ref{S:proof ex aniso}, we have $\widetilde q_1=5,\widetilde q_2=10$.
\end{remark}

\section{Finner's inequality and generalization thereof}\label{S:finner}
In this section, we will present a generalized Finner's inequality. Here's a bird's-eye view of its vital role. To pass from \eqref{E:intermediate} to \eqref{E:rwt}, we need
\begin{equation}\label{E:general finner 1}
    \displaystyle\prod_{j=1}^{\widetilde m}\beta_j^{\widetilde q_j}\overset{\textcolor{red}{?}}{\gtrsim}\displaystyle\prod_{j=1}^m\alpha_j^{q_j},
\end{equation}
which is not true in general (see Example \ref{Ex:beta>alpha}). Now let's look at the fibers within $\pi^{-1}(z)$ (see \eqref{E:pi_j fibers in pi}). Consider $\omega=\pi^{-1}(z)\cap\Omega\neq\emptyset$. Since $\{\pi_j|_\omega\}_{j=1}^m,\{\widetilde\pi_j|_\omega\}_{j=1}^{\widetilde m}$ are essentially $\{L_j\}_{j=1}^m,\{P_j\}_{j=1}^{\widetilde m}$, respectively (see \eqref{E:pi_j restriction are affine L_j}), proving \eqref{E:general finner 1} boils down to establishing
\begin{equation}\label{E:general finner 1+}
    \displaystyle\prod_{j=1}^{\widetilde m}(\frac{|\omega|}{|P_j(\omega)|})^{\widetilde q_j}\overset{\textcolor{red}{?}}{\gtrsim}\displaystyle\prod_{j=1}^m\left(\frac{|\omega|}{|L_j(\omega)|}\right)^{q_j},\qquad\omega\in\R^n.
\end{equation}
We observe that \eqref{E:general finner 1+} is equivalent to
\begin{equation}\label{E:general finner 2}
    |\omega|\overset{\textcolor{red}{?}}{\lesssim}\displaystyle\prod_{j=1}^m|L_j(\omega)|^\frac{q_j}{N^\prime}\displaystyle\prod_{j=1}^{\widetilde m}\left(\frac{|\omega|}{|P_j(\omega)|}\right)^\frac{\widetilde q_j}{N^\prime},\qquad N^\prime=\displaystyle\sum_{j=1}^mq_j.
\end{equation}
We will prove momentarily, in Corollary \ref{C:Lj Pj perp finner}, that \eqref{E:general finner 2} is ``nearly" a consequence of Finner's inequality, but not quite because we have $|\omega|/|P_j(\omega)|$ in place of $|P^\perp(\omega)|$. First, let's recall the restricted weak-type Finner's inequality.

\begin{theorem}[Theorem 2.1 of \cite{finner1992generalization}]\label{T:finner weak}
    Let $n,d$ be positive integers, and let $\{e_i\}_{i=1}^n$ be the standard basis of $\R^n$. For $1\leq j\leq d$, let $l_j:\R^n\longrightarrow W_j$ be coordinate projections. If $p_j\in[1,\infty]$ and
    \begin{equation}\label{E:weak finner assumption}
        \forall1\leq i\leq n \quad\displaystyle\sum_{j:e_i\in W_j}\frac{1}{p_j}=1,
    \end{equation}
    then for any measurable set $\omega\subset\R^n$,
    \begin{equation}\label{E:finner}
        |\omega|\leq\displaystyle\prod_{j=1}^d|l_j(\omega)|^\frac{1}{p_j}.
    \end{equation}
\end{theorem}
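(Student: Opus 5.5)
We argue by induction on the dimension $n$, slicing along one coordinate and applying H\"older's inequality; this is the classical Loomis--Whitney/Finner argument. Reduce first to the case where $\omega$ is a compact set, or more generally a measurable set with $|\omega|<\infty$ and all $|l_j(\omega)|<\infty$. If some $|l_j(\omega)|=\infty$ with $p_j<\infty$, the inequality is trivial. Factors with $p_j=\infty$ contribute $|l_j(\omega)|^0=1$, with the convention $0^0=1$, so they may be discarded. Condition \eqref{E:weak finner assumption} is unaffected, because those indices contribute $0$ to each sum. For measurability of the projections, replace $l_j(\omega)$ by a measurable (Borel) hull, or approximate $\omega$ from inside by compact sets, whose projections are compact. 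Slightly more robustly, we prove the functional form
\[
\int_{\R^n}\prod_j F_j(l_j x)\,dx\le\prod_j\|F_j\|_{L^{p_j}(W_j)}
\]
for nonnegative measurable $F_j$, and apply it with $F_j=\1_{l_j(\omega)}$. Then $\1_\omega\le\prod_j\1_{l_j(\omega)}\circ l_j$ pointwise, so the integral is at least $|\omega|$.

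The base case $n=1$ is H\"older's inequality. Each $W_j$ is either $\R$ or $\{0\}$. The indices with $W_j=\R$ satisfy $\sum 1/p_j=1$, by \eqref{E:weak finner assumption}. The indices with $W_j=\{0\}$ give constant functions $F_j(0)=\|F_j\|_{L^{p_j}(\{0\})}$, since counting measure is used on the zero space.

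For the inductive step, write $x=(x_1,x')\in\R\times\R^{n-1}$ and split the indices into $J_1=\{j:e_1\in W_j\}$ and $J_0=\{j:e_1\notin W_j\}$. For $j\in J_0$ the function $F_j(l_jx)$ does not depend on $x_1$. For $j\in J_1$ we have $\sum_{j\in J_1}1/p_j=1$, so H\"older in $x_1$ gives
\[
\int_\R\prod_j F_j(l_jx)\,dx_1\le\prod_{j\in J_0}F_j(l_jx)\prod_{j\in J_1}G_j(l_j'x'),\qquad
G_j(w')=\Bigl(\int_\R F_j(x_1,w')^{p_j}\,dx_1\Bigr)^{1/p_j}.
\]
Here $l_j'$ is the coordinate projection of $\R^{n-1}$ onto $W_j'=W_j\cap e_1^\perp$, and $\|G_j\|_{L^{p_j}(W_j')}=\|F_j\|_{L^{p_j}(W_j)}$ by Fubini. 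The new family $\{l_j'\}$ of coordinate projections on $\R^{n-1}$, with the same exponents, still satisfies \eqref{E:weak finner assumption} for $i=2,\dots,n$, because $e_i\in W_j'$ if and only if $e_i\in W_j$ when $i\ge2$. Integrating in $x'$ and applying the inductive hypothesis then closes the induction.

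There is no serious obstacle here. The only points needing care are measure-theoretic: the measurability of $l_j(\omega)$ (handled by the functional formulation or by inner approximation), the bookkeeping of zero-dimensional $W_j$ via counting measure, consistent with the notation $|\cdot|_0$, and the conventions for $p_j=\infty$.
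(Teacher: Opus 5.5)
Your argument is correct and is the classical Loomis--Whitney/Finner induction on dimension. The paper does not reprove this theorem (it is quoted from Finner), but its proof of the generalization, Lemma~\ref{L:gen finner}, uses exactly this slicing-plus-H\"older induction; the only difference is the order, since the paper works with sets, applying the inductive hypothesis on each slice $P^{-1}(x_n)\cap\omega$ and then H\"older in $x_n$ (using $\int|l_j(P^{-1}(x_n)\cap\omega)|\,dx_n=|l_j(\omega)|$ when $e_n\in W_j$), whereas you apply H\"older in one coordinate first and then induct on the functional form.
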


\begin{corollary}\label{C:Lj Pj perp finner}
    Recall the linear maps $\{L_j\}_{j=1}^M,\{P_j\}_{j=1}^{\widetilde m}$ from \eqref{E:pi_j} and \eqref{E:pi j twiddle}. Let $P_j^\perp$ denote the orthogonal projections from $\R^n$ onto the first $\widetilde k_j$ coordinates. Let $p_j=q/q_j$ be as in Section \ref{S:more notations}, and let $\widetilde q_j$ be as in \eqref{E:qj twiddle}. Then for any measurable subset $\omega\subset\R^n$,
    \begin{equation}\label{E:Lj Pj perp finner}
        |\omega|\leq\left(\displaystyle\prod_{j=1}^{\widetilde m}|P_j^\perp(\omega)|^\frac{\widetilde q_j}{N^\prime}\right)\left(\displaystyle\prod_{j=1}^m|L_j(\omega)|^\frac{q_j}{N^\prime}\right),\qquad N^\prime=\sum_{j=1}^mq_j.
    \end{equation}
    
    We have a similar result if $\{(L_j,q_j)\}_{j=1}^m$ is replaced by $\{(L_j,q_j)\}_{j=m+1}^M$.
\end{corollary}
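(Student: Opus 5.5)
This is a direct application of Theorem \ref{T:finner weak} with $d=\widetilde m+m$ coordinate projections. The projections are $l_j=P_j^\perp$ (onto $\Span\{e_1,\dots,e_{\widetilde k_j}\}$) with exponent $1/p=\widetilde q_j/N^\prime$ for $1\leq j\leq\widetilde m$, and $l_j=L_j$ (onto $V_j$) with exponent $q_j/N^\prime$ for $1\leq j\leq m$. All projections are coordinate projections, so it suffices to verify two things. First, each exponent lies in $[0,1]$, so the corresponding $p\in[1,\infty]$. Second, the balance condition \eqref{E:weak finner assumption} holds: for each coordinate $e_i$, the exponents of the projections whose range contains $e_i$ sum to $1$.

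The key step is the balance computation. Fix $i$ and let $r$ be the unique index with $\widetilde k_{r-1}<i\leq\widetilde k_r$ (with $\widetilde k_0=0$). Then $e_i$ lies in the range of $P_j^\perp$ exactly when $j\geq r$. By the telescoping definition \eqref{E:qj twiddle},
\[
\sum_{j=r}^{\widetilde m}\widetilde q_j=Q_{\widetilde k_r}=Q_i ,
\]
using \eqref{E:order of Q_i}. Next, $e_i\in V_j$ exactly when $e_i\notin K_j$, so by \eqref{E:Q_i}
\[
\sum_{\substack{1\leq j\leq m\\ e_i\in V_j}}q_j=N^\prime-\sum_{\substack{1\leq j\leq m\\ e_i\in K_j}}q_j=N^\prime-Q_i .
\]
Adding these and dividing by $N^\prime$ gives exactly $1$. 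This is the condition of Theorem \ref{T:finner weak}, which then yields \eqref{E:Lj Pj perp finner}.

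For the exponent range, $q_j/N^\prime\leq1$ is clear since $N^\prime=\sum_{j\leq m}q_j$. For the other exponents, $\widetilde q_j\leq\sum_{j'}\widetilde q_{j'}=Q_1\leq N^\prime$, because $Q_1$ is a partial sum of the $q_j$ with $j\leq m$. Positivity follows from $\widetilde q_j>0$. If some $Q_i=N^\prime$, the corresponding $L_j$ terms simply do not cover $e_i$, which causes no issue.

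The case with $\{(L_j,q_j)\}_{j=m+1}^M$ is identical. Replace $N^\prime$ by $\sum_{j>m}q_j$ and use the second expression for $Q_i$ in \eqref{E:Q_i}. One subtlety is that the two normalizations $\sum_{j\leq m}q_j$ and $\sum_{j>m}q_j$ may differ; in the second case the corollary should be read with $N^\prime$ redefined accordingly. The only step needing care is the telescoping identity together with the ordering assumption \eqref{E:order of Q_i}; there is no real obstacle.
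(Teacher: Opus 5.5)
Your proposal is correct and follows the paper's argument exactly: apply Theorem \ref{T:finner weak} to the family $\{P_j^\perp\}_{j=1}^{\widetilde m}\cup\{L_j\}_{j=1}^m$, verifying the balance condition through the telescoping identity $\sum_{j:\widetilde k_j\geq i}\widetilde q_j=Q_i$ together with $\sum_{j\le m,\,e_i\in V_j}q_j=N^\prime-Q_i$, which is the paper's identity \eqref{E:char Lj Pj perp finner}. Your remark that in the $j>m$ case $N^\prime$ must be redefined as $\sum_{j=m+1}^Mq_j$ is also right, since the two sums can differ (they are $15$ and $25$ in Example \ref{Ex:aniso}); the paper leaves this point implicit in ``similar result.''
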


\begin{proof}
    Note that for $1\leq i\leq n$, we have $\sum_{j=1}^mq_j=\sum_{j:e_i\in V_j}q_j+\sum_{j:e_i\in K_j}q_j$ (recall the $V_j,K_j$ from Section \ref{S:intro}). Then from \eqref{E:Q_i}, \eqref{E:order of Q_i}, and \eqref{E:qj twiddle} we deduce the following equivalent form of \eqref{E:weak finner assumption}:
    \begin{equation}\label{E:char Lj Pj perp finner}
        \forall1\leq i\leq n\qquad\displaystyle\sum_{\substack{j:e_i\in V_j\\1\leq j\leq m}}q_j+\displaystyle\sum_{\substack{j:\widetilde k_j\geq i\\1\leq j\leq\widetilde m}}\widetilde q_j=\displaystyle\sum_{j=1}^mq_j.
    \end{equation}
    The arguments for $\{(L_j,q_j)\}_{j=m+1}^M$ are almost the same.
\end{proof}

Now we see that \eqref{E:general finner 2} looks like \eqref{E:Lj Pj perp finner} with an implicit constant and $|P_j^\perp(\omega)|$ replaced by $|\omega|/|P_j(\omega)|$. In general, we may ask the following question: Does \eqref{E:finner} remain true if some of the $|l_j(\omega)|$ are replaced by $|\omega|/|l_j^\perp(\omega)|$, where $l_j^\perp$ is the orthogonal projection onto $\ker l_j$. The main obstruction is that $|\omega|/|l_j^\perp(\omega)|$, the average size of $(l_j^\perp)^{-1}(x)\cap\omega$, can be arbitrarily small compared to $l_j(\omega)$, the projection of all such fibers (see Example \ref{Ex:fibers<<projections}). The following lemma provides a positive answer to the question when the substituted $l_j$ have nested kernels and satisfy a certain ``regularity" condition. The proof is adapted from \cite{loomis1949inequality,finner1992generalization}.

\begin{lemma}\label{L:gen finner}
    Let $n,k,d$ be positive integers with $k\leq d$, let $\varepsilon\in(0,1)$, and let $\{e_i\}_{i=1}^n$ be the standard basis of $\R^n$. For $1\leq j\leq d$, let $l_j$ be coordinate projections on $\R^n$ satisfying
    \begin{equation}\label{E:nested kernels}
        \ker l_{\sigma(1)}\leq\ker l_{\sigma(2)}\leq\cdots\leq\ker l_{\sigma(k)}
    \end{equation}
    for some permutation $\sigma$ of $\{1,2,\dots,k\}$. Suppose $p_j\in[1,\infty]$ for $1\leq j\leq d$, and
    \begin{equation}\label{E:gen finner assumption}
        \forall1\leq i\leq n \quad\displaystyle\sum_{\substack{1\leq j\leq k\\e_i\in\ker l_j}}\frac{1}{p_j}+\displaystyle\sum_{\substack{k<j\leq d\\e_i\not\in\ker l_j}}\frac{1}{p_j}=1.
    \end{equation}
    Then there exists a constant $C>1$ depending only on $n,p_j$ such that for any bounded $\omega\subset\R^n$,
    \begin{gather}
        \label{E:regular}\forall1\leq j\leq k\quad\forall x\in l_j(\omega)\qquad|l_j^{-1}(x)\cap\omega|\lesssim\varepsilon^{-1}\frac{|\omega|}{|l_j(\omega)|}, \\
        \notag\Downarrow \\
        \label{E:gen finner}|\omega|\lesssim\varepsilon^{-C}\displaystyle\prod_{j=1}^k\left(\frac{|\omega|}{|l_j(\omega)|}\right)^\frac{1}{p_j}\displaystyle\prod_{j=k+1}^d|l_j(\omega)|^\frac{1}{p_j}.
    \end{gather}
\end{lemma}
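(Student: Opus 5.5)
The plan is to reduce \eqref{E:gen finner} to an application of Theorem~\ref{T:finner weak}. The factors $|\omega|/|l_j(\omega)|$ with $1\le j\le k$ will play the role of $|l_j^\perp(\omega)|$, where $l_j^\perp$ denotes the coordinate projection onto $\ker l_j$. With this substitution, \eqref{E:gen finner assumption} is exactly hypothesis \eqref{E:weak finner assumption} for the family $\{l_j^\perp\}_{j\le k}\cup\{l_j\}_{j>k}$, since $e_i\in\ker l_j$ means $e_i$ lies in the image of $l_j^\perp$. So Finner gives $|\omega'|\le\prod_{j\le k}|l_j^\perp(\omega')|^{1/p_j}\prod_{j>k}|l_j(\omega')|^{1/p_j}$ for every measurable $\omega'$. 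It therefore suffices to produce a piece of $\omega$ on which the kernel-side projections are comparable to fiber sizes. Since $|l_j^\perp(\omega)|$ can be far larger than the average fiber size (cf.\ Example~\ref{Ex:fibers<<projections}), I would not apply Finner to $\omega$ directly. Instead I would slice $\omega$ and combine Finner on slices via H\"older, following the Loomis--Whitney/Finner induction.

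\textbf{Slicing.} I would argue by induction on $k$ (the case $k=0$ is Theorem~\ref{T:finner weak}). Let $K=\ker l_{\sigma(k)}$ be the largest kernel, and write $x=(u,v)\in K\times K^\perp$ and $\omega_v=\{u:(u,v)\in\omega\}$. Then $v$ ranges over $l_{\sigma(k)}(\omega)$, and by \eqref{E:regular} we have $|\omega_v|\lesssim\varepsilon^{-1}|\omega|/|l_{\sigma(k)}(\omega)|$. Set $G=\{v:|\omega_v|\ge|\omega|/(2|l_{\sigma(k)}(\omega)|)\}$. Then $\sum_{v\in G}|\omega_v|\ge|\omega|/2$, all $|\omega_v|$ with $v\in G$ are comparable up to $\varepsilon^{-1}$, and $|G|\gtrsim\varepsilon|l_{\sigma(k)}(\omega)|$.

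\textbf{Why slices inherit the structure.} Nesting gives $\ker l_{\sigma(i)}\le K$ for every $i<k$. Hence every $l_{\sigma(i)}$-fiber through a point of the slice $\{v\}\times\omega_v$ lies entirely inside that slice. Consequently, the fibers of $l_{\sigma(i)}$ restricted to $\omega_v$ are literally the same sets as the fibers in $\omega$, so their upper bound from \eqref{E:regular} transfers verbatim. Only the average $|\omega_v|/|l_{\sigma(i)}(\omega_v)|$ may differ from the global one. For $j>k$, the map $l_j$ restricted to the slice is again a coordinate projection, and $|l_j(\omega_v\times\{v\})|\le|l_j(\omega)|$-type bounds persist after integration. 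I would thus apply the induction hypothesis (with $k-1$ nested maps, plus $l_{\sigma(k)}^\perp$-information absorbed into the slice's ambient space $K$) to each $\omega_v$, $v\in G$. Then I would integrate in $v$ and use H\"older with exponents dictated by \eqref{E:gen finner assumption} restricted to the coordinates of $K^\perp$. The factor $|G|\sim\varepsilon^{\pm1}|l_{\sigma(k)}(\omega)|$ together with $|\omega_v|\sim\varepsilon^{\pm1}|\omega|/|l_{\sigma(k)}(\omega)|$ then converts into $(|\omega|/|l_{\sigma(k)}(\omega)|)^{1/p_{\sigma(k)}}$ at a cost of $\varepsilon^{-O(1)}$.

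\textbf{Main obstacle.} The hard step is making the slice-wise regularity hypothesis \eqref{E:regular} hold with the slice's own averages. Inside a slice, the fiber bound is $\varepsilon^{-1}|\omega|/|l_{\sigma(i)}(\omega)|$, which need not be $\lesssim$ the slice average. I would first try a further dyadic pigeonholing: for each $i<k$, restrict to those slices, and to those fibers within them, whose sizes lie within a factor $\varepsilon^{-1}$ of the global average. This discards at most a constant fraction of $|\omega|$ at each of at most $n$ stages, and it only ever removes full fibers, so nesting is preserved. If the loss per stage is polynomial in $\varepsilon$, the accumulated constant is $\varepsilon^{-C}$ with $C=C(n,p_j)$, which is what \eqref{E:gen finner} allows.
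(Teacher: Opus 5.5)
Your plan works, but it is organized differently from the paper's proof.

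\textbf{How the paper does it.} The paper inducts on the ambient dimension $n$, not on $k$. It slices by hyperplanes $\{x_n=\mathrm{const}\}$, where $e_n$ lies outside every $\ker l_j$ with $j\le k$. Each slice therefore keeps all $k$ kernel-side maps with unchanged exponents. The $x_n$-integration is then a one-variable H\"older inequality over those $j>k$ with $e_n\notin\ker l_j$, whose exponents sum to $1$ by \eqref{E:gen finner assumption} at $i=n$. The base case $n=1$ is trivial, so Finner's theorem is never invoked.

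\textbf{Adjustments your route needs.} Slicing by translates of the top kernel $K$ requires two changes.
\begin{itemize}
\item On $\omega_v$ the map $l_{\sigma(k)}$ is zero and contributes a factor $|\omega_v|^{1/p_{\sigma(k)}}$. You must rescale the remaining exponents by $(1-1/p_{\sigma(k)})^{-1}$ to land in the $(k-1)$-case. If $p_{\sigma(k)}=1$, the claim degenerates to Finner's inequality for $l_{\sigma(k)}(\omega)\subset K^\perp$. After that, only the automatic bound $|\omega_v|\lesssim\varepsilon^{-1}|\omega|/|l_{\sigma(k)}(\omega)|$ is used, so your set $G$ plays no role.
\item The $v$-integration is not H\"older when $\dim K^\perp\ge 2$. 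The hypothesis on $K^\perp$ is only coordinatewise, and $\int_{K^\perp}|l_j(\omega_v\times\{v\})|\,dv$ overcounts along $\ker l_j\cap K^\perp$. Instead, dominate $|l_j(\omega_v\times\{v\})|\le g_j(l_jv)$, where $g_j$ is the fiber-size function of $l_j(\omega)$ over $l_j(K^\perp)$, so that $\|g_j\|_1=|l_j(\omega)|$. Then apply the $L^p$ form of Finner's inequality on $K^\perp$ to the functions $g_j^{1/p_j}$.
\end{itemize}

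\textbf{What each route buys.} Your route uses Finner as a black box, in both set and functional forms. It handles $l_{\sigma(k)}=0$ naturally, a case the paper's normalization $\ker l_k\le\langle e_1,\dots,e_{n-1}\rangle$ tacitly excludes. The paper's coordinate-by-coordinate induction is self-contained and needs only H\"older.

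\textbf{Your ``main obstacle''.} It is resolved exactly as in the paper, with no pigeonholing and no $\varepsilon$-dependent loss. Put $\beta_j=|\omega|/|l_j(\omega)|$.
\begin{itemize}
\item Process the kernels in increasing order. At stage $j$, delete the points whose current $l_j$-fiber has measure $<c\beta_j$. By Chebyshev, each stage costs at most $c|\omega|$.
\item Later stages remove whole fibers of maps with larger kernels, i.e.\ unions of earlier fibers. Hence every surviving kernel-side fiber has measure in $[c\beta_j,\,C\varepsilon^{-1}\beta_j]$.
\item A slice is a union of whole such fibers. So its averages lie in the same range, and it satisfies \eqref{E:regular} with $c\varepsilon$ in place of $\varepsilon$.
\end{itemize}
The order matters. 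If you remove small fibers of a smaller-kernel map after securing lower bounds for a larger-kernel map, you can destroy those bounds.
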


\begin{remark}
    The assumptions \eqref{E:nested kernels} and \eqref{E:regular} cannot be omitted. (See Examples \ref{Ex:counter ex nest gen finner} and \ref{Ex:counter ex reg gen finner}.)
\end{remark}

\begin{proof}
    We will prove \eqref{E:gen finner} by induction on $n$. If $n=1$, then $l_j$ is either the zero map or the identity map. If $l_j=0$, then $|l_j(\omega)|_0=\#\{0\}=1$; if $l_j$ is the identity map, then $l_j(\omega)=\omega$. It is straightforward to verify that \eqref{E:gen finner assumption} implies \eqref{E:gen finner}.
    
    Now consider $n>1$. Suppose \eqref{E:gen finner} is true for $\omega\in\R^{n-1}$. Without loss of generality, we may assume
    \begin{equation}\label{E:nested kernels n-1}
        \ker l_1\leq\ker l_2\leq\cdots\leq\ker l_k\leq\langle e_1,\dots,e_{n-1}\rangle.
    \end{equation}    
    Let $P$ denote the orthogonal projection onto the last coordinate. Note that for each $x_n\in\R$, the restriction of $l_j$ to $P^{-1}(x_n)$ are (affine) coordinate projections on $\R^{n-1}$ satisfying \eqref{E:nested kernels n-1} and \eqref{E:gen finner assumption} (for $1\leq i\leq n-1$). $P^{-1}(x_n)\cap\omega$ need not satisfy \eqref{E:regular} in general, but this can be resolved by the following refinement of $\omega$.
    
    Write $\beta_j\coloneqq|\omega|/|l_j(\omega)|$ for $1\leq j\leq k$. Let $\omega_0\coloneqq\omega$, and let
    \begin{equation}\label{E:refine omega}
        \omega_j\coloneqq\{x\in\omega_{j-1}:|l_j^{-1}(l_j(x))\cap\omega_{j-1}|\geq c\beta_j\},\qquad1\leq j\leq k
    \end{equation}
    for some sufficiently small constant $c>0$. Then
    \begin{equation}
        |\omega_0\setminus\omega_1|=\int_{l_1(\omega_0\setminus\omega_1)}|l_1^{-1}(x_j)|dx_j<c\beta_1|l_1(\omega)|=c|\omega|.
    \end{equation}
    Then, $|\omega_1|\geq(1-c)|\omega|$ and $|\omega_1|/|l_j(\omega_1)|\gtrsim\beta_j$ for $1\leq j\leq k$. Repeating the argument for $\omega_j,2\leq j\leq k$, we obtain $|\omega_k|\sim|\omega|$. Furthermore, \eqref{E:nested kernels n-1} implies that for every $x\in\omega$ and $1\leq j\leq k$, either $l_j^{-1}(l_j(x))\subset\omega_k$ or $l_{j^\prime}^{-1}(l_{j^\prime}(x))\cap\omega_k=\emptyset$ holds true. Therefore, we have $|l_j^{-1}(x_j)\cap\omega_k|\geq c\beta_j$ for all $1\leq j\leq k$.
    
    Note that for every $x_n\in P(\omega)$ and $1\leq j\leq k$,
    \begin{align*}
        & \frac{|P^{-1}(x_n)\cap\omega_k|}{|l_j(P^{-1}(x_n)\cap\omega_k)|} \\[5pt]
        = & \frac{1}{|l_j(P^{-1}(x_n)\cap\omega_k)|}\int_{l_j(P^{-1}(x_n)\cap\omega_k)}|l_j^{-1}(x_j)\cap(P^{-1}(x_n)\cap\omega_k)|dx_j,
    \end{align*}
    and for every $x_j\in l_j(P^{-1}(x_n)\cap\omega_k)$,
    \begin{equation}
        |l_j^{-1}(x_j)\cap\omega_k|=|l_j^{-1}(x_j)\cap(P^{-1}(x_n)\cap\omega_k)|\leq|l_j^{-1}(x_j)\cap\omega|\lesssim\varepsilon^{-1}\beta_j,
    \end{equation}
    where the equality is due to \eqref{E:nested kernels n-1} and the last inequality is due to \eqref{E:regular}. Therefore,
    \begin{equation}
        c\beta_j\leq\frac{|P^{-1}(x_n)\cap\omega_k|}{|l_j(P^{-1}(x_n)\cap\omega_k)|}\lesssim\varepsilon^{-1}\beta_j.
    \end{equation}
    
    By the induction hypothesis, for every $x_n\in P(\omega_k)$,
    \begin{align*}
        |P^{-1}(x_n)\cap\omega_k|\lesssim & \varepsilon^{-C}\displaystyle\prod_{j=1}^k\left(\frac{|P^{-1}(x_n)\cap\omega_k|}{|l_j(P^{-1}(x_n)\cap\omega_k)|}\right)^\frac{1}{p_j}\displaystyle\prod_{j=k+1}^d|l_j(P^{-1}(x_n)\cap\omega_k)|^\frac{1}{p_j} \\[5pt]
        \lesssim & \varepsilon^{-C}\left(\displaystyle\prod_{j=1}^k\beta_j^\frac{1}{p_j}\right)\displaystyle\prod_{j=k+1}^d|l_j(P^{-1}(x_n)\cap\omega_k)|^\frac{1}{p_j}.
    \end{align*}
    If $e_n\in\ker l_j$, then $|l_j(P^{-1}(x_n)\cap\omega_k)|\leq|l_j(P^{-1}(x_n)\cap\omega)|\leq|l_j(\omega)|$. Thus,
    \begin{align}
        \label{E:step 1}& |\omega|\sim|\omega_k| = \int_{P(\omega_k)}|P^{-1}(x_n)\cap\omega_k|dx_n \\[5pt]
        \notag\lesssim & \varepsilon^{-C}\left(\int_{P(\omega)}\displaystyle\prod_{\substack{k<j\leq d\\e_n\not\in\ker l_j}}|l_j(P^{-1}(x_n)\cap\omega)|^\frac{1}{p_j}dx_n\right)\left(\displaystyle\prod_{\substack{k<j\leq d\\e_n\in\ker l_j}}|l_j(\omega)|^\frac{1}{p_j}\right)\displaystyle\prod_{j=1}^k\beta_j^\frac{1}{p_j}.
    \end{align}
    Furthermore, we deduce from \eqref{E:gen finner assumption} and \eqref{E:nested kernels n-1}
    \begin{equation}
        \displaystyle\sum_{\substack{k<j\leq d\\e_n\not\in\ker l_j}}\frac{1}{p_j}=1.
    \end{equation}
    By H\"older's inequality,
    \begin{align}
        \notag& \int_{P(\omega)}\displaystyle\prod_{\substack{k<j\leq d\\e_n\not\in\ker l_j}}|l_j(P^{-1}(x_n)\cap\omega)|^\frac{1}{p_j}dx_n \\[5pt]
        \label{E:step 2}\leq & \displaystyle\prod_{\substack{k<j\leq d\\e_n\not\in\ker l_j}}\left(\int_{P(\omega)}|l_j(P^{-1}(x_n)\cap\omega)|\right)^\frac{1}{p_n} = \displaystyle\prod_{\substack{k<j\leq d\\e_n\not\in\ker l_j}}|l_j(\omega)|^\frac{1}{p_j}.
    \end{align}
    Combining \eqref{E:step 1} and \eqref{E:step 2} yields \eqref{E:gen finner}.
\end{proof}

\begin{corollary}\label{C:specialize gen finner}
    Let $\varepsilon\in(0,1)$. Recall the linear maps $\{L_j\}_{j=1}^M,\{P_j\}_{j=1}^{\widetilde m}$ from \eqref{E:pi_j} and \eqref{E:pi j twiddle}. Let $p_j=q/q_j$ be as in Section \ref{S:more notations}, and let $\widetilde q_j$ be as in \eqref{E:qj twiddle}. Then there exists a constant $C>1$ depending only on $n,p_j$ such that for any bounded $\omega\subset\R^n$,
    \begin{gather}
        \notag\forall1\leq j\leq k\quad\forall x\in\omega\qquad|P_j^{-1}(x)\cap\omega|\lesssim\varepsilon^{-1}\frac{|\omega|}{|P_j(\omega)|}, \\\notag
        \Downarrow \\
        \label{E:specialize gen finner}|\omega|\lesssim\varepsilon^{-C}\left(\displaystyle\prod_{j=1}^m|L_j(\omega)|^\frac{q_j}{N^\prime}\right)\displaystyle\prod_{j=1}^{\widetilde m}\left(\frac{|\omega|}{|P_j(\omega)|}\right)^\frac{\widetilde q_j}{N^\prime},\qquad N^\prime=\sum_{j=1}^mq_j.
    \end{gather}
    Likewise, \eqref{E:specialize gen finner} holds true if we replace $\{(L_j,q_j)\}_{j=1}^m$ by $\{(L_j,q_j)\}_{j=m+1}^M$.
\end{corollary}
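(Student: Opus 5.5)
The plan is to obtain Corollary \ref{C:specialize gen finner} as a direct specialization of Lemma \ref{L:gen finner}. We take $d=\widetilde m+m$ coordinate projections on $\R^n$. The first $k=\widetilde m$ of them are $l_j=P_j$, $1\leq j\leq\widetilde m$, with exponents $1/p_j^{\rm new}=\widetilde q_j/N^\prime$. The remaining ones are $l_{\widetilde m+j}=L_j$, $1\leq j\leq m$, with exponents $q_j/N^\prime$. In the hypothesis of the corollary, $k$ is read as $\widetilde m$. Under these choices the conclusion \eqref{E:gen finner} is literally \eqref{E:specialize gen finner}, so the work consists of checking the hypotheses of the lemma.

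\textbf{Step 1: nested kernels and exponent range.} The kernel of $P_j$ is spanned by $e_1,\dots,e_{\widetilde k_j}$. Since $\widetilde k_1<\dots<\widetilde k_{\widetilde m}$, the kernels satisfy $\ker P_1<\ker P_2<\cdots<\ker P_{\widetilde m}$, which is \eqref{E:nested kernels} with $\sigma$ the identity. The exponents lie in $(0,1]$ because $\widetilde q_j>0$, $q_j>0$, and both $\widetilde q_j\leq N^\prime$ and $q_j\leq N^\prime$ hold. The first inequality follows from $\widetilde q_j\leq Q_1\leq\sum_{j=1}^m q_j$, since $Q_i$ is a partial sum of the $q_j$.

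\textbf{Step 2: the balance condition.} Condition \eqref{E:gen finner assumption} reads, for each $i$,
\[
\sum_{j\leq\widetilde m:\ e_i\in\ker P_j}\frac{\widetilde q_j}{N^\prime}+\sum_{j\leq m:\ e_i\notin K_j}\frac{q_j}{N^\prime}=1 .
\]
Note that $e_i\in\ker P_j$ exactly when $\widetilde k_j\geq i$, and $e_i\notin K_j$ exactly when $e_i\in V_j$. So this is precisely \eqref{E:char Lj Pj perp finner}, already verified in the proof of Corollary \ref{C:Lj Pj perp finner}. Concretely, the telescoping definition \eqref{E:qj twiddle} gives $\sum_{j:\widetilde k_j\geq i}\widetilde q_j=Q_i$. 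Also $\sum_{j:e_i\in V_j}q_j=N^\prime-Q_i$ by \eqref{E:Q_i}, and adding the two sums gives $N^\prime$.

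\textbf{Step 3: the regularity hypothesis.} The assumed bound on $|P_j^{-1}(\cdot)\cap\omega|$ is exactly \eqref{E:regular} for $l_j=P_j$, $j\leq\widetilde m$. The only care needed is to interpret the point in $P_j(\omega)$, with the fiber $P_j^{-1}(P_j(x))$. Lemma \ref{L:gen finner} then yields \eqref{E:specialize gen finner}, with $C$ depending on $n$ and the $p_j$.

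For the case $m<j\leq M$, the same verification works using the second expression for $Q_i$ in \eqref{E:Q_i}. Here $N^\prime$ is replaced by $\sum_{j>m}q_j$, and the telescoping sum $\sum_{j:\widetilde k_j\geq i}\widetilde q_j=Q_i$ is unchanged. I expect no real obstacle; the main point to get right is the bookkeeping in Step 2 that matches the lemma's balance condition to \eqref{E:char Lj Pj perp finner}.
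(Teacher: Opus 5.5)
Your proposal is correct and is essentially the paper's argument: the paper likewise notes $\ker P_1<\cdots<\ker P_{\widetilde m}$, obtains the balance condition \eqref{E:char Lj Pj perp finner} exactly as in Corollary \ref{C:Lj Pj perp finner}, and applies Lemma \ref{L:gen finner} with $k=\widetilde m$, $l_j=P_j$, and the $L_j$ as the remaining maps; your telescoping check $\sum_{j:\widetilde k_j\geq i}\widetilde q_j=Q_i$ and your reading of the hypothesis ($k=\widetilde m$, fibers $P_j^{-1}(P_j(x))$) just spell out what the paper leaves implicit. One small point that both you and the paper pass over: $P_{\widetilde m}=0$ has kernel $\R^n$, a case the lemma's inductive step (which picks a coordinate outside all kernels) does not literally handle. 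Its factor is just $|\omega|^{\widetilde q_{\widetilde m}/N^\prime}$, so you can absorb it into the left-hand side and then apply the lemma to $P_1,\dots,P_{\widetilde m-1}$ with rescaled exponents; the case $\widetilde q_{\widetilde m}=N^\prime$ is trivial.
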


\begin{proof}
    We verify the case of $\{(L_j,q_j)\}_{j=1}^m$. The argument for $\{(L_j,q_j)\}_{j=m+1}^M$ is almost identical. By construction, we have $\ker P_1<\ker P_2<\cdots<\ker P_{\widetilde m}$. By the same argument in the proof of Corollary \ref{C:Lj Pj perp finner}, we have \eqref{E:char Lj Pj perp finner}. Thus, the conclusion follows from Lemma \ref{L:gen finner}.
\end{proof}

We finish this section by approximating convex quasiextremizers of Finner's inequality by ellipsoids.

\begin{definition}\label{D:partition}
    Let $n,d,\widetilde d$ be positive integers, and let $\{l_j\}_{j=1}^d$ be coordinate projections on $\R^n$. We say that $\{U_a\}_{a=1}^{\widetilde d}$ is a \textit{maximal partition} of $\R^n$ given by $\{l_j\}_{j=1}^d$ if
    \begin{itemize}
        \item the $U_a$ are coordinate subspaces of $\R^n$ satisfying $\R^n=\oplus_{a=1}^{\widetilde d}U_a$;

        \item for every $1\leq j\leq d$,
        \begin{equation}\label{E:partition}
            \text{either $U_a\leq\ker l_j$ or $U_a\leq l_j(\R^n)$ holds true; and}
        \end{equation}

        \item the partition is maximal in the sense that if a subspace $V\leq\R^n$ satisfies \eqref{E:partition} for every $1\leq j\leq d$, then $V\leq U_a$ for some $1\leq i\leq\widetilde d$.
    \end{itemize}

    We say that \textit{an ellipsoid $\scriptC\subset\R^n$ is adapted to $\{l_j\}_{j=1}^d$} if its principal axes are along an orthonormal basis $\cup_{a=1}^{\widetilde d}\{\widetilde e_{ab}\}_{b=1}^{\dim U_a}$, where $\{\widetilde e_{ab}\}_{b=1}^{\dim U_a}$ form a basis for $U_a$ for each $a$.
\end{definition}

\begin{lemma}\label{L:unique partition}
    Let $n,d,\widetilde d$ be positive integers, and let $\{l_j\}_{j=1}^d$ be coordinate projections on $\R^n$. The maximal partition of $\R^n$ given by $\{l_j\}_{j=1}^d$ exists and is unique up to permutation.
\end{lemma}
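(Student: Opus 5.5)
Since every $l_j$ is a coordinate projection, the problem is purely combinatorial, and I will work with index sets rather than subspaces. Put $I=\{1,\dots,n\}$. For each $j$, let $A_j\subset I$ be the set of indices $i$ with $e_i\in l_j(\R^n)$. Because $l_j$ is a coordinate projection, $\ker l_j$ is spanned by $\{e_i:i\notin A_j\}$.

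To each index $i$ I attach its signature
\[
\sigma(i)\coloneqq\big(\1_{A_1}(i),\dots,\1_{A_d}(i)\big)\in\{0,1\}^d .
\]
Two indices are declared equivalent when their signatures agree. The candidate partition takes the $U_a$ to be the spans of the standard basis vectors belonging to each equivalence class, listing only the nonempty classes. These $U_a$ are coordinate subspaces, and $\R^n=\oplus_a U_a$ because the classes partition $I$. Condition \eqref{E:partition} holds: within one class, either every $i$ lies in $A_j$, so $U_a\leq l_j(\R^n)$, or none does, so $U_a\leq\ker l_j$.

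For maximality, let $V\leq\R^n$ satisfy \eqref{E:partition} for every $j$. First, $V$ is contained in the subspace it is compared against at each step:
\begin{itemize}
\item For each $j$, we have $V\leq l_j(\R^n)$ or $V\leq\ker l_j$.
\item Define the vector $s\in\{0,1\}^d$ by $s_j=1$ in the first case and $s_j=0$ in the second. If both inclusions hold, then $V=0$ and the claim is trivial.
\item Then $V$ lies in the intersection over $j$ of the chosen subspaces.
\item An intersection of coordinate subspaces is the coordinate subspace spanned by the common basis vectors, namely $\{e_i:\sigma(i)=s\}$.
\end{itemize}
This span is exactly one $U_a$, or it is $\{0\}$; in the latter case $V=0$, which lies in any $U_a$. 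So $V\leq U_a$ for some $a$.

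For uniqueness, let $\{U'_b\}$ be another maximal partition.
\begin{itemize}
\item Each $U'_b$ satisfies \eqref{E:partition}, so by maximality of $\{U_a\}$ it lies in some $U_a$.
\item Symmetrically, each $U_a$ satisfies \eqref{E:partition} and so lies in some $U'_b$.
\item Hence $U'_b\leq U_a\leq U'_{b'}$ for suitable indices.
\item Since $\R^n=\oplus U'_b$ is a direct sum and $U'_b\neq0$, the nonzero space $U'_b$ can lie in $U'_{b'}$ only if $b=b'$. Therefore $U'_b=U_a$.
\end{itemize}
This gives an injection from $\{U'_b\}$ to $\{U_a\}$ with equal subspaces. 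Dimension counting in the two direct sums makes it a bijection, so the two partitions coincide up to permutation.

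The only point needing care is the implicit convention that the $U_a$ are nonzero. Without it, a zero summand could be added freely and uniqueness would fail, so I will make this convention explicit. Beyond that, I expect no real obstacle: the key observation is simply that intersections of coordinate subspaces are coordinate subspaces, which reduces everything to the signature map on indices.
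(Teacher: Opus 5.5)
Your proof is correct and is essentially the paper's: the same signature map on indices defines the $U_a$, and uniqueness follows from mutual maximality. You also fill in the maximality check and the direct-sum step that the paper leaves as ``straightforward''; your explicit convention that the summands be nonzero is genuinely needed for uniqueness to hold.
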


\begin{proof}
    Define $f:\{1,2,\dots,n\}\longrightarrow\{0,1\}^d$ by
    \begin{equation}
        (f(j))_i=
        \begin{cases}
            1, & e_i\in\ker l_j; \\
            0, & e_i\notin\ker l_j.
        \end{cases}
    \end{equation}
    Write $f(\{1,2,\dots,n\})=\{w_1,w_2,\dots,w_{\widetilde d}\}$ as $\widetilde d$ distinct elements, and define
    \begin{equation}
        U_a\coloneqq\Span\{e_i:f(i)=w_a\},\qquad1\leq a\leq\widetilde d.
    \end{equation}
    It is straightforward to verify that $\{U_a\}_{a=1}^{\widetilde d}$ is a maximal partition of $\R^n$.

    Suppose both $\{U_a\}_{a=1}^{d_1},\{V_b\}_{b=1}^{d_2}$ are maximal partitions of $\R^n$ given by $\{l_j\}_{j=1}^d$. Then the $U_a,V_b$ satisfy \eqref{E:partition}. By maximality of $\{U_a\}_{a=1}^{d_1},\{V_b\}_{b=1}^{d_2}$, we must have $U_a=V_b$ for some $1\leq b\leq d_2$. By symmetry, every $V_b$ is equal to $U_a$ for some $1\leq a\leq d_1$.
\end{proof}

\begin{lemma}\label{L:finner quasiextremal}
    Let $\varepsilon\in(0,1)$, and let $n,d$ be positive integers. For $1\leq j\leq d$, let $l_j$ be coordinate projections satisfying \eqref{E:weak finner assumption}, and let $\{U_a\}_{a=1}^{\widetilde d}$ be the maximal partition of $\R^n$ given by $\{l_j\}_{j=1}^d$. Suppose $\omega\subset\R^n$ is bounded and convex, and satisfies
    \begin{equation}\label{E:finner quasiextremal}
        |\omega|\geq\varepsilon\displaystyle\prod_{j=1}^d|l_j(\omega)|^\frac{1}{p_j}.
    \end{equation}
    Then there exists a constant $C>0$ depending only on $n$ and an ellipsoid $\scriptC$ adapted to $\{l_j\}_{j=1}^d$ such that
    \begin{equation}
        \forall A\subset\{1,2,\dots,\widetilde d\}\qquad P_A(\omega)\subset P_A(\scriptC),\qquad|P_A(\scriptC)|\lesssim\varepsilon^{-C}|P_A(\omega)|,
    \end{equation}
    where $P_A:\R^n\longrightarrow\oplus_{a\in A}U_a$ is the orthogonal projection. In particular, when $A=\{1,2,\dots,\widetilde d\}$,
    \begin{equation}
        \omega\subset\scriptC,\qquad|\scriptC|\lesssim\varepsilon^{-C}|\omega|,
    \end{equation}
    and when $A=\{a\}$ for any $1\leq a\leq\widetilde d$, we may write $P_a\coloneqq P_{\{a\}}$, and
    \begin{equation}
        P_a(\omega)\subset P_a(\scriptC),\qquad|P_a(\scriptC)|\lesssim\varepsilon^{-C}|P_a(\omega)|.
    \end{equation}

    If $\omega$ is in addition balanced, i.e., $x\in\omega$ if and only if $-x\in\omega$, then $\scriptC$ can be further chosen to be centered at the origin.
\end{lemma}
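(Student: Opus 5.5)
We first use John's ellipsoid theorem, then show that convex Finner quasiextremizers almost split along the maximal partition. By John's theorem there is an ellipsoid $\scriptC_0$ with $\scriptC_0\subset\omega\subset n\scriptC_0$ (after translation). In the balanced case $\omega$ is symmetric, so $\scriptC_0$ can be taken centered at the origin with $\omega\subset\sqrt n\,\scriptC_0$. The difficulty is that $\scriptC_0$ need not be adapted to $\{l_j\}$. We therefore replace it by a product-type convex body built from the pieces $P_a(\omega)$.

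Let $\omega_a:=P_a(\omega)\subset U_a$. Each $\omega_a$ is convex, and so is each $P_A(\omega)$. The key intermediate claim is
\begin{equation*}
\prod_{a\in A}|\omega_a|\lesssim\varepsilon^{-C}|P_A(\omega)|\qquad\text{for every }A\subset\{1,\dots,\widetilde d\}.
\end{equation*}
The reverse bound $|P_A(\omega)|\le\prod_{a\in A}|\omega_a|$ is trivial because $P_A(\omega)\subset\prod_{a\in A}\omega_a$.

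To prove the claim, first note that $l_j(\R^n)=\oplus_{a\in S_j}U_a$ with $S_j:=\{a:U_a\le l_j(\R^n)\}$, so $|l_j(\omega)|\le\prod_{a\in S_j}|\omega_a|$. Each $U_a$ lies either in $\ker l_j$ or in the range of $l_j$, so \eqref{E:weak finner assumption} gives $\sum_{j:a\in S_j}1/p_j=1$ for every $a$. Inserting these into \eqref{E:finner quasiextremal} yields
\begin{equation*}
\varepsilon\prod_{a}|\omega_a|\le\varepsilon\prod_j|l_j(\omega)|^{1/p_j}\le|\omega|.
\end{equation*}
For a general $A$, we use the convex-body fact $|\omega|\le|P_A(\omega)|\sup_z|\omega\cap P_A^{-1}(z)|$. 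The fiber sup is at most $|P_{A^c}(\omega)|\le\prod_{a\notin A}|\omega_a|$, since every fiber projects injectively into $\oplus_{a\notin A}U_a$. Dividing gives $|P_A(\omega)|\ge\varepsilon\prod_{a\in A}|\omega_a|$.

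Next we choose the adapted ellipsoid. Apply John's theorem inside each $U_a$ to get an ellipsoid $E_a\subset U_a$ with $\omega_a\subset E_a$ and $|E_a|\lesssim|\omega_a|$. Its principal axes give an orthonormal basis of $U_a$, and these bases together form a basis of $\R^n$ of the kind required in Definition \ref{D:partition}. Let $\scriptC$ be the ellipsoid with these axes and semi-axis lengths $\sqrt{\widetilde d}$ times those of the $E_a$, centered at the combined center. Then $\prod_a E_a\subset\scriptC$, and for every $A$,
\begin{equation*}
P_A(\omega)\subset\prod_{a\in A}\omega_a\subset\prod_{a\in A}E_a\subset P_A(\scriptC),\qquad |P_A(\scriptC)|\lesssim_n\prod_{a\in A}|E_a|\lesssim\prod_{a\in A}|\omega_a|\lesssim\varepsilon^{-1}|P_A(\omega)|,
\end{equation*}
where the last step is the claim. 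If $\omega$ is balanced, each $\omega_a$ is balanced, so each $E_a$ and hence $\scriptC$ can be centered at the origin.

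The main obstacle is the key claim, specifically its reduction to the full-dimensional inequality, which rests on the fiber bound $|\omega\cap P_A^{-1}(z)|\le\prod_{a\notin A}|\omega_a|$. That bound holds because the fiber is a translate of a subset of $\oplus_{a\notin A}U_a$ whose projection lies in $P_{A^c}(\omega)$. Because of this, the product construction gives a constant of the form $\varepsilon^{-1}$ times a dimensional constant, so any $C\ge1$ suffices.
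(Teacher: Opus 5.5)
There is a genuine gap, and it sits in your key claim for the full index set $A=\{1,\dots,\widetilde d\}$. From $l_j(\omega)\subset\prod_{a\in S_j}\omega_a$ and $\sum_{j:a\in S_j}1/p_j=1$ you correctly get $\prod_j|l_j(\omega)|^{1/p_j}\le\prod_a|\omega_a|$. Your displayed chain, however, uses the opposite inequality $\prod_a|\omega_a|\le\prod_j|l_j(\omega)|^{1/p_j}$. The correct direction only bounds the Finner product from above, so it cannot be combined with the lower bound \eqref{E:finner quasiextremal} to control $\prod_a|\omega_a|$.

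This step never uses convexity, and without convexity the claim fails outright. Take the Loomis--Whitney setting on $\R^3$: $l_j$ is the projection onto $e_j^\perp$, $p_j=2$, and $U_a=\Span\{e_a\}$. A unit cube together with $K^2$ cubes of side $1/K$ strung along the diagonal has $\varepsilon\sim1$ but $\prod_a|\omega_a|\sim K^3|\omega|$.

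Even for convex sets your rate is wrong. A tube of length $L$ and radius $\delta\ll L$ along $(1,1,1)$ has $|\omega|\sim L\delta^2$ and $|l_j(\omega)|\sim L\delta$. Hence $\varepsilon\sim(\delta/L)^{1/2}$, while $\prod_a|\omega_a|\sim L^3\sim\varepsilon^{-4}|\omega|$. An adapted ellipsoid is axis-parallel here, so every adapted $\scriptC\supset\omega$ has $|\scriptC|\gtrsim\varepsilon^{-4}|\omega|$. Your closing assertion that $C=1$ suffices is therefore false.

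The rest of your argument is sound. The reduction of a general $A$ to the full index set via $|\omega|\le|P_A(\omega)|\sup_z|\omega\cap P_A^{-1}(z)|\le|P_A(\omega)|\prod_{a\notin A}|\omega_a|$ is correct. So is the adapted ellipsoid assembled from John ellipsoids of the $\omega_a$, and both are tidier than the paper's treatment of general $A$. The whole content of the lemma is therefore the almost-splitting $\prod_a|P_a(\omega)|\lesssim\varepsilon^{-C}|\omega|$ for convex quasiextremizers, equivalently that each $l_j(\omega)$ is itself nearly a product over $S_j$. This is exactly where convexity must enter.

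The paper proves this splitting by induction on $\widetilde d$:
\begin{enumerate}
\item Lemma \ref{L:convex large fibers} says that for convex $\omega$ and $U\le l(\R^n)$, $|l(\omega)|/|P(l(\omega))|\gtrsim|\omega|/|P(\omega)|$, with $P$ the projection onto $U^\perp$. Via Lemma \ref{L:convex}, this shows that $P_1^\perp(\omega)$ is again quasiextremal for the induced family.
\item The inductive hypothesis then gives the ellipsoid on $U_1^\perp$.
\item Four ingredients combine to force $|P_1(\omega)|\lesssim\varepsilon^{-C}|\omega|/|P_1^\perp(\omega)|$: Finner's inequality for $P_1^\perp(\omega)$, quasiextremality of $\omega$, Lemma \ref{L:convex large fibers} again, and the inductive hypothesis applied to $l_j(\omega)$ for those $j$ with $U_1\le l_j(\R^n)$. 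This splits off $U_1$ at the cost of a further power of $\varepsilon$.
\end{enumerate}
You need an argument of this kind in place of your one-line chain.
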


Before proving Lemma \ref{L:finner quasiextremal}, let's recall some useful properties of convex sets.

\begin{lemma}\label{L:convex large fibers}
    Let $n$ be positive integers. Let $l$ be a coordinate projection, and let $U\leq l(\R^n)$. Suppose $\omega\subset\R^n$ is bounded and convex. Then
    \begin{equation}\label{E:convex obs}
        \frac{|l(\omega)|}{|P(l(\omega))|}\gtrsim\frac{|\omega|}{|P(\omega)|},
    \end{equation}
    where $P:\R^n\longrightarrow U^\perp$ is the coordinate projection onto $U^\perp$.
\end{lemma}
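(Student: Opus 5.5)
The plan is to read both sides of \eqref{E:convex obs} as average sizes of fibers in the $U$ direction, and compare them through the \emph{largest} fiber. Convexity makes the average fiber size of a convex set comparable to its maximal fiber size. Write $K\coloneqq l(\omega)$; it is convex and bounded as the linear image of such a set. Since $U\le l(\R^n)$ and $P$ is the coordinate projection onto $U^\perp$, the map $P$ restricted to $l(\R^n)$ is the projection onto $U^\perp\cap l(\R^n)$. Hence, for $w\in P(K)$, the fiber $K_w\coloneqq K\cap P^{-1}(w)$ is a translate of a subset of $U$. Likewise $\omega_z\coloneqq\omega\cap P^{-1}(z)$ for $z\in P(\omega)$. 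By Fubini, $|K|/|P(K)|$ and $|\omega|/|P(\omega)|$ are the average $\dim U$-dimensional measures of these fibers.

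\emph{Step 1: the fibers of $\omega$ embed into fibers of $K$.} Since $l$ is a coordinate projection with $U\le l(\R^n)$, we have $l(x+u)=l(x)+u$ for every $u\in U$. Therefore $l(\omega_z)$ is a translate of $\omega_z$ lying inside a single $U$-fiber of $K$. This gives
\[
\sup_{w\in P(K)}|K_w|\ \ge\ \sup_{z\in P(\omega)}|\omega_z|\ \ge\ \frac{|\omega|}{|P(\omega)|},
\]
where the last inequality is the trivial bound of an average by a supremum. To avoid measurability issues, replace the suprema by values at fibers that are within a factor $2$ of the supremum.

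\emph{Step 2: for convex $K$, the average fiber is comparable to the maximal fiber.} This is the only genuinely geometric step; it is standard, via Brunn--Minkowski.
\begin{enumerate}
\item Pick $w_0\in P(K)$ with $|K_{w_0}|\ge\frac12\sup_w|K_w|$, and set $d=\dim U$.
\item For any $w\in P(K)$ and $t\in[0,1]$, convexity gives $K_{(1-t)w_0+tw}\supset(1-t)K_{w_0}+tK_w$. Hence $|K_{(1-t)w_0+tw}|\ge(1-t)^d|K_{w_0}|$.
\item Take $t\le\frac12$. Every point of the homothetic copy $w_0+\frac12(P(K)-w_0)$ then carries a fiber of measure at least $2^{-d}|K_{w_0}|$. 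This copy lies in $P(K)$ by convexity and has measure $2^{-\dim P(K)}|P(K)|$.
\item Fubini then yields $|K|\ge 2^{-2n}|K_{w_0}|\,|P(K)|$, that is, $|K|/|P(K)|\gtrsim\sup_w|K_w|$.
\end{enumerate}

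Combining Steps 1 and 2 gives \eqref{E:convex obs} with a constant depending only on $n$. The only points needing care are degenerate cases. If $\dim U=0$, both sides equal $1$ under the counting-measure convention. If $P(K)$ is lower-dimensional in $U^\perp\cap l(\R^n)$, measure it with the appropriate Lebesgue measure, as the paper's notation $|\cdot|_k$ allows; the homothety argument is unchanged. I expect Step 2, the Brunn--Minkowski fiber comparison, to be the main point, though it is routine.
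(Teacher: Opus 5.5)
Your proposal is correct and follows essentially the same route as the paper: take a $U$-fiber of $\omega$ of at least average size, note that $l$ maps it isometrically into a single $U$-fiber of $l(\omega)$ (because $U\leq l(\R^n)$ and $P\circ l=l\circ P$), and use convexity of $l(\omega)$ to bound every $U$-fiber by a constant times the average $|l(\omega)|/|P(l(\omega))|$. The only difference is that you prove this last convexity fact via the halved-homothety and Fubini argument, whereas the paper simply asserts it; your argument needs only the containment $K_{(1-t)w_0+tw}\supset(1-t)K_{w_0}+tK_w$, not Brunn--Minkowski.
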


\begin{proof}
    Suppose $U\leq l(\R^n)$. We first observe that every fiber of $l$ in $\omega$ is bounded above by the average size of the $l$ fibers because of convexity. To be specific,
    \begin{equation}\label{E:convex avg fiber 1}
        \forall x^\prime\in l(\omega)\qquad|l^{-1}(x^\prime)\cap\omega|\lesssim\frac{|\omega|}{|l(\omega)|}.
    \end{equation}
    Moreover, $l(\omega)$ is also convex. The same argument that leads to \eqref{E:convex avg fiber 1} yields
    \begin{equation}\label{E:convex avg fiber 2}
        \forall\widetilde x\in P(l(\omega))\qquad|\widetilde P^{-1}(\widetilde x)\cap l(\omega)|\lesssim\frac{|l(\omega)|}{|P(l(\omega))|}.
    \end{equation}
    Let $x_0\in P(\omega)$ with $|(P)^{-1}(x_0)\cap\omega|\geq|\omega|/|P(\omega)|$. (Existence of such $x_0$ is guaranteed by Fubini's theorem.) Recalling $\ker P=U\leq l(\R^n)$, we have $|P^{-1}(x_0)\cap\omega|_{\dim U}=|l(P^{-1}(x_0)\cap\omega)|_{\dim U}$. Note that $l(P^{-1}(x_0)\cap\omega)\subset\widetilde P^{-1}(l(x_0))\cap l(\omega)$ and $\ker\widetilde P=U$. Therefore, $|P^{-1}(x_0)\cap\omega|_{\dim U}\leq|\widetilde P^{-1}(l(x_0))\cap l(\omega)|_{\dim U}$. Hence, we deduce from \eqref{E:convex avg fiber 2}, $x_0\in P(\omega)$, and $P\circ l=l\circ P$
    \begin{equation}
        \frac{|l(\omega)|}{|P(l(\omega))|}\gtrsim|\widetilde P^{-1}(l(x_0))\cap l(\omega)|\geq|P^{-1}(x_0)\cap\omega|\geq\frac{|\omega|}{|P(\omega)|}.
    \end{equation}
\end{proof}

\begin{lemma}\label{L:convex}
    Let $\varepsilon\in(0,1)$, and let $n,d$ be positive integers. For $1\leq j\leq d$, let $l_j$ be coordinate projections satisfying \eqref{E:weak finner assumption}, and let $\{U_a\}_{a=1}^{\widetilde d}$ be the maximal partition of $\R^n$ given by $\{l_j\}_{j=1}^d$. Suppose $\omega\subset\R^n$ is bounded and convex, and satisfies \eqref{E:finner quasiextremal}. Let $A\subset\{1,2,\dots,\widetilde d\}$, and let $P_A$ be the coordinate projection onto $\oplus_{a\in A}U_a<\R^n$. Then $P_A(\omega)$ is convex, and there exists a constant $C$ depending only on $n$ such that
    \begin{equation}
        |P_A(\omega)|\geq C\varepsilon\displaystyle\prod_{j=1}^d|l_j\circ P_A(\omega)|^\frac{1}{p_j}.
    \end{equation}
\end{lemma}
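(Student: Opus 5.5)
The plan is to split $\omega$ along the decomposition $\R^n=U_A\oplus U_A^\perp$, where $U_A=\oplus_{a\in A}U_a$ and $U_A^\perp=\oplus_{a\notin A}U_a$. Convexity of $P_A(\omega)$ is immediate, since linear images of convex sets are convex. Write $P_A^\perp$ for the projection onto $U_A^\perp$. By the defining property \eqref{E:partition} of the maximal partition, every $U_a$ lies either in $\ker l_j$ or in $l_j(\R^n)$. Hence each coordinate projection $l_j$ commutes with $P_A$ and with $P_A^\perp$, and
\[
l_j(\R^n)=\bigl(l_j(\R^n)\cap U_A\bigr)\oplus\bigl(l_j(\R^n)\cap U_A^\perp\bigr).
\]
Consequently the restricted families $\{l_j|_{U_A}\}$ and $\{l_j|_{U_A^\perp}\}$ are coordinate projections on $U_A$ and $U_A^\perp$. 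They still satisfy \eqref{E:weak finner assumption}, because that condition is imposed coordinate by coordinate. Theorem \ref{T:finner weak} therefore applies on each factor.

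\textbf{Step 1: split $|\omega|$.} The first bound is
\[
|\omega|\le |P_A(\omega)|\cdot \sup_{x\in P_A(\omega)}|P_A^{-1}(x)\cap\omega|.
\]
Choose $x_0$ attaining (up to a factor $2$) this supremum, and set $F=P_A^{-1}(x_0)\cap\omega$, a convex slice sitting in a translate of $U_A^\perp$. Apply Theorem \ref{T:finner weak} to $F$ with the maps $l_j|_{U_A^\perp}$:
\[
|F|\le\prod_j|l_j(F)|^{1/p_j}.
\]

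\textbf{Step 2: relate the slice to the fibres of $l_j(\omega)$.} Because $l_j$ commutes with $P_A$, the set $l_j(F)$ is contained in the fibre of $l_j\circ P_A$ over $l_j(x_0)$ inside $l_j(\omega)$. Now $l_j(\omega)$ is convex, and its fibres over $l_j(P_A(\omega))=l_j\circ P_A(\omega)$ are fibres of a convex set. By the same averaging argument used in Lemma \ref{L:convex large fibers} (every fibre of a convex set is $\lesssim$ the average fibre, e.g.\ via \eqref{E:convex avg fiber 1}), we get
\[
|l_j(F)|\lesssim\frac{|l_j(\omega)|}{|l_j\circ P_A(\omega)|}.
\]
The implicit constant depends only on the dimension.

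\textbf{Step 3: combine with the quasiextremal hypothesis.} Steps 1 and 2 give
\[
|\omega|\lesssim|P_A(\omega)|\prod_j|l_j(\omega)|^{1/p_j}\prod_j|l_j\circ P_A(\omega)|^{-1/p_j}.
\]
Inserting \eqref{E:finner quasiextremal}, i.e.\ $\prod_j|l_j(\omega)|^{1/p_j}\le\varepsilon^{-1}|\omega|$, and cancelling $|\omega|$ yields
\[
|P_A(\omega)|\gtrsim\varepsilon\prod_j|l_j\circ P_A(\omega)|^{1/p_j},
\]
which is the claim.

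The main obstacle is Step 2: the bound "maximal fibre $\lesssim$ average fibre" must hold uniformly for the convex set $l_j(\omega)$ with respect to the projection $l_j\circ P_A$ restricted to $l_j(\R^n)$. This is the standard fact that for a convex body $K$ and a linear projection $Q$,
\[
\sup_y|Q^{-1}(y)\cap K|\le C(n)\,\frac{|K|}{|Q(K)|}.
\]
This follows from Brunn's concavity principle: the fibre volume to the power $1/\dim\ker Q$ is concave on $Q(K)$, so $K$ contains a cone over the maximal fibre with base comparable to $Q(K)$. I would quote or briefly justify this fact, and then check carefully that the measures involved are taken in the correct dimensions. This is where the commuting structure from the maximal partition is essential; the degenerate cases $l_j|_{U_A^\perp}=0$ (counting measure $1$) are handled by the same conventions.
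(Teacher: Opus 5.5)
Your proof is correct, but it is organized differently from the paper's. The paper only treats the removal of a single block, $A=\{2,\dots,\widetilde d\}$, and obtains general $A$ by iterating; the restricted maps $l_j\circ P_1^\perp$ still satisfy \eqref{E:weak finner assumption} and have maximal partition $\{U_a\}_{a\ge2}$. For one block, every $l_j$ either kills $U_1$, in which case $l_j(\omega)=l_j(P_1^\perp(\omega))$, or contains $U_1$. In the latter case Lemma \ref{L:convex large fibers} gives $|l_j(\omega)|\gtrsim|l_j(P_1^\perp(\omega))|\cdot|\omega|/|P_1^\perp(\omega)|$. The Finner condition at any coordinate of $U_1$ says $\sum_{j:U_1\le l_j(\R^n)}1/p_j=1$, so the ratio $|\omega|/|P_1^\perp(\omega)|$ appears to the first power in the quasiextremal inequality and cancels. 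In that setting your Step 1 slice inequality is an identity, since each $l_j$ is the identity or zero on $U_1$. This is why the paper never needs Theorem \ref{T:finner weak}, only the exponent identity. It also uses an above-average fibre of $\omega$ where you use a near-maximal one.

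Your route removes all of $U_A^\perp$ in one step. It combines the near-maximal slice, Finner's inequality on $U_A^\perp$, and the max-fibre-versus-average bound for the convex sets $l_j(\omega)$ under $P_A$, which is the same convexity input behind Lemma \ref{L:convex large fibers}. What this buys is a non-inductive argument in which the constant does not compound over blocks. It also shows that the maximal-partition hypothesis is not needed for your argument: any coordinate subspace $U_A$ works, since coordinate projections always commute and split along coordinate subspaces. What it costs is reliance on Finner's inequality as a black box, whereas the paper's block-by-block argument is self-contained given Lemma \ref{L:convex large fibers}.
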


\begin{proof}
    It suffices to prove the lemma for $A=\{2,\dots,\widetilde d\}$. Without loss of generality, we may assume $l_j\neq\I_n$. Let $\omega\subset\R^n$ be a bounded convex set satisfying \eqref{E:finner quasiextremal}. Note that $\{U_a\}_{a=2}^{\widetilde d}$ is a maximal partition of $U_1^\perp$ given by $\{l_j\circ P_A\}_{j=1}^d$. By \eqref{E:finner quasiextremal},
    \begin{equation}\label{E:convex induction}
        |\omega|\geq\varepsilon\displaystyle\prod_{j=1}^d|l_j(\omega)|^\frac{1}{p_j}=\varepsilon\left(\displaystyle\prod_{j:U_1\leq\ker l_j}|l_j(\omega)|^\frac{1}{p_j}\right)\left(\displaystyle\prod_{j:U_1\leq l_j(\R^n)}|l_j(\omega)|^\frac{1}{p_j}\right).
    \end{equation}
    Applying Lemma \ref{L:convex large fibers} with $P=P_A,U=U_1$ and $l=l_j$ for $U_1\leq l_j(\R^n)$ yields
    \begin{align*}
        |\omega| \geq & \varepsilon\left(\displaystyle\prod_{j:U_1\leq\ker l_j}|P_1^\perp(l_j(\omega))|^\frac{1}{p_j}\right)\displaystyle\prod_{j:U_1\leq l_j(\R^n)}\left(|P_1^\perp(l_j(\omega))|\frac{|l_j(\omega)|}{|P_1^\perp(l_j(\omega))|}\right)^\frac{1}{p_j} \\[5pt]
        \gtrsim & \varepsilon\left(\displaystyle\prod_{j=1}^d|l_j(P_1^\perp(\omega))|^\frac{1}{p_j}\right)\frac{|\omega|}{|P_1^\perp(\omega)|},
    \end{align*}
    where the last inequality is due to \eqref{E:weak finner assumption} and $l_j\circ P_1^\perp=P_1^\perp\circ l_j$. On the other hand, we have $|\omega|=|P_1^\perp(\omega)|(|\omega|/|P_1^\perp(\omega)|)$. Thus,
    \begin{equation}
        |P_1^\perp(\omega)|\gtrsim\varepsilon\displaystyle\prod_{j=1}^d|l_j(P_1^\perp(\omega))|^\frac{1}{p_j}.
    \end{equation}
\end{proof}

\begin{proof}[Proof of Lemma \ref{L:finner quasiextremal}]
    We prove the lemma by induction on $\widetilde d$. If $\widetilde d=1$, then $l_j=\I_n$ for all $j$. Let $\scriptC$ be the smallest ellipsoid containing $\omega$. John ellipsoid theorem implies $|\scriptC|\sim|\omega|$. Thus, we may assume $\widetilde d>1$.
    
    Let $\{U_a\}_{a=1}^{\widetilde d}$ be a maximal partition of $\R^n$ given by $\{l_j\}_{j=1}^d$. Without loss of generality, we may assume $l_j\neq\I_n$ for any $j$. Let $\omega\subset\R^n$ be a bounded convex set satisfying \eqref{E:finner quasiextremal}. Let $P_a,P_a^\perp$ denote the coordinate projections onto $U_a,U_a^\perp$, respectively. Then $\{U_a\}_{a=2}^{\widetilde d}$ is a maximal partition of $U_1^\perp$ given by $\{P_1^\perp\circ l_j\}_{j=1}^d$. Applying Lemma \ref{L:convex} with $A=\{2,\dots,\widetilde d\}$, we have
    \begin{equation}
        |P_1^\perp(\omega)|\gtrsim\varepsilon\prod_{j=1}^d|l_j(P_1^\perp(\omega))|^\frac{1}{p_j}.
    \end{equation}
    By the inductive hypothesis, there exists a constant $C>0$ depending only on $n$ and an ellipsoid $\scriptC_1^\perp\subset U_1^\perp$ adapted to $\{P_1^\perp\circ l_j\}_{j=1}^d$ such that
    \begin{equation}
        \forall A^\prime\subset\{2,\dots,\widetilde d\}\qquad P_{A^\prime}(\omega)\subset P_{A^\prime}(\scriptC_1^\perp),\qquad|P_{A^\prime}(\scriptC_1^\perp)|\lesssim\varepsilon^{-C}|P_{A^\prime}(\omega)|.
    \end{equation}
    
    Under the assumption of \eqref{E:weak finner assumption}, Theorem \ref{T:finner weak} yields
    \begin{align*}
        |\omega| = & |P_1^\perp(\omega)|\frac{|\omega|}{|P_1^\perp(\omega)|} \\[5pt]
        \leq & \left(\displaystyle\prod_{j=1}^d|l_j(P_1^\perp(\omega))|^\frac{1}{p_j}\right)\frac{|\omega|}{|P_1^\perp(\omega)|} \\[5pt]
        = & \left(\displaystyle\prod_{\substack{j:1\leq j\leq d\\U_1\leq\ker l_j}}|l_j(P_1^\perp(\omega))|^\frac{1}{p_j}\right)\left(\displaystyle\prod_{\substack{j:1\leq j\leq d\\U_1\leq l_j(\R^n)}}|l_j(P_1^\perp(\omega))|^\frac{1}{p_j}\right)\frac{|\omega|}{|P_1^\perp(\omega)|} \\[5pt]
        = & \left(\displaystyle\prod_{\substack{j:1\leq j\leq d\\U_1\leq\ker l_j}}|l_j(\omega)|^\frac{1}{p_j}\right)\left[\displaystyle\prod_{\substack{j:1\leq j\leq d\\U_1\leq l_j(\R^n)}}\left(|l_j(P_1^\perp(\omega))|\frac{|\omega|}{|P_1^\perp(\omega)|}\right)^\frac{1}{p_j}\right],
    \end{align*}
    Invoking \eqref{E:finner quasiextremal}, we obtain
    \begin{align*}
        & \displaystyle\prod_{\substack{j:1\leq j\leq d\\U_1\leq l_j(\R^n)}}\left(|l_j(P_1^\perp(\omega))|\frac{|\omega|}{|P_1^\perp(\omega)|}\right)^\frac{1}{p_j} \\[5pt]
        \gtrsim & \varepsilon\displaystyle\prod_{\substack{j:1\leq j\leq d\\U_1\leq l_j(\R^n)}}|l_j(\omega)|^\frac{1}{p_j} = \varepsilon\displaystyle\prod_{\substack{j:1\leq j\leq d\\U_1\leq l_j(\R^n)}}\left(|P_1^\perp(l_j(\omega))|\frac{|l_j(\omega)|}{|P_1^\perp(l_j(\omega))|}\right)^\frac{1}{p_j}.
    \end{align*}
    Applying Lemma \ref{L:convex large fibers} with $U=U_1,l=l_j$ with $U_1\leq l_j(\R^n)$ yields
    \begin{equation}\label{E:last step 1}
        \frac{|l_j(\omega)|}{|P_1^\perp(l_j(\omega))|}\lesssim\varepsilon^{-C}\frac{|\omega|}{|P_1^\perp(\omega)|}.
    \end{equation}
    Recalling the assumption $l_j\neq\I_n$ and maximality of $\{U_a\}$, we observe that $l_j(\R^n)$ is the direct sum of a proper subset of $\{U_a\}_{a=1}^{\widetilde d}$. Invoking Lemma \ref{L:convex} and the inductive hypothesis again yields an ellipsoid $\scriptC^\prime\subset l_j(\R^n)$ adapted to $\{l_{j^\prime}\circ l_j\}_{j^\prime=1}^d$ satisfying
    \begin{equation}
        l_j(\omega)\subset\scriptC^\prime,\qquad|\scriptC^\prime|\leq\varepsilon^{-C}|l_j(\omega)|.
    \end{equation}
    Under the assumption of $U_1\leq l_j(\R^n)$, we have
    \begin{equation}\label{E:last step 2}
        \frac{|l_j(\omega)|}{|P_1^\perp(l_j(\omega))|}\gtrsim\varepsilon^C\frac{|\scriptC^\prime|}{|P_1^\perp(\scriptC^\prime)|}\sim\varepsilon^C|P_1(\scriptC^\prime)|\gtrsim\varepsilon^C|P_1(\omega)|.
    \end{equation}
    Let $\scriptC=\scriptC_1\times\scriptC_1^\perp$, where $\scriptC_1$ is the smallest ellipsoid containing $P_1(\omega)$. Then $|\scriptC_1|\sim|P_1(\omega)|$, and $\omega\subset\scriptC$, and invoking John ellipsoid theorem, \eqref{E:last step 1}, and \eqref{E:last step 2} yields
    \begin{equation}
        |\scriptC|\sim|\scriptC_1^\perp||\scriptC_1|\lesssim\varepsilon^{-C}|P_1^\perp(\omega)|\frac{|\omega|}{|P_1^\perp(\omega)|}=\varepsilon^{-C}|\omega|.
    \end{equation}
    Let $\{1\}\subsetneqq A\subsetneqq\{1,2,\dots,\widetilde d\}$. Invoking Lemma \ref{L:convex large fibers} with $U=P_1,l=P_A$ yields
    \begin{equation}
        \frac{|\omega|}{|P_1^\perp(\omega)|}\lesssim\frac{|P_A(\omega)|}{|P_{A^\prime}(\omega)|}.
    \end{equation}
    Then
    \begin{equation}
        |P_A(\scriptC)|\sim|\scriptC_1||P_{A^\prime}(\scriptC_1^\perp)|\lesssim\varepsilon^{-C}\frac{|\omega|}{|P_1^\perp(\omega)|}|P_{A^\prime}(\omega)|\lesssim\varepsilon^{-C}|P_A(\omega)|.
    \end{equation}
    Applying John ellipsoid theorem again, we may replace $\scriptC$ by the smallest ellipsoid containing itself adapted to $\{l_j\}_{j=1}^d$. If $\omega$ is in addition balanced, i.e., $x\in\omega$ if and only if $-x\in\omega$, then $\scriptC$ can be further chosen to be centered at the origin.
\end{proof}

\begin{corollary}\label{C:Lj Pj perp finner quasiextremal}
    Assume the conditions in Corollary \ref{C:Lj Pj perp finner}. Let $\{U_a\}_{a=1}^{\widetilde d}$ be the maximal partition of $\R^n$ given by $\{L_j\}_{j=1}^m$. Let $\varepsilon\in(0,1)$. Suppose
    \[
    |\omega|\geq\varepsilon\left(\displaystyle\prod_{j=1}^{\widetilde m}|P_j^\perp(\omega)|^\frac{\widetilde q_j}{N^\prime}\right)\left(\displaystyle\prod_{j=1}^m|L_j(\omega)|^\frac{q_j}{N^\prime}\right),\qquad N^\prime=\sum_{j=1}^mq_j
    \]
    for some bounded and convex subset $\omega\subset\R^n$. Then there exists a constant $C>0$ depending only on $n$ and an ellipsoid $\scriptC$ adapted to $\{L_j\}_{j=1}^m$ such that
    \begin{equation}
        \forall A\subset\{1,2,\dots,\widetilde d\}\qquad P_A(\omega)\subset P_A(\scriptC),\qquad|P_A(\scriptC)|\lesssim\varepsilon^{-C}|P_A(\omega)|,
    \end{equation}
    where $P_A:\R^n\longrightarrow\oplus_{a\in A}U_a$ is the orthogonal projection. In particular, when $A=\{1,2,\dots,\widetilde d\}$,
    \begin{equation}
        \omega\subset\scriptC,\qquad|\scriptC|\lesssim\varepsilon^{-C}|\omega|,
    \end{equation}
    and when $A=\{a\}$ for any $1\leq a\leq\widetilde d$, we may write $P_a\coloneqq P_{\{a\}}$, and
    \begin{equation}
        P_a(\omega)\subset P_a(\scriptC),\qquad|P_a(\scriptC)|\lesssim\varepsilon^{-C}|P_a(\omega)|.
    \end{equation}

    If $\omega$ is in addition balanced, i.e., $x\in\omega$ if and only if $-x\in\omega$, then $\scriptC$ can be further chosen to be centered at the origin.

    We obtain similar conclusions if $\{(L_j,q_j)\}_{j=1}^m$ are replaced by $\{(L_j,q_j)\}_{j=m+1}^M$.
\end{corollary}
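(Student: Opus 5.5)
The plan is to treat Corollary \ref{C:Lj Pj perp finner quasiextremal} as a direct specialization of Lemma \ref{L:finner quasiextremal}. We apply that lemma to the enlarged family of coordinate projections
\[
\{l_j\}_{j=1}^{d}\coloneqq\{L_j\}_{j=1}^m\cup\{P_j^\perp\}_{j=1}^{\widetilde m},\qquad d=m+\widetilde m,
\]
with exponents $1/p_j$ given by $q_j/N'$ for the $L_j$ and $\widetilde q_j/N'$ for the $P_j^\perp$.

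The first step is to check the hypothesis \eqref{E:weak finner assumption} for this family. This is exactly \eqref{E:char Lj Pj perp finner}, already established in the proof of Corollary \ref{C:Lj Pj perp finner}. Indeed, $e_i$ lies in the range of $P_j^\perp$ if and only if $i\le\widetilde k_j$. Dividing \eqref{E:char Lj Pj perp finner} by $N'$ then gives $\sum_{j:e_i\in W_j}1/p_j=1$ for every $i$. Each exponent lies in $[0,1]$, so $p_j\in[1,\infty]$; any $j$ with $\widetilde q_j=0$ or $q_j=0$ may simply be discarded. The hypothesis on $\omega$ in the corollary is then literally \eqref{E:finner quasiextremal} for this family.

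Lemma \ref{L:finner quasiextremal} therefore produces an ellipsoid $\scriptC$ adapted to the maximal partition $\{U'_a\}$ of $\R^n$ given by the enlarged family $\{L_j\}\cup\{P_j^\perp\}$. It also gives the containment and measure bounds for every union of blocks of that partition.

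The one point needing care is that the corollary is phrased with the maximal partition $\{U_a\}$ given by $\{L_j\}_{j=1}^m$ alone. We handle this by comparing the two partitions through Lemma \ref{L:unique partition}. Adding projections can only refine the partition: each $U'_b$ satisfies \eqref{E:partition} for the $L_j$, so by maximality each $U'_b$ lies in some $U_a$. Hence every $U_a$ is a direct sum of blocks $U'_b$.

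It follows that an ellipsoid whose axes follow an orthonormal basis subordinate to $\{U'_b\}$ also has axes subordinate to $\{U_a\}$, so it is adapted to $\{L_j\}_{j=1}^m$. Moreover, for $A\subset\{1,\dots,\widetilde d\}$ the projection $P_A$ equals $P_{A'}$ for the set $A'$ of blocks $U'_b$ contained in $\bigoplus_{a\in A}U_a$. The conclusions $P_A(\omega)\subset P_A(\scriptC)$ and $|P_A(\scriptC)|\lesssim\varepsilon^{-C}|P_A(\omega)|$ then follow directly from the lemma. The balanced case, with $\scriptC$ centered at the origin, also transfers verbatim from the lemma.

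The case $\{(L_j,q_j)\}_{j=m+1}^M$ is identical. It uses the second half of Corollary \ref{C:Lj Pj perp finner}, namely \eqref{E:char Lj Pj perp finner} with the sums over $m<j\le M$, which holds by the same $Q_i$ identity \eqref{E:Q_i}.

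I expect no serious obstacle. The only things to watch are the refinement argument above and the harmless removal of projections with zero exponent, which is needed so that all $p_j$ are finite and the lemma applies as stated.
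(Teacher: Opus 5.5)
Your proposal is correct and is the paper's argument: apply Lemma \ref{L:finner quasiextremal} to the family $\{L_j\}_{j=1}^m\cup\{P_j^\perp\}_{j=1}^{\widetilde m}$, with the hypothesis supplied by \eqref{E:char Lj Pj perp finner}. The only difference is in handling the partition. The paper observes that the two maximal partitions actually coincide: coordinates with the same $\{L_j\}$-pattern have equal $Q_i$, so they lie on the same side of every $P_j^\perp$. Your refinement argument does not need this coincidence. Also, discarding zero exponents is unnecessary here, since $q_j,\widetilde q_j>0$ and the lemma allows $p_j=\infty$ anyway. Had an $L_j$ actually been discarded, your refinement step would no longer guarantee adaptation to it.
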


\begin{proof}
    We verify the case of $\{(L_j,q_j)\}_{j=1}^m$. The argument for $\{(L_j,q_j)\}_{j=m+1}^M$ is almost identical. By the same argument in the proof of Corollary \ref{C:Lj Pj perp finner}, we have \eqref{E:char Lj Pj perp finner}. Furthermore, we observe that the maximal partition given by $\{L_j\}_{j=1}^m\cup\{P_j^\perp\}_{j=1}^{\widetilde m}$ is the same as that given by $\{L_j\}_{j=1}^m$. Thus, the conclusion follows from Lemma \ref{L:finner quasiextremal}.
\end{proof}

\section{Refining \texorpdfstring{$\Omega$}{Omega}}\label{S:refinement}
In the following two sections, we aim to construct an inflation map by flowing along the fibers of $\widetilde\pi_j,\widetilde\pi_j^*$. Given a smooth manifold, a flow map along some vector field $V$ in a small neighborhood is defined uniquely via the ODE
\[
\frac{d}{d\tau}\theta(t,z)=V_{\theta(t,z)},\qquad\theta(0,z)=z.
\]
For each $z$, $\theta(t,z)$ is a curve invariant under $V$, i.e., it's always tangent to $V$. The exponential flow along $V$ is defined as $e^V(z)=\theta(1,z)$. Following the notations in Section \ref{S:more notations}, we consider the exponential flows along vector fields spanned by $\mathbf X,\mathbf Y$, respectively. To be specific, for $s\in\R^n$,
\begin{equation}\label{E:exp flows}
    \begin{array}{c}
        e^{s\cdot\mathbf X}(x,y,t)\coloneqq(x+s,y,t-\frac{1}{2}s\cdot y)\in\pi^{-1}(\pi(x,y,t)), \\
        e^{s\cdot\mathbf Y}(x,y,t)\coloneqq(x,y+s,t+\frac{1}{2}x\cdot s)\in\pi_*^{-1}(\pi_*(x,y,t)).
    \end{array}
\end{equation}
For $z\in\HH^n,l\in\Z_{>0}$, we further define $\Phi_l(z):\R^{ln}\longrightarrow\HH^n$ by
\begin{equation}\label{E:compound exp flows}
    \begin{array}{rcl}
        \Phi_1(s)(z) & \coloneqq & e^{s\cdot\mathbf X}(z), \\
        \Phi_{2l}(s,u)(z) & \coloneqq & e^{u\cdot\mathbf Y}\Phi_{2l-1}(s)(z), \\
        \Phi_{2l+1}(s,u)(z) & \coloneqq & e^{u\cdot\mathbf X}\Phi_{2l}(s)(z).
    \end{array}
\end{equation}
One can similarly define $\Phi_l^*$ by swapping $\mathbf X,\mathbf Y$. When no ambiguity occurs, we may drop $z$. Note that
\begin{equation}\label{E:pi_j fibers in pi}
    \begin{array}{ll}
        \forall1\leq j\leq m&\qquad\pi_j^{-1}(\pi_j(z))\subset\pi^{-1}(\pi(z)); \\
        \forall m<j\leq M&\qquad\pi_j^{-1}(\pi_j(z))\subset\pi_*^{-1}(\pi_*(z)); \\
        \forall1\leq j\leq\widetilde m&\qquad\widetilde\pi_j^{-1}(\widetilde\pi_j(z))\subset\pi^{-1}(\pi(z)), \\
        &\qquad(\widetilde\pi_j^*)^{-1}(\widetilde\pi_j^*(z))\subset\pi_*^{-1}(\pi_*(z)).
    \end{array}
\end{equation}
Furthermore, when restricted to a fiber of $\pi$, $\{\pi_j\}_{j=1}^m,\{\widetilde\pi_j\}_{j=1}^{\widetilde m}$ are affine coordinate projections. To be specific,
\begin{align}\label{E:pi_j restriction are affine L_j}
    \pi_j|_{\pi^{-1}(y,t)}(s,y,t-\frac{1}{2}s\cdot y) = & (L_js,y,t), & 1\leq j\leq m, \\
    \widetilde\pi_j|_{\pi^{-1}(y,t)}(s,y,t-\frac{1}{2}s\cdot y) = & (P_js,y,t), & 1\leq j\leq\widetilde m.\notag
\end{align}
One has similar conclusions for $\{\pi_j\}_{j=m+1}^M,\{\widetilde\pi_j^*\}_{j=1}^{\widetilde m}$ restricted to fibers of $\pi_*$. Thus, it is convenient to parametrize the fibers of $\pi_j,\widetilde\pi_j,\widetilde\pi_j^*$. For $\Omega\subset\HH^n,z\in\HH^n$, define
\begin{equation}\label{E:T}
    \begin{array}{rcl}
        T^\Omega(z) & \coloneqq & \{s\in\R^n:e^{s\cdot\mathbf X}(z)\in\Omega\}; \\
        T^\Omega_*(z) & \coloneqq & \{s\in\R^n:e^{s\cdot\mathbf Y}(z)\in\Omega\}; \\
        T^\Omega_j(z) & \coloneqq &
        \begin{cases}
            \{s_j\in K_j:e^{s_j\cdot\mathbf X}(z)\in\Omega\}, & 1\leq j\leq m,\\
            \{s_j\in K_j:e^{s_j\cdot\mathbf Y}(z)\in\Omega\}, & m<j\leq M;
        \end{cases} \\
        \widetilde T^\Omega_j(z) & \coloneqq & \{s\in\R^{\widetilde k_j}:e^{(s,0)\cdot\mathbf X}(z)\in\Omega\}; \\
        (\widetilde T^*_j)^\Omega(z) & \coloneqq & \{s\in\R^{\widetilde k_j}:e^{(s,0)\cdot\mathbf Y}(z)\in\Omega\}.
    \end{array}
\end{equation}
When no ambiguity occurs, we may drop $\Omega$ and $z$. Then $\pi_j|_{\pi^{-1}(\pi(z))\cap\Omega},1\leq j\leq m$ can be identified with $L_j|_{T^\Omega(z)}$, and $\widetilde\pi_j|_{\pi^{-1}(\pi(z))\cap\Omega}$ with $P_j|_{\widetilde T^\Omega(z)}$. One has similar observations about $\pi_j,m<j\leq M$ and $\widetilde\pi_j^*$.

\begin{definition}\label{D:refinement}
Let $\varepsilon>0$. $\widetilde\Omega$ is called an \textit{$\varepsilon$-refinement} of $\Omega\subset\R^d$ if $\widetilde\Omega\subset\Omega$ and $|\widetilde\Omega|\gtrsim\varepsilon|\Omega|$. If $\varepsilon=1$, we say $\widetilde\Omega$ is a \textit{refinement} of $\Omega$.
\end{definition}

The following lemma is adapted from Lemma 1 of \cite{christ1998convolution} and Lemma 3.1 of \cite{christ2011quasiextremals}. Unlike \cite{christ1998convolution,TaoWright,christ2011quasiextremals}, where $|\pi_j^{-1}(\pi_j(z))\cap\Omega|\gtrsim\alpha_j$ for all $z\in\Omega$ after refinement, we refine the larger fibers $\pi^{-1}(\pi(z))\cap\Omega$ so that within each larger fiber, the average size of the $\pi_j$ fibers ($|T^\Omega(z)|/|L_j(T^\Omega(z))|$) are bounded below by $\alpha_j$ for each $j\leq m$, while a specific $\pi_j$ fiber can be small (though not many such $\pi_j$ fibers exist). Similarly, we can bound the average size of the $\pi_j$ fibers for $j>m$ within each $\pi_*$ fiber. However, for each intersection of $\Omega$ and a $\widetilde\pi_j$ or $\widetilde\pi_j^*$ fiber, the refinement guarantees a lower bound by $\beta_j$ or $\beta_j^*$, respectively, i.e., $|\widetilde T^\Omega_j(z)|\gtrsim\beta_j,|(\widetilde T^*_j)^\Omega(z)|\gtrsim\beta_j^*$.

\begin{lemma}\label{L:refinement}
    Let $\Omega\in\HH^n$ be bounded and measurable, and let
    \begin{gather}
        \alpha_j=\frac{|\Omega|}{|\pi_j(\Omega)|},\qquad j=1,\dots,M; \\
        \beta_j=\frac{|\Omega|}{|\widetilde\pi_j(\Omega)|},\quad\beta_j^*=\frac{|\Omega|}{|\widetilde\pi_j^*(\Omega)|},\qquad j=1,\dots,\widetilde m.
    \end{gather}
    Let $A\in\Z_{>0}. $Then there exist $c>0$ depending on $n,M,A$, a chain of refinements $\Omega_1\subset\Omega_2\subset\cdots\subset\Omega_A\subset\Omega$ (the implicit constants may depend on $A$), $z_0\in\HH^n$, and sets $S_l\in\R^{ln}$ for $l=1,2,\dots,A$, such that
    \begin{gather}
        \label{E:refinement 0}S_1=T^{\Omega_1}(z_0); \\
        \label{E:refinement 1}S_{l+1}=
        \begin{cases}
            \{(s^\prime,s)\in S_l\times\R^n:s\in T^{\Omega_{l+1}}_*(\Phi_l(s^\prime)(z_0))\}, & \text{$l$ is odd}, \\
            \{(s^\prime,s)\in S_l\times\R^n:s\in T^{\Omega_{l+1}}(\Phi_l(s^\prime)(z_0))\}, & \text{$l$ is even};
        \end{cases} \\
        \label{E:refinement 2}\frac{|S_1|}{|L_j(S_1)|}\geq c\alpha_j,\quad\frac{|\scriptG_l(s)|}{|L_j(\scriptG_l(s))|}\geq c\alpha_j,\qquad j=1,\dots,m, s\in S_{2l}; \\
        \label{E:refinement 3}\frac{|\scriptF_l(s)|}{|L_j(\scriptF_l(s))|}\geq c\alpha_j,\quad j=m+1,\dots,M, s\in S_{2l-1}; \\
        \label{E:refinement 4}\forall z\in\Omega_l\qquad
        \begin{cases}
            |\widetilde T^{\Omega_l}_j(z)|\geq c\beta_j, & \text{$l$ is odd}, \\[5pt]
            |(\widetilde T^*_j)^{\Omega_l}(z)|\geq c\beta_j^*, & \text{$l$ is even}. \\
        \end{cases}
    \end{gather}
    Here, for $l\in\Z_{>0},s\in S_{2l-1}$,
    \[
    \scriptF_l(s)\coloneqq\{u\in\R^n:(s,u)\in S_{2l}\};
    \]
    for $s\in S_{2l}$,
    \[
    \scriptG_l(s)\coloneqq\{v\in\R^n:(s,v)\in S_{2l+1}\}.
    \]

    We call the $(z_0,S_1,\scriptF_l,\scriptG_l)$ a \textit{refined flow scheme} of $(\Omega,\pi,A)$. Similarly, one can obtain a refined flow scheme of $(\Omega,\pi_*,A)$ by substituting $\Phi_l^*$ for $\Phi_l$.
\end{lemma}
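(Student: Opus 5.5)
We follow the iterated refinement scheme of Lemma 1 of \cite{christ1998convolution} and Lemma 3.1 of \cite{christ2011quasiextremals}, with one change: at each stage we refine entire fibers of the coarse maps $\pi,\pi_*$ so that the \emph{averaged} $L_j$-fiber sizes inside each surviving fiber are controlled. The point sets of the refinements are built first, by a finite sequence of removals, and the flow sets $S_l$ are read off afterwards.

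\textbf{Basic removal operations.} Each operation deletes at most a small constant fraction of the current set $E$.

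\begin{enumerate}
\item[(i)] \emph{Coarse $\widetilde\pi_j$ refinement.} Delete the points $z$ of $E$ with $|\widetilde T^E_j(z)|<c\beta_j$. As in \eqref{E:refine omega}, the deleted set has measure at most
\[
c\beta_j|\widetilde\pi_j(\Omega)|=c|\Omega|.
\]
The same holds with $\beta_j^*$ and $\widetilde\pi_j^*$.

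\item[(ii)] \emph{Averaged $\pi_j$ refinement inside $\pi$-fibers.} Delete those whole fibers $\pi^{-1}(w)\cap E$ for which $|T^E(z)|/|L_j(T^E(z))|<c\alpha_j$. Integrating over $w$, and using that the sets $L_j(T^E(z))$ for different $w$ parametrize disjoint pieces of $\pi_j(\Omega)$ (by \eqref{E:pi_j fibers in pi} and \eqref{E:pi_j restriction are affine L_j}), the deleted mass is at most
\[
c\alpha_j\int|L_j(T)|\,dw\le c\alpha_j|\pi_j(\Omega)|=c|\Omega|.
\]
The analogue with $\pi_*$ handles $j>m$.
\end{enumerate}

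\textbf{Compatibility of the two operations.} Operation (i) never destroys what (ii) established. The $\widetilde\pi_j$-fibers lie inside $\pi$-fibers, so deleting a whole $\pi$-fiber leaves every remaining $\widetilde\pi_j$-fiber intact. The converse direction is the delicate one: removing points by (i) may lower the average $L_j$-fiber sizes inside a $\pi$-fiber. We handle this by iterating (i) and (ii) a bounded number of times, with geometrically decreasing $c$. The total loss is summable, so at most half of $|\Omega|$ is removed, and we reach a set stable under all the removals. The nested-kernel structure $\ker P_1<\cdots<\ker P_{\widetilde m}\leq\ker$ of $\pi$ is what makes each $\widetilde\pi_j$-fiber either entirely kept or entirely removed, as in the proof of Lemma \ref{L:gen finner}. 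This makes the process terminate after finitely many passes, depending on $n,M$.

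\textbf{Building the chain.} We construct $\Omega_A\supset\cdots\supset\Omega_1$ downward. Set $\Omega_A$ to be the stable set above. Having $\Omega_{l+1}$, define $\Omega_l$ as the set of $z\in\Omega_{l+1}$ whose next-level fiber ($\pi_*$ or $\pi$, according to parity) inside $\Omega_{l+1}$ has been refined. This means it satisfies \eqref{E:refinement 2}, \eqref{E:refinement 3} relative to $\Omega_{l+1}$, and we then reapply (i) so that \eqref{E:refinement 4} holds. Since each step loses a constant fraction, with constants depending on $A$, every $\Omega_l$ is a refinement of $\Omega$.

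\textbf{Choosing $z_0$ and defining the flow sets.} Pick $z_0$ so that $\pi^{-1}(\pi(z_0))\cap\Omega_1$ has been refined, and define $S_1$ by \eqref{E:refinement 0} and the sets $S_{l+1}$ by \eqref{E:refinement 1}. Then $\Phi_l(s')(z_0)\in\Omega_l$ for every $s'\in S_l$. Each new fiber is by construction a refined fiber of $\Omega_{l+1}$, which gives the bounds on $\scriptF_l$ and $\scriptG_l$.

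\textbf{Main obstacle.} The hardest step is the compatibility check described above: showing that the averaged ratio bounds survive the pointwise deletions of (i). We expect to handle it exactly via the nested-kernels observation from Lemma \ref{L:gen finner}, with losses tracked by the geometric series.
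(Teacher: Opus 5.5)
Your argument is correct. It has the same skeleton as the paper's proof: coarea over whole $\pi$-fibers for the averaged $L_j$ bounds, deletion of short $\widetilde\pi_j$-fibers for \eqref{E:refinement 4}, and alternation between $\pi$ and $\pi_*$ to build the chain. You differ in how the two kinds of deletion are made compatible.

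The paper runs the fiberwise step first and adds to it the averaged bounds $|T^{\Omega}(z)|/|P_j(T^{\Omega}(z))|\geq c\beta_j$. The later pointwise deletions use a smaller threshold, so they remove only a small fraction of each surviving $\pi$-fiber, and the averaged $L_j$ bounds persist with worse constants.

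You instead use the fact that deleting whole $\pi$-fibers, or whole $\widetilde\pi_j$-fibers, cannot damage pointwise bounds at finer levels $i<j$. Your first compatibility sentence states the direction backwards, but the reason you give is the right one. Taken to its conclusion, this needs neither averaged $P_j$ bounds nor a geometric decrease of $c$. One pass of (i) for $j=1,2,\dots,\widetilde m$ (finest level first, as in Lemma \ref{L:gen finner}), followed by (ii), is already stable. Each deletion is charged against $c|\Omega|$ via $|\pi_j(E)|\leq|\pi_j(\Omega)|$ and $|\widetilde\pi_j(E)|\leq|\widetilde\pi_j(\Omega)|$.

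The finest-first order is also the more robust one. In the paper's coarse-to-fine order, a deletion at level $j$ can shrink an intermediate $\widetilde\pi_l$-fiber with $j<l<\widetilde m$. The fiberwise step gives no averaged control on such fibers, so that step needs extra care when $\widetilde m\geq 3$.

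Finally, your chain construction makes the compatibility question moot. You form $\Omega_l$ by keeping those $z\in\Omega_{l+1}$ whose $\pi_*$- or $\pi$-fiber (according to the parity of $l+1$) inside $\Omega_{l+1}$ is good, and only then prune for \eqref{E:refinement 4}. So \eqref{E:refinement 2}--\eqref{E:refinement 3} refer only to fibers of sets that are never modified afterwards. Even $\Omega_A$ therefore needs only operation (i), and your "stable set" is more than required. The paper instead refines each $\Omega_l$ with respect to its own fibers, which is why it must check that the averaged bounds survive the subsequent pointwise pruning.
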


\begin{proof}
    We present the proof for odd $A$. The case of even $A$ is handled similarly. Let $c>0$ be a small constant whose value will be determined later. Let
    \begin{align*}
        \Omega_j^\prime\coloneqq\left\{z\in\Omega:\frac{|T^\Omega(z)|}{|L_j(T^\Omega(z))|}\geq c\alpha_j\right\},\qquad1\leq j\leq m; \\[5pt]
        \Omega_{j+m}^\prime\coloneqq\left\{z\in\Omega:\frac{|T^\Omega(z)|}{|P_j(T^\Omega(z))|}\geq c\beta_j\right\},\qquad1\leq j\leq\widetilde m.
    \end{align*}
    Recall that the $P_j$ are defined in \eqref{E:pi j twiddle}. By the coarea formula,
    \[
    \begin{array}{rcl}
        |\Omega\setminus\Omega_j^\prime| & = & \int_{\pi(\Omega\setminus\Omega_j^\prime)}|T^\Omega(0,y,t)|dydt \\
        & < & c\alpha_j\int_{\pi(\Omega)}|L_j(T^\Omega(0,y,t))|dydt \\
        & = & c\alpha_j|\pi_j(\Omega)| \\
        & = & c|\Omega|,
    \end{array}
    \]
    Similarly, $|\Omega\setminus\Omega_j^\prime|<c|\Omega|$ for $m<j\leq m+\widetilde m$. Take $\Omega^\prime=\cap_{j=1}^{m+\widetilde m}\Omega_j^\prime$. Choosing a sufficiently small $c<\frac1{100(m+\widetilde m)}$, we have $|\Omega^\prime|\geq(1-c(m+\widetilde m))|\Omega|$. Note that each $\pi^{-1}(\pi(z))\cap\Omega\neq\emptyset$ is either completely discarded or untouched. Within each untouched fiber, the average size of the $\pi_j$ or $\widetilde\pi_j$ fibers is large. To be specific,
    \begin{equation}\label{E:avg alpha_j beta_j in pi inverse}
        \frac{|T^{\Omega^\prime}(z)|}{|L_j(T^{\Omega^\prime}(z))|}\geq c\alpha_j,\quad1\leq j\leq m;\qquad\frac{|T^{\Omega^\prime}(z)|}{|P_j(T^{\Omega^\prime}(z))|}\geq c\beta_j,\quad1\leq j\leq\widetilde m.
    \end{equation}
    Let $\widetilde\Omega_{\widetilde m}\coloneqq\Omega^\prime$. Then $|\widetilde\Omega_{\widetilde m}|\geq(1-c_{\widetilde m})|\Omega|$ for some small constant $c_{\widetilde m}\in(0,1)$ depending on $m,n$. Suppose for $j=\widetilde m,\widetilde m-1,\cdots,1$, we have $|\widetilde\Omega_j|\geq(1-c_j)|\Omega|$ for some small constant $c_j\in(0,1)$ depending on $n$. Let
    \[
    \widetilde\Omega_{j-1}\coloneqq\{z\in\widetilde\Omega_j:|\widetilde T_j^{\widetilde\Omega_j}(z)|<c\beta_j\}.
    \]
    Note that $(0,x_j,y,t-\frac{1}{2}(0,x_j)\cdot y)\in\widetilde\pi_j^{-1}(x_j,y,t)$. The coarea formula yields
    \begin{align*}
        |\widetilde\Omega_j\setminus\widetilde\Omega_{j-1}|=&\int_{\widetilde\pi_j(\widetilde\Omega_j\setminus\widetilde\Omega_{j+1})}|\{s\in\R^{\widetilde k_j}:(s,x_j,y,t-\frac{1}{2}(s,x_j)\cdot y)\in\widetilde\Omega_j\}|dx_jdydt \\
        =&\int_{\widetilde\pi_j(\widetilde\Omega_j\setminus\widetilde\Omega_{j-1})}|\widetilde T^{\widetilde\Omega_j}_j(0,x_j,y,t-\frac{1}{2}(0,x_j)\cdot y)|dx_jdydt \\
        <&c|\widetilde\pi_j(\Omega)|\beta_j \\
        \leq&c_{j-1}|\widetilde\Omega_j|,
    \end{align*}
    where $c_{j-1}=\frac{c}{1-c_j}$. Furthermore, \eqref{E:pi_j fibers in pi}, \eqref{E:avg alpha_j beta_j in pi inverse}, and the inclusion relationship of $\ker P_j$ -- i.e., $\ker P_1<\ker P_2<\cdots<\ker P_{\widetilde m}=\R^n$ -- imply that for small $c_{j-1}>0$,
    \[
    \forall z\in\widetilde\Omega_{l-1}\quad\forall j\leq l\leq\widetilde m\qquad|\widetilde T^{\widetilde\Omega_{j-1}}_l(z)|\geq c_{j-1}\beta_l,
    \]
    and \eqref{E:avg alpha_j beta_j in pi inverse} is preserved. Let $\Omega_A=\widetilde\Omega_0$. We repeat the process $A-1$ more times to obtain $\Omega_A\supset\Omega_{A-1}\supset\cdots\supset\Omega_1$. Specifically, we refine $T^{\Omega_{A-2l}}_*$ to obtain $\Omega_{A-(2l+1)}\subset\Omega_{A-2l}$, and refine $T^{\Omega_{A-(2l+1)}}$ to obtain $\Omega_{A-{2l+2}}\subset\Omega_{A-(2l+1)}$. We finish the proof by picking any $z_0\in\Omega_1$.
\end{proof}

\section{An inflation map}\label{S:inflation}
Recall the notations introduced in Section \ref{S:more notations}. Let
\begin{equation}\label{E:qj double twiddle}
    \dbtilde q_j=
    \begin{cases}
        \widetilde q_j, & 1\leq j<\widetilde m, \\
        \widetilde q_{\widetilde m}-N, & j=\widetilde m.
    \end{cases}
\end{equation}
We deduce from \eqref{B''}, \eqref{E:order of Q_i}, and \eqref{E:qj twiddle} that $\dbtilde q_j\geq1$. Furthermore, \eqref{A'} implies
\begin{equation}\label{A''}
    \displaystyle\sum_{j=1}^{\widetilde m}\widetilde k_j\dbtilde q_j=\left(\displaystyle\sum_{j=1}^{\widetilde m}\widetilde k_j\widetilde q_j\right)-nN=\left(\displaystyle\sum_{j=m+1}^Mk_jq_j\right)-nN=N.
\end{equation}

Let $\Omega\subset\HH^n$. By Lemma \ref{L:refinement}, there exists a refined flow scheme $(z_0,S_1,\scriptF_l,\scriptG_l)$ of $(\Omega,\pi,A)$ for any $A\geq3$. Write $S=S_1,\scriptF=\scriptF_1,\scriptG=\scriptG_1$. Write $s=(s^{j,l}),\mathbf u=(u^{j,l,a})$ with $s^{j,l},u^{j,l,a}\in\R^n$ and the indices from
\begin{equation}
    1\leq j\leq\widetilde m,\quad1\leq l\leq\dbtilde q_j,\quad1\leq a\leq\widetilde k_j.
\end{equation}
The domain $\Omega^\flat$ of the inflation map takes the form
\begin{equation}\label{E:Omega flat}
    \Omega^\flat\coloneqq\{(s,\mathbf u,\mathbf v)\in\Xi\times\R^{2nN}: u^{j,l,a}\in\scriptF(s^{j,l}),v^{j,l,a}\in\scriptG(s^{j,l},u^{j,l,a})\},
\end{equation}
and the inflation map $\Psi:\Omega^\flat\longrightarrow\R^{(2n+1)N}$ is defined as
\begin{equation}\label{E:inflation}
    \Psi(s, \mathbf u, \mathbf v)\coloneqq(e^{v^{j,l,a}\cdot\mathbf X}e^{u^{j,l,a}\cdot\mathbf Y}e^{s^{j,l}\cdot\mathbf X}(z_0))_{1\leq j\leq\widetilde m,1\leq l\leq\dbtilde q_j,1\leq a\leq\widetilde k_j}.
\end{equation}
Here, $|\scriptF(s^{j,l})|\gtrsim\beta_{\widetilde m}^*,|\scriptG(s^{j,l},u^{j,l,a})|\gtrsim\beta_{\widetilde m}$, and $\Xi$ is an embedding of a subset of $\R^N$ into $\R^{n(\sum_{j=1}^{\widetilde m}\dbtilde q_j)}$ via $\iota:\Xi\hookrightarrow\R^{n(\sum_{j=1}^{\widetilde m}\dbtilde q_j)}$. $\iota,\Xi$ are defined within a broader class of embeddings $\iota(\widetilde k,\dbtilde q,E,\tau)$ and sets $\Xi(\widetilde k,\dbtilde q,E,\tau)$.

\begin{definition}\label{D:iota Xi}
    Consider a \textit{class of parameters} $(\widetilde k,\dbtilde q,E,\tau)$, where
    \begin{itemize}
        \item $\widetilde k=(\widetilde k_0,\widetilde k_1,\dots,\widetilde k_{\widetilde m})$ with $\widetilde m>0$ satisfies $0=\widetilde k_0<\widetilde k_1<\widetilde k_2<\cdots<\widetilde k_{\widetilde m}$;

        \item $\dbtilde q=(\dbtilde q_1,\dots,\dbtilde q_{\widetilde m})\in\Z_{>0}^{\widetilde m}$;

        \item $E\subset\R^{\widetilde k_{\widetilde m}}$; and

        \item $\tau=(\tau_1,\dots,\tau_d)$ is a $d$-tuple of reals for some $d\geq0$. If $d=0$, then $\tau=()$ is the empty tuple.
    \end{itemize}
    Write $\widetilde Q_j=\sum_{i=j}^{\widetilde m}\dbtilde q_i,\xi=(\xi^{j,l}),s=(s^{j,l})$ with $\xi^{j,l}\in\R^{\widetilde k_j},s^{j,l}\in\R^{\widetilde k_{\widetilde m}+d}$ for $1\leq j\leq\widetilde m,1\leq l\leq\dbtilde q_j$.
    \begin{itemize}
        \item[$\blacksquare$] Define $\iota(\widetilde k,\dbtilde q,E,\tau):(\xi^{j,l})\mapsto(s^{j,l})$ as in Matrix \ref{M:Xi_embedding_1}. (See also Matrix \ref{M:s^jl s^(j+1)l}.) Specializing in $E=S,\tau=()$, we let $\iota\coloneqq\iota(\widetilde k,\dbtilde q,S,())$.
        \begin{figure}
            \centering
            \includegraphics[width=1\linewidth]{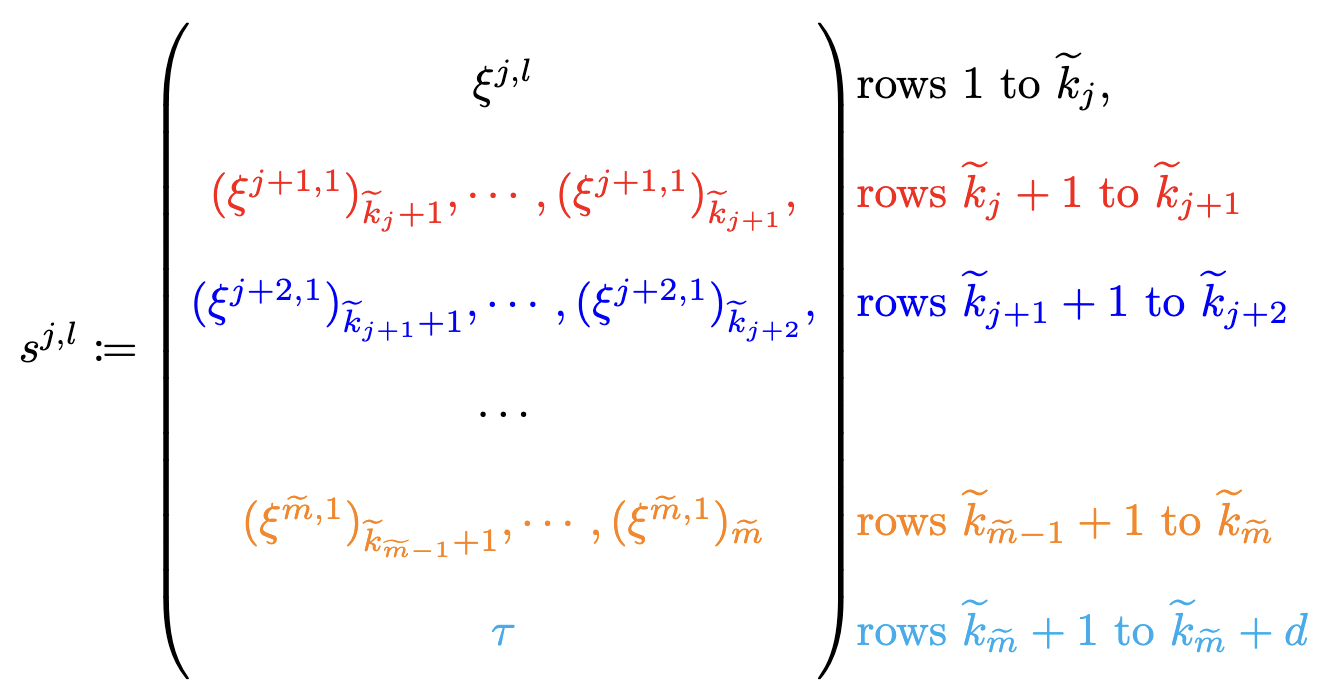}
            \captionsetup{labelformat=matrix}
            \caption{}\label{M:Xi_embedding_1}
        \end{figure}
        
        \item[$\blacksquare$] Define $\Xi(\widetilde k,\dbtilde q,E,\tau)\coloneqq(\iota(\widetilde k,\dbtilde q,E,\tau))^{-1}((E\times\{\tau\})^{\widetilde Q_1})$. Specializing in $E=S,\tau=()$, we let $\Xi\coloneqq\Xi(\widetilde k,\dbtilde q,S,())$. From now on, we identify $\xi$ and $s$, and refer to $\xi$ if we want $\xi^{j,l}\in\R^{\widetilde k_j}$, and to $s$ if we want $s^{j,l}\in\R^{\widetilde k_{\widetilde m}+d}$.
        
        \item[$\blacksquare$] Let $G_0:\R^{\widetilde k_{\widetilde m}}\longrightarrow[0,\infty)$ be a bounded measurable function. Define
        \begin{equation}\label{E:G}
            G_j(s_j)\coloneqq\left(\int_{P_{j-1}(P_j^{-1}(s_j)\cap E)}G_{j-1}^\frac1{\widetilde Q_j}(s)ds\right)^{\widetilde Q_j},\qquad1\leq j\leq\widetilde m.
        \end{equation}
        where $s_j$ is a $(\widetilde k_{\widetilde m}-\widetilde k_j)$-tuple of reals, $P_j$ is the orthogonal projection from $\R^{\widetilde k_{\widetilde m}}$ onto the last $\widetilde k_{\widetilde m}-\widetilde k_j$ coordinates for $0\leq j<\widetilde m$, $P_{\widetilde m}$ is the zero map with $P_{\widetilde m}^{-1}(s_{\widetilde m})=\R^{\widetilde k_{\widetilde m}}$. For fixed $s_j$, $s\in P_{j-1}(P_j^{-1}(s_j)\cap E)$ is of the form $s=(s^\prime,s_j)$. The integral over $P_{j-1}(P_j^{-1}(s_j)\cap E)$ is understood as the Lebesgue integral against $s^\prime$.
    \end{itemize}
\end{definition}

\begin{remarks}\hfill
    \begin{enumerate}
        \item Although we have defined the $P_j$ in \eqref{E:pi j twiddle}, they are consistent with the $P_j$ above when $\widetilde k_{\widetilde m}=n$, so the abuse of notation is well tolerated.
        
        \item In Example \ref{Ex:isoperimetric}, we have $\widetilde m=1,\widetilde k_1=3,\dbtilde q_1=1,E=S,\tau=()$. Then $\Omega^\flat$ defined in this section is consistent with that in Subsection \ref{S:proof ex iso}. In Example \ref{Ex:aniso}, we have $\widetilde m=2,\widetilde k=(0,1,4),\dbtilde q=(5,1),E=S,\tau=()$, and see that the $\Omega^\flat$ in Subsection \ref{S:proof ex aniso} also matches the definition here.
    \end{enumerate}
\end{remarks}

\begin{figure}
    \centering
    \includegraphics[width=1\linewidth]{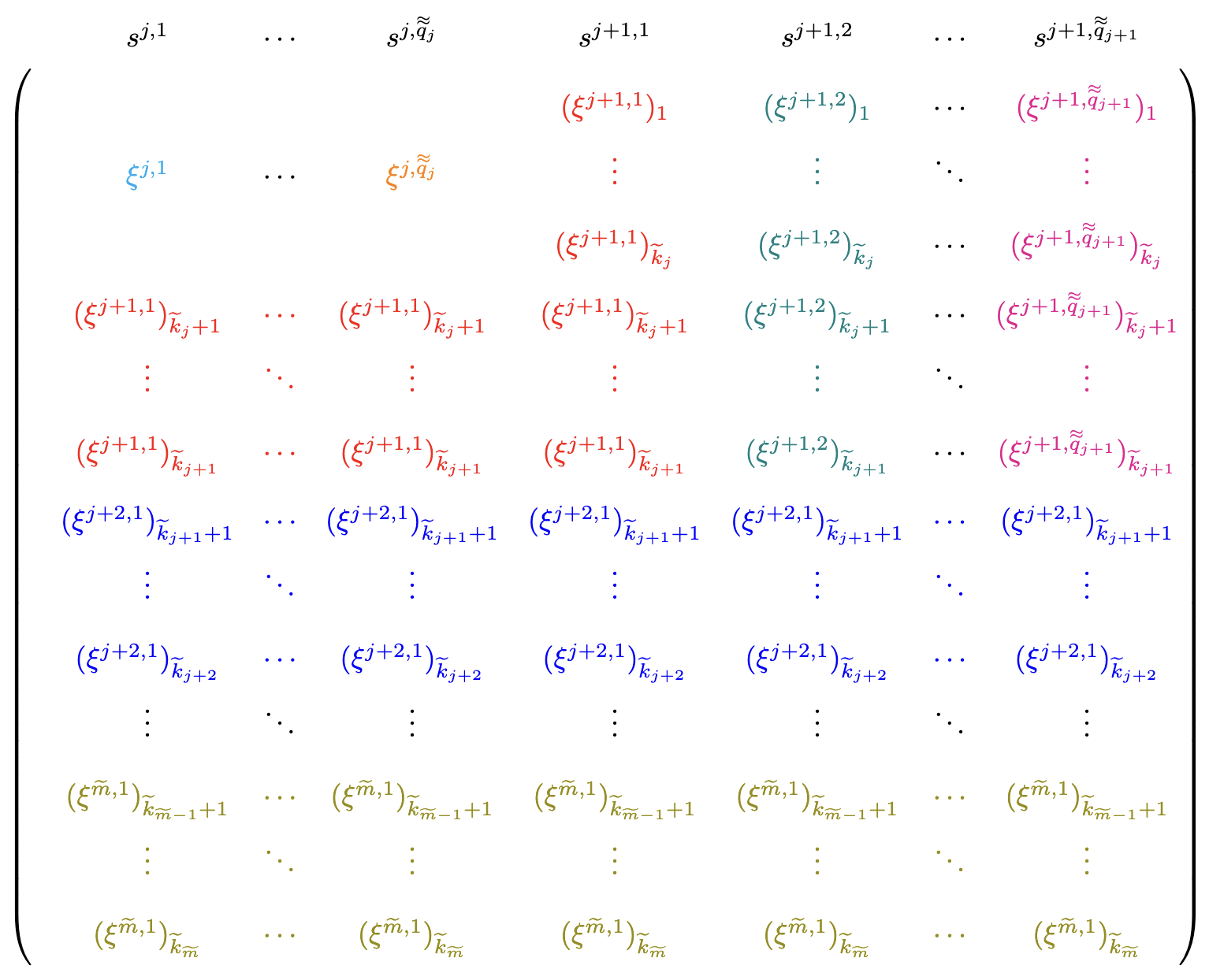}
    \captionsetup{labelformat=matrix}
    \caption{These are the forms of \(s^{j,l}\) and \(s^{j+1,l}\) when \(\tau=()\). Each \(\xi^{j,l}\) is distinctly colored. \(\Xi_{j+1}\) includes \(\xi^{r,l}\) for \(r\leq j\) or \(r=j+1,l=1\).}\label{M:s^jl s^(j+1)l}
\end{figure}

We record some properties of the parameters $(\widetilde k,\dbtilde q,E,\tau)$ below. 

\begin{props}\label{Prop:Xi} Let $(\widetilde k,\dbtilde q,E,\tau)$ be a class of parameters of length $\widetilde m$.
    \begin{enumerate}
        \item\label{Prop:m twiddle=1} If $\widetilde m=1$, then $\Xi(\widetilde k,\dbtilde q,E,\tau)=E^{\dbtilde q_1}$ is a product set.

        \item\label{Prop:nested Xi} For $1\leq j\leq\widetilde m$, consider a $(\widetilde k_{\widetilde m}-\widetilde k_j)$-tuple $s_j$ and
        \[
        \Xi_j(s_j)\coloneqq\Xi((\widetilde k_1,\dots,\widetilde k_j),(\dbtilde q_1,\dots,\dbtilde q_{j-1},\dbtilde q_j+\delta_{j-\widetilde m}),E^\prime(s_j),(s_j,\tau))
        \]
        with $E^\prime(s_j)=\{s\in\R^{\widetilde k_j}:(s,s_j)\in E\}$ and $\delta_{j-\widetilde m}=1$ if $j=\widetilde m$, otherwise zero. Then $\Xi_1(s_1)=(P_1^{-1}(s_1)\cap E)^{\dbtilde q_1+1}$, and for $1<j\leq\widetilde m$,
        \[
        \Xi_j(s_j)=\left(\displaystyle\bigsqcup_{s_{j-1}\in P_{j-1}(P_j^{-1}(s_j))}\Xi_{j-1}(s_{j-1})\right)\times(P_j^{-1}(s_j)\cap E)\times\{\tau\})^{\dbtilde q_j-\delta_{j-\widetilde m}},
        \]
        and $\Xi_{\widetilde m}(\tau_{\widetilde m})=\Xi(\widetilde k,\dbtilde q,E,\tau)$. (See also Matrix \ref{M:s^jl s^(j+1)l}.)

        \item\label{Prop:E layer} Let $f:\R^{n-\widetilde k_{j-1}}\longrightarrow[0,\infty)$ be any bounded measurable function. Then,
        \[
        \int_{P_{j-1}(P_j^{-1}(s_j)\cap E)}\int_{P_{j-1}^{-1}(s_{j-1})\cap E}f(s)dsds_{j-1}=\int_{P_j^{-1}(s_j)\cap E}f(s)ds.
        \]

        Let $\sigma\in(0,1)>0$. If $E$ satisfies the following property:
        \begin{equation}\label{E:regular E}
            \forall 1\leq j<\widetilde m\quad\forall s_j\in P_j(E)\qquad\frac{|E|}{|P_j(E)|}\lesssim|P_j^{-1}(s_j)\cap E|\lesssim\sigma^{-1}\frac{|E|}{|P_j(E)|},
        \end{equation}
        then
        \begin{equation}\label{E:section k j twiddle to k j+1}
            |P_j(P_{j+1}^{-1}(s_{j+1})\cap E)|\gtrsim\sigma\frac{|P_j(E)|}{|P_{j+1}(E)|},\qquad1\leq j<\widetilde m.
        \end{equation}
    \end{enumerate}
\end{props}

\begin{lemma}\label{L:G m twiddle lower bound}
    Let $(\widetilde k,\dbtilde q,E,\tau)$ be a class of parameters of length $\widetilde m$, and let $\sigma\in(0,1)>0$. Assume \eqref{E:regular E} and $G_0\equiv1$. Then there exists some constant $C>1$ depending only on $(\widetilde k,\widetilde q)$ such that
    \begin{equation}\label{E:G m twiddle lower bound}
        G_{\widetilde m}\gtrsim\sigma^C\displaystyle\prod_{j=1}^{\widetilde m}\left(\frac{|E|}{|P_j(E)|}\right)^{\dbtilde q_j}.
    \end{equation}
\end{lemma}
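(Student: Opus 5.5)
We argue by induction on $j$, proving for each $1\le j\le\widetilde m$ a pointwise lower bound for $G_j$ on $P_j(E)$:
\begin{equation*}
G_j(s_j)\gtrsim\sigma^{C_j}\left(|P_{j-1}(P_j^{-1}(s_j)\cap E)|\right)^{\widetilde Q_j}\prod_{i=1}^{j-1}\left(\frac{|E|}{|P_i(E)|}\right)^{\dbtilde q_i},
\end{equation*}
and then convert the first factor using \eqref{E:section k j twiddle to k j+1}. The case $j=\widetilde m$ gives \eqref{E:G m twiddle lower bound}.

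For the base case $j=1$, we have $G_0\equiv1$. Since $P_0$ is the identity on $\R^{\widetilde k_{\widetilde m}}$, the definition \eqref{E:G} gives $G_1(s_1)=|P_1^{-1}(s_1)\cap E|^{\widetilde Q_1}$. By the lower half of \eqref{E:regular E}, this is $\gtrsim(|E|/|P_1(E)|)^{\widetilde Q_1}$. (If $\widetilde m=1$, then $P_1$ is the zero map and $G_1=|E|^{\dbtilde q_1}$, which is exactly the claim.)

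For the inductive step, suppose $G_{j-1}(s_{j-1})\gtrsim\sigma^{C}A_{j-1}$ for every $s_{j-1}\in P_{j-1}(E)$. Here
\begin{equation*}
A_{j-1}=\prod_{i<j-1}\left(\frac{|E|}{|P_i(E)|}\right)^{\dbtilde q_i}\left(\frac{|E|}{|P_{j-1}(E)|}\right)^{\widetilde Q_{j-1}}.
\end{equation*}
Inserting this into \eqref{E:G} gives
\begin{equation*}
G_j(s_j)\gtrsim\sigma^{C}A_{j-1}\,\bigl|P_{j-1}(P_j^{-1}(s_j)\cap E)\bigr|^{\widetilde Q_j}.
\end{equation*}
This uses that the integrand $G_{j-1}^{1/\widetilde Q_j}$ is bounded below uniformly on the domain of integration, and that the domain lies inside $P_{j-1}(E)$. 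Now apply \eqref{E:section k j twiddle to k j+1} with index $j-1$:
\begin{equation*}
\bigl|P_{j-1}(P_j^{-1}(s_j)\cap E)\bigr|\gtrsim\sigma\,\frac{|P_{j-1}(E)|}{|P_j(E)|}.
\end{equation*}
Since $\widetilde Q_{j-1}=\dbtilde q_{j-1}+\widetilde Q_j$, we can split $(|E|/|P_{j-1}(E)|)^{\widetilde Q_{j-1}}$ into $(|E|/|P_{j-1}(E)|)^{\dbtilde q_{j-1}}$ times $(|E|/|P_{j-1}(E)|)^{\widetilde Q_j}$. The second factor multiplies with $(|P_{j-1}(E)|/|P_j(E)|)^{\widetilde Q_j}$ to give $(|E|/|P_j(E)|)^{\widetilde Q_j}$. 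This yields the claimed bound $A_j$ with $\sigma$-exponent $C+\widetilde Q_j$. At $j=\widetilde m$ we have $P_{\widetilde m}(E)$ equal to a point, so $|P_{\widetilde m}(E)|=1$ and $\widetilde Q_{\widetilde m}=\dbtilde q_{\widetilde m}$, which reproduces \eqref{E:G m twiddle lower bound} with $C=\sum_j\widetilde Q_j$.

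The main point needing care is the step invoking \eqref{E:section k j twiddle to k j+1}. That bound is only stated for $j<\widetilde m$; at the top level we are using it with index $\widetilde m-1$, which is allowed. It must also hold for \emph{every} $s_j\in P_j(E)$, not merely on average, and this is exactly where the two-sided regularity \eqref{E:regular E} is needed.

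The derivation is short. The fiber $P_{j-1}(P_j^{-1}(s_j)\cap E)$ has fibers over it of the form $P_{j-1}^{-1}(s_{j-1})\cap E$, each of size at most $\sigma^{-1}|E|/|P_{j-1}(E)|$ by the upper half of \eqref{E:regular E}. Their total mass, by Properties \ref{Prop:Xi}(\ref{Prop:E layer}) with $f\equiv1$, equals $|P_j^{-1}(s_j)\cap E|$. By the lower half of \eqref{E:regular E} this is $\gtrsim|E|/|P_j(E)|$, or equals $|E|$ when $j=\widetilde m$. Dividing the total mass by the maximal fiber size gives the bound. One also checks that the domains of integration in \eqref{E:G} are nonempty only for $s_j\in P_j(E)$, which is where the bound is claimed. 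The implicit constants depend only on $(\widetilde k,\dbtilde q)$.
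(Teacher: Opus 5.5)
Your argument is the paper's proof. It runs the same induction on $j$ for the pointwise bound $G_j\gtrsim\sigma^{C}\bigl(|E|/|P_j(E)|\bigr)^{\widetilde Q_j}\prod_{r<j}\bigl(|E|/|P_r(E)|\bigr)^{\dbtilde q_r}$ (your $A_j$, the paper's \eqref{E:Gj lower bound}), pulls the lower bound out of the integral in \eqref{E:G}, and invokes \eqref{E:section k j twiddle to k j+1}; your short derivation of \eqref{E:section k j twiddle to k j+1} from the two halves of \eqref{E:regular E} correctly fills in a step the paper only asserts.

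One slip: your opening display is missing the factor $\bigl(|E|/|P_{j-1}(E)|\bigr)^{\widetilde Q_j}$ for $j\ge 2$, so as written it is not even dimensionally consistent. The hypothesis you actually carry, $G_{j-1}\gtrsim\sigma^{C}A_{j-1}$, is the correct one, so the proof goes through.
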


\begin{proof}
    Write $\beta_j\coloneqq|\omega|/|P_j(\omega)|$ for $1\leq j\leq\widetilde m$, and $\widetilde Q_j=\sum_{r=j}^{\widetilde m}\dbtilde q_r$. Note that $G_1(s_1)=|P_1^{-1}(s_1)\cap E|^{\widetilde Q_1}\gtrsim\beta_1^{\widetilde Q_1}$. If $\widetilde m=1$, then we are done. Otherwise, suppose
    \begin{equation}\label{E:Gj lower bound}
        G_j(s_j)\gtrsim\sigma^C\beta_j^{\widetilde Q_j}\displaystyle\prod_{r=1}^{j-1}\beta_r^{\dbtilde q_r},
    \end{equation}
    holds for $1\leq j<\widetilde m$. It remains to prove \eqref{E:Gj lower bound} holds for $j+1$. By the induction hypothesis and \eqref{E:section k j twiddle to k j+1},
    \begin{align*}
        G_{j+1}(s_{j+1})=    &    \left(\int_{P_j(P_{j+1}^{-1}(s_{j+1})\cap E)}G_j^\frac1{\widetilde Q_{j+1}}(s)ds\right)^{\widetilde Q_{j+1}} \\[5pt]
        \gtrsim   &   \sigma^C\left(\frac{\beta_{j+1}}{\beta_j}\right)^{\widetilde Q_{j+1}}\left[\beta_j^{\widetilde Q_j}\displaystyle\prod_{r=1}^{j-1}\beta_r^{\dbtilde q_r}\right]^\frac{\widetilde Q_{j+1}}{\widetilde Q_{j+1}} \\[5pt]
        =   &   \sigma^C\beta_{j+1}^{\widetilde Q_{j+1}}\displaystyle\prod_{r=1}^{j}\beta_r^{\dbtilde q_r}.
    \end{align*}
    The constant $C$ may vary line by line, but it always depends on $\widetilde m,\dbtilde q$.
\end{proof}

\begin{lemma}\label{L:G m twiddle upper bound}
    Let $(\widetilde k,\dbtilde q,E,\tau)$ be a class of parameters of length $\widetilde m$. Assume \eqref{E:regular E} for some $\sigma\in(0,1)$. Let $\varepsilon\in(0,\sigma]$. Suppose
    \begin{equation}\label{E:G m twiddle upper bound}
        G_{\widetilde m}\lesssim\varepsilon^{-C_{\widetilde m}}\displaystyle\prod_{j=1}^{\widetilde m}\left(\frac{|E|}{|P_j(E)|}\right)^{\dbtilde q_j}.
    \end{equation}
    for some constant $C_{\widetilde m}>1$ depending only on $(\widetilde k,\widetilde q)$ and $\varepsilon$. Then there exists a refinement of $E$ on which $G_0\lesssim\varepsilon^{-C}$. If $E$ further satisfies
    \begin{equation}\label{E:E large fibers}
        \forall1\leq j\leq\widetilde m\quad\forall x\in P_j(E)\qquad|P_j^{-1}(x)\cap E|\gtrsim\frac{|E|}{|P_j(E)|},
    \end{equation}
    then the refinement can be chosen to preserve \eqref{E:E large fibers}.
\end{lemma}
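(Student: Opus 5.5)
We argue by reversing the recursion in Lemma \ref{L:G m twiddle lower bound}, peeling off one layer $G_j$ at a time from $j=\widetilde m$ down to $j=0$. Write $\beta_j=|E|/|P_j(E)|$ and $\widetilde Q_j=\sum_{r\ge j}\dbtilde q_r$. The lower bound argument shows that, under \eqref{E:regular E}, for every $s_j$ we have
\[
G_j(s_j)\gtrsim\sigma^C\beta_j^{\widetilde Q_j}\prod_{r<j}\beta_r^{\dbtilde q_r}.
\]
Call the right side $B_j$. The hypothesis \eqref{E:G m twiddle upper bound} says $G_{\widetilde m}\lesssim\varepsilon^{-C_{\widetilde m}}B_{\widetilde m}$, up to the harmless factor $\sigma^C\ge\varepsilon^C$. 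The inductive claim is the following: after refining $E$, every surviving $s_{j}$ satisfies $G_{j}(s_{j})\lesssim\varepsilon^{-C_{j}}B_{j}$, and the refinement still satisfies \eqref{E:regular E} with $\sigma$ replaced by a power of $\varepsilon$. At $j=0$ we have $B_0=1$, which is exactly the desired bound $G_0\lesssim\varepsilon^{-C}$.

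For the inductive step $j\to j-1$, fix $s_j$. By \eqref{E:G}, $G_j(s_j)^{1/\widetilde Q_j}$ is the integral of $G_{j-1}^{1/\widetilde Q_j}$ over the fiber $P_{j-1}(P_j^{-1}(s_j)\cap E)$. By \eqref{E:section k j twiddle to k j+1}, this fiber has measure $\gtrsim\sigma\beta_j/\beta_{j-1}$, and by \eqref{E:regular E} it has measure $\lesssim\sigma^{-2}\beta_j/\beta_{j-1}$. Since $G_{j-1}\gtrsim B_{j-1}$ pointwise and $B_{j-1}^{1/\widetilde Q_j}\cdot(\beta_j/\beta_{j-1})=B_j^{1/\widetilde Q_j}$, the upper bound on $G_j(s_j)$ yields, by Chebyshev, that the set of $s_{j-1}$ in this fiber with $G_{j-1}(s_{j-1})>K\varepsilon^{-C_j'}B_{j-1}$ occupies at most a fraction $\lesssim K^{-1/\widetilde Q_j}$ of the fiber. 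We choose $K$ to be a large power of $\varepsilon^{-1}$ and delete from $E$ the full $P_{j-1}$-fibers over these bad $s_{j-1}$. Using Properties \ref{Prop:Xi}\eqref{Prop:E layer} and the near-constancy of the $P_{j-1}$-fiber sizes given by \eqref{E:regular E}, Fubini shows that we remove at most a small fraction of $|E|$.

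The main obstacle is that a deletion can spoil the regularity hypotheses. A deletion lowers the $G$'s, which is harmless for upper bounds, but it can break \eqref{E:regular E} and \eqref{E:E large fibers}, and it can break the lower bounds $G_j\gtrsim B_j$ that the Chebyshev step relies on. To restore them, after each deletion we run the refinement from the proof of Lemma \ref{L:gen finner}, namely \eqref{E:refine omega}. That procedure discards $P_r$-fibers that have become smaller than $c\beta_r$, and the nestedness $\ker P_1<\cdots<\ker P_{\widetilde m}$ guarantees that each fiber is either kept whole or discarded whole. This restores the lower bound in \eqref{E:E large fibers}, and the upper bounds in \eqref{E:regular E} are inherited from $E$.

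Each such step costs a bounded fraction of $|E|$ and worsens the constants by powers of $\varepsilon$, since $\varepsilon\le\sigma$. We then recompute the $G_r$ on the refined set. They only decrease, so the upper bounds from earlier stages persist, and the lower bounds are recovered from Lemma \ref{L:G m twiddle lower bound} with $\sigma$ replaced by a power of $\varepsilon$. There are only $\widetilde m$ stages, so the total loss stays a refinement in the sense of Definition \ref{D:refinement} and all exponents remain bounded. This yields $G_0\lesssim\varepsilon^{-C}$ on a refinement that preserves \eqref{E:E large fibers}.
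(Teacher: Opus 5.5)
Your argument is essentially the paper's proof. It runs a downward induction in $j$: Markov's inequality on each fiber $P_{j-1}(P_j^{-1}(s_j)\cap E)$, together with \eqref{E:section k j twiddle to k j+1}, lets you discard whole bad $P_{j-1}$-fibers at small cost by \eqref{E:regular E} and Fubini, and a final nested-kernel refinement restores \eqref{E:E large fibers}. One caveat: the pointwise lower bound $G_{j-1}\gtrsim B_{j-1}$ and the re-regularization after each stage are unnecessary. Markov needs only the upper bound on $G_j(s_j)$ and the lower bound on the fiber measure, and the surviving $P_{j-1}$-fibers are left untouched, so the original $\beta_j$ and fiber bounds still apply at the next stage. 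Moreover, that lower bound is not available here, because Lemma \ref{L:G m twiddle lower bound} requires $G_0\equiv 1$ (or at least $G_0\geq 1$), which this lemma does not assume.
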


\begin{proof}
    Write $\beta_j\coloneqq|\omega|/|P_j(\omega)|$ for $1\leq j\leq\widetilde m$, and $\widetilde Q_j=\sum_{r=j}^{\widetilde m}\dbtilde q_r$. Set $\beta_0=1$ and
        \begin{equation}
            B_j=
            \begin{cases}
                1, & j=0, \\
                \beta_j^{\widetilde Q_j}\prod_{i=1}^{i-1}\beta_i^{\dbtilde q_i}, & 1\leq j\leq\widetilde m.
            \end{cases}
        \end{equation}
        Then $B_j/B_{j-1}=(\beta_j/\beta_{j-1})^{\widetilde Q_j}$. Then \eqref{E:G m twiddle upper bound} becomes
        \begin{equation}\label{E:G m upper base}
            G_{\widetilde m}\leq C\varepsilon^{-C}B_{\widetilde m}.
        \end{equation}
        For $1<j\leq\widetilde m$ and a $(\widetilde k_{\widetilde m}-\widetilde k_j)$-tuple $s_j$, define
        \begin{equation}
            E_{j-1}(s_j)\coloneqq\left\{s_{j-1}\in P_{j-1}(P_j^{-1}(s_j)\cap E):G_{j-1}(s_{j-1})\leq C_{j-1}\varepsilon^{-C_{j-1}}B_{j-1}\right\},
        \end{equation}
        where $C_{j-1}\geq1$ are large constants to be determined. Suppose for $j\leq r\leq\widetilde m$, there exists a large constant $C_r\geq1$ such that 
        \begin{equation}\label{E: G induction}
            G_r(s_r)\leq C_r\varepsilon^{-C_r}B_r.
        \end{equation}
        (\eqref{E:G m upper base} is the base case.) By Markov's inequality, \eqref{E: G induction} and \eqref{E:section k j twiddle to k j+1},
        \begin{align*}
            & |P_{j-1}(P_j^{-1}(s_j)\cap E)\setminus E_{j-1}(s_j)| \\
            \leq & \left(\frac{\varepsilon^{C_{j-1}}}{C_{j-1}B_{j-1}}\right)^\frac{1}{\widetilde Q_j}\int_{P_{j-1}(P_j^{-1}(s_j)\cap E)}G_{j-1}(s_{j-1})^\frac{1}{\widetilde Q_j}ds_{j-1} \\[5pt]
            = & \left(\frac{\varepsilon^{C_{j-1}}}{C_{j-1}B_{j-1}}G_j\right)^\frac{1}{\widetilde Q_j} \leq \varepsilon^{(C_{j-1}-C_j)/\widetilde Q_j-1}|P_{j-1}(P_j^{-1}(s_j)\cap E)|.
        \end{align*}
        Thus, for sufficiently large $C_{j-1}$, \eqref{E:regular E} implies that $E\cap(\sqcup_{s_j}P_j^{-1}(s_j))$ is a refinement of $E$. Let this refinement be our new $E$. In particular, \eqref{E: G induction} holds true for every $j-1\leq r\leq\widetilde m$ and every $s_r$ in $P_r(E)$ (the $P_r$ image of the new $E$). By (a downward) induction on $j$, we have $G_0\lesssim\varepsilon^{-C}$ on $E$ (which has been refined at most $\widetilde m$ times). If need be, we can further refine $E$ to preserve \eqref{E:E large fibers}. This is possible due to the nested kernels of $P_j$. (See the proof of \eqref{E:refinement 4} in Lemma \ref{L:refinement}.)
\end{proof}

\begin{lemma}\label{L:diffeo}
    Let $\Omega\subset\HH^n$ be bounded and measurable. Then $\Psi(\Omega^\flat)\subset\Omega^N$, and
    \begin{equation}\label{E:diffeo}
        |\Omega|^N\geq|\Psi(\Omega^\flat)|\gtrsim\beta_{\widetilde m}^N\int_\Xi\displaystyle\prod_{j=1}^{\widetilde m}\displaystyle\prod_{l=1}^{\dbtilde q_j}f_{j,l}(s^{j,l})ds,
    \end{equation}
    where the integral over $\Xi$ is understood as the Lebesgue integral against $\xi$,
    \[
    f_{j,l}(s)\coloneqq\int_{(\scriptF(s))^{\widetilde k_j}}|\det(\mathbf u^{j,l})|\prod_{a=1}^{\widetilde k_j}du^{j,l,a},\qquad s\in S,
    \]
    and $\mathbf u^{j,l}$ is the $\widetilde k_j\times\widetilde k_j$ matrix whose $a$-th column is the first $\widetilde k_j$ coordinates of $u^{j,l,a}$.
\end{lemma}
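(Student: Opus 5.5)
The plan is the standard inflation-map argument, in three steps: containment, Jacobian computation, and a Bézout-type change of variables.

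\textbf{Step 1: containment.} Each component $e^{v\cdot\mathbf X}e^{u\cdot\mathbf Y}e^{s\cdot\mathbf X}(z_0)$ lies in $\Omega$ by the refined flow scheme of Lemma \ref{L:refinement}.
\begin{itemize}
\item $s^{j,l}\in S=T^{\Omega_1}(z_0)$ gives $\Phi_1(s^{j,l})(z_0)\in\Omega_1$.
\item $u^{j,l,a}\in\scriptF(s^{j,l})$ gives $\Phi_2\in\Omega_2$.
\item $v^{j,l,a}\in\scriptG(s^{j,l},u^{j,l,a})$ gives $\Phi_3\in\Omega_3\subset\Omega$.
\end{itemize}
Counting components via \eqref{A''}, $\sum_j\widetilde k_j\dbtilde q_j=N$, so $\Psi(\Omega^\flat)\subset\Omega^N$. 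The first inequality in \eqref{E:diffeo} is then immediate.

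\textbf{Step 2: the Jacobian.} Using \eqref{E:exp flows} explicitly, a single component is
\[
\bigl(x_0+s+v,\; y_0+u,\; t_0-\tfrac12 s\cdot y_0+\tfrac12(x_0+s)\cdot u-\tfrac12 v\cdot(y_0+u)\bigr).
\]
This is a polynomial of degree at most $2$. The coordinates of $\Omega^\flat$ are $\xi$ (which has exactly $N$ real parameters, via $\iota$), $\mathbf u\in\R^{nN}$ and $\mathbf v\in\R^{nN}$. The target $\R^{(2n+1)N}$ has the same dimension.

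For fixed $(s,\mathbf u)$, the map $\mathbf v\mapsto$ (first $n$ coordinates of each component) is a translation. Similarly, $\mathbf u$ maps affinely onto the $y$-coordinates. So after block row reduction, $\det D\Psi$ equals the determinant of the $N\times N$ matrix of $\xi$-derivatives of the $t$-coordinates, with $\mathbf u,\mathbf v$ fixed and the $x,y$ contributions eliminated.

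After subtracting the $v$- and $u$-dependence, the $t$-coordinate of component $(j,l,a)$ depends on $s^{j,l}$ through $\tfrac12 s^{j,l}\cdot u^{j,l,a}$ modulo terms already killed. The embedding $\iota$ (Matrix \ref{M:Xi_embedding_1}) is built so that the free variables of the $(j,l)$ block are exactly $\xi^{j,l}\in\R^{\widetilde k_j}$, occupying the first $\widetilde k_j$ coordinates of $s^{j,l}$. The other coordinates of $s^{j,l}$ are shared with earlier blocks.

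The resulting matrix should therefore be block triangular, ordering blocks by $(j,l)$ lexicographically. Its diagonal blocks are $\frac12(\mathbf u^{j,l})^T$, so
\[
|\det D\Psi|=c\prod_{j,l}|\det\mathbf u^{j,l}|.
\]
This is consistent with the examples of Section \ref{S:proof overview}.

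\textbf{Main obstacle.} Verifying this triangularity against the precise form of $\iota$ is the delicate point. Concretely, one must check that each shared coordinate of $s^{j,l}$ belongs to some $\xi^{j',l'}$ whose own block was already placed earlier in the order, so that it contributes only to off-diagonal entries. I would check this directly from Matrix \ref{M:s^jl s^(j+1)l}, using the nested structure of Property \ref{Prop:Xi}(\ref{Prop:nested Xi}).

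\textbf{Step 3: change of variables.} Since $\Psi$ is polynomial of bounded degree and generically nondegenerate, Bézout's theorem bounds the number of preimages of a generic point by a constant depending only on $n,N$. Hence
\[
|\Psi(\Omega^\flat)|\gtrsim\int_{\Omega^\flat}|\det D\Psi|.
\]
The integrand does not depend on $\mathbf v$, so we integrate over $\mathbf v$ first. Each $v^{j,l,a}$ ranges over $\scriptG(s^{j,l},u^{j,l,a})$, which has measure $\gtrsim\beta_{\widetilde m}$ by \eqref{E:refinement 4} (a $\widetilde\pi_{\widetilde m}$-fiber is a $\pi$-fiber). There are $N$ such factors, giving $\beta_{\widetilde m}^N$.

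Next, for fixed $s$, integrate over $\mathbf u$: the remaining integral factors by Fubini into $\prod_{j,l}f_{j,l}(s^{j,l})$. Finally integrate over $\xi\in\Xi$, which yields \eqref{E:diffeo}.
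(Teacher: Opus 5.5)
Your proposal is correct and follows the same route as the paper: containment via the refined flow scheme and \eqref{A''}, the Jacobian $\prod_{j,l}\det\mathbf u^{j,l}$ of a degree-two polynomial map, B\'ezout, then integrating out $\mathbf v$ using $|\scriptG|\gtrsim\beta_{\widetilde m}$ and $\mathbf u$ by Fubini. It in fact supplies the Schur-complement and triangularity computation that the paper only asserts. Two harmless corrections, neither of which affects the conclusion: after eliminating the $x$-rows the $t$-row entries are $u^{j,l,a}\cdot\partial_\xi s^{j,l}$, so the diagonal blocks are $(\mathbf u^{j,l})^T$ rather than $\tfrac12(\mathbf u^{j,l})^T$; and the shared coordinates of $s^{j,l}$ come from the blocks $\xi^{r,1}$ with $r>j$, which lie later (not earlier) in lexicographic order, so the matrix is block upper triangular.
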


\begin{proof}
    By the construction of $\Omega^\flat$ and $\Psi$ and by Lemma \ref{L:refinement}, we have $\Psi(\Omega^\flat)\subset\Omega$. Unpacking the definition of $\Psi$, we observe that it is a vector-valued function whose components are linear or quadratic polynomials. Furthermore,
    \[
    \det\Psi(s,\mathbf u,\mathbf v)=\displaystyle\prod_{j=1}^{\widetilde m}\displaystyle\prod_{l=1}^{\dbtilde q_j}\det(\mathbf u^{j,l})\neq0
    \]
    almost everywhere. By B\'ezout's Theorem, there is a submanifold $\Gamma$ with $\dim\Gamma<(2n+1)N$ such that $\#\Psi^{-1}(z)\lesssim1$ for $z\notin\Gamma$. Thus \eqref{E:diffeo} holds.
\end{proof}

Later on, we will show that on the diagonal (i.e., $s^{j,l}$ all take the same value, denoted by $s$, for all $j,l$),
\[
\displaystyle\prod_{j,l}f_{j,l}(s)\gtrsim\displaystyle\prod_{j=m+1}^M\alpha_j^{q_j}.
\]
However, the $s^{j,l}$ are in general not equal. The following lemma relates the off-diagonal terms (i.e., the $s^{j,l}$ don't take the same value) to the diagonal terms.

\begin{lemma}\label{L:am-gm}
    Consider a class of parameters $(\widetilde k,\dbtilde q,E,\tau)$ in Definition \ref{D:iota Xi} and their associated $\iota,\Xi,G_j$ with $G_0=\prod_{j,l}f_{j,l}$, where $G_j$ are defined in \eqref{E:G} and $f_{j,l}$ are bounded measurable nonnegative functions on $\R^n$ for $1\leq j\leq\widetilde m,1\leq l\leq\dbtilde q_j$. Then,
    \begin{equation}\label{E:am-gm}
        \int_{\Xi(\widetilde k,\dbtilde q,E,\tau)}\displaystyle\prod_{j=1}^{\widetilde m}\displaystyle\prod_{l=1}^{\dbtilde q_j}f_{j,l}(s^{j,l})ds\geq G_{\widetilde m}.
    \end{equation}
\end{lemma}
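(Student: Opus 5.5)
We argue by induction on the length $\widetilde m$, using the nested description of $\Xi$ from Properties \ref{Prop:Xi}\eqref{Prop:nested Xi}. The claim to carry through the induction is sharper than \eqref{E:am-gm}. For every $j$ and every $s_j$, the integral of the product over the $j$-th layer $\Xi_j(s_j)$, with the last block counted with the appropriate multiplicity, is bounded below by $G_j(s_j)$. At the top level $j=\widetilde m$ this is exactly \eqref{E:am-gm}.

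\textbf{Base case ($j=1$).} Here $\Xi_1(s_1)=(P_1^{-1}(s_1)\cap E)^{\widetilde Q_1}$ is a product set. The integrand factors as $\prod_{i=1}^{\widetilde Q_1} h_i(s^i)$, where each $h_i$ is one of the $f_{j,l}$ restricted to the fiber. We then use the multilinear form of Hölder's inequality in the reverse direction: for nonnegative $h_i$ on a product set,
\[
\prod_i\int h_i\;\geq\;\Bigl(\int\prod_i h_i^{1/\widetilde Q_1}\Bigr)^{\widetilde Q_1}.
\]
This is just Hölder applied to $\int\prod_i h_i^{1/\widetilde Q_1}$ with $\widetilde Q_1$ equal exponents. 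The right-hand side is precisely $G_1(s_1)$ with $G_0=\prod f_{j,l}$ evaluated on the diagonal.

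\textbf{Inductive step.} Suppose the bound holds at level $j-1$. By the decomposition in Properties \ref{Prop:Xi}\eqref{Prop:nested Xi}, $\Xi_j(s_j)$ is the disjoint union over $s_{j-1}\in P_{j-1}(P_j^{-1}(s_j)\cap E)$ of $\Xi_{j-1}(s_{j-1})$, times $\dbtilde q_j$ further free copies of $P_j^{-1}(s_j)\cap E$. (Up to the index shift by $\delta_{j-\widetilde m}$, one of the $\dbtilde q_j$ copies is already absorbed into $\Xi_{j-1}$ through its last block.) We proceed in three steps.

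\begin{enumerate}
\item Integrate over the $\Xi_{j-1}(s_{j-1})$ part using the inductive hypothesis. This gives at least $\int_{P_{j-1}(P_j^{-1}(s_j)\cap E)} G_{j-1}(s_{j-1})\,ds_{j-1}$, multiplied by the integrals over the free copies.
\item Rewrite each free integral over $P_j^{-1}(s_j)\cap E$ as an iterated integral over $s_{j-1}$ and then over $P_{j-1}^{-1}(s_{j-1})\cap E$, using Properties \ref{Prop:Xi}\eqref{Prop:E layer}.
\item Apply the same Hölder inequality with $\widetilde Q_j$ factors. The factor coming from $\Xi_{j-1}$ carries weight $\widetilde Q_{j-1}$ in $G_{j-1}$, i.e.\ $\dbtilde q_{j-1}$ plus the $\widetilde Q_j$ shared with the earlier blocks. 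This produces $\bigl(\int G_{j-1}^{1/\widetilde Q_j}\bigr)^{\widetilde Q_j}=G_j(s_j)$, as defined in \eqref{E:G}.
\end{enumerate}

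\textbf{Main obstacle.} The bookkeeping in step 3 is where the argument can fail. We must check that after integrating out the inner fibers, the free copies contribute functions of $s_{j-1}$ whose product, together with $G_{j-1}$, has the homogeneity that Hölder with $\widetilde Q_j$ factors requires. We would first verify this on Example \ref{Ex:aniso} ($\dbtilde q=(5,1)$, $\widetilde k=(0,1,4)$) against Matrix \ref{M:ex_Xi}, and only then write the general exponent count.

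If the direct multiplicity count does not close, the fallback is to strengthen the inductive hypothesis. Instead of the bare product, we would use a weighted product in which each $f_{j,l}$ is raised to the power $1/\widetilde Q_{j'}$ at the level $j'$ where its block becomes free, and then apply Hölder one layer at a time.
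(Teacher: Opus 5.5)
Your ingredients are the right ones: the layered decomposition of $\Xi_j(s_j)$ from Properties~\ref{Prop:Xi}, Fubini via Property~\ref{Prop:E layer}, and H\"older with $\widetilde Q_j$ equal exponents at each layer. However, the inductive step fails in the order you perform it, and this is exactly the bookkeeping you defer.

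Your inductive claim, that the integral over $\Xi_j(s_j)$ dominates $G_j(s_j)$, is not well posed. $G_j$ is built from $G_0=\prod_{\text{all }(r,l)}f_{r,l}$, which has $\widetilde Q_1$ factors. By contrast, $\Xi_j(s_j)$ only carries the blocks with $r\le j$ together with $(j+1,1)$. For $\widetilde m\ge2$, $\Xi_1(s_1)=(P_1^{-1}(s_1)\cap E)^{\dbtilde q_1+1}$, not a $\widetilde Q_1$-fold product. Your base case works only because you silently append the other $\widetilde Q_2-1$ blocks as free copies of the \emph{same} small fiber $P_1^{-1}(s_1)\cap E$. At the next level those blocks range over the larger fiber $P_2^{-1}(s_2)\cap E$, so your Step~1 is false: the $\Xi_{j-1}(s_{j-1})$ integral alone need not dominate $G_{j-1}(s_{j-1})$. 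For instance, take $\widetilde m=2$, $\dbtilde q=(1,2)$, $F=P_1^{-1}(s_1)\cap E$, and $f_{1,1}=f_{2,1}=1$, $f_{2,2}=K$ on $F$. Then the $\Xi_1(s_1)$ integral is $|F|^2$, while $G_1(s_1)=K|F|^3$. Even granting Step~1, H\"older applied to $\int G_{j-1}\,ds_{j-1}$ times the free integrals gives $\bigl(\int (G_{j-1}\prod B_{r,l})^{1/\widetilde Q_j}\,ds_{j-1}\bigr)^{\widetilde Q_j}$. This counts the free blocks twice and is not $G_j(s_j)$.

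The missing idea is to carry, as inductive hypothesis, an upper bound for $G_{j-1}(s_{j-1})$ by a product of exactly $\widetilde Q_j$ functions of $s_{j-1}$:
\[
G_{j-1}(s_{j-1})\le\Big(\int_{\Xi_{j-1}(s_{j-1})}\prod_{r\le j-1\text{ or }(r,l)=(j,1)}f_{r,l}(s^{r,l})\,ds\Big)\prod_{\text{remaining }(r,l)}\int_{P_{j-1}^{-1}(s_{j-1})\cap E}f_{r,l}.
\]
This is one composite factor and $\widetilde Q_j-1$ single-fiber integrals over the small fiber. You then proceed in three steps:
\begin{enumerate}
\item Insert this bound \emph{inside} $G_j(s_j)=\bigl(\int G_{j-1}^{1/\widetilde Q_j}\,ds_{j-1}\bigr)^{\widetilde Q_j}$, using monotonicity.
\item Apply H\"older in $s_{j-1}$ with $\widetilde Q_j$ equal exponents.
\item Only afterwards use Fubini to turn $\int ds_{j-1}\int_{P_{j-1}^{-1}(s_{j-1})\cap E}f_{r,l}$ into $\int_{P_j^{-1}(s_j)\cap E}f_{r,l}$. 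Then regroup the blocks $(j,l)$ with $l\ge2$, and $(j+1,1)$ if $j<\widetilde m$, with the composite factor into an integral over $\Xi_j(s_j)$.
\end{enumerate}
This is the paper's proof. In this count the $\Xi_{j-1}$ factor has weight $1$ among the $\widetilde Q_j$ H\"older factors, not $\widetilde Q_{j-1}$ as in your Step~3. No re-weighting of the $f_{r,l}$, as in your fallback, is needed.
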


\begin{proof}
    We will prove the lemma by induction and repeated applications of H\"older's inequality. Note that $P_0(P_1^{-1}(s_1)\cap E)=P_1^{-1}(s_1)\cap E$. If $\widetilde m=1$, then \eqref{E:am-gm} is H\"older's inequality. Otherwise, H\"older's inequality and Property \ref{Prop:Xi}.\ref{Prop:nested Xi} yield
    \begin{align*}
        G_1(s_1)= & \left(\int_{P_1^{-1}(s_1)\cap E}\displaystyle\prod_{j=1}^{\widetilde m}\displaystyle\prod_{l=1}^{\dbtilde q_j}(f_{j,l}(s))^\frac{1}{\widetilde Q_1}ds\right)^{\widetilde Q_1} \\[5pt]
        \leq & \left(\int_{P_1^{-1}(s_1)\cap E}\displaystyle\prod_{\substack{j=1,1\leq l\leq\dbtilde q_1;\\\text{or }j=2,l=1}}(f_{j,l}(s))^\frac{1}{\widetilde q_1+1}ds\right)^{\widetilde q_1+1} \\[5pt]
        & \times\displaystyle\prod_{\substack{j=2,1<l\leq\dbtilde q_2;\\\text{or }2<j\leq\widetilde m,1\leq l\leq\dbtilde q_j}}\left(\int_{P_1^{-1}(s_1)\cap E}f_{j,l}(s)ds\right) \\[5pt]
        \leq & \displaystyle\prod_{j=1}^{\widetilde m}\displaystyle\prod_{l=1}^{\dbtilde q_j}\int_{P_1^{-1}(s_1)\cap E}f_{j,l}(s^{j,l})ds^{j,l} \\[5pt]
        = & \left(\int_{\Xi_1(s_1)}\displaystyle\prod_{\substack{j=1,1\leq l\leq\dbtilde q_1;\\\text{or }j=2,l=1}}f_{j,l}(s^{j,l})ds\right) \\[5pt]
        & \times\displaystyle\prod_{\substack{j=2,1<l\leq\dbtilde q_2;\\\text{or }2<j\leq\widetilde m,1\leq l\leq\dbtilde
        q_j}}\int_{P_1^{-1}(s_1)\cap E}f_{j,l}(s^{j,l})ds^{j,l}.
    \end{align*}
    
    Suppose for $1\leq j<\widetilde m$, we have
    \begin{align}\label{E:G_j}
        G_j(s_j)\leq&\int_{\Xi_j(s_j)}\displaystyle\prod_{\substack{1\leq r\leq j,1\leq l\leq\dbtilde q_r;\\\text{or }r=j+1,l=1}}f_{r,l}(s^{r,l})ds \\[5pt]
        & \times\displaystyle\prod_{\substack{r=j+1,1<l\leq\dbtilde q_{j+1};\\\text{or }j+1<r\leq\widetilde m,1\leq l\leq\dbtilde q_r}}\int_{P_j^{-1}(s_j)\cap E}f_{r,l}(s)ds.\notag
    \end{align}
    By the induction hypothesis and H\"older's inequality,
    \begin{align}\label{E:G j+1}
        G_{j+1}(s_{j+1})= & \left(\int_{P_j(P_{j+1}^{-1}(s_{j+1})\cap E)}G_j^\frac{1}{\widetilde Q_{j+1}}(s)ds\right)^{\widetilde Q_{j+1}} \\[5pt]\notag
        \leq & \left(\int_{P_j(P_{j+1}^{-1}(s_{j+1})\cap E)}\left(\int_{\Xi_j(s_j)}\displaystyle\prod_{\substack{1\leq r\leq j,1\leq l\leq\dbtilde q_r;\\\text{or }r=j+1,l=1}}f_{r,l}(s^{r,l})ds\right.\right. \\[5pt]\notag
        & \left.\left.\times\displaystyle\prod_{\substack{r=j+1,1<l\leq\dbtilde q_{j+1};\\\text{or }j+1<r\leq\widetilde m,1\leq l\leq\dbtilde q_r}}\int_{P_j^{-1}(s_j)\cap E}f_{r,l}(s)ds\right)^\frac{1}{\widetilde Q_{j+1}}ds_j\right)^{\widetilde Q_{j+1}} \\[5pt]\notag
        \leq & \int_{P_j(P_{j+1}^{-1}(s_{j+1})\cap E)}\int_{\Xi_j(s_j)}\displaystyle\prod_{\substack{1\leq r\leq j,1\leq l\leq\dbtilde q_r;\\\text{or }r=j,l=1}}f_{r,l}(s^{r,l})dsds_j \\[5pt]
        & \times\displaystyle\prod_{\substack{r=j,1<l\leq\dbtilde q_j;\\\text{or }j+1<r\leq\widetilde m,1\leq l\leq\dbtilde q_r}}\int_{P_j(P_{j+1}^{-1}(s_{j+1})\cap E)}\int_{P_j^{-1}(s_j)\cap E}f_{r,l}(s)dsds_j,\notag
    \end{align}
    If $j<\widetilde m-1$, then we deduce \eqref{E:G_j} for $j+1$ from Properties \ref{Prop:Xi}.\ref{Prop:nested Xi} and \ref{Prop:Xi}.\ref{Prop:E layer}. By induction, \eqref{E:G_j} holds for $1\leq j<\widetilde m$. We then obtain \eqref{E:am-gm} by taking $j=\widetilde m-1$ in \eqref{E:G j+1} and applying Properties \ref{Prop:Xi}.\ref{Prop:nested Xi} and \ref{Prop:Xi}.\ref{Prop:E layer} again.
\end{proof}

\section{Approximation by ellipsoids}\label{S:appr}
Recall the $f_{j,l}$ in Lemma \ref{L:diffeo} are of the form
\begin{equation}\label{E:f_jl integral}
    \int_{E^k}\det(\mathbf{u^\prime})\prod_{j=1}^kdu_i,
\end{equation}
where $E\subset\R^n$ has measure bounded below, $1\leq k\leq n$, and $\mathbf{u^\prime}$ is the $k\times k$-matrix whose $i$-th column consists of the first $k$ coordinates of $u_i$. In this subsection, we aim to bound \eqref{E:f_jl integral} by approximating $E$ by some balanced -- i.e., $x\in E$ implies $-x\in E$ -- and convex sets (or equivalently, ellipsoids centered at the origin).

\begin{lemma}\label{L:f_jl bound}
    Let $1\leq k\leq d,\eta>0$. Suppose $P:\R^d\longrightarrow\R^k$ is the orthogonal projection onto the first $k$ coordinates. Then there exists $c>0$ with the following property. For any Lebesgue measurable set $E\subset\R^d$ satisfying $0<|E|<\infty$, there exists a bounded balanced convex set $\scriptC\subset\R^d$ such that $|E\cap\scriptC|\sim|E|$, and
\begin{equation}\label{E:f_jl bound}
    \int_{E^k}|\det(\mathbf{u^\prime})|\displaystyle\prod_{i=1}^kdu_i\geq c\left(\left(\frac{|E|}{|\scriptC|}\right)^\eta|E|\right)^k|P(\scriptC)|,
\end{equation}
where $\mathbf{u^\prime}$ denotes the $k\times k$ matrix whose $i$-th column is $P(u_i)$ for $1\leq i\leq k$, and $c>0$ depends only on $d,\eta$.
\end{lemma}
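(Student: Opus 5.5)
The plan is to run a ``dyadic John ellipsoid'' argument in the spirit of Christ's approximation of sets by convex bodies (as in \cite{christ2011quasiextremals}), combined with the elementary fact that the determinant integral is large when the vectors $P(u_i)$ are spread out in a convex body.

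\textbf{Construction of $\scriptC$.} Since $|E|<\infty$, we first pass to a bounded subset carrying half the mass. We then run a greedy/pigeonhole scheme over balanced convex sets. Start with $\scriptC_0$, a large centered ball with $|E\cap\scriptC_0|\geq\frac12|E|$. Given $\scriptC_i$, consider its John ellipsoid and the parallel half-slabs obtained by cutting $\scriptC_i$ along a principal direction. If some balanced convex $\scriptC'\subset\scriptC_i$ with $|\scriptC'|\leq\frac12|\scriptC_i|$ still satisfies $|E\cap\scriptC'|\geq(1-\delta_i)|E\cap\scriptC_i|$, we replace $\scriptC_i$ by $\scriptC'$. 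The losses $\delta_i$ are chosen as a small multiple of $(|E|/|\scriptC_i|)^{\eta}$. Each step halves the volume, and $|\scriptC_i|\geq|E\cap\scriptC_i|\gtrsim|E|$, so the total loss $\sum\delta_i$ is a geometric-type sum bounded by $c_\eta\ll1$. Hence the process terminates with $|E\cap\scriptC|\sim|E|$, at a set $\scriptC$ that is ``$\delta$-irreducible'' with $\delta\sim(|E|/|\scriptC|)^\eta$. Irreducibility means that every balanced convex subset of half the volume misses at least a $\delta$-fraction of $E\cap\scriptC$. After a linear change of variables $A$ taking the John ellipsoid of $\scriptC$ to the unit ball, this says that $E':=A(E\cap\scriptC)$ puts mass $\gtrsim\delta|E'|$ outside every slab $\{|x\cdot\theta|\leq c\}$ and outside every small centered ball.

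\textbf{Lower bound for the determinant integral.} We bound $\int_{E^k}|\det\mathbf u'|$ from below iteratively. Choose $u_1,\dots,u_k$ one at a time. For each $i$, the set of $u_i\in E\cap\scriptC$ with $P(u_i)$ at distance $\gtrsim$ (width of $P(\scriptC)$ in the relevant direction) from $\Span\{P(u_1),\dots,P(u_{i-1})\}$ has measure $\gtrsim\delta|E|$. This uses irreducibility applied to the balanced convex set $\scriptC\cap P^{-1}(\text{slab around that span})$, which has at most half the volume of $\scriptC$ when the slab width is a small multiple of the appropriate semi-axis. Multiplying the distances gives $|\det\mathbf u'|\gtrsim|P(\scriptC)|$, since the product of the relevant semi-axes of $P(\scriptC)$ is comparable to $|P(\scriptC)|$ by John's theorem. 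The choice of the successive directions must be made along the principal axes of $P(\scriptC)$, greedily from the largest. The result is a lower bound of the form $\gtrsim(\delta|E|)^k|P(\scriptC)|$, which is the claimed bound with $\delta\sim(|E|/|\scriptC|)^\eta$ (the exponent may be adjusted by renaming $\eta$).

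\textbf{Main obstacle.} The delicate step is the claim that the slab $\scriptC\cap P^{-1}(\text{slab})$ is a legitimate competitor, namely balanced, convex, and of volume at most $\frac12|\scriptC|$, uniformly in the previously chosen $u_j$. The span through the origin makes it balanced. The volume bound requires the slab width to be measured against the semi-axes of $P(\scriptC)$ rather than of $\scriptC$, which is where $P$ enters. I would try first to handle this by fixing the orthonormal basis of the John ellipsoid of $P(\scriptC)$ and using slabs of width $c\,\lambda_i$ orthogonal to the $i$-th axis. If the greedy choice then fails to control the distance to the span, I would fall back to a Grassmannian net argument: cover the possible spans by $O(\delta^{-C})$ slabs and absorb the extra loss into the power $\eta$.
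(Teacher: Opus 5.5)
Your overall strategy is the paper's. First you produce an ``irreducible'' balanced convex $\scriptC$. The paper simply cites Christ's Lemma~5.2 (Lemma~\ref{L:appr}); your greedy halving scheme is a correct self-contained proof of it, because reading backwards from the terminal step the ratios $|E|/|\scriptC_i|$ decay geometrically, so $\sum_i(|E|/|\scriptC_i|)^\eta\lesssim_\eta 1$. Then you bound the determinant integral from below through $|\det\mathbf u'|=\prod_i\dist(P(u_i),V_{i-1})$, integrating out the $u_i$ one at a time and applying irreducibility to $\scriptC\cap P^{-1}(\text{tube around }V_{i-1})$, exactly as in Lemma~\ref{L:det}.

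\textbf{The gap.} The step you flag as the main obstacle, uniformity in the previously chosen $u_j$, is left open, and your suggested fixes are off target. Slabs orthogonal to \emph{fixed} principal axes of $P(\scriptC)$ do not bound $\dist(P(u_i),V_{i-1})$ for an arbitrary span $V_{i-1}$; the direction has to adapt to $V_{i-1}$. No Grassmannian net is needed either: irreducibility holds for every balanced convex competitor individually, so there is no union bound to pay for. Normalizing the John ellipsoid of $\scriptC$ itself, as in your first paragraph, does not help, since a general linear map of $\R^d$ does not intertwine with $P$.

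\textbf{The fix (the paper's device).} Normalize $P(\scriptC)$ instead. Apply $(x',x'')\mapsto(T^{-1}x',x'')$ on $\R^k\times\R^{d-k}$, where $T$ maps the unit ball onto the John ellipsoid of $P(\scriptC)$.
\begin{enumerate}
\item This map preserves balanced convexity and scales all volumes by the same factor, so irreducibility survives.
\item It multiplies $\det\mathbf u'$ by $|\det T|^{-1}\sim|P(\scriptC)|^{-1}$, which is exactly where the factor $|P(\scriptC)|$ comes from.
\item Afterwards, for \emph{every} subspace $V\subsetneq\R^k$ the tube $\{v:\dist(v,V)<c\}$ occupies an $O(c)$ fraction of $P(\scriptC)$.
\end{enumerate}
If you prefer to stay in the original coordinates, take $\dim V\le i-1$ and pick a unit $\theta\in V^\perp$ lying in the span of the $i$ longest axes $\lambda_1\ge\dots\ge\lambda_i$. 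The tube of width $c\lambda_i$ around $V$ lies in $\{|v\cdot\theta|<c\lambda_i\}$, while $P(\scriptC)$ has width $\gtrsim\lambda_i$ in direction $\theta$.

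\textbf{Lifting to $\scriptC$.} You also need to pass from a small fraction of $P(\scriptC)$ to a small fraction of $\scriptC$. By Brunn--Minkowski the fiber volume $|\scriptC\cap P^{-1}(v)|$ is maximized at $v=0$, and $|\scriptC|\gtrsim_d|P(\scriptC)|\,|\scriptC\cap P^{-1}(0)|$. Hence $|\scriptC\cap P^{-1}(\text{tube})|\le\frac12|\scriptC|$ once $c$ is small.

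With these two observations, each integration gives $\int_{E\cap\scriptC}\dist(P(u_i),V_{i-1})\,du_i\gtrsim\lambda_i\,\delta|E|$ uniformly in $u_1,\dots,u_{i-1}$. Iterating yields the claim, and no choice of directions is required.
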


If $E$ is balanced and convex, then the right-hand side of \eqref{E:f_jl bound} is simply $|E|^k|P(E)|$. Otherwise, we can approximate $E$ by balanced, convex sets. The idea descends from central sets on the real line in \cite{TaoWright}. Christ sharpens the result in \cite{christ2002mixed}. The higher-dimensional case is formulated in Lemma 5.2 of \cite{christ2011quasiextremals}.

\begin{lemma}[Lemma 5.2 of \cite{christ2011quasiextremals}]\label{L:appr}
For any $d\geq1,\eta>0$, there exists $c\in(0,1)$ with the following property. For any Lebesgue measurable set $S\subset\R^d$ satisfying $0<|S|<\infty$, there exists a bounded balanced convex set $\scriptC\subset\R^d$ such that $|S\cap\scriptC|\sim|S|$, and for any balanced convex set $\scriptC^\prime\subset\scriptC$
\begin{equation}\label{E:appr}
    |\scriptC^\prime|\leq\frac{1}{2}|\scriptC|\quad\implies\quad|S\cap(\scriptC\setminus\scriptC^\prime)|\geq c\left(\frac{|S|}{|\scriptC|}\right)^\eta|S|.
\end{equation}
\end{lemma}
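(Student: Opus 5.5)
The plan is a stopping-time (greedy shrinking) argument: start from a large balanced convex set and keep replacing it by a balanced convex subset of at most half the volume whenever \eqref{E:appr} fails. The constant $c$ will be chosen small enough, depending on $d$ and $\eta$, that the total mass of $S$ lost along the way is at most $|S|/4$.

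First, since $0<|S|<\infty$, monotone convergence gives a ball $B$ centered at the origin with $|S\cap B|\geq\frac34|S|$. Set $\scriptC_0\coloneqq B$, which is bounded, balanced and convex.

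Inductively, suppose $\scriptC_k$ is bounded, balanced and convex with $|S\cap\scriptC_k|\geq\frac12|S|$. If $\scriptC_k$ satisfies \eqref{E:appr}, we stop and take $\scriptC\coloneqq\scriptC_k$. Otherwise there is a balanced convex $\scriptC_{k+1}\subset\scriptC_k$ with $|\scriptC_{k+1}|\leq\frac12|\scriptC_k|$ and
\[
|S\cap(\scriptC_k\setminus\scriptC_{k+1})|<c\left(\frac{|S|}{|\scriptC_k|}\right)^\eta|S|.
\]
In that case we pass to $\scriptC_{k+1}$.

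The key step is to control the cumulative loss. As long as the invariant $|S\cap\scriptC_k|\geq\frac12|S|$ holds, we have $|\scriptC_k|\geq\frac12|S|$. Also $|\scriptC_j|\geq 2^{k-j}|\scriptC_k|$ for $j\leq k$. Hence the losses form a geometric series dominated by its last term:
\[
\sum_{j\leq k}c\left(\frac{|S|}{|\scriptC_j|}\right)^\eta|S|\leq c\,|S|\left(\frac{|S|}{|\scriptC_k|}\right)^\eta\sum_{i\geq0}2^{-i\eta}\leq c\,2^\eta(1-2^{-\eta})^{-1}|S|.
\]
Choosing $c$ so that the right-hand side is at most $\frac14|S|$ gives $|S\cap\scriptC_{k+1}|\geq\frac34|S|-\frac14|S|=\frac12|S|$, so the invariant persists.

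Termination follows because the volumes halve at each step but stay bounded below by $\frac12|S|>0$. Hence the process stops after at most $\log_2(2|B|/|S|)+1$ steps, at some $\scriptC$ for which \eqref{E:appr} holds. For that $\scriptC$ we have $\frac12|S|\leq|S\cap\scriptC|\leq|S|$, i.e.\ $|S\cap\scriptC|\sim|S|$, and $\scriptC$ is bounded since $\scriptC\subset B$.

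The only delicate point is the loss bookkeeping. Summing the failure thresholds naively in terms of $|\scriptC_0|$ would be useless, because the thresholds grow as the sets shrink. It is the lower bound $|\scriptC_k|\gtrsim|S|$, itself guaranteed by the invariant, that lets us sum the geometric series from the top and close the induction with $c$ depending only on $d$ and $\eta$.
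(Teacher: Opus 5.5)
Your argument is correct and complete. The paper gives no proof of its own here and only cites Lemma 5.2 of Christ, and your greedy shrinking scheme is the standard iteration behind that result. You also identify the key point correctly: the losses $c(|S|/|\scriptC_j|)^\eta|S|$ form a geometric series dominated by its last term, which is bounded because the invariant forces $|\scriptC_k|\geq\frac12|S|$, and this is exactly where $\eta>0$ is needed. As a minor bonus, your choice $c\leq\frac14 2^{-\eta}(1-2^{-\eta})$ depends only on $\eta$, not on $d$.
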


The following lemma is a generalization of Lemma 6.1 of \cite{christ2011quasiextremals}.

\begin{lemma}\label{L:det}
Let $\scriptC\subset\R^d$ be a bounded, balanced, convex set. Let $\mu$ be a positive, finite measure supported on $\scriptC$. Let $1\leq k\leq d$. Suppose $P:\R^d\longrightarrow\R^k$ is the orthogonal projection onto the first $k$ coordinates. Let $\delta,\lambda>0$. Suppose that for any balanced convex subset $\scriptC^\prime\subset\scriptC$ satisfying $|\scriptC^\prime|\leq\delta|\scriptC|$, one has $\mu(\scriptC\backslash\scriptC^\prime)\geq\lambda$. Then
\begin{equation}\label{E:det}
    \int_{\scriptC^k}|\det(\mathbf{u^\prime})|\displaystyle\prod_{i=1}^kd\mu(u_i)\geq c\delta^k\lambda^k|P(\scriptC)|,
\end{equation}
where $\mathbf{u^\prime}$ denotes the $k\times k$ matrix whose $i$-th column is $P(u_i)$ for $1\leq i\leq k$, and $c>0$ depends only on $d$.
\end{lemma}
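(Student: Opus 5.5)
The plan is to follow the proof of Lemma 6.1 of \cite{christ2011quasiextremals}, which is the case $k=d$, and to reduce the general case to it by an inductive selection of the columns. First normalize. By John's theorem there is an ellipsoid $\mathcal E$ centered at the origin with $\mathcal E\subset\scriptC\subset\sqrt d\,\mathcal E$. Since every quantity in \eqref{E:det} is comparable for $\scriptC$ and $\mathcal E$ up to constants depending only on $d$, we may treat $\scriptC$ as an ellipsoid. Shrinking $\delta$ by a dimensional factor absorbs the loss in the hypothesis. Then $P(\scriptC)$ is comparable to an ellipsoid in $\R^k$. Choose a linear map $A$ of $\R^k$ with $P(\scriptC)\approx A(B_k)$, where $B_k$ is the unit ball of $\R^k$, so that $|\det A|\sim|P(\scriptC)|$. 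Writing $P(u_i)=Aw_i$ gives $|\det\mathbf u'|=|\det A|\,|\det(w_1,\dots,w_k)|$. It therefore suffices to show
\[
\int_{\scriptC^k}|\det(w_1,\dots,w_k)|\prod_{i=1}^k d\mu(u_i)\gtrsim\delta^k\lambda^k,\qquad w_i=A^{-1}P(u_i),
\]
where all the $w_i$ lie in a ball of radius $O(1)$.

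The core step selects the $u_i$ one at a time. For $0\le i<k$ and fixed $u_1,\dots,u_i$, let $W_i=\Span\{w_1,\dots,w_i\}\le\R^k$. For a small parameter $\rho$, consider the slab
\[
\scriptC'_i=\{u\in\scriptC:\dist(A^{-1}Pu,W_i)\le\rho\}.
\]
This set is balanced and convex, being the intersection of $\scriptC$ with a symmetric convex slab. Its volume is $\lesssim\rho^{k-i}|\scriptC|$: the fibers of $P$ over $P(\scriptC)$ have comparable sizes, and after the change of variables by $A$ the set $A^{-1}P(\scriptC'_i)$ is contained in a $\rho$-neighborhood of an $i$-dimensional subspace inside a ball of radius $O(1)$. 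Taking $\rho=c\delta$ makes $|\scriptC'_i|\le\delta|\scriptC|$, so the hypothesis gives $\mu(\scriptC\setminus\scriptC'_i)\ge\lambda$. On $\scriptC\setminus\scriptC'_i$ we have $\dist(w_{i+1},W_i)\ge c\delta$. Restricting the integral to the set where each $u_{i+1}\in\scriptC\setminus\scriptC'_i(u_1,\dots,u_i)$ and using
\[
|\det(w_1,\dots,w_k)|=\prod_{i=0}^{k-1}\dist(w_{i+1},W_i),
\]
Fubini gives a lower bound of $(c\delta)^k\lambda^k$. Multiplying back by $|\det A|\sim|P(\scriptC)|$ yields \eqref{E:det}.

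The main obstacle is the volume estimate $|\scriptC'_i|\lesssim\rho^{k-i}|\scriptC|$, uniformly in the choice of $W_i$. It requires that, for an ellipsoid, the pushforward of Lebesgue measure on $\scriptC$ under $A^{-1}P$ has bounded density relative to normalized measure on the unit ball. This holds because that pushforward density is proportional to the fiber volume $|P^{-1}(x)\cap\scriptC|$, which is at most $|\scriptC|/|P(\scriptC)|$ up to a dimensional constant by convexity, as in \eqref{E:convex avg fiber 1}. Integrating this bounded density over a $\rho$-neighborhood of a subspace of $\R^k$ restricted to an $O(1)$ ball gives a bound of $\rho^{k-i}$ when $i<k$, and a bound of $\rho$ suffices since $\rho<1$. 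So it is enough to take $\rho=c\delta$ with $c$ depending on $d$, which produces the factor $\delta^k$ rather than a worse power.
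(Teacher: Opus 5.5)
Your proposal is correct and is essentially the paper's proof: normalize so that $P(\scriptC)$ is a unit ball (which produces the factor $|P(\scriptC)|$), write $|\det(\mathbf{u^\prime})|$ as a product of distances to the successive spans, and at each step apply the hypothesis to the balanced convex slab of width $c\delta$ to gain a factor $c\delta\lambda$. Your convexity bound on fiber sizes, which gives the slab estimate $|\scriptC^\prime_i|\lesssim\rho\,|\scriptC|$, justifies a step the paper only asserts. The John-ellipsoid reduction is harmless but unnecessary, since that fiber bound holds for any bounded convex set.
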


\begin{proof}
    By a change of variables in $\R^d$, we may assume $P(\scriptC)$, which is a bounded, balanced, convex set in $\R^k$, is the unit ball centered at the origin. The factor $|P(\scriptC)|$ on the right hand side of \eqref{E:det} results from the transformation law for $|\det(\mathbf{u^\prime})|$.

    Note that $|\det(\mathbf{u^\prime})|=\prod_{i=1}^k\dist(P(u_i),V_{i-1})$, where $V_0=\{0\}$, and $V_i$ is the linear span of $\{P(u_1),\dots,P(u_i)\}$ for $i=1,\dots,k-1$. Fix $u_1,\dots,u_{k-1}$. Let
    \begin{equation}
        \scriptC_1\coloneqq\{u_k\in\R^d:\dist(P(u_k),V_{k-1})<c\delta\},
    \end{equation}
    where $c$ is a constant chosen to be sufficiently small so that $|\scriptC_1|\leq\delta|\scriptC|$. We observe that $\scriptC_1$ is convex and balanced. Then
    \begin{equation}
        \int_{\scriptC}\dist(u_k,V_{k-1})d\mu(u_k)\geq c\delta\mu(\scriptC\setminus\scriptC_1)\geq c\delta\lambda.
    \end{equation}
    Fix $u_1,\dots,u_{k-2}$. Now let
    \begin{equation}
        \scriptC_2\coloneqq\{u_k\in\R^d:\dist(P(u_k),V_{k-2})<c\delta\},
    \end{equation}
    for another sufficiently small constant $c$. By the same reasoning, one has
    \begin{equation}
        \int_{\scriptC}\dist(u_{k-1},V_{k-2})d\mu(u_{k-1})\geq c\delta\mu(\scriptC\setminus\scriptC_2)\geq c\delta\lambda.
    \end{equation}
    Repeating this argument $k-2$ more times gives the desired bound.
\end{proof}

\begin{proof}[Proof of Lemma \ref{L:f_jl bound}]
    By Lemma \ref{L:appr}, one can find a constant $c>0$ and a bounded balanced convex set $\scriptC\subset\R^d$ such that $|E\cap\scriptC|\sim|E|$, and for any balanced convex set $\scriptC^\prime\subset\scriptC$
    \begin{equation}\label{EC:appr}
        |\scriptC^\prime|\leq\frac{1}{2}|\scriptC|\quad\implies\quad|E\cap(\scriptC\setminus\scriptC^\prime)|\geq c\left(\frac{|E|}{|\scriptC|}\right)^\eta|E|.
    \end{equation}
    Letting $\lambda=c(|E|/|\scriptC|)^\eta|E|$, $\delta=\frac{1}{2}$, and $\mu$ be the Lebesgue measure restricted to $E\cap\scriptC$, one may apply Lemma \ref{L:det} to obtain
    \begin{equation}
        \begin{array}{rcl}
            \int_{E^k}|\det(\mathbf{u^\prime})|\displaystyle\prod_{i=1}^kdu_i & \geq & \int_{\scriptC^k}|\det(\mathbf{u^\prime})|\displaystyle\prod_{i=1}^kd\mu(u_i) \\
            & \geq & c_1c^k2^{-k}\left(\left(\frac{|E|}{|\scriptC|}\right)^\eta|E|\right)^k|P(\scriptC)|.
        \end{array}
    \end{equation}
\end{proof}

\section{Semiregularity, regularity, and quasiextremality}\label{S:srq}
In this section, we make the notions of ``semiregularity" and ``regularity" rigorous. We will once again make use of the notations from \eqref{E:pi j twiddle} and \eqref{E:T}. 

\begin{definition}[$\varepsilon$-semiregular sets]\label{D:semi regular}
    Let $\varepsilon>0$. A bounded subset $\Omega\subset\HH^n$ of positive measure is said to be \textit{$\varepsilon$-semiregular with respect to $\pi$} if for each $1\leq j\leq\widetilde m$, the fibers of $\widetilde \pi_j$ in $\Omega$ have comparable sizes (up to $\varepsilon$). To be specific,
    \begin{equation}
        \forall z\in\Omega,j=1,\dots,\widetilde m\quad|\widetilde T_j(z)|\leq2\varepsilon^{-1}\frac{|\Omega|}{|\widetilde\pi_j(\Omega)|}.
    \end{equation}
    $\Omega$ is said to be \textit{$\varepsilon$-semiregular with respect to $\pi_*$} if
    \begin{equation}
        \forall z\in\Omega,j=1,\dots,\widetilde m\quad|\widetilde T_j^*(z)|\leq2\varepsilon^{-1}\frac{|\Omega|}{|\widetilde\pi_j^*(\Omega)|}.
    \end{equation}
    When no ambiguity occurs, we simply say $\Omega$ is \textit{$\varepsilon$-semiregular}.
\end{definition}

\begin{lemma}\label{L:semiregular refinement}
    Let $\sigma,\varepsilon>0$. Every $\sigma$-refinement (see Definition \ref{D:refinement}) $\Omega^\prime$ of an $\varepsilon$-semiregular set $\Omega$ with respect to $\pi$ ($\pi_*$, resp.) is $(C\varepsilon\sigma)$-semiregular with respect to $\pi$ ($\pi_*$, resp.) for some large constant $C>0$ depending only on $n$.
\end{lemma}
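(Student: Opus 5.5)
The argument is a direct comparison of the two sides of the semiregularity inequality before and after refinement; I present the case of $\pi$, and the case of $\pi_*$ is identical after replacing $\widetilde\pi_j,\widetilde T_j$ by $\widetilde\pi_j^*,\widetilde T_j^*$. Fix $z\in\Omega^\prime$ and $1\leq j\leq\widetilde m$.

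First, the fibers can only shrink. Since $\Omega^\prime\subset\Omega$, the definition \eqref{E:T} gives $\widetilde T^{\Omega^\prime}_j(z)\subset\widetilde T^{\Omega}_j(z)$. As $z\in\Omega$, the $\varepsilon$-semiregularity of $\Omega$ then yields
\[
|\widetilde T^{\Omega^\prime}_j(z)|\leq|\widetilde T^{\Omega}_j(z)|\leq2\varepsilon^{-1}\frac{|\Omega|}{|\widetilde\pi_j(\Omega)|}.
\]

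Second, the average fiber size does not drop much. Because $\Omega^\prime$ is a $\sigma$-refinement, $|\Omega^\prime|\gtrsim\sigma|\Omega|$, with the implicit constant depending only on $n$. Because $\widetilde\pi_j(\Omega^\prime)\subset\widetilde\pi_j(\Omega)$, we have $|\widetilde\pi_j(\Omega^\prime)|\leq|\widetilde\pi_j(\Omega)|$. Together these give
\[
\frac{|\Omega|}{|\widetilde\pi_j(\Omega)|}\leq\frac{|\Omega|}{|\widetilde\pi_j(\Omega^\prime)|}\lesssim\sigma^{-1}\frac{|\Omega^\prime|}{|\widetilde\pi_j(\Omega^\prime)|}.
\]

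Combining the two displays, we obtain $|\widetilde T^{\Omega^\prime}_j(z)|\leq2(C^{-1}\varepsilon\sigma)^{-1}|\Omega^\prime|/|\widetilde\pi_j(\Omega^\prime)|$ for a constant $C$ depending only on $n$. This is exactly $(C^{-1}\varepsilon\sigma)$-semiregularity of $\Omega^\prime$. Since we may replace $\varepsilon$ by anything smaller (the condition only weakens), this matches the statement "$(C\varepsilon\sigma)$-semiregular" once the constant is absorbed appropriately, i.e.\ read with $C$ small.

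There is no real obstacle. The only points needing care are the bookkeeping of constants (the constant in the statement should be read as small, or equivalently the inequality should be read as $\lesssim$) and measurability of $\widetilde\pi_j(\Omega^\prime)$; the latter can be handled with outer measure, since only an upper bound on $|\widetilde\pi_j(\Omega^\prime)|$ is used.
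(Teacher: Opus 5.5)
Your proof is correct and is essentially the paper's argument: the inclusion $\widetilde T_j^{\Omega^\prime}(z)\subset\widetilde T_j^{\Omega}(z)$, the semiregularity bound for $\Omega$, and then $|\widetilde\pi_j(\Omega^\prime)|\leq|\widetilde\pi_j(\Omega)|$ together with $|\Omega^\prime|\gtrsim\sigma|\Omega|$, all in one chain of inequalities. Your remark about the constant is also right. The argument (the paper's as well as yours) gives $(c\,\varepsilon\sigma)$-semiregularity with $c$ small, so the ``large constant $C$'' in the statement should be read accordingly.
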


\begin{proof}
    Let $\Omega\subset\HH^n$ be $\varepsilon$-semiregular with respect to $\pi$ (the argument is almost the same for remiregularity with respect to $\pi_*$), and let $\Omega^\prime\subset\Omega$ be a $\sigma$-refinement. Then there exists a small constant $c>0$ such that for all $z\in\Omega^\prime,1\leq j\leq\widetilde m$,
    \begin{equation}
        |\widetilde T_j^{\Omega^\prime}(z)|\leq|\widetilde T_j^{\Omega}(z)|\leq2\varepsilon^{-1}\frac{|\Omega|}{|\widetilde\pi_j(\Omega)|}\leq 2c\sigma^{-1}\varepsilon^{-1}\frac{|\Omega^\prime|}{|\widetilde\pi_j(\Omega^\prime)|}.
    \end{equation}
    Then $\Omega^\prime$ is $(c^{-1}\sigma\varepsilon)$-semiregular with respect to $\pi$.
\end{proof}

\begin{definition}[$\varepsilon$-quasiextremality]\label{D:quasiextremal}
We say a bounded, measurable subset $\Omega\subset\HH^n$ is \textit{$\varepsilon$-quasiextremal} if
\begin{equation}
    |\Omega|\geq\varepsilon\displaystyle\prod_{j=1}^M|\pi_j(\Omega)|^\frac{1}{p_j},
\end{equation}
and we say the functions $\{f_j\}_{j=1}^M$ are \textit{$\varepsilon$-quasiextremal} if
\begin{equation}
    \scriptM(f_1,\dots,f_M)\geq\varepsilon\displaystyle\prod_{j=1}^M\|f_j\|_{p_j}.
\end{equation}
\end{definition}

\begin{remark}
    Note that we don't \textit{a priori} assume \eqref{E:base case} or the restricted weak-type inequality
    \begin{equation}
        |\Omega|\lesssim\displaystyle\prod_{j=1}^M|\pi_j(\Omega)|^\frac{1}{p_j}
    \end{equation}
    to call $\{f_j\}$ or $\Omega$ $\varepsilon$-quasiextremal, respectively.
\end{remark}

\begin{lemma}\label{L:quasiextremal refinement}
    Let $\sigma,\varepsilon>0$. Every $\sigma$-refinement $\Omega^\prime$ of an $\varepsilon$-quasiextremal set $\Omega$ is $C\varepsilon\sigma$-quasiextremal for some large constant $C>0$.
\end{lemma}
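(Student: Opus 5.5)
The argument is a direct comparison of the two sides of the quasiextremality inequality, using only that projections are monotone under inclusion. Let $\Omega$ be $\varepsilon$-quasiextremal and let $\Omega^\prime\subset\Omega$ be a $\sigma$-refinement, so that by Definition \ref{D:refinement} there is an absolute constant $c_0>0$ (depending only on $n,\pi_j,p_j$, as all implicit constants do) with $|\Omega^\prime|\geq c_0\sigma|\Omega|$. Since $\Omega^\prime\subset\Omega$, we have $\pi_j(\Omega^\prime)\subset\pi_j(\Omega)$, hence $|\pi_j(\Omega^\prime)|\leq|\pi_j(\Omega)|$ for every $1\leq j\leq M$. Because each exponent $1/p_j\geq0$, it follows that
\[
\displaystyle\prod_{j=1}^M|\pi_j(\Omega^\prime)|^\frac{1}{p_j}\leq\displaystyle\prod_{j=1}^M|\pi_j(\Omega)|^\frac{1}{p_j}.
\]

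Chaining these facts with the quasiextremality of $\Omega$ gives
\[
|\Omega^\prime|\geq c_0\sigma|\Omega|\geq c_0\sigma\varepsilon\displaystyle\prod_{j=1}^M|\pi_j(\Omega)|^\frac{1}{p_j}\geq c_0\sigma\varepsilon\displaystyle\prod_{j=1}^M|\pi_j(\Omega^\prime)|^\frac{1}{p_j}.
\]
Thus $\Omega^\prime$ is $C\varepsilon\sigma$-quasiextremal with $C=c_0$. Measurability and boundedness of $\Omega^\prime$ are inherited from $\Omega$, or are part of what being a refinement means.

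Two remarks are in order. First, the constant produced is in fact small rather than large; the statement's ``large constant $C$'' should be read as $C\varepsilon\sigma$ with $C$ depending only on the implicit constant in the definition of refinement, mirroring the phrasing in Lemma \ref{L:semiregular refinement}. Second, the only point needing any care is that measurability of the projected sets $\pi_j(\Omega^\prime)$ is handled as elsewhere in the paper, for instance by using outer measure or by assuming the sets are nice. In either case monotonicity still holds, so there is no real obstacle.
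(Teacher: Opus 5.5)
Your proof is correct. The paper states this lemma without proof, and the intended justification is exactly your chain $|\Omega^\prime|\gtrsim\sigma|\Omega|\geq\sigma\varepsilon\prod_j|\pi_j(\Omega)|^{1/p_j}\geq\sigma\varepsilon\prod_j|\pi_j(\Omega^\prime)|^{1/p_j}$, using monotonicity of the projections just as in the proof of Lemma \ref{L:semiregular refinement}; you are also right that the resulting constant is the small implicit constant from Definition \ref{D:refinement}, so ``large'' in the statement is a slip.
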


\begin{definition}[$\varepsilon$-regularity]\label{D:epsilon regularity}
    Let $\varepsilon>0$, and $\Omega\subset\HH^n$. $\Omega$ is said to be \textit{$\varepsilon$-regular} if it is $\varepsilon$-semiregular with respect to both $\pi,\pi_*$ and $\varepsilon$-quasiextremal.
\end{definition}

\begin{lemma}\label{L:regular refinement}
    Let $\sigma,\varepsilon>0$. Every $\sigma$-refinement $\Omega^\prime$ of an $\varepsilon$-regular set $\Omega$ is $C\varepsilon\sigma$-regular for some large constant $C>0$.
\end{lemma}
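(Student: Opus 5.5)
The plan is to check the three defining properties of $\varepsilon$-regularity separately and read off each from a lemma already proved. By Definition \ref{D:epsilon regularity}, $\Omega$ is $\varepsilon$-semiregular with respect to $\pi$, $\varepsilon$-semiregular with respect to $\pi_*$, and $\varepsilon$-quasiextremal. Let $\Omega^\prime\subset\Omega$ be a $\sigma$-refinement, so $|\Omega^\prime|\gtrsim\sigma|\Omega|$.

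\textbf{Semiregularity.} Lemma \ref{L:semiregular refinement} applied to $\pi$ gives that $\Omega^\prime$ is $(C_1\varepsilon\sigma)$-semiregular with respect to $\pi$. Applied to $\pi_*$, it gives the same conclusion with respect to $\pi_*$. The mechanism is a one-line inequality. The fibers only shrink, since $\widetilde T_j^{\Omega^\prime}(z)\subset\widetilde T_j^{\Omega}(z)$. The average also does not drop too much, since $|\widetilde\pi_j(\Omega^\prime)|\le|\widetilde\pi_j(\Omega)|$ and $|\Omega^\prime|\gtrsim\sigma|\Omega|$.

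\textbf{Quasiextremality.} Lemma \ref{L:quasiextremal refinement} gives that $\Omega^\prime$ is $(C_2\varepsilon\sigma)$-quasiextremal. This is again immediate: $|\pi_j(\Omega^\prime)|\le|\pi_j(\Omega)|$ for every $j$, so
\[
|\Omega^\prime|\gtrsim\sigma|\Omega|\ge\sigma\varepsilon\prod_{j=1}^M|\pi_j(\Omega)|^{1/p_j}\ge\sigma\varepsilon\prod_{j=1}^M|\pi_j(\Omega^\prime)|^{1/p_j}.
\]

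\textbf{Combining.} Since each notion is monotone in its parameter, I take the smaller of the constants. This makes $\Omega^\prime$ satisfy all three conditions with a common parameter $c\,\varepsilon\sigma$, where $c$ depends only on $n,\pi_j,p_j$.

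There is one bookkeeping point, and it is the only real obstacle. In the semiregularity proof the constant appears as $c^{-1}$ with $c$ small, and that constant sits on the favorable side. The intended meaning of ``$C\varepsilon\sigma$'' is therefore a parameter comparable to $\varepsilon\sigma$ up to an admissible constant. I will state explicitly that all three conditions are monotone, so that $\delta$-regularity implies $\delta^\prime$-regularity for $\delta^\prime\le\delta$. That lets the single constant be chosen as the worst of the three.
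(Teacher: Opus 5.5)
Your proposal is correct and takes the same approach as the paper. The paper gives this lemma no separate proof because it follows at once from Lemma~\ref{L:semiregular refinement} (applied to both $\pi$ and $\pi_*$) and Lemma~\ref{L:quasiextremal refinement}, after taking the worst of the three constants.

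Your bookkeeping remark is also right. From $|\Omega^\prime|\geq c\sigma|\Omega|$ you get $|\Omega|\leq c^{-1}\sigma^{-1}|\Omega^\prime|$, so the honest conclusion is $c\,\varepsilon\sigma$-regularity with a \emph{small} constant $c$. The ``large constant $C$'' in the statement, and the $c$ versus $c^{-1}$ slip in the proof of Lemma~\ref{L:semiregular refinement}, are typos. Monotonicity in the parameter then lets one common $c$ serve all three conditions.
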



\begin{lemma}\label{L:semiregularity to regularity}
    Let $0<\sigma\leq\varepsilon\leq1$, let $\Omega\subset\HH^n$ be $\varepsilon$-semiregular with respect to $\pi$, and let $\Omega^\prime\subset\Omega$ be an $\varepsilon^C$-refinement that is $\varepsilon$-semiregular with respect to $\pi_*$. Suppose $\Omega$ is $\varepsilon$-quasiextremal, and
    \begin{equation}
        \Omega^{\prime\prime}=\{z\in\Omega^\prime:\forall1\leq j\leq\widetilde m\quad|(T_j^*)^{\Omega^\prime}(z)|\leq\varepsilon^{-C}\frac{|\Omega^\prime|}{|\widetilde\pi_j(\Omega^\prime)|}\}
    \end{equation}
    satisfies $|\Omega^\prime|\sim\sigma|\Omega|$. Then $\Omega^\prime$ is $\sigma^C$-regular. 
\end{lemma}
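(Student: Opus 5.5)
Regularity of $\Omega^\prime$ (Definition \ref{D:epsilon regularity}) has three parts: $\sigma^C$-semiregularity with respect to $\pi$, with respect to $\pi_*$, and $\sigma^C$-quasiextremality. I would check each one by comparing the relevant quantities for $\Omega^\prime$ with those for $\Omega$, using $|\Omega^{\prime\prime}|\sim\sigma|\Omega|$ and $\Omega^{\prime\prime}\subset\Omega^\prime\subset\Omega$ to get $|\Omega^\prime|\gtrsim\sigma|\Omega|$. Thus $\Omega^\prime$ is a $\sigma$-refinement of $\Omega$, and also an $\varepsilon^C$-refinement by hypothesis.

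For the $\pi$-side, I apply Lemma \ref{L:semiregular refinement} to the $\sigma$-refinement $\Omega^\prime$ of the $\varepsilon$-semiregular set $\Omega$. This makes $\Omega^\prime$ $(C\varepsilon\sigma)$-semiregular with respect to $\pi$. Since $\sigma\leq\varepsilon\leq1$, we have $\varepsilon\sigma\geq\sigma^2$, so this is $\sigma^{C}$-semiregular after enlarging $C$. Concretely, for $z\in\Omega^\prime$,
\[
|\widetilde T_j^{\Omega^\prime}(z)|\leq|\widetilde T_j^{\Omega}(z)|\leq2\varepsilon^{-1}\frac{|\Omega|}{|\widetilde\pi_j(\Omega)|}\lesssim\varepsilon^{-1}\sigma^{-1}\frac{|\Omega^\prime|}{|\widetilde\pi_j(\Omega^\prime)|},
\]
using $|\widetilde\pi_j(\Omega^\prime)|\leq|\widetilde\pi_j(\Omega)|$. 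For quasiextremality, Lemma \ref{L:quasiextremal refinement} gives that $\Omega^\prime$ is $C\varepsilon\sigma$-quasiextremal, since $|\pi_j(\Omega^\prime)|\leq|\pi_j(\Omega)|$. This is again $\gtrsim\sigma^2$, hence $\sigma^C$-quasiextremal.

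The $\pi_*$-side is where I expect the main difficulty. We are told $\Omega^\prime$ is $\varepsilon$-semiregular with respect to $\pi_*$, which is already stronger than $\sigma^C$ since $\sigma\leq\varepsilon$. So if that hypothesis is read literally, nothing more is needed. The role of $\Omega^{\prime\prime}$ is presumably to supply the $\pi_*$-fiber control quantitatively in terms of $\sigma$, in case the semiregularity bound is meant relative to $\Omega$ rather than $\Omega^\prime$. In that reading, I would argue that points of $\Omega^\prime$ whose $\widetilde\pi_j^*$-fibers exceed $\varepsilon^{-C}|\Omega^\prime|/|\widetilde\pi_j^*(\Omega^\prime)|$ lie outside $\Omega^{\prime\prime}$, and then use $|\Omega^{\prime\prime}|\sim\sigma|\Omega|\lesssim\sigma\varepsilon^{-C}|\Omega^\prime|$. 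If needed, one could pass to $\Omega^{\prime\prime}$ itself, a $\sigma$-refinement, and re-run the refinement lemmas. Since $\varepsilon^{-C}\leq\sigma^{-C}$, the fiber bound on $\Omega^{\prime\prime}$ is $\sigma^{-C}$ times the average, which gives $\sigma^C$-semiregularity with respect to $\pi_*$.

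Collecting the three parts with a single enlarged exponent $C$ (absorbing all $\varepsilon^{\pm C}$ factors via $\sigma\leq\varepsilon$) shows $\Omega^\prime$ is $\sigma^C$-regular.
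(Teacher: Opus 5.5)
Your argument is correct for the statement as written. The paper states this lemma without proof, but the reasoning it intends is clearly yours: treat $\Omega'$ as a $\sigma$-refinement of $\Omega$, apply Lemmas \ref{L:semiregular refinement} and \ref{L:quasiextremal refinement} (your displayed chain is the same computation as the paper's proof of Lemma \ref{L:semiregular refinement}), take $\pi_*$-semiregularity directly from the hypothesis, and absorb $\varepsilon^{-1}\le\sigma^{-1}$.

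The speculative paragraph about $\Omega''$ is unnecessary. As written it establishes $\pi_*$-semiregularity of $\Omega''$ rather than of $\Omega'$, and it omits the harmless extra factor $|\Omega'|/|\Omega''|\lesssim\sigma^{-1}$ incurred in passing from the $\Omega'$-average to the $\Omega''$-average. Either drop it or state it as a separate claim about $\Omega''$.
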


\section{The restricted weak-type inequality for semiregular sets}\label{S:rwt semireg}
Now we have all the ingredients in place to prove \eqref{E:rwt} for semiregular sets.

\begin{proposition}\label{P:semi reg rwt}
    Let $\Omega\subset\HH^n$ be a $\sigma$-semiregular set with respect to $\pi$ of for some $\sigma\in(0,1)$, and let $p_j$ be the exponenets satisfying the assumptions in Theorem \ref{T:base case}. Then,
    \begin{equation}\label{E:semi reg rwt avg}
        |\Omega|^N\gtrsim\sigma^C\left(\displaystyle\prod_{j=1}^{\widetilde m}\beta_j^{\widetilde q_j}\right)\left(\displaystyle\prod_{j=m+1}^M\alpha_j^{q_j}\right)\gtrsim\sigma^C\displaystyle\prod_{j=1}^M\alpha_j^{q_j},
    \end{equation}
    where
    \[
    \alpha_j=\frac{|\Omega|}{|\pi_j(\Omega)|},\quad1\leq j\leq M;\qquad\beta_j=\frac{|\Omega|}{|\widetilde\pi_j(\Omega)|},\quad1\leq j\leq\widetilde m.
    \]
    $N,q_j,\widetilde q_j,\widetilde\pi_j$ are as in Section \ref{S:more notations}. In particular,
    \begin{equation}\label{E:semi reg rwt}
        |\Omega|\lesssim\sigma^{-C}\displaystyle\prod_{j=1}^M|\pi_j(\Omega)|^\frac{1}{p_j}.
    \end{equation}

    Let $L_j$ be as in \eqref{E:pi_j}. Suppose $\Omega$ is in addition $\varepsilon$-quasiextremal for some $0<\varepsilon\leq\sigma$. Then for any $A\geq3$, there exists a refined flow scheme $(z_0,S_1,\scriptF_l,\scriptG_l)$ of $(\Omega,\pi,A)$ satisfying the following property. For each $s\in S_1$, there exists an ellipsoid $\scriptC(s)\subset\R^n$ centered at the origin adapted to $\{L_j\}_{j=m+1}^M$ as in Definition \ref{D:partition} such that
    \begin{equation}\label{E:small r(s)}
        \scriptF_1(s)\subset\scriptC(s),\qquad|\scriptC(s)|\lesssim\varepsilon^{-C}|\scriptF_1(s)|.
    \end{equation}

    The implicit constants and $C$ depend only on $n,\pi_j,p_j$.
\end{proposition}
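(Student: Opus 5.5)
The plan is to run the inflation-map argument of Section \ref{S:inflation} on a refined flow scheme, and then pass from the $\widetilde\pi_j$ to the $\pi_j$ with the generalized Finner inequality. First I apply Lemma \ref{L:refinement} to get a refined flow scheme $(z_0,S_1,\scriptF_l,\scriptG_l)$ of $(\Omega,\pi,A)$ with $A\geq3$. By Lemma \ref{L:semiregular refinement}, all refinements in the chain remain $C\sigma$-semiregular with respect to $\pi$. Hence $S=S_1$ satisfies both the lower bounds \eqref{E:refinement 4} and the upper bounds from semiregularity on its $P_j$-fibers. Up to replacing $\beta_j$ by quantities comparable within $\sigma^{-C}$, this gives the regularity hypothesis \eqref{E:regular E} for $E=S$, with ratios $|S|/|P_j(S)|\sim\beta_j/\beta_{\widetilde m}$ (up to $\sigma^{\pm C}$) and $|S|\gtrsim\beta_{\widetilde m}$.

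Next I apply Lemma \ref{L:diffeo} to obtain
\[
|\Omega|^N\gtrsim\beta_{\widetilde m}^N\int_\Xi\prod_{j,l}f_{j,l}(s^{j,l})\,ds .
\]
Lemma \ref{L:am-gm} bounds the integral below by $G_{\widetilde m}$ with $G_0=\prod_{j,l}f_{j,l}$, which reduces matters to diagonal lower bounds on $f_{j,l}(s)$. For fixed $s$, I use Lemma \ref{L:f_jl bound} with $E=\scriptF(s)$ to get $f_{j,l}(s)\gtrsim(|\scriptF(s)|^{1+\eta}|\scriptC(s)|^{-\eta})^{\widetilde k_j}|P_j^\perp(\scriptC(s))|$. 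Multiplying over $(j,l)$ and using \eqref{A''}, the product becomes roughly $|\scriptF|^{N}\prod_j|P_j^\perp(\scriptC)|^{\dbtilde q_j}$ times a loss $(|\scriptF|/|\scriptC|)^{C\eta}$. I then rewrite $\prod_j|P_j^\perp(\scriptC)|^{\dbtilde q_j}$ via Corollary \ref{C:Lj Pj perp finner} for the family $\{L_j\}_{j>m}$, applied to the convex set $\scriptC$. Combined with $|L_j(\scriptF)|\lesssim|\scriptF|/\alpha_j$ from \eqref{E:refinement 3} and $|\scriptF|\gtrsim\beta^*_{\widetilde m}$, this yields $\prod_{j,l}f_{j,l}(s)\gtrsim(\text{loss})\prod_{j>m}\alpha_j^{q_j}$ up to powers of $\beta$'s. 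The $\beta$ powers are then absorbed by Lemma \ref{L:G m twiddle lower bound}, which supplies $\prod_j(\beta_j/\beta_{\widetilde m})^{\dbtilde q_j}$, and assembling the exponents via \eqref{E:qj double twiddle} gives the first inequality of \eqref{E:semi reg rwt avg}. The second inequality comes from Corollary \ref{C:specialize gen finner} applied on each $\pi$-fiber $T^\Omega(z)$; the regularity it requires is exactly semiregularity. Integrating over $\pi(\Omega)$ with Hölder then gives $\prod\beta_j^{\widetilde q_j}\gtrsim\sigma^C\prod_{j\le m}\alpha_j^{q_j}$, and \eqref{E:semi reg rwt} is a restatement of this.

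The main obstacle is controlling the $\eta$-loss $(|\scriptF(s)|/|\scriptC(s)|)^{\eta}$ uniformly in $s$. The plan is to choose $\eta$ small and argue by dichotomy. For each $s$, either $|\scriptC(s)|\le\varepsilon^{-C}|\scriptF(s)|$, or the loss is so large that, fed through the chain above, it contradicts $\varepsilon$-quasiextremality (or, in the first part, it is simply a power of $\sigma$ after pigeonholing dyadic values of $|\scriptF(s)|/|\scriptC(s)|$ and discarding bad $s$). To run this I would pigeonhole $s\in S$ into dyadic classes of the ratio, keep a class carrying a $\gtrsim 1/\log$ fraction, and refine $S$ to it, re-running Lemma \ref{L:refinement} so that the scheme is preserved. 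For the quasiextremal part, I combine the lower bound with $|\Omega|\ge\varepsilon\prod|\pi_j(\Omega)|^{1/p_j}$: this shows that for most $s$, equivalently after refining, the ratio is $\gtrsim\varepsilon^{C}$. Lemma \ref{L:G m twiddle upper bound} localizes that conclusion from $G_{\widetilde m}$ down to each $s$, giving \eqref{E:small r(s)} with $\scriptC(s)$ the balanced convex set from Lemma \ref{L:appr}. Finally, Corollary \ref{C:Lj Pj perp finner quasiextremal} replaces $\scriptC(s)$ by an origin-centered ellipsoid adapted to $\{L_j\}_{j>m}$, at a cost of a further $\varepsilon^{-C}$. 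The delicate bookkeeping is ensuring the refinement of $S$ keeps \eqref{E:regular E}; this is exactly what the second part of Lemma \ref{L:G m twiddle upper bound} provides, thanks to the nested kernels.
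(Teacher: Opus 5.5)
Your outline follows the paper's architecture: a refined flow scheme, Lemma \ref{L:diffeo}, Lemma \ref{L:am-gm}, a diagonal bound from Lemma \ref{L:f_jl bound} and Corollary \ref{C:Lj Pj perp finner}, Lemma \ref{L:G m twiddle lower bound}, Corollary \ref{C:specialize gen finner}, and then Lemma \ref{L:G m twiddle upper bound} with Corollary \ref{C:Lj Pj perp finner quasiextremal} for quasiextremals. The gap is the step you flag as the main obstacle: your fix for the $\eta$-loss does not work.

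In the first part there is no quasiextremality, and nothing bounds $|\scriptC(s)|/|\scriptF(s)|$ by a power of $\sigma$. Semiregularity controls $\widetilde\pi_j$-fibers inside $\pi$-fibers, whereas $\scriptF(s)$ is a slice of a $\pi_*$-fiber whose shape is unconstrained. Pigeonholing dyadic values of the ratio only selects a class on which the loss may be uniformly enormous, so there is nothing to discard.

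In the quasiextremal part your dichotomy runs backwards. Quasiextremality, fed through the chain, yields only an \emph{upper} bound $G_{\widetilde m}\lesssim\varepsilon^{-C}\prod_j\beta_j^{\dbtilde q_j}$. That can force $|\scriptC(s)|/|\scriptF(s)|$ to be small only if $G_0$ \emph{increases} with the ratio. A pure loss factor can never contradict quasiextremality.

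The missing idea is that the $j=\widetilde m$ factors more than pay for the loss. Since $P_{\widetilde m}^\perp$ is the identity, writing $|\scriptC(s)|\sim 2^{r(s)}|\scriptF(s)|$, Lemma \ref{L:f_jl bound} gives $f_{\widetilde m,l}(s)\gtrsim 2^{-r(s)n\eta}|\scriptF(s)|^n|\scriptC(s)|\sim 2^{r(s)(1-n\eta)}|\scriptF(s)|^{n+1}$. Keep this factor, and use $|P_j^\perp(\scriptC(s))|\ge|P_j^\perp(\scriptF(s)\cap\scriptC(s))|$ only for $j<\widetilde m$. Then \eqref{A''} and \eqref{E:qj double twiddle} give $\prod_{j,l}f_{j,l}(s)\gtrsim 2^{r(s)(\dbtilde q_{\widetilde m}-N\eta)}\prod_{j=1}^{\widetilde m}|P_j^\perp(\scriptF(s)\cap\scriptC(s))|^{\widetilde q_j}$.

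Since $\dbtilde q_{\widetilde m}\ge 1$, choosing $\eta<\dbtilde q_{\widetilde m}/N$ makes the exponent of $2^{r(s)}$ positive. This is exactly where the strict inequality \eqref{B twiddle} is needed, and your plan never uses it. Applying Corollary \ref{C:Lj Pj perp finner} to $\scriptF(s)\cap\scriptC(s)$, together with \eqref{E:refinement 3}, then gives $\prod_{j,l}f_{j,l}(s)\gtrsim 2^{r(s)+\widetilde r(s)}\prod_{j>m}\alpha_j^{q_j}$ with $r(s),\widetilde r(s)\ge 0$. Here $2^{\widetilde r(s)}$ records how far the Finner bound is from $\prod_{j>m}\alpha_j^{q_j}$. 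So $G_0\ge 1$, and Lemma \ref{L:G m twiddle lower bound} applies with no pigeonholing at all.

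In the quasiextremal case, Lemma \ref{L:G m twiddle upper bound} then bounds $2^{r(s)}$ and $2^{\widetilde r(s)}$ separately by $\varepsilon^{-C}$. The first gives $|\scriptC(s)|\lesssim\varepsilon^{-C}|\scriptF(s)|$. The second, combined with the first and convexity of $\scriptC(s)$, gives near-extremality of $\scriptC(s)$ in \eqref{E:Lj Pj perp finner}. That is the hypothesis Corollary \ref{C:Lj Pj perp finner quasiextremal} requires, and your plan invokes that corollary without ever producing it.

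Your bookkeeping is also off. Each $P_j$-fiber of $S$ is a $\widetilde\pi_j$-fiber, so $|S|/|P_j(S)|\sim\beta_j$ up to $\sigma^{-1}$, not $\beta_j/\beta_{\widetilde m}$. Hence $\beta_{\widetilde m}^N\prod_j\beta_j^{\dbtilde q_j}=\prod_j\beta_j^{\widetilde q_j}$ directly, and $\beta^*_{\widetilde m}$ plays no role in the first inequality.
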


\begin{proof}
    Recall the notations from Section \ref{S:more notations} and \eqref{E:qj double twiddle}. Write $\widetilde k=(0,\widetilde k_1,\dots,\widetilde k_{\widetilde m}),\dbtilde q=(\dbtilde q_1,\dots,\dbtilde q_{\widetilde m})$. Let $(z_0,S_1,\scriptF_l,\scriptG_l)$ be a refined flow scheme of $(\Omega,\pi,A)$ with $A\geq3$ as in Lemma \ref{L:refinement}, and write $S=S_1,\scriptF=\scriptF_1,\scriptG=\scriptG_1$. \eqref{E:refinement 2}-\eqref{E:refinement 4} and $\sigma$-semiregularity imply
    \begin{gather}
        \label{E:SG alpha bound}\frac{|S|}{|L_j(S)|},\frac{|\scriptG|}{|L_j(\scriptG)|}\gtrsim\alpha_j,\qquad1\leq j\leq m; \\
        \label{E:F bound}\frac{|\scriptF|}{|L_j(\scriptF)|}\gtrsim\alpha_j,\qquad m<j\leq M; \\
        \label{E:SG beta bound}\beta_j\lesssim|P_j^{-1}(s)\cap S|,|P_j^{-1}(v)\cap\scriptG|\lesssim\sigma^{-1}\beta_j,\qquad s\in P_j(S),v\in P_j(\scriptG),1\leq j\leq\widetilde m.
    \end{gather}
    
    We first prove an intermediate result:
    \begin{equation}\label{E:intermediate semi reg rwt avg}
        |\Omega|^N\gtrsim\sigma^C\displaystyle\prod_{j=1}^m\beta_j^{\widetilde q_j}\displaystyle\prod_{j=m+1}^M\alpha_j^{q_j}.
    \end{equation}    
    Recall \eqref{E:Omega flat} and Lemma \ref{L:diffeo}. Then
    \[
    |\Omega|^N\gtrsim\beta_{\widetilde m}^N\int_\Xi\displaystyle\prod_{j=1}^{\widetilde m}\displaystyle\prod_{l=1}^{\dbtilde q_j}f_{j,l}(s^{j,l})ds,
    \]
    where
    \[
    f_{j,l}(s)\coloneqq\int_{(\scriptF(s))^{\widetilde k_j}}|\det(\mathbf u^{j,l})|\prod_{a=1}^{\widetilde k_j}du^{j,l,a},\qquad s\in S.
    \]
    
    For the time being, we consider the diagonal case, i.e., $s=s^{j,l}$ for all $j,l$, and aim to bound $\prod_{j,l}f_{j,l}(s)$. Let $\eta>0$ be a small constant depending only on $n$ to be determined later. By Lemma \ref{L:f_jl bound}, there exists a bounded balanced convex set $\scriptC(s)\subset\R^n$ such that $|\scriptC(s)|\sim 2^{r(s)}|\scriptF(s)|$ for some $r(s)\geq0$, $|\scriptF(s)\cap\scriptC(s)|\sim|\scriptF(s)|$, and
    \begin{equation}
        f_{j,l}(s)\gtrsim
        \begin{cases}
            2^{-r(s)\widetilde k_j\eta}|\scriptF(s)\cap\scriptC(s)|^{\widetilde k_j}|P_j^\perp(\scriptF(s)\cap\scriptC(s))|, & 1\leq j<\widetilde m, \\
            2^{r(s)(1-n\eta)}|\scriptF(s)\cap\scriptC(s)|^n|\scriptF(s)\cap\scriptC(s)|, & j=\widetilde m,
        \end{cases}
    \end{equation}
    where we recall $\widetilde k_{\widetilde m}=n$ from Section \ref{S:more notations} and that $P_j^\perp:\R^n\longrightarrow\R^{\widetilde k_j}$ denotes the orthogonal projection onto the first $\widetilde k_j$ coordinates. $\widetilde k_{\widetilde m}=n$, \eqref{E:qj double twiddle}, and \eqref{A''} yield
    \begin{equation}
        \displaystyle\prod_{j=1}^{\widetilde m}\displaystyle\prod_{l=1}^{\dbtilde q_j}f_{j,l}(s)\gtrsim2^{r(s)(\dbtilde q_{\widetilde m}-N\eta)}\displaystyle\prod_{j=1}^{\widetilde m}|P_j^\perp(\scriptF(s)\cap\scriptC(s))|^{\widetilde q_j}.
    \end{equation}
    Recall \eqref{B''}, \eqref{E:order of Q_i}, \eqref{E:qj twiddle}, and \eqref{E:qj double twiddle}. Since $\dbtilde q_{\widetilde m}\geq1$, we can consider $\eta=(\dbtilde q_{\widetilde m}-0.5)/N>0$. (This is the only place where we require the strict inequality \eqref{B twiddle} instead of \eqref{B}.) We now call $2r(s)(\dbtilde q_{\widetilde m}-N\eta)$ our new $r(s)$.
    
    Applying Corollary \ref{C:Lj Pj perp finner} with $\omega=\scriptF(s)\cap\scriptC(s)$ yields
    \begin{equation}\label{E:F(s) finner}
        \displaystyle\prod_{j=1}^{\widetilde m}(|P_j^\perp(\scriptF(s)\cap\scriptC(s))|)^{\widetilde q_j}\geq\displaystyle\prod_{j=m+1}^M\left(\frac{|\scriptF(s)\cap\scriptC(s)|}{|L_j(\scriptF(s)\cap\scriptC(s))|}\right)^{q_j}.
    \end{equation}
    Let
    \begin{equation}\label{E:r twiddle}
        2^{\widetilde r(s)}\coloneqq\frac{C\displaystyle\prod_{j=1}^{\widetilde m}|P_j^\perp(\scriptF(s)\cap\scriptC(s))|^{\widetilde q_j}}{\displaystyle\prod_{j=m+1}^M\alpha_j^{q_j}}.
    \end{equation}
    $|\scriptF(s)\cap\scriptC(s)|\sim|\scriptF(s)|$, \eqref{E:F bound}, and \eqref{E:F(s) finner} imply $\widetilde r(s)\geq0$ for sufficiently large constant $C$. Thus,
    \begin{equation}
        \displaystyle\prod_{j=1}^{\widetilde m}\displaystyle\prod_{l=1}^{\dbtilde q_j}f_{j,l}(s)\gtrsim2^{r(s)+\widetilde r(s)}\displaystyle\prod_{j=m+1}^M\alpha_j^{q_j}.
    \end{equation}
    
    By Lemma \ref{L:am-gm},
    \begin{equation}\label{E:avg C(s) size}
        |\Omega|^N\gtrsim\beta_{\widetilde m}^N\displaystyle\prod_{j=m+1}^M\alpha_j^{q_j}G_{\widetilde m},
    \end{equation}
    where $G_j$ is associated with $(\widetilde k,\dbtilde q,S,())$ and $g(s)=2^{r(s)+\widetilde r(s)}\geq1$. \eqref{E:SG beta bound}, Lemma \ref{L:G m twiddle lower bound}, and \eqref{E:qj double twiddle} yield \eqref{E:intermediate semi reg rwt avg}. Then \eqref{E:semi reg rwt avg} follows from \eqref{E:SG beta bound}, and Corollary \ref{C:specialize gen finner}.

    Assume $\Omega$ is in addition $\varepsilon$-quasiextremal for some $0<\varepsilon\leq\sigma$. Then the chain of inequality
    \begin{equation}
        |\Omega|^N\gtrsim\beta_{\widetilde m}^N\displaystyle\prod_{j=m+1}^M\alpha_j^{q_j}G_{\widetilde m}\gtrsim\sigma^C\displaystyle\prod_{j=1}^M\alpha_j^{q_j}
    \end{equation}
    can be reversed as follows:
    \begin{equation}
        |\Omega|^N\lesssim\varepsilon^{-C_1}\beta_{\widetilde m}^N\displaystyle\prod_{j=m+1}^M\alpha_j^{q_j}G_{\widetilde m}\lesssim\varepsilon^{-C_2}\displaystyle\prod_{j=1}^M\alpha_j^{q_j}
    \end{equation}
    for sufficiently large constants $C_1,C_2$ depending only on $p_j$.
    Applying Lemma \ref{L:G m twiddle upper bound} with $E=S,\tau=()$, we have $2^{r(s)+\widetilde r(s)}\leq C\varepsilon^{-C}$ for all $s$ in a possibly smaller $S=S_1$, but $(z_0,S_1,\scriptF_l,\scriptG_l)$ remains a refined flow scheme. Since $r(s),\widetilde r(s)\geq0$, we have $2^{r(s)}\leq C\varepsilon^{-C}$ and $2^{\widetilde r(s)}\leq C\varepsilon^{-C}$. Unpacking the definitions of $r(s),\widetilde r(s)$ and invoking convexity of $\scriptC(s)$, $|\scriptF(s)\cap\scriptC(s)|\sim|\scriptF(s)|$, and \eqref{E:F bound}, we obtain
    \begin{align*}
        \displaystyle\prod_{j=1}^{\widetilde m}|P_j^\perp(\scriptC(s))|^{\widetilde q_j}\lesssim    &   \varepsilon^{-C}\displaystyle\prod_{j=1}^{\widetilde m}|P_j^\perp(\scriptC(s)\cap\scriptF(s))|^{\widetilde q_j} \\
        \lesssim     &  \varepsilon^{-C}\displaystyle\prod_{j=m+1}^M\left(\frac{|\scriptF(s)\cap\scriptC(s)|}{|L_j(\scriptF(s)\cap\scriptC(s))|}\right)^{q_j} \\
        \lesssim    &   \varepsilon^{-C}\displaystyle\prod_{j=m+1}^M\left(\frac{|\scriptC(s)|}{|L_j(\scriptC(s))|}\right)^{q_j},
    \end{align*}
    where $C$ may vary line by line, but only depends on $n,\pi_j,p_j$. By Corollary \ref{C:Lj Pj perp finner quasiextremal}, we can choose an ellipsoid $\scriptC(s)\subset\R^n$ centered at the origin adapted to $\{L_j\}_{j=m+1}^M$ satisfying \eqref{E:small r(s)}.
\end{proof}

\begin{corollary}\label{C:C appr S_1}
    Let $\varepsilon>0,A\geq2$. Suppose $\Omega$ is $\varepsilon$-regular. Let $L_j$ be as in \eqref{E:pi_j}. Then there exist a large constant $C$ depending only on $n,\pi_j,p_j$, a refined flow scheme $(z_0,S_1,\scriptF_l,\scriptG_l)$ of $(\Omega,\pi,A)$ and an ellipsoid $\scriptC\subset\R^n$ centered at the origin adapted to $\{L_j\}_{j=1}^m$ as in Definition \ref{D:partition} such that
    \begin{equation}
        S_1\subset\scriptC,\qquad|\scriptC|\lesssim\varepsilon^{-C}|S_1|.
    \end{equation}
\end{corollary}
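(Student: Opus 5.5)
The plan is to run the argument of Proposition \ref{P:semi reg rwt} with the roles of $\pi$ and $\pi_*$ exchanged, and then transfer the resulting ellipsoid structure from the first fibers to $S_1$. Since $\Omega$ is $\varepsilon$-regular, it is $\varepsilon$-semiregular with respect to $\pi_*$ and $\varepsilon$-quasiextremal. The $\pi_*$-version of Proposition \ref{P:semi reg rwt}, with $\sigma=\varepsilon$, therefore gives a refined flow scheme $(z_0^*,S_1^*,\scriptF_l^*,\scriptG_l^*)$ of $(\Omega,\pi_*,A+1)$. In that scheme the first $\pi$-fibers $\scriptF_1^*(s)$ are contained in origin-centered ellipsoids $\scriptC(s)$ adapted to $\{L_j\}_{j=1}^m$, with $|\scriptC(s)|\lesssim\varepsilon^{-C}|\scriptF_1^*(s)|$.

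The key observation is that $\scriptF_1^*(s)$ is exactly $T^{\Omega_2}(\Phi_1^*(s)(z_0^*))$. That is, after translating the base point, it is the parametrized $\pi$-fiber through the point $z_1=\Phi^*_1(s)(z_0^*)$ of the refined set $\Omega_2$. I would fix one $s\in S_1^*$ and set $z_0:=z_1$. I would then build a refined flow scheme of $(\Omega,\pi,A)$ starting at $z_0$, reusing the chain $\Omega_2\subset\cdots\subset\Omega_{A+1}$ shifted by one index. With this choice $S_1=T^{\Omega_2}(z_0)=\scriptF_1^*(s)$, up to the affine reparametrization relating $e^{u\cdot\mathbf X}$ based at different points of the same $\pi$-fiber. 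That reparametrization is a translation in $\R^n$, because $e^{u\cdot\mathbf X}e^{u'\cdot\mathbf X}=e^{(u+u')\cdot\mathbf X}$.

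The properties \eqref{E:refinement 2}--\eqref{E:refinement 4} then transfer directly. The refinement lemma gives the lower bounds on every $\pi$-fiber of $\Omega_2$ and on the subsequent $\scriptF_l,\scriptG_l$, because the odd/even alternation of Lemma \ref{L:refinement} shifts consistently when we start one step later. This is why I take $A+1$ in the starred scheme, so that $A$ steps remain after the shift.

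The main obstacle is the translation. If $S_1=\scriptF_1^*(s)-u_0$, then $S_1\subset\scriptC(s)-u_0$, and this ellipsoid is no longer centered at the origin. To fix this, I would choose $z_0$ so that $u_0$ is the origin. That is, I pick $z_0$ to be the point $e^{0\cdot\mathbf X}\Phi_1^*(s)(z_0^*)$, so the parametrization of $T^{\Omega_2}(z_0)$ is literally $\scriptF_1^*(s)$, and the statement follows with $\scriptC=\scriptC(s)$.

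It then remains to check that $z_0$ lies in $\Omega_2$ and is a legitimate base point. If it is not, I would instead pick $u_0\in S_1$ and replace $\scriptC$ by $2\scriptC(s)$: since $u_0\in\scriptC(s)$ and $\scriptC(s)$ is balanced and convex, we have $\scriptC(s)-u_0\subset 2\scriptC(s)$, and $|2\scriptC(s)|\sim|\scriptC(s)|$. This gives an origin-centered ellipsoid adapted to $\{L_j\}_{j=1}^m$ with $S_1\subset\scriptC$ and $|\scriptC|\lesssim\varepsilon^{-C}|S_1|$.
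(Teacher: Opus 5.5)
Your proposal is correct and is the paper's argument: apply the $\pi_*$-version of Proposition \ref{P:semi reg rwt} to obtain a refined flow scheme $(z_*,S_1^*,\scriptF_l^*,\scriptG_l^*)$ of $(\Omega,\pi_*,A+1)$. Then fix $s_0\in S_1^*$ and take the index-shifted scheme $(e^{s_0\cdot\mathbf Y}(z_*),\scriptF_1^*(s_0),\scriptG_l^*(s_0,\cdot),\scriptF_{l+1}^*(s_0,\cdot))$ with $\scriptC=\scriptC(s_0)$. Your translation worry is moot: $z_0=e^{s_0\cdot\mathbf Y}(z_*)$ automatically lies in $\Omega_1\subset\Omega_2$ because $s_0\in T_*^{\Omega_1}(z_*)$, so $S_1=\scriptF_1^*(s_0)$ literally and no doubling of $\scriptC(s_0)$ is needed.
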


\begin{proof}
    Let $\varepsilon>0$. Suppose $\Omega\subset\HH^n$ is $\varepsilon$-regular, that is, $\Omega$ is $\varepsilon$-semiregular with respect to both $\pi,\pi_*$ and $\varepsilon$-quasiextremal. By Proposition \ref{P:semi reg rwt}, there exists a large constant depending only on $n,\pi_j,p_j$ and a refined flow scheme $(z_*,S_1^*,\scriptF_l^*,\scriptG_l^*)$ of $(\Omega,\pi_*,A+1)$ such that, for each $s\in S_1^*$, there exists an ellipsoid $\scriptC(s)\subset\R^n$ centered at the origin adapted to $\{L_j\}_{j=1}^m$ as in Definition \ref{D:partition} such that
    \begin{equation}
        \scriptF_1^*(s)\subset\scriptC(s),\qquad|\scriptC(s)|\lesssim\varepsilon^{-C}|\scriptF_1^*(s)|.
    \end{equation}
    Fix $s_0\in S_1^*$. Then $(e^{s_0\cdot\mathbf Y}(z_*),\scriptF_1^*(s_0),\scriptG_l^*(s_0,\cdot),\scriptF_{l+1}^*(s_0,\cdot))$ and $\scriptC(s_0)$ are as desired.
\end{proof}

\section{Quasiextremal regular sets}\label{S:regular set paraball}
Our next goal is to remove the semiregularity assumption in Proposition \ref{P:semi reg rwt}. This will be done by partitioning $\Omega$ into semiregular sets $\{\Omega_\ell\}_{\ell\in\Z^{\widetilde m}}$ so that the $\widetilde\pi_j$ fibers of $\Omega_\ell$ are comparable to $2^{\ell_j}$. Then, we will apply Proposition \ref{P:semi reg rwt} to each $\Omega_\ell$ in hope of the following:
\begin{equation}
    |\Omega|=\sum_\ell|\Omega_\ell|\lesssim\sum_\ell\displaystyle\prod_{j=1}^M|\pi_j(\Omega_\ell)|^\frac{1}{p_j}\overset{\textcolor{red}{?}}{\lesssim}\displaystyle\prod_{j=1}^M|\pi_j(\Omega)|^\frac{1}{p_j}.
\end{equation}
The main obstruction of the last inequality is that $\pi_j(\Omega_\ell)\cap\pi_j(\Omega_{\ell^\prime})$ may be large for $\ell\neq\ell^\prime$. In a worst-case scenario, where $\Omega_\ell$ are (quasi-)extremals of the restricted weak-type inequality (thus the first inequality is reversed), we will see that the quasiextremals can be approximated by ``paraballs". Most of the paraballs look drastically different, so their $\pi_j$ images are almost disjoint.

In this section, we will approximate regular quasiextremals adapting Christ's argument (see Section 7 of \cite{christ2011quasiextremals}).

\begin{definition}[Paraballs]\label{D:paraball}
    Let $z=(x,y,t)\in\HH^n,\rho>0$, let $\{l_j\}_{j=1}^d$ be coordinate projections on $\R^n$. Suppose $r,r^*\in(\R_{>0})^n$ satisfy $\rho=r_ir_i^*$ for $i=1,\dots,n$. A \textit{paraball $\scriptB(z,\mathbf e,r,r^*,\rho,\{l_j\}_{j=1}^d)$} is defined as
    \[
    \{(x+s,y+u,t+\frac{1}{2}(-s\cdot y+x\cdot u)+\tau):s\in\scriptC,u\in\scriptC_*,|\tau|<\rho\},
    \]
    where $\scriptC,\scriptC_*\subset\R^n$ are the ellipsoids centered at the origin adapted to $\{l_j\}_{j=1}^d$ as in Definition \ref{D:partition} whose principal axes are along $e_i$ and of length $r_j,r_j^*$, respectively. When no ambiguity occurs, we may simply denote the paraball by $\scriptB$.
\end{definition}

\begin{props}[Properties of paraballs]\label{Prop:paraball}\hfill
    \begin{enumerate}
        \item\label{Prop:left-inv para} $\scriptB$ are left-invariant under the Heisenberg group operation, i.e.,
        \[
        \forall z_1,z_2\in\HH^n\qquad z_1\bullet\scriptB(z_2,\mathbf e,r,r^*,\rho,\{l_j\}_{j=1}^d)=\scriptB(z_1\bullet z_2,\mathbf e,r,r^*,\rho,\{l_j\}_{j=1}^d).
        \]

        \item\label{Prop:quasi ext para} Every paraball associated with $\{L_j\}_{j=1}^M$ is $C$-regular for some $C>0$ depending only on $n,\pi_j,p_j$.

        \item\label{Prop:scaling-inv para} Every paraball associated with $\{L_j\}_{j=1}^M$ is scaling-invariant in the sense that
        \begin{gather}
            |\widetilde\scriptB|=a^{n+1}|\scriptB|; \\
            |\pi_j(\widetilde\scriptB)|=
            \begin{cases}
                (\prod_{i:e_i\not\in V_j}\lambda_i^*)a^{\dim V_j+1}|\pi_j(\scriptB)|, & 1\leq j\leq m, \\
               (\prod_{i:e_i\not\in V_j}\lambda_i)a^{\dim V_j+1}|\pi_j(\scriptB)|, & m<j\leq M;
            \end{cases} \\
            |\widetilde\pi_j(\widetilde\scriptB)|=(\prod_{i=1}^{\widetilde k_j}\lambda_i^*)a^{n-\widetilde k_j+1}|\pi_j(\scriptB)|,\qquad1\leq j\leq\widetilde m; \\
            |\widetilde\pi_j^*(\widetilde\scriptB)|=(\prod_{i=1}^{\widetilde k_j}\lambda_i)a^{n-\widetilde k_j+1}|\pi_j^*(\scriptB)|,\qquad1\leq j\leq\widetilde m,
        \end{gather}
        for any $\lambda,\lambda^*\in\R_{>0}$ satisfying $\lambda_i\lambda_i^*=a$ for $1\leq i\leq n$, and
        \[
        \scriptB=\scriptB(z,r,r^*,\rho,\{L_j\}_{j=1}^M),\qquad\widetilde\scriptB=\scriptB(z,\lambda\otimes r,\lambda^*\otimes r^*,a\rho,\{L_j\}_{j=1}^M),
        \]
        with $\lambda\otimes r=(\lambda_1r_1,\dots,\lambda_nr_n)$.

        \item\label{Prop:paraball little interaction} Let $\varepsilon\in(0,1)$, and let $\scriptB,\scriptB^\prime$ be two paraballs associated with dual ellipsoids $(\scriptC,\scriptC_*)$ and $(\scriptC^\prime,\scriptC_*^\prime)$, respectively. Suppose $l$ is an orthogonal projection onto a subspace spanned by some of the $e_i$ satisfying
        \begin{equation}\label{E:paraball little interaction}
            \max\{\frac{|l(\scriptC)|}{|l(\scriptC^\prime)|},\frac{|l(\scriptC^\prime)|}{|l(\scriptC)|},\frac{|l(\scriptC_*)|}{|l(\scriptC_*^\prime)|},\frac{|l(\scriptC_*^\prime)|}{|l(\scriptC_*)|}\}\gtrsim\varepsilon^{-A}.
        \end{equation}
        Then there exists $C_A$ depending on $n,\pi_j,p_j,A$ with $\lim_{A\rightarrow\infty}C_A=\infty$ and
        \begin{gather}
            |\pi_j(\scriptB)\cap\pi_j(\scriptB^\prime)|\lesssim\varepsilon^{C_A}\max\{|\pi_j(\scriptB)|,|\pi_j(\scriptB^\prime)|\},\qquad1\leq j\leq M; \\
            |\widetilde\pi_j(\scriptB)\widetilde\cap\pi_j(\scriptB^\prime)|\lesssim\varepsilon^{C_A}\max\{|\widetilde\pi_j(\scriptB)|,|\widetilde\pi_j(\scriptB^\prime)|\},\qquad1\leq j\leq\widetilde m; \\
            |\widetilde\pi_j^*(\scriptB)\widetilde\cap\pi_j^*(\scriptB^\prime)|\lesssim\varepsilon^{C_A}\max\{|\widetilde\pi_j^*(\scriptB)|,|\widetilde\pi_j^*(\scriptB^\prime)|\},\qquad1\leq j\leq\widetilde m.
        \end{gather}
    \end{enumerate}
\end{props}

\begin{proof}
    Property \eqref{Prop:left-inv para} and regularity can be seen by straightforward computation.
    
    Elementary calculus shows $|\scriptB|\sim\rho^{n+1}$ and
    \[
    |\pi_j(\scriptB)|\sim
    \begin{cases}
        |L_j(\scriptC)||\scriptC_*|\rho\sim(\prod_{i:e_i\not\in V_j}r_i)\rho^{\dim V_j+1}, & 1\leq j\leq m, \\
        |\scriptC||L_j(\scriptC_*)|\rho\sim(\prod_{i:e_i\not\in V_j}r_i^*)\rho^{\dim V_j+1}, & m<j\leq M.
    \end{cases}
    \]
    By construction of $P_j$ from Section \ref{S:more notations}, the images of $P_j$ are also spanned by some of the $e_i$.  Thus, we can similarly obtain the sizes of $\widetilde\pi_j(\scriptB),\widetilde\pi_j^*(\scriptB)$. It is straightforward to check Property \eqref{Prop:scaling-inv para} from here. Recalling conditions \eqref{A} and \eqref{C} with $V=\R^n$, we have $\prod_{j=1}^M|\pi_j(\scriptB)|^\frac{1}{p_j}\sim\rho^{n+1}$, thus Property \eqref{Prop:quasi ext para}.

    Let $\rho^n=|\scriptC||\scriptC_*|,(\rho^\prime)^n=|\scriptC^\prime||\scriptC_*^\prime|$. \eqref{E:paraball little interaction} and duality of $(\scriptC,\scriptC_*),(\scriptC^\prime,\scriptC_*^\prime)$ imply $|\scriptC\cap\scriptC^\prime|\lesssim\max\{|\scriptC|,|\scriptC^\prime|\},|\scriptC_*\cap\scriptC_*^\prime|\lesssim\max\{|\scriptC_*|,|\scriptC_*^\prime|\}$. Then Property \eqref{Prop:paraball little interaction} can be deduced from the size of $\pi_j(\scriptB)$ and the following:
    \[
    |\pi_j(\scriptB)\cap\pi_j(\scriptB^\prime)|\leq
    \begin{cases}
        |L_j(\scriptC\cap\scriptC^\prime)||\scriptC_*\cap\scriptC_*^\prime|\min\{\rho,\rho^\prime\}, & 1\leq j\leq m, \\
        |L_j(\scriptC_*\cap\scriptC_*^\prime)||\scriptC\cap\scriptC^\prime|\min\{\rho,\rho^\prime\}, & m<j\leq M.
    \end{cases}
    \]
    One has a similar estimate for $\widetilde\pi_j,\widetilde\pi_j^*$.
\end{proof}

The main result of the section is as follows:

\begin{proposition}\label{P:regular paraball}
    Let $\varepsilon>0$. Suppose $\Omega\subset\HH^n$ is $\varepsilon$-regular, then there exist a paraball $\scriptB$ associated with $\{L_j\}_{j=1}^M$ and $C>0$ depending on $\varepsilon,n,\pi_j,p_j$ such that
    \begin{equation}\label{E:paraball}
        \varepsilon^C|\scriptB|\lesssim|\Omega|\lesssim\varepsilon^{-C}|\Omega\cap\scriptB|,
    \end{equation}
    and
    \begin{align}
        \notag|\pi_j(\scriptB)|\leq|\pi_j(\Omega)|&\qquad j=1,\dots,M;\\
        \label{E:paraball proj est}|\widetilde\pi_j(\scriptB)|\leq|\widetilde\pi_j(\Omega)|&\qquad j=1,\dots,\widetilde m;\\
        \notag|\widetilde\pi_j^*(\scriptB)|\leq|\widetilde\pi_j^*(\Omega)|&\qquad j=1,\dots,\widetilde m.
    \end{align}
\end{proposition}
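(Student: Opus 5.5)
Following Section 7 of \cite{christ2011quasiextremals}, I would build the paraball from two refined flow schemes, one adapted to $\pi$ and one to $\pi_*$, and then check its size and projections using the regularity of $\Omega$.

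\emph{Step 1: the two ellipsoids.} Apply Corollary \ref{C:C appr S_1} with some $A\geq3$. This gives a refined flow scheme $(z_0,S_1,\scriptF_l,\scriptG_l)$ of $(\Omega,\pi,A)$ and an origin-centred ellipsoid $\scriptC$ adapted to $\{L_j\}_{j=1}^m$, with $S_1\subset\scriptC$ and $|\scriptC|\lesssim\varepsilon^{-C}|S_1|$. Proposition \ref{P:semi reg rwt} then gives, for each $s\in S_1$, an origin-centred ellipsoid $\scriptC(s)$ adapted to $\{L_j\}_{j=m+1}^M$, with $\scriptF_1(s)\subset\scriptC(s)$ and $|\scriptC(s)|\lesssim\varepsilon^{-C}|\scriptF_1(s)|$. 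Fix one $s_0$ and set $\scriptC_*\coloneqq\scriptC(s_0)$.

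By semiregularity and \eqref{E:refinement 4}, all fibers are comparable up to $\varepsilon^{\pm C}$: $|S_1|\sim\beta_{\widetilde m}$ and $|\scriptF_1(s)|\sim\beta_{\widetilde m}^*$. Quasiextremality, through the reversed inequality chain in the proof of Proposition \ref{P:semi reg rwt}, forces $|\Omega|\sim\beta_{\widetilde m}\beta^*_{\widetilde m}\rho$ up to $\varepsilon^{\pm C}$, where $\rho\coloneqq|\Omega|/(\beta_{\widetilde m}\beta^*_{\widetilde m})$ is the effective $t$-thickness. Both $\scriptC$ and $\scriptC_*$ have principal axes along coordinate directions. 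Since the maximal partitions of $\{L_j\}_{j=1}^m$ and $\{L_j\}_{j=m+1}^M$ agree after Corollary \ref{C:Lj Pj perp finner quasiextremal}, both are adapted to $\{L_j\}_{j=1}^M$.

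\emph{Step 2: duality of the axes (main obstacle).} A paraball needs $r_ir_i^*=\rho$ for each $i$, which I would get from the volume of the inflation map. Restrict $\Psi$ to a single round $e^{v\cdot\mathbf X}e^{u\cdot\mathbf Y}e^{s\cdot\mathbf X}(z_0)$ and read off the $t$-coordinate spread $\sim\sup|s\cdot u|$ over $s\in\scriptC$, $u\in\scriptC_*$. Containment $\Psi(\Omega^\flat)\subset\Omega^N$ together with $|\Omega|\lesssim\varepsilon^{-C}|\Omega\cap\cdot|$ gives $r_ir_i^*\lesssim\varepsilon^{-C}\rho$ for each $i$. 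Conversely, Lemma \ref{L:det} gives the lower bound on $\int|\det|$ via $|P(\scriptC)|$, and the upper bound $|\Omega|^N\gtrsim$ that integral (Lemma \ref{L:diffeo}) gives $\prod_i r_ir_i^*\gtrsim\varepsilon^C\rho^n$. Combined with the individual upper bounds, this yields $r_ir_i^*\sim\rho$ for each $i$, up to $\varepsilon^{\pm C}$. Using Property \ref{Prop:paraball}.\ref{Prop:scaling-inv para}, adjust the axes by factors $\varepsilon^{\pm C}$ to get exact duality $r_ir_i^*=\rho$. This is the delicate step, since the $s^{j,l}$ are off-diagonal; I would handle it with the same Hölder reduction as in Lemma \ref{L:am-gm}.

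\emph{Step 3: the paraball and its properties.} Let $\scriptB\coloneqq\scriptB(z_0,\mathbf e,r,r^*,\rho)$. The sets $\{e^{u\cdot\mathbf Y}e^{s\cdot\mathbf X}(z_0)\}$ with $s\in S_1$, $u\in\scriptF_1(s)$ lie in $\Omega$, have measure $\gtrsim\varepsilon^C|\Omega|$ after one more $\mathbf X$-flow, and lie in a dilate $\varepsilon^{-C}\scriptB$. Hence $|\Omega\cap\varepsilon^{-C}\scriptB|\gtrsim\varepsilon^C|\Omega|$ and $|\scriptB|\sim\rho^{n+1}\lesssim\varepsilon^{-C}|\Omega|$; the enlargement is absorbed by renaming $\scriptB$. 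This gives \eqref{E:paraball}.

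For \eqref{E:paraball proj est}, compute $|\pi_j(\scriptB)|\sim|L_j(\scriptC)||\scriptC_*|\rho$ as in the proof of Property \ref{Prop:paraball}, and compare it with $|\pi_j(\Omega)|=|\Omega|/\alpha_j$. The refinement bounds \eqref{E:refinement 2}--\eqref{E:refinement 3}, together with $|\scriptC|\sim|S_1|$ and the analogous bound for $\scriptC_*$, give $|\pi_j(\scriptB)|\lesssim\varepsilon^{-C}|\pi_j(\Omega)|$. The same argument applies to $\widetilde\pi_j$ and $\widetilde\pi_j^*$, using $P_j$ in place of $L_j$. Finally, shrink $\scriptB$ by a scaling $a=\varepsilon^{C}$ in Property \ref{Prop:paraball}.\ref{Prop:scaling-inv para}. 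This turns the $\lesssim\varepsilon^{-C}$ into the exact inequalities $\leq$, at the cost of an $\varepsilon^{C}$ loss in \eqref{E:paraball}, which the statement permits.
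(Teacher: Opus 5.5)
Step 2 has a genuine gap, on two counts.

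First, $\scriptC$ (from Corollary~\ref{C:C appr S_1}) and $\scriptC(s_0)$ (from Proposition~\ref{P:semi reg rwt}) are produced independently. Being ``adapted'' only fixes the principal axes up to rotations inside each block of the relevant maximal partition. The partitions for $\{L_j\}_{j=1}^m$ and $\{L_j\}_{j=m+1}^M$ differ in general: in Example~\ref{Ex:aniso} the first consists of the four coordinate lines, the second of $\langle e_1\rangle$ and $\langle e_2,e_3,e_4\rangle$. Corollary~\ref{C:Lj Pj perp finner quasiextremal} does not make them agree. So your two ellipsoids need not be coaxial, and ``$r_ir_i^*$'' is not even defined. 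Fixing one $s_0$ also gives no control of $\scriptF_1(s)$ for other $s$, or of $\scriptG(s,u)$, which your Step 3 containment needs.

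Second, with $\rho\coloneqq|\Omega|/(\beta_{\widetilde m}\beta^*_{\widetilde m})$, the relation $|\Omega|\sim\beta_{\widetilde m}\beta^*_{\widetilde m}\rho$ is a tautology. The real content you need is $\rho^n\sim\beta_{\widetilde m}\beta^*_{\widetilde m}$ up to $\varepsilon^{\pm C}$. Your inequality $\prod_i r_ir_i^*\gtrsim\varepsilon^C\rho^n$ amounts to an \emph{upper} bound on $|\Omega|$, but Lemmas~\ref{L:det} and~\ref{L:diffeo} only ever give lower bounds on $|\Omega|$, so they cannot produce it. The ``$t$-spread'' bound $r_ir_i^*\lesssim\varepsilon^{-C}\rho$ is not an argument either: $\Omega$ is not known to lie in any slab, and ``$|\Omega\cap\cdot|$'' refers to the paraball you are still building.

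The paper removes the coaxiality problem by \emph{defining} the dual ellipsoid. Writing $\scriptC=A(B)$, it sets $\scriptC_*\coloneqq A^{-1}(B^n_\rho(0))$, so the axes agree, $r_ir_i^*=\rho$ exactly, and $\scriptC_*$ is independent of $s$.

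The quantitative input is Lemma~\ref{L:slicing}. Moving $s$ along lines inside a $P_j$-fiber of $S$, the three-step flow has Jacobian $|\nu\cdot u|$. This gives $|\Omega|\gtrsim\varepsilon^{C}\beta_{\widetilde m}\beta^*_{\widetilde m}(\beta_j\beta_j^*)^{1/\widetilde k_j}$ for every $j$. Raised to the powers $\widetilde k_j\dbtilde q_j$, which sum to $N$ by \eqref{A''}, these multiply to $\prod_j(\beta_j\beta_j^*)^{\widetilde q_j}$, the quasiextremal quantity. Quasiextremality therefore makes every one of them sharp. For $j=\widetilde m$ this yields $|\Omega|\sim(\beta_{\widetilde m}\beta^*_{\widetilde m})^{(n+1)/n}$ up to $\varepsilon^{\pm C}$.

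With $\rho=\varepsilon^{-C'}(\beta_{\widetilde m}\beta^*_{\widetilde m})^{1/n}$, the lower bound gives $|\scriptB|\sim\rho^{n+1}\lesssim\varepsilon^{-C}|\Omega|$. The upper bound, combined with the $k=n$ case $|\Omega|\gtrsim|\det A|^{-1}\int_\omega|Au|$, rules out a large part of $\scriptF(s)$ lying outside $\scriptC_*$. Your projection comparison in Step 3 is fine once such a paraball is in hand.

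Your final step also fails as stated. Shrinking $\scriptB$ concentrically by $a=\varepsilon^{C}$ does not preserve $|\Omega\cap\scriptB|\gtrsim\varepsilon^{C}|\Omega|$. The sets $S_1$ and $\scriptF_1(s)$ have only density $\varepsilon^C$ in their ellipsoids and need not concentrate near the center. The paper instead covers $\scriptB$ by $\lesssim\delta^{-C}$ paraballs $\scriptB_k$ with $|\pi_j(\scriptB_k)|=\delta^{A_j}|\pi_j(\scriptB)|$ (Lemma~\ref{L:covering}), takes $\delta=\varepsilon^{C}$, and pigeonholes to find one $\scriptB_k$ still containing a $\delta^{C}$-fraction of $\Omega\cap\scriptB$.
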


To prove Proposition \ref{P:regular paraball}, it suffices to prove the following proposition:

\begin{proposition}\label{P:regular weak paraball}
    Let $\varepsilon>0$. Suppose $\Omega\subset\HH^n$ is $\varepsilon$-regular. Then there exists a paraball $\scriptB$ associated with $\{L_j\}_{j=1}^M$ such that \eqref{E:paraball} holds true, and
    \begin{align}
        \notag|\pi_j(\scriptB)|\lesssim\varepsilon^{-C}|\pi_j(\Omega)|&\qquad j=1,\dots,M;\\
        \label{E:paraball weak proj est}|\widetilde\pi_j(\scriptB)|\lesssim\varepsilon^{-C}|\widetilde\pi_j(\Omega)|&\qquad j=1,\dots,\widetilde m;\\
        \notag|\widetilde\pi_j^*(\scriptB)|\lesssim\varepsilon^{-C}|\widetilde\pi_j^*(\Omega)|&\qquad j=1,\dots,\widetilde m.
    \end{align}
    $0<C<\infty$ is a constant depending only on $n,\pi_j,p_j$.
\end{proposition}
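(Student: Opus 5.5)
The plan is to build the paraball from the two ellipsoids supplied by the earlier results and then verify the size and projection bounds by comparing with $\Omega$ through the refined flow schemes. First, since $\Omega$ is $\varepsilon$-regular, it is $\varepsilon$-semiregular with respect to both $\pi$ and $\pi_*$. Applying Corollary \ref{C:C appr S_1} gives a refined flow scheme $(z_0,S_1,\scriptF_l,\scriptG_l)$ of $(\Omega,\pi,A)$ (with $A\ge 3$ fixed) and an origin-centered ellipsoid $\scriptC$ adapted to $\{L_j\}_{j=1}^m$ with $S_1\subset\scriptC$ and $|\scriptC|\lesssim\varepsilon^{-C}|S_1|$. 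Proposition \ref{P:semi reg rwt} then gives, for each $s\in S_1$, an ellipsoid $\scriptC(s)$ adapted to $\{L_j\}_{j=m+1}^M$ with $\scriptF_1(s)\subset\scriptC(s)$ and $|\scriptC(s)|\lesssim\varepsilon^{-C}|\scriptF_1(s)|$. I would fix one $s_0$, refining $S_1$ if necessary so that $|\scriptF_1(s_0)|$ is typical ($\gtrsim\beta^*_{\widetilde m}$ by \eqref{E:refinement 4}), and set $\scriptC_*=\scriptC(s_0)$.

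Second, I would adjust these ellipsoids to a common diagonal frame: both are adapted to maximal partitions refined by the coordinate structure of all the $L_j$, so after a John-ellipsoid enlargement by a bounded factor they can be taken to be coordinate ellipsoids with semi-axes $r_i,r_i^*$. I then set $\rho:=\bigl(\prod_i r_ir_i^*\bigr)^{1/n}$. The paraball definition requires $r_ir_i^*=\rho$ for every $i$, so I must show the products $r_ir_i^*$ are all comparable up to $\varepsilon^{-C}$. I would get this from the near-equality cases: $\varepsilon$-quasiextremality forces every inequality in the chain of Proposition \ref{P:semi reg rwt} to be sharp up to $\varepsilon^{-C}$. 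This includes the Finner step \eqref{E:F(s) finner}, the determinant bound of Lemma \ref{L:det}, and the Bézout count in Lemma \ref{L:diffeo}. Comparing the image $\Psi(\Omega^\flat)$, whose $t$-extent is governed by the quadratic term $u\cdot s$, with $|\Omega|^N$ then forces $r_ir_i^*\sim\rho$ up to powers of $\varepsilon$. After rescaling $r,r^*$ by $\varepsilon^{\pm C}$, I define $\scriptB=\scriptB(z_0,r,r^*,\rho,\{L_j\})$.

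Third, I would check \eqref{E:paraball} and \eqref{E:paraball weak proj est}. Because $e^{v\cdot\mathbf X}e^{u\cdot\mathbf Y}e^{s\cdot\mathbf X}(z_0)$ with $s,v\in\scriptC$ and $u\in\scriptC_*$ lies in a dilate of $\scriptB$ (the $t$-displacement is $O(\rho)$), a positive proportion of $\Omega$ lies in $\varepsilon^{-C}\scriptB$; this gives $|\Omega|\lesssim\varepsilon^{-C}|\Omega\cap\scriptB|$. The reverse bound $\varepsilon^C|\scriptB|\lesssim|\Omega|$ follows from $|\scriptB|\sim\rho^{n+1}$ together with $|\scriptC|\lesssim\varepsilon^{-C}\beta_{\widetilde m}$, $|\scriptC_*|\lesssim\varepsilon^{-C}\beta^*_{\widetilde m}$ and the intermediate inequality \eqref{E:intermediate semi reg rwt avg}. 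For the projections I use the size formulas in the proof of Properties \ref{Prop:paraball}, $|\pi_j(\scriptB)|\sim|L_j(\scriptC)||\scriptC_*|\rho$ and its analogues for $\widetilde\pi_j,\widetilde\pi_j^*$. Writing $|\pi_j(\Omega)|=|\Omega|/\alpha_j$ and using $|S_1|/|L_j(S_1)|\gtrsim\alpha_j$ with $S_1\subset\scriptC$, each projection of $\scriptB$ is bounded by $\varepsilon^{-C}$ times the corresponding projection of $\Omega$.

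The main obstacle is the second step: proving the duality $r_ir_i^*\sim\rho$ coordinatewise, rather than only in aggregate. My first attempt would apply Corollary \ref{C:Lj Pj perp finner quasiextremal} to both the $\pi$ and $\pi_*$ schemes and use condition \eqref{C} coordinate-by-coordinate, via $Q_i$, to match the exponents on each $r_i$ and $r_i^*$. If that fails, the fallback is to use the scaling freedom in Properties \ref{Prop:paraball}(\ref{Prop:scaling-inv para}) to rebalance the axes while keeping every projection bound within an $\varepsilon^{-C}$ loss.
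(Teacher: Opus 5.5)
Your second step is where the proposition's content lies, and there is a genuine gap there. You need coordinatewise duality $r_ir_i^*\sim\rho$, with both ellipsoids on one common frame $\mathbf e$. This cannot come from near-equality in the chain of Proposition \ref{P:semi reg rwt}. The inflation map $\Psi$ has Jacobian $\prod_{j,l}\det(\mathbf u^{j,l})$, which does not involve the $s$-variables, and $s$ enters the lower bound only through the fiber sizes of $S$ inside $G_{\widetilde m}$. Sharpness of the Finner step, Lemma \ref{L:det} and the B\'ezout count therefore constrains the shape of $\scriptF(s)$ and, via the $\pi_*$-scheme in Corollary \ref{C:C appr S_1}, the shape of $S_1$, each separately. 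Nothing in that chain couples the two.

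Your remark about the $t$-extent of $\Psi(\Omega^\flat)$ points the wrong way: a large image is no contradiction. What you need is a lower bound on $|\Omega|$ that grows when some $r_ir_i^*\gg\rho$. Neither fallback supplies it. Corollary \ref{C:Lj Pj perp finner quasiextremal} with \eqref{C} only controls products of semi-axes over blocks. The scaling property of paraballs only relates configurations that are already dual.

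There are further problems with the setup:
\begin{itemize}
\item Adaptation to a maximal partition allows arbitrary rotation inside a block. In Example \ref{Ex:isoperimetric} the partition given by $\{L_4\}$ is all of $\R^3$, while $\scriptC$ is axis-parallel. A thin ellipsoid along a diagonal cannot be made axis-parallel at bounded cost, so $\scriptC$ and $\scriptC(s_0)$ need not share axes.
\item Fixing one $s_0$ gives no control of $\scriptF_1(s)$ for other $s$. It also gives none over $\scriptG(s,u)$, which your third step silently assumes lies in $\scriptC$.
\item The claim that a positive proportion of $\Omega$ lies in $\varepsilon^{-C}\scriptB$ needs a lower bound on the measure of the single-copy image $\{e^{v\cdot\mathbf X}e^{u\cdot\mathbf Y}e^{s\cdot\mathbf X}(z_0)\}$, which you do not provide.
\item The intermediate inequality \eqref{E:intermediate semi reg rwt avg} alone does not give $|\Omega|\gtrsim\varepsilon^C(\beta_{\widetilde m}\beta^*_{\widetilde m})^{(n+1)/n}$.
\end{itemize}

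The missing ingredient is the slicing lemma, Lemma \ref{L:slicing}, combined with building duality into the construction. Write $\scriptC=A(B)$ and \emph{define} $\scriptC_*:=A^{-1}(B^n_\rho)$; common axes and $r_i^*=\rho/r_i$ then hold automatically. Restrict $s$ to lines in the normalized ball. The single-copy map $(r,u,v)\mapsto e^{v\cdot\mathbf X}e^{u\cdot\mathbf Y}e^{(a+r\nu)\cdot\mathbf X}(z_0)$ has Jacobian $|\nu\cdot u|$, and averaging over $a,\nu$ gives $|\Omega|\gtrsim|\det A|^{-1}\int_\omega|P(Au)|$. This is the one place where the shapes of $S$ and $\scriptF(s)$ interact.

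It yields the two-sided bound \eqref{E:beta j bound}. With $\rho=\varepsilon^{-C'}(\beta_{\widetilde m}\beta^*_{\widetilde m})^{1/n}$ it then gives a dichotomy. If $A\scriptF(s)$ had a substantial part outside $B^n_\rho$ for many $s$, then $\int|Au|$ would push $|\Omega|$ above the case $j=\widetilde m$ of \eqref{E:beta j bound}. That upper bound was itself derived from quasiextremality, so this is a contradiction.

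Consequences:
\begin{itemize}
\item $\scriptF(s)$ concentrates in the single, $s$-independent ellipsoid $\scriptC_*$.
\item The same argument run from the $\pi_*$ side gives $\scriptG(s,u)\subset\scriptC$.
\item The lower bound in \eqref{E:beta j bound} gives both $|\Omega\cap\scriptB|\gtrsim\varepsilon^C|\Omega|$ and $|\scriptB|\sim\rho^{n+1}\lesssim\varepsilon^{-C}|\Omega|$.
\end{itemize}
Adaptation to all of $\{L_j\}_{j=1}^M$ comes only afterwards, via Corollary \ref{C:Lj Pj perp finner quasiextremal} applied to $\scriptC_*$ and transferred to $\scriptC$ by duality.
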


To see this, we need the following lemma adapted from Lemma 7.2 of \cite{christ2011quasiextremals}:

\begin{lemma}\label{L:covering}
    There exist $0<C,A,A_j<\infty$ for each $1\leq j\leq M$ such that for any $\delta\in(0,1]$ and any paraball $\scriptB(z,\mathbf e,r,r^*,\rho,\{L_j\}_{j=1}^M)$, there exists a family of paraballs $\{\scriptB_k\}_{k\in\scriptK}$ associated with $\{L_j\}_{j=1}^M$ satisfying
    \begin{gather*}
        \scriptB\subset\cup_{k\in\scriptK}\scriptB_k, \\
        \#\scriptK\leq C\delta^{-C}, \\
        \forall1\leq j\leq M\quad|\pi_j(\scriptB_k)|=\delta^{A_j}|\pi_j(\scriptB)|, \\
        \forall1\leq j\leq\widetilde m\quad|\widetilde\pi_j(\scriptB_k)|=\delta^{\widetilde A_j^*}|\widetilde\pi_j(\scriptB)|, \\
        \forall1\leq j\leq\widetilde m\quad|\widetilde\pi_j^*(\scriptB_k)|=\delta^{\widetilde A_j}|\widetilde\pi_j^*(\scriptB)|. \\
    \end{gather*}
\end{lemma}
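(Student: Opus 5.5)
Because paraballs are left-invariant (Property \ref{Prop:paraball}.\ref{Prop:left-inv para}), I would first translate $\scriptB$ to the origin and work with the data $(\scriptC,\scriptC_*,\rho)$. Here $\scriptC,\scriptC_*$ are ellipsoids adapted to $\{L_j\}_{j=1}^M$ with axes along the $e_i$, of lengths $r_i$ and $r_i^*$, and $r_ir_i^*=\rho$. The covering paraballs will be obtained by a uniform anisotropic shrinking. I fix $\lambda=\lambda^*$ with $\lambda_i=\lambda_i^*=\delta$ for all $i$, so that $a=\delta^2$. I then take $\scriptB_k=z_k\bullet\widetilde\scriptB$, where $\widetilde\scriptB=\scriptB(0,\lambda\otimes r,\lambda^*\otimes r^*,\delta^2\rho,\{L_j\}_{j=1}^M)$. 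Property \ref{Prop:paraball}.\ref{Prop:scaling-inv para} then gives the projection sizes exactly. For instance, $|\pi_j(\scriptB_k)|=\delta^{\#\{i:e_i\notin V_j\}+2(\dim V_j+1)}|\pi_j(\scriptB)|=\delta^{A_j}|\pi_j(\scriptB)|$, and analogous powers $\widetilde A_j,\widetilde A_j^*$ hold for $\widetilde\pi_j,\widetilde\pi_j^*$. These exponents depend only on $n$ and the $V_j$. This settles the three equalities, since left translation preserves the measures of the $\pi_j$-, $\widetilde\pi_j$- and $\widetilde\pi_j^*$-images. The reason is that each of these maps intertwines left translation with an affine measure-preserving map of the target.

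The remaining task is to choose the centers $z_k$ so that the union covers $\scriptB$, with $\#\scriptK\lesssim\delta^{-C}$. I would take a maximal $\delta^{C_0}$-separated set of points $z_k\in\scriptB$, measured in the rescaled coordinates $(s_i/r_i,u_i/r_i^*,\tau/\rho)$. Rescaling by the automorphism-like dilation $(x,y,t)\mapsto(x_i/r_i,y_i/r_i^*,t/\rho)$ preserves the group law, because $r_ir_i^*=\rho$. This reduces everything to the standard paraball with unit balls and $\rho=1$. The smaller paraball is then the unit one dilated by $\delta$ horizontally and $\delta^2$ vertically, which is comparable to the Carnot--Carathéodory ball of radius $\delta$.

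A volume count then applies. The normalized $\scriptB$ has measure about $1$ and each small paraball has measure about $\delta^{2n+2}$. Every point $w$ of $\scriptB$ satisfies $z_k^{-1}\bullet w\in\widetilde\scriptB$ for some $k$, provided $z_k$ is chosen within a sufficiently small sub-Riemannian distance $c\delta$ of $w$. This uses the ball-box comparison $\{|s|,|u|<c\delta,|\tau|<c\delta^2\}\subset\widetilde\scriptB$ together with the group law expansion $z_k^{-1}\bullet w=(s'-s,u'-u,\tau'-\tau+\tfrac12(\cdots))$. Choosing the maximal net with respect to the quasi-metric $d(z,w)=\|z^{-1}\bullet w\|$, where $\|(s,u,\tau)\|=|s|+|u|+|\tau|^{1/2}$, makes the cover immediate, and the disjointness of half-radius balls gives $\#\scriptK\lesssim\delta^{-(2n+2)}$.

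The main obstacle is the ball-box step, namely checking that the quadratic term $\tfrac12(-s\cdot u'+\dots)$ does not destroy the containment. In normalized coordinates it is bounded by $|s-s'||u'|+|u-u'||s'|\lesssim\delta$, not $\delta^2$. That bound is too weak. I would first handle it by using the quasi-metric $d$ above, which is left-invariant, so that this cross term is already absorbed in the definition of the net. If that fails, I would restrict the first two coordinates to $\delta$-cells and then cover each $t$-column by $\delta^{-2}$ vertical translates. This still yields a polynomial count in $\delta^{-1}$.
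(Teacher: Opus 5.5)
Your proof is correct. It differs from the paper's only in how the centers of the small paraballs are chosen.

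\textbf{What the paper does.} The paper normalizes to $z=0$, $\mathbf e$ standard and $\rho=r_i=r_i^*=1$, and uses the same small paraballs as you. These have horizontal radii $\delta$ and height $\delta^2$, giving $A_j=\dim V_j+n+2$ and exponent $2n-\widetilde k_j+2$ for $\widetilde\pi_j,\widetilde\pi_j^*$. For the centers it simply takes $\scriptB\cap(c\delta^2\Z)^{2n+1}$, a Euclidean grid at the vertical scale $\delta^2$ in \emph{every} coordinate, horizontal ones included. Since the normalized $\scriptB$ is bounded, the cross term $\tfrac12\bigl(y_k\cdot(x_w-x_k)-x_k\cdot(y_w-y_k)\bigr)$ in $z_k^{-1}\bullet w$ is then automatically $O(c\delta^2)$. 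So the obstacle you single out never arises. The price is the cruder count $\#\scriptK\lesssim\delta^{-2(2n+1)}$, which is harmless because only a polynomial bound is needed.

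\textbf{Your route.} Your maximal net for the left-invariant quasi-metric $d(z,w)=\|z^{-1}\bullet w\|$ works exactly as stated, so the hedge ``if that fails'' is unnecessary. The condition $d(z_k,w)<c\delta$ says literally that $z_k^{-1}\bullet w\in\{|s|,|u|<c\delta,\ |\tau|<c^2\delta^2\}$, which lies inside the normalized $\widetilde\scriptB$. The cross term is therefore never estimated at all. For the count you should state two standard facts:
\begin{itemize}
\item the quasi-triangle inequality $\|gh\|\le K(\|g\|+\|h\|)$, which makes the balls of radius $c\delta/(2K)$ about the $z_k$ disjoint and contained in a bounded set;
\item $|B_d(z,r)|\sim r^{2n+2}$, which follows from left-invariance of Lebesgue measure and the dilation $(s,u,\tau)\mapsto(rs,ru,r^2\tau)$ being a group automorphism with Jacobian $r^{2n+2}$.
\end{itemize}
This gives the sharp count $\delta^{-(2n+2)}$ at the cost of slightly more machinery than the paper's grid. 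Your fallback (horizontal $\delta$-cells times a vertical $\delta^2$-grid, choosing the vertical translate pointwise to absorb the tilt) also works and is essentially the same net.

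\textbf{Projection identities.} You derive the exact projection identities from the $\delta$-dilation automorphism together with the fact that left translation acts on the targets of $\pi_j,\widetilde\pi_j,\widetilde\pi_j^*$ by unimodular affine shears. This is correct, and it makes explicit what the paper leaves to the properties of paraballs.
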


\begin{proof}
    By Properties \ref{Prop:paraball}, we may assume $\mathbf e$ is the standard basis for $\R^n$, $z$ the origin, and $\rho=r_j=r_j^*=1$ for $j=1,\dots, n$. Elementary calculus shows $|\scriptB|\sim1$. Let $\eta=c\delta^2$, where $c>0$ is a small number whose value will be determined later. Consider $\{z_k\}_{k\in\scriptK}=\scriptB\cap(\eta\Z)^{2n+1}$. Then $\#\scriptK\lesssim\delta^{-C}$.
    
    For each $k\in\scriptK$, define $\scriptB_k=\scriptB(z_k,\mathbf e,r,r^*,\delta^2,\{L_j\}_{j=1}^M)$, where $r_i=r_i^*=\delta$ for $1\leq i\leq n$. Then $|\pi_j(\scriptB_k)|=\delta^{A_j}|\pi_j(\scriptB)|$ with $A_j=\dim V_j+n+2$, for $1\leq j\leq M$. We reach similar conclusions for $\widetilde\pi_j,\widetilde\pi_j^*$ with $\widetilde A_j=2n-\widetilde k_j+2$. Choosing a sufficiently small $c>0$ gives $\scriptB\subset\cup_{k\in\scriptK}\scriptB_k$.
\end{proof}

\begin{proof}[Proof of Proposition \ref{P:regular paraball} assuming Proposition \ref{P:regular weak paraball}]
    Let $\Omega\subset\HH^n$. Suppose there exists a paraball $\scriptB$ associated with $\{L_j\}_{j=1}^M$ satisfying \eqref{E:paraball} and \eqref{E:paraball weak proj est}.
    Let $\delta=\varepsilon^C$. Then there is a covering $\{\scriptB_k\}_{k\in\scriptK}$ of paraballs associated with $\{L_j\}_{j=1}^M$ as in Lemma \ref{L:covering}. Choosing a sufficiently large $C$, we can find a paraball in the covering satisfying \eqref{E:paraball} and \eqref{E:paraball proj est}.
\end{proof}

The remaining part of the section is dedicated to proving Proposition \ref{P:regular weak paraball} by adapting Section 7 in \cite{christ2011quasiextremals}. We first prove a generalization of Lemma 4.1 in \cite{christ2011quasiextremals}.

\begin{lemma}[Slicing lemma]\label{L:slicing}
    Let $B\in\R^{2n+1}$ be the unit ball centered at $0$. For $s,u,v\in\R^n$, define
    \[
    \Phi(s,u,v)=(x_0+s+v,y_0+u,t_0+\frac{1}{2}(-s\cdot y_0+(x_0+s)\cdot u-v\cdot(y_0+u))).
    \]
    Let $A:\R^n\longrightarrow\R^n$ be a symmetric invertible linear transformation. Let $s^\prime\in\R^{n-k}$. Suppose $\emptyset\neq\omega\subset\{(a,s^\prime,b)\in A(B)\times\R^{2n}:a\in\R^k,b\in\R^{2n}\}$. Then
    \[
    |\Phi(\omega)|\gtrsim|\det A|^{-1}\int_\omega|P(Au)|dvduds,
    \]
    where $P:\R^n\longrightarrow\R^k$ is the orthogonal projection onto the $k$-dimensional subspace parallel to $A^{-1}(\R^k\times\{s^\prime\})$.
\end{lemma}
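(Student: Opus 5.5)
We will argue by Fubini and the coarea formula rather than by a Jacobian/B\'ezout argument, since $\omega$ lives in a $(k+2n)$-dimensional slice while $\Phi(\omega)\subset\R^{2n+1}$; when $k>1$ the restricted map has positive-dimensional fibers, which we must control. The first step is to change variables to decouple the coordinates. For fixed $u$, set $x=x_0+s+v$, so that $v=x-x_0-s$. Substituting into the last coordinate and expanding, the terms $-s\cdot y_0$ and $+s\cdot y_0$ cancel, and the two copies of $\tfrac12 s\cdot u$ combine. This gives
\[
\Phi(s,u,x-x_0-s)=\bigl(x,\;y_0+u,\;h(x,u)+s\cdot u\bigr)
\]
for an explicit polynomial $h$ that does not depend on $s$. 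Since $(s,u,v)\mapsto(s,u,x)$ has unit Jacobian, it suffices to estimate, for each fixed $(x,u)$, the one-dimensional measure of the set of values $s\cdot u$ as $s$ ranges over the section $\omega_{x,u}=\{s:(s,u,x-x_0-s)\in\omega\}$. We then integrate over $(x,u)$.

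The second step treats each section. Every $s\in\omega_{x,u}$ has the form $(a,s')$ with $s'$ fixed, so $s\cdot u=a\cdot u_{(k)}+\text{const}$, where $u_{(k)}$ is the first $k$ coordinates of $u$. The map $\ell:a\mapsto a\cdot u_{(k)}$ is a linear functional on the $k$-plane $\R^k\times\{s'\}$. The section $\omega_{x,u}$ lies inside the convex set $K=A(B)\cap(\R^k\times\{s'\})$. By the coarea formula,
\[
|\omega_{x,u}|_k\le |\ell(\omega_{x,u})|_1\cdot\sup_\theta\bigl|\ell^{-1}(\theta)\cap K\bigr|_{k-1}\cdot |u_{(k)}|^{-1},
\]
with the appropriate normalization of the level sets. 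This gives
\[
|\ell(\omega_{x,u})|\gtrsim |\omega_{x,u}|\;\frac{|\nabla\ell|}{\sup_\theta|\ell^{-1}(\theta)\cap K|_{k-1}}.
\]

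The key estimate, which we expect to be the main obstacle, is a lower bound for the ratio $|\nabla\ell|/\sup_\theta|\ell^{-1}(\theta)\cap K|_{k-1}$ by a quantity of the form $|\det A|^{-1}|P(Au)|$. To get it we pull everything back by $A$. The set $K$ is the image under $A$ of the slice $B\cap A^{-1}(\R^k\times\{s'\})$, which is a ball slice of radius at most $1$. The level sets of $\ell$ correspond to level sets of $b\mapsto (Ab)\cdot u=b\cdot Au$, using that $A$ is symmetric. Restricted to the $k$-plane parallel to $A^{-1}(\R^k\times\{s'\})$, this functional has gradient $P(Au)$. For a ball slice, every level set has $(k-1)$-measure $\lesssim 1$. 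Tracking the Jacobian factors of $A$ between the two $k$-planes and their hyperplane sections then yields the bound, with a constant depending only on $n$. We would verify this Jacobian bookkeeping carefully, since this is where the $|\det A|^{-1}$ must emerge. If the direct computation is messy, we would first diagonalize $A$ by an orthogonal change of coordinates and then apply the elementary fact that the ratio of a convex body's volume to its maximal hyperplane section in a given direction is comparable to its width in that direction.

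The third step assembles the pieces. For each $(x,u)$ we have
\[
|\Phi(\omega)\cap\{(x,y_0+u,\cdot)\}|_1\gtrsim|\det A|^{-1}|P(Au)|\,|\omega_{x,u}|.
\]
Integrating in $(x,u)$ by Fubini, and undoing the unit-Jacobian change $x\leftrightarrow v$, gives
\[
|\Phi(\omega)|\gtrsim|\det A|^{-1}\int_\omega|P(Au)|\,dv\,du\,ds,
\]
which is the claim.
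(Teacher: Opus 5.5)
Your reduction is sound and in fact exact. After substituting $x=x_0+s+v$, Fubini gives $|\Phi(\omega)|=\int|\{s\cdot u: s\in\omega_{x,u}\}|_1\,dx\,du$. The coarea bound, together with the cone estimate for hyperplane sections of the convex set $K$, then yields $|\ell(\omega_{x,u})|\ge |\omega_{x,u}|_k\,|\ell(K)|/(k|K|_k)$.

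The gap is the step you defer, and it cannot be closed. For a general symmetric $A$, the ratio $|\ell(K)|/|K|_k$ is \emph{not} bounded below by $|\det A|^{-1}|P(Au)|$. Write $K=A(B\cap\Pi)$, where $B\cap\Pi$ is a $k$-ball of radius $r_0\le 1$. Then $|\ell(K)|=2r_0|P(Au)|$ and $|K|_k\sim J\,r_0^k$, where $J$ is the $k$-dimensional Jacobian of $A$ on the direction space of $\Pi$. So the bookkeeping produces $r_0^{1-k}J^{-1}$, and $J$ is not controlled by $|\det A|$.

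In fact the statement as written is false. Take $n=2$, $k=1$, $A=\diag(1,\epsilon)$, $s'=0$, $x_0=y_0=0$ and $t_0=0$. Then $\Phi$ restricted to the slice $\{(a,0,u,v)\}$ is the map $(a,u,v)\mapsto(a+v_1,v_2,u,\tfrac12(au_1-v_1u_1-v_2u_2))$. It is injective with Jacobian $|u_1|$, and $P(Au)=u_1$. Hence $|\Phi(\omega)|=\int_\omega|u_1|$, while the claimed lower bound is $\epsilon^{-1}\int_\omega|u_1|$, which fails as $\epsilon\to0$.

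What your argument does prove is $|\Phi(\omega)|\gtrsim r_0^{1-k}J^{-1}\int_\omega|P(Au)|\,dv\,du\,ds$. Since $r_0\le 1$, this implies the lemma whenever $J\lesssim|\det A|$. That holds, for instance, when $A=A_1\oplus I_{n-k}$ respects the splitting $\R^k\oplus\R^{n-k}$, where $J=|\det A_1|=|\det A|$. This is the only case the paper uses: $A_j=\mathbf e\diag(r_1,\dots,r_{\widetilde k_j},1,\dots,1)\mathbf e^T$ in Step 1 of the proof of Proposition~\ref{P:regular weak paraball}. So you should either add that hypothesis or state the bound with $J^{-1}$, rather than try to extract $|\det A|^{-1}$.

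For comparison, the paper's route is different:
\begin{itemize}
\item It conjugates by a unimodular affine map to reduce to $A=I$.
\item It foliates the $k$-ball slice by lines in directions $\nu$.
\item It applies B\'ezout to the $(2n+1)$-dimensional maps $G_{a,\nu}$, whose Jacobian is $|\nu\cdot u|$.
\item It averages over $a$ and $\nu$ to produce $|P(\tilde u)|$.
\end{itemize}
The paper makes the same slip when it changes variables back. The $u$- and $v$-Jacobians cancel, and the $k$-dimensional variable $\tilde s$ contributes $J^{-1}$, not $|\det A|^{-1}$. Under the corrected hypothesis, your fiber-by-fiber argument is a cleaner alternative: it avoids B\'ezout and the spherical averaging, and it is sharp (an identity when $k=1$).
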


\begin{proof}
    Let $\widetilde s = A^{-1}s, \widetilde u = Au, \widetilde v = A^{-1}v,\widetilde\omega\coloneqq\{(\widetilde s,\widetilde u,\widetilde v)\in\R^{3n}:(A\widetilde s,A^{-1}\widetilde u,A\widetilde v)\in\omega\}\subset B\times\R^{2n}$. Then,
    \begin{align*}
        &   \Phi(s,u,v) =   \Phi(A\widetilde s,A^{-1}\widetilde u,A\widetilde v) \\
        =   &   (x_0+A(\widetilde s+\widetilde v), y_0+A^{-1}\widetilde u, t_0+\frac{1}{2}(-(\widetilde s+\widetilde v)^TAy_0+x_0^TA^{-1}\widetilde u+(\widetilde s-\widetilde v)\cdot\widetilde u)) \\
        =   &   \widetilde A\Phi(\widetilde s,\widetilde u,\widetilde v),
    \end{align*}
    where $\widetilde A:\R^{2n+1}\longrightarrow\R^{2n+1}$ is a linear map given by
    \begin{align*}
        \widetilde A(x,y,t) = & (A(x-x_0)+x_0,A^{-1}(y-y_0)+y_0, \\
        &   t+\frac{1}{2}((x-x_0)\cdot y_0-(x-x_0)^TAy_0-x_0\cdot(y-y_0)+x_0^TA^{-1}(y-y_0))).
    \end{align*}
    Since $|\det\widetilde A|\equiv1$, we have $|\Phi(\omega)|=|\Phi(\widetilde\omega)|$.

    \begin{figure}
        \centering
        \includegraphics[width=1\linewidth]{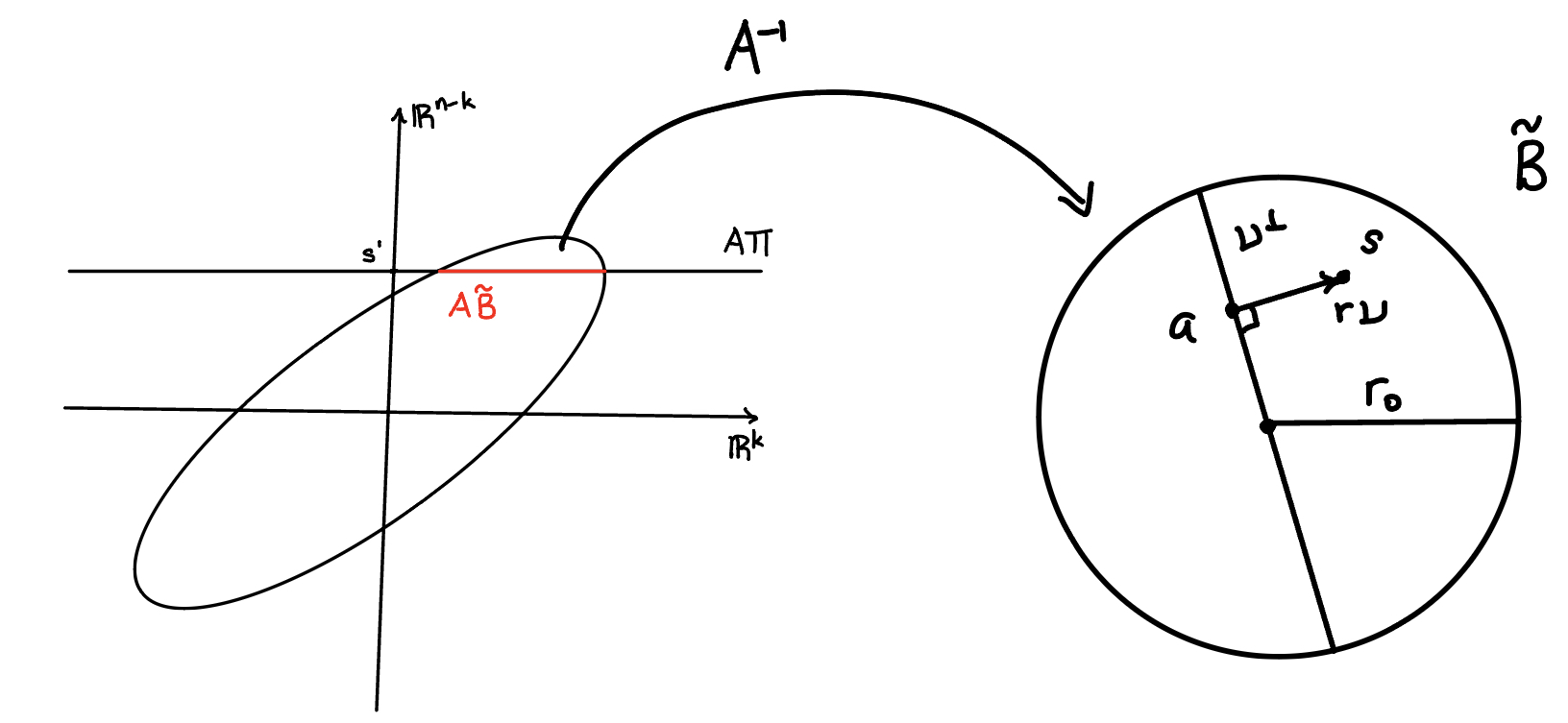}
        \caption{}
        \label{fig:slicing}
    \end{figure}

    Note that $\Pi=A^{-1}(\R^k\times\{s^\prime\})$ is an affine subspace of $\R^n$, and $\widetilde B\coloneqq\Pi\cap B$ is a $k$-dimensional ball of radius $r_0\in(0,1]$ (see Figure \ref{fig:slicing}). Let $D=\{\nu\in S^{n-1}:\nu\parallelsum\Pi\}$, and let $\nu^\perp$ be the $(k-1)$-dimensional affine plane in $\Pi$ orthogonal to $\nu\in D$ and through the center of $\widetilde B$. Then for each $s\in\widetilde B$, there exists a unique pair $(a,r)\in(\widetilde B\cap\nu^\perp)\times[0,r_0)$ such that $s=a+r\nu$. Define $G_{a,\nu}:\R\times\R^n\times\R^n\longrightarrow\HH^n$ by
    \[
    G_{a,\nu}(r,u,v)\coloneqq\Phi(a+r\nu,u,v).
    \]
    Let $\widetilde\omega_{a,\nu}=\{(r,u,v)\in\R^{2n+1}:(a+r\nu,u,v)\in\widetilde\omega\}$. Then $G_{a,\nu}(\widetilde\omega_{a,\nu})\subset\Phi(\widetilde\omega)$. It is straightforward to check that the Jacobian of $G_{a,\nu}$ is $|\nu\cdot u|$. By B\'ezout's Theorem, for each $a\in\nu^\perp$,
    \[
    |\Phi(\widetilde\omega)|\geq|G_{a,\nu}(\widetilde\omega_{a,\nu})|\sim\int_{\widetilde\omega_{a,\nu}}|\nu\cdot u|drdudv.
    \]
    Since $|\widetilde B\cap\nu^\perp|\sim r_0^{k-1}$, we have
    \begin{equation}
        |\Phi(\widetilde\omega)|\sim r_0^{1-k}\int_{\widetilde B\cap\nu^\perp}|\Phi(\widetilde\omega)|da\gtrsim\int_{\widetilde B\cap\nu^\perp}\int_{\widetilde\omega_{a,\nu}}|\nu\cdot u|drdudvda\sim\int_{\widetilde\omega}|\nu\cdot u|dsdudv.
    \end{equation}
    By the same reasoning, we may integrate $|\Phi(\widetilde\omega)|$ against $\nu$ over $D$:
    \begin{align*}
        & |\Phi(\widetilde\omega)|\sim\int_D|\Phi(\widetilde\omega)|d\nu \\
        \gtrsim &   \int_{D}\int_{\widetilde\omega}|\nu\cdot u|dsdudvd\nu = |\det A|^{-1}\int_{D}\int_{\omega}|\nu\cdot(Au)|dsdudvd\nu \\
        =   &   |\det A|^{-1}\int_{\omega}\int_{D}|\nu\cdot(Au)|d\nu dsdudv\sim|\det A|^{-1}\int_{\omega}|P(Au)|dsdudv,
    \end{align*}
    where $P:\R^n\longrightarrow\R^k$ is the orthogonal projection onto the $k$-dimensional subspace parallel to $\Pi$.
\end{proof}

\begin{proof}[Proof of Proposition \ref{P:regular weak paraball}]
    Let $\varepsilon>0$. Suppose $\Omega\subset\HH^n$ is $\varepsilon$-regular. Let
    \begin{gather}
        \alpha_j=\frac{|\Omega|}{|\pi_j(\Omega)|},\qquad1\leq j\leq M; \\
        \beta_j=\frac{|\Omega|}{|\widetilde\pi_j(\Omega)|},\quad\beta_j^*=\frac{|\Omega|}{|\widetilde\pi_j^*(\Omega)|},\qquad1\leq j\leq\widetilde m.
    \end{gather}
    By Proposition \ref{P:semi reg rwt} and $\varepsilon$-regularity, we have
    \begin{equation}\label{E:alpha beta bound}
        \displaystyle\prod_{j=1}^m\alpha_j\lesssim\varepsilon^{-C}\displaystyle\prod_{j=1}^{\widetilde m}\beta_j\lesssim\varepsilon^{-C}\displaystyle\prod_{j=1}^m\alpha_j,\qquad\displaystyle\prod_{j=m+1}^M\alpha_j\lesssim\varepsilon^{-C}\displaystyle\prod_{j=1}^{\widetilde m}\beta_j^*\lesssim\varepsilon^{-C}\displaystyle\prod_{j=m+1}^M\alpha_j.
    \end{equation}
    By Corollary \ref{C:C appr S_1}, there exist a large constant $C$ depending only on $n,\pi_j,p_j$, a refined flow scheme $(z_0,S_1,\scriptF_l,\scriptG_l)$ of $(\Omega,\pi,10)$, and an ellipsoid $\scriptC\subset\R^n$ centered at the origin adapted to $\{L_j\}_{j=1}^m$ satisfying $S_1\subset\scriptC,|\scriptC|\lesssim\varepsilon^{-C}|S_1|$.
    
    Write $S=S_1,\scriptF=\scriptF_1,\scriptG=\scriptG_1$, and $\scriptC=A(B)$, where $A$ is an invertible symmetric linear transformation on $\R^n$, and $B\subset\R^n$ the unit ball centered at the origin. We can further write $A=\mathbf e\diag(r)\mathbf e^T$, where $r=(r_1,\dots,r_n)\in(\R_{>0})^n$, and $\mathbf e=\{\widetilde e_i\}_{i=1}^n$ is orthonormal such that $\{\widetilde e_i\}_{i=1}^{\widetilde k_j}$ span the first $\widetilde k_j$ coordinates for each $1\leq j\leq\widetilde m$. Then
    \begin{gather}
        \label{E:C alpha bound}\alpha_j\lesssim\frac{|S|}{|L_j(S)|}\lesssim\frac{|\scriptC|}{|L_j(\scriptC)|},\quad\alpha_j\lesssim\frac{|\scriptG|}{|L_j(\scriptG)|},\qquad1\leq j\leq m; \\
        \label{E:F alpha bound}\alpha_j\lesssim\frac{|\scriptF|}{|L_j(\scriptF)|},\qquad m<j\leq M; \\
        \label{E:C beta bound}\beta_j\lesssim\frac{|\scriptC|}{|P_j(\scriptC)|}\sim|P_j^\perp(\scriptC)|\sim\displaystyle\prod_{i=1}^{\widetilde k_j}r_i\lesssim\varepsilon^{-C}\beta_j,\qquad1\leq j\leq\widetilde m; \\
        \label{E:SG beta bound para}\beta_j\lesssim|P_j^{-1}(s_j)\cap S|,|P_j^{-1}(v_j)\cap\scriptG|\lesssim\varepsilon^{-C}\beta_j,\qquad\forall s_j\in P_j(S),v_j\in P_j(\scriptG); \\
        \label{E:F beta bound}\beta_j^*\lesssim|P_j^{-1}(u_j)\cap\scriptF|\lesssim\varepsilon^{-C}\beta_j^*,\qquad\forall u_j\in P_j(\scriptF).
    \end{gather}

    \textbf{Step 1.} Apply Lemma \ref{L:slicing} with
    \begin{gather}
        \notag A_j=\mathbf e\diag(r_1,\dots,r_{\widetilde k_j},1,\dots,1)\mathbf e^T, \\
        \notag \omega_j=\{(s,s_j,u,v)\in\R^{3n},(s,s_j)\in S,u\in\scriptF(s,s_j),v\in\scriptG(s,s_j,u)\},\qquad s_j\in P_j(S), \\
        \label{E:phi}\phi_j(s,s_j,u,v)=e^{v\cdot\mathbf X}e^{u\cdot\mathbf Y}e^{(s,s_j)\cdot\mathbf X}(z_0), \\
        \notag P=P_j^\perp.
    \end{gather}
    We claim the following:
    \begin{equation}\label{E:beta j bound}
        \varepsilon^{C_0}\beta_{\widetilde m}\beta_{\widetilde m}^*(\beta_j\beta_j^*)^{\frac{1}{\widetilde k_j}}\lesssim|\phi(\omega_j)|\leq|\Omega|\lesssim\varepsilon^{-C}\beta_{\widetilde m}\beta_{\widetilde m}^*(\beta_j\beta_j^*)^{\frac{1}{\widetilde k_j}},\qquad1\leq j\leq\widetilde m.
    \end{equation}
    To see this, we fix $s_j^\prime\in P_j(S)$ for $j=1,\dots,\widetilde m$. Then
    \[
    |\Omega|\gtrsim\varepsilon^C\beta_j^{-2}\beta_{\widetilde m}\int_{P_j^{-1}(s_j^\prime)\cap S}\int_{\widetilde\scriptF_j(s)}|P_j^\perp(u)|duds,
    \]
    where $\widetilde\scriptF_j(s)=A_j(\scriptF(s))$. By definition of $A_j$, $P_j(\widetilde\scriptF_j(s))=P_j(\scriptF(s))$. Moreover, $|P_j(\scriptF(s))|\gtrsim\varepsilon^C\beta_{\widetilde m}^*/\beta_j^*$ by \eqref{E:F beta bound} and $\widetilde k_{\widetilde m}=n$, and $|P_j^{-1}(u_j)\cap\widetilde F_j(s)|\gtrsim\varepsilon^C\beta_j\beta_j^*$ by \eqref{E:SG beta bound para} and \eqref{E:F beta bound}. Then
    \[
    \int_{\widetilde\scriptF_j(s)}|P_j^\perp(u)|du=\int_{P_j(\scriptF_j(s))}\int_{P_j^{-1}(u_j)\cap\widetilde\scriptF_j(s)}|P_j^\perp(u)|dudu_j\gtrsim\varepsilon^C\beta_j\beta_{\widetilde m}^*(\beta_j\beta_j^*)^\frac{1}{\widetilde k_j}.
    \]
    The lower bound of \eqref{E:beta j bound} is obtained by further applying \eqref{E:SG beta bound para}.

    To see the upper bound, write $|\Omega|=\varepsilon^{-C_j}(\beta_{\widetilde m}\beta_{\widetilde m}^*)(\beta_j\beta_j^*)^{1/\widetilde k_j}$ for $1\leq j\leq\widetilde m$. Then we deduce the following from \eqref{A''} and the lower bound of \eqref{E:beta j bound}:
    \begin{equation}\label{E:C_j lower bound}
        |\Omega|^N\gtrsim\varepsilon^{C_j^\prime}\displaystyle\prod_{j=1}^{\widetilde m}(\beta_{\widetilde m}\beta_{\widetilde m}^*(\beta_j\beta_j^*)^\frac{1}{\widetilde k_j})^{\widetilde k_j\dbtilde q_j}=\varepsilon^{C_j^\prime}\displaystyle\prod_{j=1}^{\widetilde m}(\beta_j\beta_j^*)^{\widetilde q_j},
    \end{equation}
    where $C_j^\prime=-C_j\widetilde k_j\dbtilde q_j+C_0\prod_{r\neq j}\widetilde k_r\dbtilde q_r$. This contradicts quasiextremality when $C_j$ is too large.
    
    \textbf{Step 2.} We now construct the dual ellipsoid $\scriptC_*$. Let $\widetilde\scriptF(s)=A(\scriptF(s))$. Then $\beta_{\widetilde m}\beta_{\widetilde m}^*\lesssim|\widetilde\scriptF(s)|\lesssim\varepsilon^{-C}\beta_{\widetilde m}\beta_{\widetilde m}^*$. We observe that for any $\rho>0$, one has either
    \begin{enumerate}
        \item $\int_{\widetilde\scriptF(s)}|u|du\gtrsim\rho|\widetilde\scriptF(s)|$, or
        \item $|\widetilde\scriptF(s)\cap B_\rho^n(0)|\gtrsim|\widetilde\scriptF(s)|$,
    \end{enumerate}
    where, $B_\rho^n(0)$ denotes the ball in $\R^n$ of radius $\rho$ and centered at the origin. Write
    \begin{equation}
        \rho=\varepsilon^{-C^\prime}(\beta_{\widetilde m}\beta_{\widetilde m}^*)^\frac{1}{n},
    \end{equation}
    where $a>0$ is a large constant to be determined. The dichotomy essentially boils down to the followings:
    \begin{enumerate}
        \item there exists a refinement $S^\dagger$ of $S$ such that $\int_{\widetilde\scriptF(s)}|u|du\gtrsim\rho^{n+1}$ for any $s\in S^\dagger$; or
        \item there exists a refinement $S^\ddagger$ of $S$ such that $|\widetilde\scriptF(s)\cap B_\rho^n(0)|\sim|\widetilde\scriptF(s)|$ for any $s\in S^\ddagger$.
    \end{enumerate}
    Indeed, for sufficiently large $C^\prime$, the first case cannot happen. Otherwise, we obtain
    \[
    |\Omega|\gtrsim\varepsilon^{-nC^\prime}(\beta_{\widetilde m}\beta_{\widetilde m}^*)^\frac{n+1}{n}\gtrsim\varepsilon^{C-nC^\prime}\displaystyle\prod_{j=1}^M\alpha_j^\frac{q_j}{N},
    \]
    from the upper bound of \eqref{E:beta j bound} with $j=\widetilde m$ and Proposition \ref{P:semi reg rwt}. This contradicts quasiextremality for sufficiently large $C^\prime$.
    
    Write $\scriptC_*=A^{-1}(B_\rho^n(0))=A_*(B)$, where $A_*=\mathbf e\diag(r^*)\mathbf e^T$, $r_j^*=\rho/r_j$ for each $j$. Then $|\scriptF(s)\cap\scriptC_*|\sim|\scriptF(s)|$ and $|\scriptC_*|\lesssim\varepsilon^{-C}|\scriptF(s)|$. Hence,
    \begin{gather}
        \label{E:C* alpha bound}\alpha_j\lesssim\frac{|\scriptC_*|}{|L_j(\scriptC_*)|},\qquad m<j\leq M; \\
        \label{E:C* beta bound}\beta_j^*\lesssim\frac{|\scriptC_*|}{|P_j(\scriptC_*)|}\sim|P_j^\perp(\scriptC_*)|\lesssim\varepsilon^{-C}\beta_j^*,\qquad1\leq j\leq\widetilde m.
    \end{gather}
    By \eqref{E:alpha beta bound}, \eqref{E:C* alpha bound}, \eqref{E:C* beta bound}, and Corollary \ref{C:Lj Pj perp finner quasiextremal}, we can enlarge $C_*$ up to $\varepsilon^{-C}$ loss (and by duality, also enlarge $\scriptC$ up to $\varepsilon^{-C}$ loss) so that it is adapted to $\{L_j\}_{j=1}^M$. We now let $\mathbf e$ denote the orthonormal basis along the principal axes of the new $\scriptC$. Note that $\scriptC_*$ is independent of $s$. For each $s\in S$, consider the refined flow scheme $(e^{s\cdot\mathbf X}(z_0),\scriptF(s),\scriptG_l(s,\cdot),\scriptF_{l+1}(s,\cdot))$ of $(\Omega,\pi_*,9)$. By duality of $\scriptC,\scriptC_*$, we can enlarge $C,C^\prime$ if necessary to obtain the following:
    \begin{equation}
        \forall s\in S\quad u\in \scriptF(s)\qquad\scriptG(s,u)\subset\scriptC,\quad|\scriptC|\lesssim\varepsilon^{-C}|\scriptG(s,u)|.
    \end{equation}
    
    \textbf{Step 3.} Now we claim that the paraball determined by $\scriptC,\scriptC_*$ gives the desired bounds \eqref{E:paraball} and \eqref{E:paraball weak proj est}. Let $\scriptB\coloneqq\scriptB(z_0,\mathbf e,r,r^*,\rho,\{L_j\}_{j=1}^M)$. Recalling \eqref{E:phi}, we have $\phi_{\widetilde m}(\omega)\subset\Omega\cap\scriptB$. By \eqref{E:beta j bound}, we have
    \begin{equation}
        |\Omega\cap\scriptB|\gtrsim\varepsilon^C(\beta_{\widetilde m}\beta_{\widetilde m}^*)^\frac{n}{n+1}\gtrsim\varepsilon^C|\Omega|.
    \end{equation}
    Furthermore, by quasiextremality, lower bound of \eqref{E:beta j bound}, and Proposition \ref{P:semi reg rwt}, we obtain
    \begin{equation}\label{E:paraball est}
        |\scriptB|\sim\rho^{n+1}\lesssim\varepsilon^{-C}\displaystyle\prod_{j=1}^{\widetilde m}(\beta_j\beta_j^*)^\frac{\widetilde q_j}{N}\lesssim\varepsilon^{-C}|\Omega|.
    \end{equation}
    $C$ may vary line by line. 

    It remains to show \eqref{E:paraball weak proj est}. Since $\scriptC,\scriptC_*$ are adapted to the $L_j$ (thus the $P_j$), the bound can be deduced from the construction of $\scriptB$, \eqref{E:C alpha bound}, \eqref{E:C beta bound}, \eqref{E:C* alpha bound}, \eqref{E:C* beta bound}, and $|\scriptB|\lesssim\varepsilon^{-C}|\Omega|$.
\end{proof}

\section{Summing over almost disjoint sets}\label{S:summing disjoint sets}
Our next step is to generalize Proposition \ref{P:regular paraball} to semiregular sets. This will be done by partitioning $\varepsilon$-quasiextremal semiregular sets into $\eta$-regular sets whose $\widetilde\pi_j,\widetilde\pi_j^*$ fibers are drastically different. By Proposition \ref{P:regular paraball}, we can approximate the regular sets with paraballs up to $\eta^{-C}$ loss. We claim that these paraballs must be almost disjoint, so the regular sets are also almost disjoint.

\begin{lemma}\label{L:greedy}
    Let $\varepsilon\in(0.1]$. Let $\Omega\subset\HH^n$. Suppose for every refinement $\Omega^\prime$ of $\Omega$, there is a paraball $\scriptB$ associated with $\{L_j\}_{j=1}^M$ such that $(\Omega^\prime,\scriptB)$ satisfy \eqref{E:paraball} and \eqref{E:paraball proj est}. Then there exist a collection of paraballs $\{\scriptB_k\}_{k\in\scriptK}$ associated with $\{L_j\}_{j=1}^M$ and a constant $C>0$ depending only on $n,\pi_j,p_j$ such that
    \begin{gather}
        \label{E:cover}|\Omega\cap(\displaystyle\bigcup_{k\in\scriptK}\scriptB_k)|\sim|\Omega|, \\
        \label{E:cardinality}\#\scriptK\lesssim\varepsilon^{-C}, \\
        \label{E:projection bound pi}|\pi_j(\scriptB_k)|\leq|\pi_j(\Omega)|,\quad1\leq j\leq M, \\
        \label{E:projection bound pi twiddle}|\widetilde\pi_j(\scriptB_k)|\leq|\widetilde\pi_j(\Omega)|,\quad1\leq j\leq\widetilde m, \\
        \label{E:projection bound pi twiddle *}|\widetilde\pi_j^*(\scriptB_k)|\leq|\widetilde\pi_j^*(\Omega)|,\quad1\leq j\leq\widetilde m.
    \end{gather}
\end{lemma}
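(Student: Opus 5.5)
We will run a greedy (exhaustion) argument, in the spirit of Section 7 of \cite{christ2011quasiextremals}. Set $\Omega_0\coloneqq\Omega$ and $\scriptK=\emptyset$. Fix a small absolute constant $c_0\in(0,1)$ to be chosen later.

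\emph{Inductive step.} Suppose $\scriptB_1,\dots,\scriptB_k$ have been chosen. Put $\Omega_k\coloneqq\Omega\setminus\bigcup_{i\le k}\scriptB_i$. If $|\Omega_k|<c_0|\Omega|$, stop. Otherwise $\Omega_k$ is a $c_0$-refinement of $\Omega$, hence (after adjusting constants) a refinement in the sense of Definition \ref{D:refinement}. By hypothesis there is a paraball $\scriptB_{k+1}$ associated with $\{L_j\}_{j=1}^M$ such that $(\Omega_k,\scriptB_{k+1})$ satisfy \eqref{E:paraball} and \eqref{E:paraball proj est}. In particular
\[
|\Omega_k\cap\scriptB_{k+1}|\gtrsim\varepsilon^{C}|\Omega_k|\gtrsim c_0\varepsilon^C|\Omega|,
\]
and $|\pi_j(\scriptB_{k+1})|\le|\pi_j(\Omega_k)|\le|\pi_j(\Omega)|$, and similarly for $\widetilde\pi_j$ and $\widetilde\pi_j^*$, since all projections are monotone under inclusion. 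This gives \eqref{E:projection bound pi}--\eqref{E:projection bound pi twiddle *} at once. Add $k+1$ to $\scriptK$ and repeat.

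\emph{Termination and cardinality.} The sets $\Omega_k\cap\scriptB_{k+1}$ are pairwise disjoint, because $\Omega_{k}$ excludes all earlier balls, and each lies in $\Omega$. Summing gives $\#\scriptK\cdot c_0\varepsilon^C|\Omega|\lesssim|\Omega|$, which is \eqref{E:cardinality}. At termination $|\Omega\setminus\bigcup_k\scriptB_k|<c_0|\Omega|$, so $|\Omega\cap\bigcup_k\scriptB_k|\ge(1-c_0)|\Omega|$, which is \eqref{E:cover}.

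\emph{Main obstacle.} The delicate point is reconciling "refinement" (which allows an implicit constant) with the $c_0$-refinements produced by the greedy step. The hypothesis must yield constants in \eqref{E:paraball} that are uniform over all refinements with a fixed proportion $c_0$. I would fix $c_0=\tfrac12$ so that every $\Omega_k$ is a refinement with a uniform constant, and therefore the $\varepsilon^C$ in \eqref{E:paraball} is uniform in $k$.

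If the hypothesis is only available for sets that retain regularity, I would instead invoke Lemma \ref{L:regular refinement} on $\Omega_k$: a $\tfrac12$-refinement of an $\varepsilon$-regular set is $C\varepsilon$-regular. Proposition \ref{P:regular paraball} then applies to $\Omega_k$ with $\varepsilon$ replaced by $C\varepsilon$, which only changes constants.
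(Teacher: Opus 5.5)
Your greedy exhaustion argument with threshold $\tfrac12$ is essentially the paper's proof: you remove each chosen paraball, use the disjointness of the sets $\Omega_k\cap\scriptB_{k+1}$ (each of measure $\gtrsim\varepsilon^C|\Omega|$) to get $\#\scriptK\lesssim\varepsilon^{-C}$, and obtain the projection bounds from $\pi_j(\Omega_k)\subset\pi_j(\Omega)$. Your remark that the implicit constants must be uniform over $\tfrac12$-refinements makes explicit a point the paper leaves tacit, and the regularity fallback is unnecessary because the lemma's hypothesis already applies to every refinement.
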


\begin{proof}
    Let $\Omega_1=\Omega$. By assumption, we can find a paraball $\scriptB_1$ associated with $\{L_j\}_{j=1}^M$ satisfying \eqref{E:projection bound pi}-\eqref{E:projection bound pi twiddle *}. For $k>1$, suppose we have obtained $(\Omega_r,\scriptB_r)$ satisfying \eqref{E:projection bound pi}-\eqref{E:projection bound pi twiddle *} with $|\Omega_r|>\frac{1}{2}|\Omega|$ for $r<k$, and $\{\Omega_r\cap\scriptB_r\}_{r<k}$ disjoint. Let $\Omega_k=\Omega_{k-1}\backslash\scriptB_{k-1}$. Suppose $|\Omega_k|>\frac{1}{2}|\Omega|$. Then there is a paraball $\scriptB_k$ associated with $\{L_j\}_{j=1}^M$ satisfying \eqref{E:projection bound pi}-\eqref{E:projection bound pi twiddle *}. Since $\{\Omega_k\cap\scriptB_k\}$ are disjoint $\varepsilon^C$-refinements of $\Omega$, we must have $|\Omega_k|\leq\frac{1}{2}|\Omega|$ for some $k\lesssim\varepsilon^{-C}$.
\end{proof}

\begin{lemma}\label{L:regular little interaction}
    Let $\varepsilon\in(0,\frac{1}{2}],A>0$, and let $\Omega,\Omega^\prime\subset\HH^n$. Suppose for any refinements $\widetilde\Omega,\widetilde\Omega^\prime$ of $\Omega,\Omega^\prime$, respectively, there are paraballs $\scriptB,\scriptB^\prime$ associated with $\{L_j\}_{j=1}^M$ such that $(\widetilde\Omega,\scriptB),(\widetilde\Omega^\prime,\scriptB^\prime)$ satisfy \eqref{E:paraball} and \eqref{E:paraball proj est}. Let
    \begin{equation}
        \beta_j=\frac{|\Omega|}{|\widetilde\pi_j(\Omega)|},\quad\beta_j^*=\frac{|\Omega|}{|\widetilde\pi_j^*(\Omega)|};\qquad\beta_j^\prime=\frac{|\Omega^\prime|}{|\widetilde\pi_j(\Omega^\prime)|},\quad(\beta_j^*)^\prime=\frac{|\Omega^\prime|}{|\widetilde\pi_j^*(\Omega^\prime)|}.
    \end{equation}
    If there exists some $1\leq j\leq\widetilde m$ such that
    \begin{equation}\label{E:different beta}
        \max\{\frac{\beta_j}{\beta_j^\prime},\frac{\beta_j^\prime}{\beta_j},\frac{\beta_j^*}{(\beta_j^\prime)^*},\frac{(\beta_j^\prime)^*}{\beta_j^*}\}\geq\varepsilon^{-A},
    \end{equation}
    Then we can choose the refinements $\widetilde\Omega,\widetilde\Omega^\prime$ of $\Omega,\Omega^\prime$, respectively, such that    \begin{equation}\label{E:epsilon little interaction}
        \forall1\leq j\leq M\quad|\pi_j(\widetilde\Omega)\cap\pi_j(\widetilde\Omega^\prime)|\lesssim\varepsilon^{C_A}|\pi_j(\widetilde\Omega\cup\widetilde\Omega^\prime)|,
    \end{equation}
    where $C_A>0$ depends only on $n,\pi_j,p_j,A$, and $\lim_{A\rightarrow\infty}C_A=\infty$.
\end{lemma}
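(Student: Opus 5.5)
The plan is to transfer the disparity \eqref{E:different beta} of the fiber averages from $\Omega,\Omega^\prime$ to the approximating paraballs. Then Property \ref{Prop:paraball}.\eqref{Prop:paraball little interaction} separates the $\pi_j$ images of the paraballs, and the refinements $\widetilde\Omega=\Omega\cap\scriptB$, $\widetilde\Omega^\prime=\Omega^\prime\cap\scriptB^\prime$ inherit this separation.

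\textbf{Step 1: choose the paraballs.} Apply the hypothesis with $\widetilde\Omega=\Omega$ and $\widetilde\Omega^\prime=\Omega^\prime$. This gives paraballs $\scriptB,\scriptB^\prime$ satisfying \eqref{E:paraball} and \eqref{E:paraball proj est}. Set $\widetilde\Omega\coloneqq\Omega\cap\scriptB$ and $\widetilde\Omega^\prime\coloneqq\Omega^\prime\cap\scriptB^\prime$. By \eqref{E:paraball}, these are $\varepsilon^{C}$-refinements, with $|\scriptB|\sim_{\varepsilon}|\Omega|$ up to factors $\varepsilon^{\pm C}$. I read "refinement" in the statement as allowing $\varepsilon^{C}$ losses; if genuine refinements are required, I would first pass to a refinement and then iterate with Lemma \ref{L:greedy}.

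\textbf{Step 2: compare the $\beta$'s of sets and paraballs.} From \eqref{E:paraball proj est} we have $|\widetilde\pi_j(\scriptB)|\le|\widetilde\pi_j(\Omega)|$. Combined with $|\scriptB|\gtrsim\varepsilon^{C}|\Omega|$, this gives
\[
\frac{|\scriptB|}{|\widetilde\pi_j(\scriptB)|}\gtrsim\varepsilon^{C}\beta_j .
\]
For the reverse bound, use $\scriptB\supset\widetilde\Omega$:
\[
|\widetilde\pi_j(\scriptB)|\ge|\widetilde\pi_j(\widetilde\Omega)|\ge\frac{|\widetilde\Omega|}{\sup_z|\widetilde T_j^{\widetilde\Omega}(z)|}.
\]
For a general $\Omega$ a semiregularity-type upper bound is not available. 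Instead I would use the size comparisons already computed for paraballs in the proof of Properties \ref{Prop:paraball}. There $|\scriptB|/|\widetilde\pi_j(\scriptB)|\sim|P_j^\perp(\scriptC)|$, and quasiextremality of $\scriptB$ (Property \ref{Prop:paraball}.\eqref{Prop:quasi ext para}) with \eqref{E:paraball proj est} pins down all projections simultaneously up to $\varepsilon^{\pm C}$. This relies on the product identity $\prod_j|\pi_j(\scriptB)|^{1/p_j}\sim|\scriptB|$ together with $|\pi_j(\scriptB)|\le|\pi_j(\Omega)|$ and quasiextremality of $\Omega$ (coming from \eqref{E:paraball}). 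Since each factor is bounded above and the product is comparable, every factor is comparable up to $\varepsilon^{-C}$. The same argument applies to the $\widetilde\pi_j,\widetilde\pi_j^*$ via \eqref{E:intermediate}. The outcome is
\[
|P_j^\perp(\scriptC)|\sim_{\varepsilon^{\pm C}}\beta_j,\qquad |P_j^\perp(\scriptC_*)|\sim_{\varepsilon^{\pm C}}\beta_j^*,
\]
and likewise for $\scriptB^\prime$. This step is the main obstacle, namely obtaining two-sided comparability from the one-sided bounds \eqref{E:paraball proj est}. I would handle it by the product/quasiextremality squeeze just described.

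\textbf{Step 3: conclude.} By \eqref{E:different beta} and Step 2, for $A$ large relative to $C$,
\[
\max\left\{\frac{|l(\scriptC)|}{|l(\scriptC^\prime)|},\frac{|l(\scriptC^\prime)|}{|l(\scriptC)|},\frac{|l(\scriptC_*)|}{|l(\scriptC_*^\prime)|},\frac{|l(\scriptC_*^\prime)|}{|l(\scriptC_*)|}\right\}\gtrsim\varepsilon^{-A+2C}
\]
holds with $l=P_j^\perp$, which is a coordinate projection. Property \ref{Prop:paraball}.\eqref{Prop:paraball little interaction} then gives
\[
|\pi_j(\scriptB)\cap\pi_j(\scriptB^\prime)|\lesssim\varepsilon^{C_{A-2C}}\max\{|\pi_j(\scriptB)|,|\pi_j(\scriptB^\prime)|\}.
\]
By Step 2 and $\widetilde\Omega\subset\scriptB$, the right side is at most $\varepsilon^{C_{A-2C}-C}|\pi_j(\widetilde\Omega\cup\widetilde\Omega^\prime)|$. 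Here we use $|\pi_j(\scriptB)|\lesssim\varepsilon^{-C}|\pi_j(\widetilde\Omega)|$, which follows from the same quasiextremality squeeze applied to the refinement $\widetilde\Omega$ (Lemma \ref{L:quasiextremal refinement}). Finally, $\pi_j(\widetilde\Omega)\cap\pi_j(\widetilde\Omega^\prime)\subset\pi_j(\scriptB)\cap\pi_j(\scriptB^\prime)$. Relabelling $C_A\coloneqq C_{A-2C}-C$, which still tends to $\infty$, yields \eqref{E:epsilon little interaction}.
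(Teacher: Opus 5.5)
Your skeleton matches the paper's. You approximate $\Omega,\Omega^\prime$ by paraballs, convert \eqref{E:different beta} into a large ratio between $|P_j^\perp(\scriptC)|$ and $|P_j^\perp(\scriptC^\prime)|$ (or between the dual ellipsoids), and finish with Property \ref{Prop:paraball}.\ref{Prop:paraball little interaction} and Lemma \ref{L:greedy}.

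\textbf{The gap is in Step 2.} The paper's proof also only asserts this step (``the structure of $\scriptB$''). From \eqref{E:paraball} and \eqref{E:paraball proj est} you get only one side, $|P_j^\perp(\scriptC)|\sim|\scriptB|/|\widetilde\pi_j(\scriptB)|\gtrsim\varepsilon^{C}\beta_j$. Transferring \eqref{E:different beta} also needs the reverse bound for the set with the smaller $\beta_j$, i.e.\ $|\widetilde\pi_j(\scriptB^\prime)|\gtrsim\varepsilon^{C}|\widetilde\pi_j(\Omega^\prime)|$.

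Your squeeze needs quasiextremality of $\Omega$, and this does not come from \eqref{E:paraball}. The bounds $|\Omega|\gtrsim\varepsilon^C|\scriptB|\sim\varepsilon^C\prod_j|\pi_j(\scriptB)|^{1/p_j}$ and $|\pi_j(\scriptB)|\le|\pi_j(\Omega)|$ cannot be chained into $|\Omega|\gtrsim\varepsilon^{C}\prod_j|\pi_j(\Omega)|^{1/p_j}$. Indeed, $\Omega=\scriptB_0\cup T$ with $|T|\ll|\scriptB_0|$ but $|\pi_j(T)|\gg|\pi_j(\scriptB_0)|$ satisfies both hypotheses with $\scriptB=\scriptB_0$ without being quasiextremal.

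Even granting quasiextremality, the squeeze for $\widetilde\pi_j$ (resp.\ $\widetilde\pi_j^*$) needs $\Omega$ to be quasiextremal for the family $\{\widetilde\pi_i\}_{i=1}^{\widetilde m}\cup\{\pi_i\}_{i>m}$ (resp.\ $\{\pi_i\}_{i\le m}\cup\{\widetilde\pi_i^*\}_{i=1}^{\widetilde m}$). That is the \emph{reverse} of \eqref{E:intermediate}, which only bounds $|\Omega|$ from above. It follows from ordinary quasiextremality only via $\prod_{i\le m}\alpha_i^{q_i}\lesssim\sigma^{-C}\prod_i\beta_i^{\widetilde q_i}$, i.e.\ Corollary \ref{C:specialize gen finner} on the fibers of $\pi$. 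That needs semiregularity with respect to $\pi$ (resp.\ $\pi_*$) and fails in general (Example \ref{Ex:beta>alpha}).

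Concretely, in Example \ref{Ex:aniso} add to a unit paraball the slab $\{|x|\le Y^{-4}\}$ lying over $(y,t+\frac12x\cdot y)\in[Y,2Y]^4\times[0,1]$. Every $|\pi_j(\cdot)|$ changes by at most a constant factor, but $\beta_{\widetilde m}$ drops by a factor $Y^{4}$.

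\textbf{The same direction problem recurs in Step 3.} For $\widetilde\Omega=\Omega\cap\scriptB$, the bound $|\pi_j(\scriptB)|\lesssim\varepsilon^{-C}|\pi_j(\widetilde\Omega)|$ needs a \emph{lower} bound $\prod_j|\pi_j(\widetilde\Omega)|^{1/p_j}\gtrsim|\widetilde\Omega|$. That is the restricted weak-type inequality for $\widetilde\Omega$, not its quasiextremality; Lemma \ref{L:quasiextremal refinement} gives the wrong inequality. Proposition \ref{P:weak} is unavailable, since this lemma feeds into its proof. So you must use Proposition \ref{P:semi reg rwt} together with Lemma \ref{L:semiregular refinement}, which again requires semiregularity.

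\textbf{Repair.} Add the hypotheses that hold wherever the lemma is used: $\Omega,\Omega^\prime$ quasiextremal and semiregular. They are regular in Proposition \ref{P:semiregular paraball}, and semiregular with respect to $\pi$ in Proposition \ref{P:weak}, where only the $\beta_j$ ratios occur. Alternatively, take the two-sided bounds \eqref{E:C beta bound} and \eqref{E:C* beta bound} directly from the construction in Proposition \ref{P:regular weak paraball}. These survive the passage through Lemma \ref{L:covering} with $\delta=\varepsilon^C$, at the cost of $\varepsilon^{\pm C}$ factors.
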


\begin{proof}
    Let $\scriptB,\scriptB^\prime$ be two paraballs associated with $\{L_j\}_{j=1}^M$ such that $(\Omega,\scriptB),(\Omega^\prime,\scriptB^\prime)$ satisfy \eqref{E:paraball} and \eqref{E:paraball proj est}. Let $(\scriptC,\scriptC_*),(\scriptC^\prime,\scriptC_*^\prime)$ be the associated ellipsoids. Then there exists some $1\leq j\leq\widetilde m$ such that
    \[
    \max\left\{\frac{|P_j^\perp(\scriptC)|}{|P_j^\perp(\scriptC^\prime)|},\frac{|P_j^\perp(\scriptC^\prime)|}{|P_j^\perp(\scriptC)|},\frac{|P_j^\perp(\scriptC_*)|}{|P_j^\perp(\scriptC_*^\prime)|},\frac{|P_j^\perp(\scriptC_*^\prime)|}{|P_j^\perp(\scriptC_*)|}\right\}\geq\varepsilon^{-C_A}
    \]
    due to \eqref{E:different beta}, \eqref{E:paraball}, \eqref{E:paraball proj est}, and the structure of $\scriptB$, where $C_A>0$ is a constant depending on $n,\pi_j,p_j,A$, and $\lim_{A\rightarrow\infty}C_A=\infty$. We finish the proof by invoking Lemma \ref{L:greedy} and Property \ref{Prop:paraball}.\ref{Prop:paraball little interaction}.
\end{proof}

\begin{lemma}[Almost disjointness\cite{christ2011quasiextremals}]\label{L:almost disjoint}
    Let $\varepsilon\in(0,\frac{1}{2}]$, and let $d>0$ be a positive integer. Suppose $F\subset\R^d$ is a measurable set, and $\{F_\ell\}_{\ell\in\Lambda}$ are measurable subsets of $F$. If $|F_\ell|\gtrsim\varepsilon|F|$ for each $\ell\in\Lambda$, and $|F_{\ell_1}\cap F_{\ell_2}|\lesssim\varepsilon^3|F|$ whenever $\ell_1\neq\ell_2$, then
    \begin{equation}\label{E:almost disjoint}
        \displaystyle\sum_{\ell\in\Lambda}|F_\ell|\lesssim|F|.
    \end{equation}
\end{lemma}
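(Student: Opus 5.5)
Since each $F_\ell$ has $|F_\ell|\gtrsim\varepsilon|F|$ while pairwise overlaps are only $\lesssim\varepsilon^3|F|$, my plan is to show that the index set $\Lambda$ is effectively small. The $F_\ell$ are then almost disjoint, and the sum $\sum_\ell|F_\ell|$ can be compared with $|\bigcup_\ell F_\ell|\le|F|$. The argument is essentially inclusion--exclusion truncated at second order, i.e.\ the Bonferroni inequality.

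\textbf{Step 1: bound the cardinality of $\Lambda$.} Write $c_1\varepsilon|F|\le|F_\ell|$ and $|F_{\ell_1}\cap F_{\ell_2}|\le c_2\varepsilon^3|F|$ for the implicit constants. Take any finite subfamily $\Lambda'\subset\Lambda$ with $\#\Lambda'=K$. By Bonferroni,
\[
|F|\ \ge\ \Big|\bigcup_{\ell\in\Lambda'}F_\ell\Big|\ \ge\ \sum_{\ell\in\Lambda'}|F_\ell|-\sum_{\ell_1<\ell_2}|F_{\ell_1}\cap F_{\ell_2}|\ \ge\ Kc_1\varepsilon|F|-\tfrac{K^2}{2}c_2\varepsilon^3|F|.
\]
I then choose $K=\lceil 4/(c_1\varepsilon)\rceil$, with $\varepsilon$ small relative to the constants; for $\varepsilon$ comparable to $1$ the statement is trivial after adjusting constants. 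With this choice, $K^2c_2\varepsilon^3/2\lesssim c_2\varepsilon/c_1^2\le 1$, while $Kc_1\varepsilon\ge4$. So the right-hand side exceeds $|F|$, which is a contradiction. Hence $\#\Lambda<K\lesssim\varepsilon^{-1}$. This also shows that $\Lambda$ is finite, so there are no measurability or summability issues.

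\textbf{Step 2: conclude.} Apply the same Bonferroni estimate to all of $\Lambda$, now knowing $\#\Lambda\lesssim\varepsilon^{-1}$. This gives
\[
\sum_{\ell\in\Lambda}|F_\ell|\le|F|+\sum_{\ell_1<\ell_2}|F_{\ell_1}\cap F_{\ell_2}|\le|F|+(\#\Lambda)^2c_2\varepsilon^3|F|\lesssim|F|+\varepsilon|F|\lesssim|F|,
\]
which is \eqref{E:almost disjoint}.

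\textbf{Main obstacle.} The only delicate point is the constant bookkeeping in Step 1. One must make sure that the quadratic error term $K^2\varepsilon^3$ stays below the linear gain $K\varepsilon$ at the threshold $K\sim\varepsilon^{-1}$. The exponent $3$ leaves an extra factor of $\varepsilon$ for exactly this purpose; any exponent greater than $2$ would suffice. The case of $\varepsilon$ not small, where the constants may not separate, is handled by absorbing it into the implicit constants.
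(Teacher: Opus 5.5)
Your argument is correct but takes a different route from the paper. The paper never bounds $\#\Lambda$ separately. It applies Cauchy--Schwarz to $\sum_\ell\chi_{F_\ell}$ on $F$,
\[
\Bigl(\sum_{\ell}|F_\ell|\Bigr)^2\le|F|\int_F\Bigl(\sum_{\ell}\chi_{F_\ell}\Bigr)^2=|F|\Bigl(\sum_{\ell}|F_\ell|+\sum_{\ell_1\neq\ell_2}|F_{\ell_1}\cap F_{\ell_2}|\Bigr),
\]
and then splits into two cases. If the diagonal term dominates, $\sum_\ell|F_\ell|\lesssim|F|$ follows at once. If the off-diagonal term dominates, both sides are of order $(\#\Lambda)^2$, giving $(\#\Lambda)^2\varepsilon^2\lesssim(\#\Lambda)^2\varepsilon^3$; this is a contradiction for small $\varepsilon$ no matter how large $\Lambda$ is.

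Your Bonferroni bound becomes vacuous once its quadratic error term overtakes the linear main term. That is why you must first test it on a subfamily of size $\sim\varepsilon^{-1}$ to get $\#\Lambda\lesssim\varepsilon^{-1}$, and only then apply it to all of $\Lambda$. The paper's argument is one-shot and insensitive to the cardinality of the family. Yours is equally elementary and one step longer, but it yields the explicit bound $\#\Lambda\lesssim\varepsilon^{-1}$, which makes precise the finiteness of $\Lambda$ that the paper simply asserts. Both arguments need the same gap between exponents: any power greater than $2$, or power $2$ with a small enough constant.

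One correction: your claim that the case of $\varepsilon$ comparable to $1$ is trivial after adjusting constants is false. If $\varepsilon$ is not small relative to the implicit constants, the overlap hypothesis can be vacuous (e.g.\ when $c_2\varepsilon^3\ge1$). Then infinitely many copies $F_\ell=F$ satisfy the hypotheses while violating the conclusion. The lemma genuinely requires $\varepsilon\le\varepsilon_0(c_1,c_2)$, or equivalently a sufficiently small constant in the overlap hypothesis. The paper's proof has the same restriction, since it only obtains a contradiction for small $\varepsilon$. This is harmless in the paper's applications, where the overlaps are $O(\varepsilon^{C_A})$ and $C_A$ can be taken as large as needed. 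You should state that smallness assumption explicitly rather than dismiss the large-$\varepsilon$ case.
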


\begin{proof}
    The assumption forces the set of $F_\ell$ to be finite. By H\"older's inequality,
    \begin{align*}
        & \left(|F|^{-1}\displaystyle\sum_{\ell\in\Lambda}|F_\ell|\right)^2 = |F|^{-2}\left(\int_F\displaystyle\sum_{\ell\in\Lambda}\scriptX_{F_\ell}\right)^2 \\[15pt]
        \leq & |F|^{-1}\int_F\left(\displaystyle\sum_{\ell\in\Lambda}\scriptX_{F_\ell}\right)^2 \sim |F|^{-1}\left(\displaystyle\sum_{\ell\in\Lambda}|F_\ell|+\displaystyle\sum_{\ell_1\neq \ell_2}|F_{\ell_1}\cap F_{\ell_2}|\right).
    \end{align*}
    Suppose $(|F|^{-1}\sum_{\ell\in\Lambda}|F_\ell|)^2\lesssim|F|^{-1}\sum_{\ell_1\neq \ell_2}|F_{\ell_1}\cap F_{\ell_2}|$. Applying H\"older's inequality again yields
    \begin{equation}
        \#\Lambda^2\varepsilon^2\lesssim|F|^{-1}\#\Lambda^2\max_{\ell_1\neq \ell_2}|F_{\ell_1}\cap F_{\ell_2}|\lesssim\varepsilon^3\#\Lambda^2.
    \end{equation}
    Contradiction arises for small $\varepsilon$.
\end{proof}

The remainder of the article is essentially repeating application of the following lemma. We prove it by adapting Section 9 of \cite{christ2011quasiextremals} to the multilinear setting.

\begin{lemma}\label{L:summing without a priori est}
    Let $\varepsilon,\eta_j\in(0,\frac{1}{2}]$ for $1\leq j\leq M$. There exist constants $a_j,c>0$ such that the following property holds. Suppose $p_j\in[1,\infty]$ and $F_\ell,H_j,F_{j,\ell}\geq0$ satisfy $\sum_j1/p_j>1,1\leq\ell\lesssim\prod\eta_j^{-1}$, and
    \begin{gather}
        \label{E:without rwt est}\forall l\qquad F_\ell\lesssim\displaystyle\prod_{j=1}^MF_{j,\ell}^\frac{1}{p_j}, \\
        \label{E:without epsilon}\forall l\qquad F_\ell\sim\varepsilon\displaystyle\prod_{j=1}^MH_j^\frac{1}{p_j}, \\[10pt]
        \label{E:without eta}\forall l\quad\forall1\leq j\leq M\quad F_{j,\ell}\lesssim\eta_jH_j, \\[20pt]
        \label{E:without sum}\forall1\leq j\leq M\quad\sum_\ell F_{j,\ell}\lesssim H_j.
    \end{gather}
    Then
    \begin{equation}\label{E:weak summing quasiextremals}
        \displaystyle\sum_\ell F_\ell\lesssim\min\{\displaystyle\prod_{j=1}^M\eta_j^a,\varepsilon\displaystyle\prod_{j=1}^M\eta_j^{-1}\}\displaystyle\prod_{j=1}^MH_j^\frac{1}{p_j}.
    \end{equation}
\end{lemma}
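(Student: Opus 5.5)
The estimate \eqref{E:weak summing quasiextremals} has two parts, and we prove them separately, since the minimum of two bounds is implied by each bound alone.

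\emph{The trivial bound.} By \eqref{E:without epsilon}, every $F_\ell$ is $\sim\varepsilon\prod_j H_j^{1/p_j}$. The number of indices is $\lesssim\prod_j\eta_j^{-1}$. Summing over $\ell$ therefore gives
\[
\sum_\ell F_\ell\lesssim\varepsilon\prod_{j=1}^M\eta_j^{-1}\prod_{j=1}^M H_j^{1/p_j}.
\]

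\emph{The decaying bound.} Write $\theta_j=1/p_j$ and $\Theta=\sum_j\theta_j>1$. From \eqref{E:without rwt est} we have $F_\ell\lesssim\prod_j F_{j,\ell}^{\theta_j}$. Split each exponent as $\theta_j=\theta_j/\Theta+\theta_j(1-1/\Theta)$. On the second piece we use \eqref{E:without eta}:
\[
F_{j,\ell}^{\theta_j(1-1/\Theta)}\lesssim(\eta_jH_j)^{\theta_j(1-1/\Theta)}.
\]
The first pieces have exponents $\theta_j/\Theta$, which sum to $1$. So by the generalized H\"older inequality for sums (equivalently, the weighted AM--GM inequality applied termwise),
\[
\sum_\ell\prod_j F_{j,\ell}^{\theta_j/\Theta}\le\prod_j\Bigl(\sum_\ell F_{j,\ell}\Bigr)^{\theta_j/\Theta}\lesssim\prod_j H_j^{\theta_j/\Theta},
\]
where the last step uses \eqref{E:without sum}. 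Combining the two pieces,
\[
\sum_\ell F_\ell\lesssim\prod_j\eta_j^{\theta_j(1-1/\Theta)}\prod_j H_j^{\theta_j}.
\]

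\emph{Choice of exponents.} Any $p_j=\infty$ contributes $\theta_j=0$, so that factor can be dropped, or equivalently $\eta_j\le1/2$ is harmless there. To get a common exponent, set $a=\min_{j:\theta_j>0}\theta_j(1-1/\Theta)>0$. Since each $\eta_j\le1/2<1$, we have $\eta_j^{\theta_j(1-1/\Theta)}\le\eta_j^{a}$ whenever $\theta_j>0$. For $\theta_j=0$, the bound as written with $\prod_j\eta_j^a$ does not follow directly. In that case we instead read $a_j$ as $j$-dependent, $a_j=\theta_j(1-1/\Theta)$, which is exactly what the statement's ``constants $a_j$'' allows. Note that $a_j$ and the implicit constants depend only on the $p_j$.

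\emph{Main obstacle.} The only real point is extracting a positive power of $\eta_j$. This is possible precisely because $\Theta>1$: the surplus $\Theta-1$ in the exponents is spent on the pointwise bound \eqref{E:without eta}, while exponents summing to exactly $1$ are kept for the H\"older summation with \eqref{E:without sum}. Taking the minimum of the decaying bound and the trivial bound yields \eqref{E:weak summing quasiextremals}.
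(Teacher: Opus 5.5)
Your proof is correct and is essentially the paper's argument: you spend the surplus $\sum_j 1/p_j-1>0$ of exponent on the pointwise bound \eqref{E:without eta}, handle the remaining exponents (which sum to one) by H\"older against \eqref{E:without sum}, and get the other branch of the minimum from the trivial counting bound. The paper organizes this differently---it first H\"older-isolates $F_{M,\ell}$ with conjugate exponent $r$, splits the remaining exponents as $s/p_j+(r-s)/p_j$, and then rotates the isolated index to get decay in every $\eta_j$---whereas your split $\theta_j=\theta_j/\Theta+\theta_j(1-1/\Theta)$ gets all of them at once. Your caveat about $p_j=\infty$ is right, but a $j$-dependent $a_j$ does not fix it: it would be $0$, not positive, and for $p_j=\infty$ no decay in $\eta_j$ is possible at all, so the correct repair is the paper's standing assumption $p_j<\infty$.
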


\begin{proof}
    By H\"older's inequality, \eqref{E:without rwt est}, and \eqref{E:without sum},
    \begin{equation}
        \displaystyle\sum_\ell F_\ell\lesssim\left(\displaystyle\sum_\ell\displaystyle\prod_{j=1}^{M-1}F_{j,\ell}^\frac{r}{p_j}\right)^\frac{1}{r}\left(\displaystyle\sum_\ell F_{M,\ell}\right)^\frac{1}{p_M}\lesssim\left(\displaystyle\sum_\ell\displaystyle\prod_{j=1}^{M-1}F_{j,\ell}^\frac{r}{p_j}\right)^\frac{1}{r}H_M^\frac{1}{p_M},
    \end{equation}
    where $r^{-1}=1-p_M^{-1}<\sum_{j=1}^{M-1}p_j^{-1}\eqqcolon s^{-1}$. Applying H\"oler's inequality again and using \eqref{E:without sum}, one has
    \begin{align*}
        & \left(\displaystyle\sum_\ell\displaystyle\prod_{j=1}^{M-1}F_{j,\ell}^\frac{r}{p_j}\right)^\frac{1}{r}=\left(\displaystyle\sum_\ell\displaystyle\prod_{j=1}^{M-1}F_{j,\ell}^\frac{s}{p_j}\displaystyle\prod_{j=1}^{M-1}F_{j,\ell}^\frac{r-s}{p_j}\right)^\frac{1}{r} \\
        \leq & \displaystyle\prod_{j=1}^{M-1}\left(\displaystyle\sum_\ell F_{j,\ell}\right)^\frac{s}{p_jr}\displaystyle\max_\ell\displaystyle\prod_{j=1}^{M-1}F_{j,\ell}^\frac{r-s}{p_jr}\sim\displaystyle\prod_{j=1}^{M-1}\eta_j^\frac{r-s}{p_jr}H_j^\frac{1}{p_j}.
    \end{align*}
    Replacing $M$ with any other index $j$, we can find a small constant $a>0$ such that
    \begin{equation}
        \displaystyle\sum_\ell F_\ell\lesssim\displaystyle\prod_{j=1}^M\eta_j^aH_j^\frac{1}{p_j}.
    \end{equation}

    On the other hand, we deduce from \eqref{E:without epsilon} and $1\leq l\lesssim\prod\eta_j^{-1}$
    \begin{equation}
        \displaystyle\sum_\ell F_\ell\sim\varepsilon\displaystyle\sum_\ell\displaystyle\prod_{j=1}^MH_j^\frac{1}{p_j}\lesssim\varepsilon\displaystyle\prod_{j=1}^M\eta_j^{-1}H_j^\frac{1}{p_j}.
    \end{equation}    
\end{proof}

\section{Summing quasiextremals}\label{S:summing quasiextremals}
\subsection{Quasiextremal semiregular sets}
\begin{proposition}\label{P:semiregular paraball}
    Let $\varepsilon>0$. Suppose $\Omega\subset\HH^n$ is a $1$-semiregular set with respect to $\pi$ that is $\varepsilon$-quasiextremal. Then there exists a paraball $\scriptB$ associated with $\{L_j\}_{j=1}^M$ such that \eqref{E:paraball} and \eqref{E:paraball proj est} hold true.
\end{proposition}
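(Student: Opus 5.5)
The plan is to reduce Proposition \ref{P:semiregular paraball} to Proposition \ref{P:regular paraball}. We partition $\Omega$ dyadically according to the sizes of its $\widetilde\pi_j^*$ fibers, and then use almost-disjointness to show that one piece carries a large share of $\Omega$. For $\ell=(\ell_1,\dots,\ell_{\widetilde m})\in\Z^{\widetilde m}$, let $\Omega_\ell$ be the set of $z\in\Omega$ with $|(\widetilde T^*_j)^{\Omega}(z)|\sim 2^{\ell_j}$ for each $j$. Since the kernels $\ker P_1<\cdots<\ker P_{\widetilde m}$ are nested, the same refinement procedure used to prove \eqref{E:refinement 4} in Lemma \ref{L:refinement} makes each $\Omega_\ell$ (after a harmless refinement) $c$-semiregular with respect to $\pi_*$. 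Lemma \ref{L:semiregular refinement} keeps it $(|\Omega_\ell|/|\Omega|)$-semiregular with respect to $\pi$. Proposition \ref{P:semi reg rwt} then gives
\[
|\Omega_\ell|\lesssim (|\Omega_\ell|/|\Omega|)^{-C}\prod_{j=1}^M|\pi_j(\Omega_\ell)|^{1/p_j}.
\]
Pieces with $|\Omega_\ell|\le \varepsilon^{C'}|\Omega|$ should contribute negligibly. To show this, sort the $\ell$ into classes $\Lambda_\eta$ according to $|\Omega_\ell|\sim\eta|\Omega|$, where $\eta$ ranges over dyadic values.

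The key step is to sum over each class $\Lambda_\eta$ using Lemma \ref{L:summing without a priori est}. Set $F_\ell=|\Omega_\ell|$, $F_{j,\ell}=|\pi_j(\Omega_\ell)|$ and $H_j=|\pi_j(\Omega)|$. Hypothesis \eqref{E:without rwt est} holds up to a factor $\eta^{-C}$ by the display above. Hypothesis \eqref{E:without sum} is the almost-disjointness of the projections, and this is where Proposition \ref{P:regular paraball} enters. Each $\Omega_\ell$ with $\ell\in\Lambda_\eta$ that is $\eta$-quasiextremal is, by Lemma \ref{L:semiregularity to regularity}, $\eta^C$-regular, and the same is true of its refinements by Lemma \ref{L:regular refinement}. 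Hence every refinement of such an $\Omega_\ell$ is approximated by a paraball in the sense of \eqref{E:paraball}--\eqref{E:paraball proj est}. For $\ell\neq\ell'$ with $|\ell-\ell'|_\infty\ge A\log(1/\eta)$ the fiber averages $\beta_j^*$ differ by a factor $\eta^{-A}$. Lemma \ref{L:regular little interaction} therefore gives refinements whose $\pi_j$-images overlap in at most $\eta^{C_A}$ of their size. Lemma \ref{L:almost disjoint} then yields $\sum_\ell|\pi_j(\widetilde\Omega_\ell)|\lesssim|\pi_j(\Omega)|$ along each residue class modulo $A\log(1/\eta)$, and there are only polylogarithmically many such classes. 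Hypothesis \eqref{E:without eta} comes from $|\Omega_\ell|\sim\eta|\Omega|$ together with quasiextremality.

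Lemma \ref{L:summing without a priori est} then gives $\sum_{\ell\in\Lambda_\eta}|\Omega_\ell|\lesssim\eta^{a}\prod_j|\pi_j(\Omega)|^{1/p_j}$, with the logarithmic losses absorbed by shrinking $a$. Summing over dyadic $\eta\le\varepsilon^{C'}$ gives at most $\varepsilon^{aC'}\prod_j|\pi_j(\Omega)|^{1/p_j}$. Because $\Omega$ is $\varepsilon$-quasiextremal, this is $\le\frac12|\Omega|$ once $C'$ is large. Consequently some $\Omega_\ell$ has $|\Omega_\ell|\gtrsim\varepsilon^{C}|\Omega|$. By Lemmas \ref{L:quasiextremal refinement} and \ref{L:semiregularity to regularity} this piece is $\varepsilon^{C}$-regular. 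Proposition \ref{P:regular paraball} applied to it produces a paraball $\scriptB$ with $\varepsilon^C|\scriptB|\lesssim|\Omega_\ell|\le|\Omega|$, $|\Omega|\lesssim\varepsilon^{-C}|\Omega_\ell\cap\scriptB|\le\varepsilon^{-C}|\Omega\cap\scriptB|$, and projection bounds relative to $\Omega_\ell\subset\Omega$. This is exactly \eqref{E:paraball} and \eqref{E:paraball proj est} for $\Omega$.

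The main obstacle I expect is the almost-disjointness step. Lemma \ref{L:regular little interaction} needs regularity of the pieces, but regularity includes quasiextremality, which we do not know in advance for every $\Omega_\ell$. I would handle this by first discarding the non-quasiextremal pieces: for those, the estimate from Proposition \ref{P:semi reg rwt} combined with the trivial bound $|\pi_j(\Omega_\ell)|\le|\pi_j(\Omega)|$ already makes them small. Only the $\eta^{C}$-quasiextremal pieces then need the paraball disjointness argument. The pieces must also be refined compatibly with the hypotheses of Lemma \ref{L:regular little interaction}, which follows Christ's Section 9 scheme.
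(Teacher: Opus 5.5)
Your architecture matches the paper's. You split dyadically by $\widetilde\pi_j^*$-fiber sizes, form large-gap classes, use Lemmas~\ref{L:regular little interaction} and~\ref{L:almost disjoint} for almost disjoint projections, apply Lemma~\ref{L:summing without a priori est}, and finish with Proposition~\ref{P:regular paraball} on one large piece. However, the summing step fails as you have set it up.

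You get the restricted weak-type bound for the pieces from the $\pi$-semiregularity inherited through Lemma~\ref{L:semiregular refinement}. So \eqref{E:without rwt est} holds only up to a factor $\eta^{-C}$, where $C$ is the constant of Proposition~\ref{P:semi reg rwt}. The only gain in Lemma~\ref{L:summing without a priori est} is $\prod_j\eta_j^{a}$, where $a$ is a small H\"older exponent coming from $\sum_j1/p_j>1$. The other branch is the counting bound $\eta\,\#\Lambda\lesssim\eta\min_j\eta_j^{-1}$. With the loss, the best the lemma gives is
\[
\sum_{\ell\in\Lambda_\eta}|\Omega_\ell|\lesssim\min\Bigl\{\eta^{-C}\prod_{j=1}^M\eta_j^{a},\ \eta\min_j\eta_j^{-1}\Bigr\}\prod_{j=1}^M|\pi_j(\Omega)|^{1/p_j}.
\]
Take $\eta_j\equiv\eta^{(1+C)/(1+Ma)}$. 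Then both entries equal $\eta^{(Ma-C)/(1+Ma)}$, which is $\gtrsim1$ as soon as $C\geq Ma$. Nothing forces $a>C$. So your claimed bound $\eta^{a}\prod_j|\pi_j(\Omega)|^{1/p_j}$ does not follow, and you reach no contradiction with $\varepsilon$-quasiextremality.

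The missing idea is the very reason for splitting along $\widetilde\pi_j^*$-fibers. Each piece is $O(1)$-semiregular with respect to $\pi_*$. Proposition~\ref{P:semi reg rwt} holds verbatim with $(\pi,\widetilde\pi_j,\mathbf X)$ replaced by $(\pi_*,\widetilde\pi_j^*,\mathbf Y)$. This gives $|\Omega_\ell|\lesssim\prod_j|\pi_j(\Omega_\ell)|^{1/p_j}$ with no $\eta$-dependent loss, and likewise for the constant-fraction refinements coming from Lemma~\ref{L:greedy}. That is what the paper uses. You establish $\pi_*$-semiregularity but spend it only on certifying regularity.

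Three smaller points.

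\begin{enumerate}
\item Hypothesis \eqref{E:without eta} does not follow from $|\Omega_\ell|\sim\eta|\Omega|$ and quasiextremality, since a piece of small measure can have projections comparable to those of $\Omega$. As in the paper, you must pigeonhole $|\pi_j(\widetilde\Omega_\ell)|\sim\eta_j|\pi_j(\Omega)|$ into further dyadic classes. Then use almost disjointness to get $\#\Lambda\lesssim\min_j\eta_j^{-1}$, and sum the minimum above over the $\eta_j$. The decay in $\eta$ comes from that minimum (once the loss is removed), not from $\eta_j\approx\eta$.
\item Your worry about quasiextremality of the pieces is moot. We have $|\Omega_\ell|\sim\eta|\Omega|\geq\eta\varepsilon\prod_j|\pi_j(\Omega)|^{1/p_j}\geq\eta\varepsilon\prod_j|\pi_j(\Omega_\ell)|^{1/p_j}$. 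So every piece is automatically $\eta\varepsilon$-quasiextremal.
\item A refinement as in Lemma~\ref{L:refinement} only produces lower bounds on fibers, while semiregularity needs upper bounds. Also, defining $\Omega_\ell$ through the fibers of $\Omega$ cuts the $\widetilde\pi_j^*$-fibers for $j>1$. To get pieces that are genuinely $O(1)$-semiregular with respect to $\pi_*$, with $\beta_j^*(\Omega_\ell)\sim2^{\ell_j}$ as Lemma~\ref{L:regular little interaction} requires, split successively for $j=1,\dots,\widetilde m$ by the $\widetilde\pi_j^*$-fibers of the current piece. Each step keeps whole $\widetilde\pi_j^*$-fibers, hence whole $\widetilde\pi_i^*$-fibers for $i<j$ by nestedness, so earlier normalizations survive.
\end{enumerate}
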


\begin{proof}
    Without loss of generality, we may assume $p_j<\infty$. For $\ell\in\Z^{\widetilde m}$, define
    \begin{gather}
        \Omega_\ell=\{z\in\Omega:\forall j=1,\dots,\widetilde m\quad2^{\ell_j}\leq|(\widetilde T_j^*)^\Omega(z)|<2^{\ell_j+1}\}.
    \end{gather}
    Then $\Omega_\ell$ is $1$-semiregular with respect to $\pi_*$, and $\Omega=\sqcup_\ell\Omega_\ell$ (up to a null set). Proposition \ref{P:semi reg rwt} yields
    \begin{equation}
        |\Omega_\ell|\lesssim\displaystyle\prod_{j=1}^M|\pi_j(\Omega_\ell)|^\frac{1}{p_j}.
    \end{equation}
    Furthermore, there exists some dyadic number $\eta_0\in(0,0.5]$ such that
    \begin{equation}\label{E:semi reg epsilon}
        |\Omega_\ell|\sim\eta_0\prod_{j=1}^M|\pi_j(\Omega)|^\frac{1}{p_j}\geq\eta_0\displaystyle\prod_{j=1}^M|\pi_j(\Omega_\ell)|^\frac{1}{p_j}.
    \end{equation}
    Then we deduce from Lemmas \ref{L:semiregular refinement} that $\Omega_\ell$ are $C\eta_0$-regular. If $\eta_0\gtrsim\varepsilon^C$, then we can invoke Proposition \ref{P:regular paraball} to approximate $\Omega_\ell$ with paraballs, which also well approximates $\Omega$ up to $\varepsilon$-loss. Suppose we always have $\eta_0\leq\varepsilon^{C_0}$. It suffices to show that this is impossible when $C_0$ is too large.
    
    Let $\Lambda$ denote the set of indices $\ell$. Consider the partition $\Lambda=\sqcup_b\Lambda_b$, where $1\leq b\lesssim(\log(\eta_0^{-1}))^{\widetilde m}$, and for any $b$, every pair of distinct $\ell,\ell^\prime\in\Lambda_b$ enjoys the large gap property $|\ell_j-\ell_j^\prime|\geq A\log(\eta_0^{-1})$ for some $j$ and sufficiently large $A>0$. Applying Proposition \ref{P:regular paraball} and Lemmas \ref{L:regular refinement} and \ref{L:regular little interaction} yields refinements $\Omega_\ell^\prime$ of $\Omega_\ell$ such that
    \begin{equation}
        \forall\ell,\ell^\prime\in\Lambda_b\qquad\ell\neq\ell^\prime\implies|\pi_j(\Omega_\ell^\prime)\cap\pi_j(\Omega_{\ell^\prime}^\prime)|\lesssim\eta_0^{C_A}|\pi_j(\Omega)|.
    \end{equation}
    Assume further a dyadic number $\eta_j\in(0,\frac{1}{2}]$ such that for each $\ell$,
    \begin{equation}\label{E:semi reg eta}
        \forall1\leq j\leq M\quad|\pi_j(\Omega_\ell^\prime)|\sim\eta_j|\pi_j(\Omega)|.
    \end{equation}
    We can assume $\eta_j\gtrsim\eta_0^C$ for all $j$. Otherwise, Proposition \ref{P:semi reg rwt} yields
    \begin{equation}
        |\Omega_\ell^\prime|\lesssim\displaystyle\prod_{j=1}^M|\pi_j(\Omega_\ell^\prime)|^\frac{1}{p_j}\lesssim\eta_0^{\min\frac{C}{p_j}}\displaystyle\prod_{j=1}^M|\pi_j(\Omega)|^\frac{1}{p_j},
    \end{equation}
    contradicting \eqref{E:semi reg epsilon} for sufficiently large $C$. Note that $C_A\geq 3C$ for sufficiently large $A$. Let $\overrightarrow\eta=\{\eta_j\}_{j=0}^M$. Let $\Lambda(b,\overrightarrow\eta)$ denote the set of indices $\ell\in\Lambda_b$ satisfying \eqref{E:semi reg eta}, and let $\Lambda(\overrightarrow\eta)=\sqcup_b\Lambda(b,\overrightarrow\eta)$. Lemma \ref{L:almost disjoint} implies
    \begin{equation}
        \sum_{\ell\in\Lambda(b,\overrightarrow\eta)}|\pi_j(\Omega_\ell^\prime)|\lesssim|\pi_j(\Omega)|,\qquad1\leq j\leq M.
    \end{equation}
    Note that $\#\Lambda(b,\overrightarrow\eta)\lesssim\min\eta_j^{-1}$. Applying Lemma \ref{L:summing without a priori est} with $F_\ell=|\Omega_\ell^\prime|,H_j=|\pi_j(\Omega)|,F_{j,\ell}=|\pi_j(\Omega_\ell^\prime)|$ and recalling \eqref{E:Lp improving} yields
    \begin{equation}
        \displaystyle\sum_{\Lambda(b,\overrightarrow\eta)}|\Omega_\ell|\lesssim\min\{\displaystyle\prod_{j=1}^M\eta_j^a,\eta_0\displaystyle\prod_{j=1}^M\eta_j^{-1}\}\displaystyle\prod_{j=1}^M|\pi_j(\Omega)|^\frac{1}{p_j}.
    \end{equation}
    Summing over $b$ incurs a harmless $(\log(\eta_0^{-1}))^{\widetilde m}$-loss and yields
    \begin{equation}
        \displaystyle\sum_{\Lambda(\overrightarrow\eta)}|\Omega_\ell|\lesssim\min\{\displaystyle\prod_{j=1}^M\eta_j^a,\eta_0^b\}\displaystyle\prod_{j=1}^M|\pi_j(\Omega)|^\frac{1}{p_j},
    \end{equation}
    for some constants $a,b>0$. Summing over the dyadic numbers $\eta_j\in[\varepsilon^C,\frac{1}{2}],\eta_0\leq\varepsilon^{C_0}$ contradicts $\varepsilon$-quasiextremality of $\Omega$ for sufficiently large $C_0$.
\end{proof}

\subsection{The restricted weak-type inequality}
\begin{proposition}\label{P:weak}
Suppose $\mathbf p\in([1,\infty]\cap\Q)^n$ satisfies the conditions \eqref{A}, \eqref{B twiddle}, and \eqref{C}. Then the \textit{restricted weak-type estimate}
\begin{equation} \label{E:RWT ineq}
|\Omega| \lesssim \prod_{j=1}^{M} |\pi_j(\Omega)|^\frac{1}{p_j}
\end{equation}
holds uniformly over all Lebesgue measurable subsets $\Omega\subset\HH^n$.
\end{proposition}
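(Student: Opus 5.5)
We will deduce Proposition \ref{P:weak} from Proposition \ref{P:semi reg rwt} and Proposition \ref{P:semiregular paraball}, following the same dyadic decomposition scheme used in the proof of Proposition \ref{P:semiregular paraball}, but now decomposing with respect to the fibers of the $\widetilde\pi_j$ rather than the $\widetilde\pi_j^*$. We may assume $\Omega$ is bounded with $0<|\Omega|<\infty$ and $p_j<\infty$. For $\ell\in\Z^{\widetilde m}$ we set
\[
\Omega_\ell=\{z\in\Omega:\forall 1\leq j\leq\widetilde m\quad 2^{\ell_j}\leq|\widetilde T_j^\Omega(z)|<2^{\ell_j+1}\},
\]
so $\Omega=\bigsqcup_\ell\Omega_\ell$ up to a null set. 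Each $\Omega_\ell$ is $1$-semiregular with respect to $\pi$, because the nested kernels $\ker P_1<\cdots<\ker P_{\widetilde m}$ ensure that $\widetilde T_j^{\Omega_\ell}(z)$ is comparable to $\widetilde T_j^{\Omega}(z)$, so the fiber sizes are pinned at $2^{\ell_j}$ and the average is at least about $2^{\ell_j}$. Proposition \ref{P:semi reg rwt} then gives $|\Omega_\ell|\lesssim\prod_j|\pi_j(\Omega_\ell)|^{1/p_j}$ for each $\ell$. The whole difficulty is summing over $\ell$ without losing more than a constant.

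To sum, we will argue by contradiction in the standard quasiextremal way. We set $\Phi:=\prod_j|\pi_j(\Omega)|^{1/p_j}$ and pigeonhole dyadic parameters: $\eta_0$ with $|\Omega_\ell|\sim\eta_0\Phi$, and $\eta_j$ with $|\pi_j(\Omega_\ell)|\sim\eta_j|\pi_j(\Omega)|$. This partitions the index set into classes $\Lambda(\overrightarrow\eta)$. Inside a class, each $\Omega_\ell$ is an $\eta_0$-quasiextremal, $1$-semiregular set with respect to $\pi$. Proposition \ref{P:semiregular paraball} therefore provides a paraball approximating $\Omega_\ell$ up to $\eta_0^{-C}$ losses, and the same holds for every refinement of $\Omega_\ell$ by Lemmas \ref{L:semiregular refinement} and \ref{L:quasiextremal refinement}. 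Any $\eta_j\ll\eta_0^{C}$ contradicts Proposition \ref{P:semi reg rwt}, as in the preceding proof, so we may assume $\eta_j\gtrsim\eta_0^C$. Next we split $\Lambda(\overrightarrow\eta)$ into $O((\log\eta_0^{-1})^{\widetilde m})$ subclasses $\Lambda_b$ whose indices are $A\log(\eta_0^{-1})$-separated in some coordinate. Separation in $\ell_j$ means the $\beta_j$ differ by a factor of at least $\eta_0^{-A}$. Lemma \ref{L:regular little interaction} then produces refinements $\Omega'_\ell$ whose $\pi_j$-images pairwise overlap by at most $\eta_0^{C_A}|\pi_j(\Omega)|$. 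Choosing $A$ large and applying Lemma \ref{L:almost disjoint} gives $\sum_{\ell\in\Lambda_b}|\pi_j(\Omega'_\ell)|\lesssim|\pi_j(\Omega)|$.

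With this almost disjointness we apply Lemma \ref{L:summing without a priori est}, taking $F_\ell=|\Omega'_\ell|$, $F_{j,\ell}=|\pi_j(\Omega'_\ell)|$ and $H_j=|\pi_j(\Omega)|$, and using \eqref{E:Lp improving}. This yields
\[
\sum_{\ell\in\Lambda(\overrightarrow\eta)}|\Omega_\ell|\lesssim\min\Bigl\{\prod_j\eta_j^a,\ \eta_0^b\Bigr\}\,\Phi
\]
for some $a,b>0$, where the $\log$ factor from the subclasses has been absorbed into the power of $\eta_0$. Summing over all dyadic $\eta_0,\eta_1,\dots,\eta_M\leq 1/2$ gives a convergent geometric series, so $|\Omega|\lesssim\Phi$.

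I expect the main obstacle to be the transfer of the regularity hypotheses. Proposition \ref{P:semiregular paraball} demands quasiextremality measured against the projections of $\Omega_\ell$ itself, while the pigeonholing above measures against $\Phi$. The bridge is the bound $|\pi_j(\Omega_\ell)|\leq|\pi_j(\Omega)|$, which means $\eta_0$-quasiextremality with respect to $\Phi$ implies $\eta_0$-quasiextremality of $\Omega_\ell$, and this inequality points the right way. A further concern is that this argument could be circular if $\Phi$ were not finite; this is harmless because $\Omega$ is bounded. Finally, the general measurable case follows from the bounded case by monotone convergence.
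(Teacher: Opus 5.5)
Your outline follows the paper's own proof of Proposition~\ref{P:weak} step for step (your $\eta_0$ is the paper's $\varepsilon$). However, the one step you actually justify is justified incorrectly, and everything downstream rests on it.

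With $\Omega_\ell$ defined by simultaneous conditions on $|\widetilde T_j^{\Omega}(z)|$, nestedness of the kernels only shows that these quantities are constant along $\widetilde\pi_1$-fibers. So $\Omega_\ell$ is a union of whole $\widetilde\pi_1$-fibers of $\Omega$, and $\widetilde T_1^{\Omega_\ell}=\widetilde T_1^{\Omega}$ on it. For $j\ge 2$, a $\widetilde\pi_j$-fiber of $\Omega$ is foliated by $\widetilde\pi_i$-fibers ($i<j$) of different sizes. $\Omega_\ell$ keeps only the part where $|\widetilde T_i^{\Omega}|\in[2^{\ell_i},2^{\ell_i+1})$, which can be an arbitrarily small fraction.

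For instance, take $\widetilde k=(1,2)$, and parametrise each $\pi$-fiber by $s\mapsto e^{s\cdot\mathbf X}(z)$, so the $\widetilde\pi_1$-fibers are the lines $s_2=\mathrm{const}$. Let many $\pi$-fibers meet $\Omega$ in $[0,1]^2\cup([0,2^{10}]\times[1,2])$, and a few meet it in $[0,1]\times[0,2^{10}]$. Both kinds contribute to $\Omega_{(0,10)}$, whose $\widetilde\pi_2$-fibers then have measures $1$ and $2^{10}$. Hence $\Omega_{(0,10)}$ is not $\varepsilon$-semiregular for any fixed $\varepsilon$, and $\beta_2(\Omega_{(0,10)})\approx 1\ll 2^{\ell_2}$.

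This invalidates your appeal to Propositions~\ref{P:semi reg rwt} and~\ref{P:semiregular paraball}. It also breaks your claim that separation in $\ell_j$ forces the $\beta_j$ apart, which is what feeds Lemma~\ref{L:regular little interaction}. The paper states the same semiregularity without proof, so you inherited this defect rather than introduced it.

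The repair is to decompose in nested fashion, in increasing order of kernels:
\begin{itemize}
\item split $\Omega$ by $|\widetilde T_1^{\Omega}|$;
\item split each resulting piece $\Omega'$ by $|\widetilde T_2^{\Omega'}|$, computed inside that piece;
\item continue in this way up to $j=\widetilde m$.
\end{itemize}
Because $\ker d\widetilde\pi_i\le\ker d\widetilde\pi_j$ for $i<j$, each piece created at stage $j$ is a union of whole $\widetilde\pi_i$-fibers of its parent for every $i\le j$. So the fiber sizes fixed at earlier stages are untouched. The final pieces have all $\widetilde\pi_j$-fibers of measure in $[2^{\ell_j},2^{\ell_j+1})$. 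They are therefore $1$-semiregular with respect to $\pi$ with $\beta_j(\Omega_\ell)\sim 2^{\ell_j}$, and the rest of your argument goes through.

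A smaller point: pigeonhole $\eta_j$ through $|\pi_j(\Omega_\ell')|$ after the refinement, as the paper does, rather than through $|\pi_j(\Omega_\ell)|$. Lemma~\ref{L:summing without a priori est} needs the cardinality bound $\#\Lambda(b,\overrightarrow\eta)\lesssim\prod_j\eta_j^{-1}$. This comes from $\sum_\ell|\pi_j(\Omega_\ell')|\lesssim|\pi_j(\Omega)|$ together with $|\pi_j(\Omega_\ell')|\gtrsim\eta_j|\pi_j(\Omega)|$. A refinement of an $\eta_0$-quasiextremal set can have a $\pi_j$-image smaller by a power of $\eta_0$. In that case the term $\eta_0\prod_j\eta_j^{-1}$ no longer gives the decay in $\eta_0$ that makes the final dyadic sum converge.
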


\begin{proof}
    Without loss of generality, we may assume $p_j<\infty$ and $\Omega$ bounded. For $\ell\in\Z^{\widetilde m}$, define
    \begin{equation}
        \Omega_\ell=\{z^\prime\in\Omega:\forall j=1,\dots,\widetilde m\quad2^{\ell_j}\leq|\widetilde T_j^\Omega(z)|<2^{\ell_j+1}\}.
    \end{equation}
    Then $\Omega_\ell$ is $1$-semiregular with respect to $\pi$, and $\Omega=\sqcup\Omega_\ell$ (up to a null set). Proposition \ref{P:semi reg rwt} yields
    \begin{equation}
        |\Omega_\ell|\lesssim\displaystyle\prod_{j=1}^M|\pi_j(\Omega_\ell)|^\frac{1}{p_j}.
    \end{equation}
    Furthermore, there exists some dyadic number $\varepsilon\in(0,0.5]$ such that
    \begin{equation}\label{E:rwt epsilon}
        |\Omega_\ell|\sim\varepsilon\prod_{j=1}^M|\pi_j(\Omega)|^\frac{1}{p_j}\geq\varepsilon\displaystyle\prod_{j=1}^M|\pi_j(\Omega_\ell)|^\frac{1}{p_j}.
    \end{equation}
    
    Let $\Lambda$ denote the set of indices $\ell$. Consider the partition $\Lambda=\sqcup_b\Lambda_b$, where $1\leq b\lesssim(\log(\eta_0^{-1}))^{\widetilde m}$, and for any $b$, every pair of distinct $\ell,\ell^\prime\in\Lambda_b$ enjoys the large gap property $|\ell_j-\ell_j^\prime|\geq A\log(\eta_0^{-1})$ for some $j$ and sufficiently large $A>0$. Then one can invoke Proposition \ref{P:semiregular paraball} and Lemmas \ref{L:semiregular refinement} and \ref{L:regular little interaction} to obtain refinements $\Omega_\ell^\prime$ of $\Omega_\ell$ such that if $\ell\neq\ell^\prime$, then for all $1\leq j\leq M$,
    \begin{equation}
        |\pi_j(\Omega_\ell^\prime)\cap\pi_j(\Omega_{\ell^\prime}^\prime)|\lesssim\varepsilon^{C_A}|\pi_j(\Omega)|,\qquad\lim_{A\rightarrow\infty}C_A=\infty.
    \end{equation}
    
    Assume further a dyadic number $\eta_j\in(0,\frac{1}{2}]$ such that for each $\ell$,
    \begin{equation}\label{E:rwt eta}
        \forall1\leq j\leq M\quad|\pi_j(\Omega_\ell^\prime)|\sim\eta_j|\pi_j(\Omega)|.
    \end{equation}
    We can assume $\eta_j\gtrsim\varepsilon^C$ for all $j$. Otherwise, Proposition \ref{P:semi reg rwt} yields
    \[
    |\Omega_\ell^\prime|\lesssim\displaystyle\prod_{j=1}^M|\pi_j(\Omega_\ell^\prime)|^\frac{1}{p_j}\lesssim\varepsilon^{\min\frac{C}{p_j}}\displaystyle\prod_{j=1}^M|\pi_j(\Omega)|^\frac{1}{p_j},
    \]
    contradicting \eqref{E:rwt epsilon} for sufficiently large $C$. Note that $C_A\geq 3C$ for sufficiently large $A$. Let $\overrightarrow\eta=\{\eta_j\}_{j=1}^M$. Let $\Lambda(b,\varepsilon,\overrightarrow\eta)$ denote the set of indices $\ell\in\Lambda_b$ satisfying \eqref{E:rwt eta}, and let $\Lambda(\varepsilon,\overrightarrow\eta)=\sqcup_b\Lambda(b,\varepsilon,\overrightarrow\eta)$. Lemma \ref{L:almost disjoint} implies
    \[
    \sum_{\ell\in\Lambda(b,\varepsilon,\overrightarrow\eta)}|\pi_j(\Omega_\ell^\prime)|\lesssim|\pi_j(\Omega)|,\qquad1\leq j\leq M.
    \]
    Note that $\#\Lambda(b,\overrightarrow\eta)\lesssim\min\eta_j^{-1}$. Applying Lemma \ref{L:summing without a priori est} with $F_\ell=|\Omega_\ell^\prime|,H_j=|\pi_j(\Omega)|,F_{j,\ell}=|\pi_j(\Omega_\ell^\prime)|$ and recalling \eqref{E:Lp improving} yields
    \begin{equation}
        \displaystyle\sum_{\Lambda(b,\varepsilon,\overrightarrow\eta)}|\Omega_\ell|\lesssim\min\{\displaystyle\prod_{j=1}^M\eta_j^a,\varepsilon\displaystyle\prod_{j=1}^M\eta_j^{-1}\}\displaystyle\prod_{j=1}^M|\pi_j(\Omega)|^\frac{1}{p_j}.
    \end{equation}
    Summing over $b$ incurs a harmless $(\log(\varepsilon^{-1}))^{\widetilde m}$-loss and yields
    \begin{equation}
        \displaystyle\sum_{\Lambda(\varepsilon,\overrightarrow\eta)}|\Omega_\ell|\lesssim\min\{\displaystyle\prod_{j=1}^M\eta_j^a,\varepsilon^b\}\displaystyle\prod_{j=1}^M|\pi_j(\Omega)|^\frac{1}{p_j},
    \end{equation}
    for some positive constants $a,b$. Summing over the dyadic numbers $\eta_j,\varepsilon\in(0,\frac{1}{2}]$ gives \ref{E:RWT ineq}.
\end{proof}

The above proof implies the following theorem that subsumes Proposition \ref{P:regular paraball} and Corollary \ref{P:semiregular paraball}:

\begin{theorem}\label{T:paraball}
    Let $\varepsilon>0$. Suppose $\Omega\subset\HH^n$ is $\varepsilon$-quasiextremal, then there exists a paraball $\scriptB$ associated with $\{L_j\}_{j=1}^M$ such that \eqref{E:paraball} and \eqref{E:paraball proj est} hold true.
\end{theorem}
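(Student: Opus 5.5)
The plan is to run the same dyadic decomposition used in the proof of Proposition \ref{P:weak}, but now starting from an $\varepsilon$-quasiextremal $\Omega$ and arguing by contradiction, exactly as in the proof of Proposition \ref{P:semiregular paraball}. For $\ell\in\Z^{\widetilde m}$, let
\[
\Omega_\ell=\{z\in\Omega:2^{\ell_j}\leq|\widetilde T_j^\Omega(z)|<2^{\ell_j+1}\ \forall j\}.
\]
Each $\Omega_\ell$ is then $1$-semiregular with respect to $\pi$, and $\Omega=\bigsqcup_\ell\Omega_\ell$ up to a null set. By Proposition \ref{P:weak}, the restricted weak-type inequality now holds \emph{a priori} for every set. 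Hence for each $\ell$ there is a dyadic $\eta_0\in(0,\frac12]$ with
\[
|\Omega_\ell|\sim\eta_0\prod_j|\pi_j(\Omega)|^{1/p_j}\geq\eta_0\prod_j|\pi_j(\Omega_\ell)|^{1/p_j},
\]
so each $\Omega_\ell$ is $\eta_0$-quasiextremal and $1$-semiregular with respect to $\pi$. If some $\ell$ has $\eta_0\gtrsim\varepsilon^{C}$, Proposition \ref{P:semiregular paraball} gives a paraball $\scriptB$ with \eqref{E:paraball} and \eqref{E:paraball proj est} for $\Omega_\ell$. Because $|\Omega_\ell|\gtrsim\varepsilon^{C}|\Omega|$ and $\pi_j(\Omega_\ell)\subset\pi_j(\Omega)$ (likewise for $\widetilde\pi_j,\widetilde\pi_j^*$), the same $\scriptB$ works for $\Omega$ after enlarging $C$.

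It remains to rule out the case where every $\Omega_\ell$ has $\eta_0\leq\varepsilon^{C_0}$ for a large $C_0$. Here I would copy the summation scheme. Split the index set $\Lambda=\bigsqcup_b\Lambda_b$ into $O((\log\eta_0^{-1})^{\widetilde m})$ classes whose members are separated by at least $A\log\eta_0^{-1}$ in some coordinate. Within a class, the $\widetilde\pi_j$-fiber averages $\beta_j$ of distinct $\Omega_\ell$ differ by a factor $\eta_0^{-A}$ up to constants. Proposition \ref{P:semiregular paraball} (applied to refinements, using Lemma \ref{L:semiregular refinement}) supplies paraball approximations, so Lemma \ref{L:regular little interaction} yields refinements $\Omega_\ell'$ whose projections satisfy $|\pi_j(\Omega_\ell')\cap\pi_j(\Omega_{\ell'}')|\lesssim\eta_0^{C_A}|\pi_j(\Omega)|$.

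Next, pigeonhole dyadic $\eta_j$ with $|\pi_j(\Omega_\ell')|\sim\eta_j|\pi_j(\Omega)|$. Proposition \ref{P:weak} forces $\eta_j\gtrsim\eta_0^{C}$, and then Lemma \ref{L:almost disjoint} (with $C_A\ge 3C$) gives $\sum_\ell|\pi_j(\Omega_\ell')|\lesssim|\pi_j(\Omega)|$. Lemma \ref{L:summing without a priori est}, together with \eqref{E:Lp improving}, then bounds the sum over each class by $\min\{\prod\eta_j^a,\eta_0\prod\eta_j^{-1}\}\prod|\pi_j(\Omega)|^{1/p_j}$. Summing over $b$, the dyadic $\eta_j$, and $\eta_0\le\varepsilon^{C_0}$ gives $|\Omega|\lesssim\varepsilon^{C_0 b'}\prod|\pi_j(\Omega)|^{1/p_j}$ for some $b'>0$, which contradicts $\varepsilon$-quasiextremality once $C_0$ is large.

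The main obstacle I expect is ensuring that the application of Lemma \ref{L:regular little interaction} is legitimate. Its hypothesis requires paraball approximations for \emph{every} refinement of $\Omega_\ell$, not just for $\Omega_\ell$ itself. That is exactly why Proposition \ref{P:semiregular paraball} must be applied to $\sigma$-refinements, which remain semiregular (Lemma \ref{L:semiregular refinement}) and quasiextremal (Lemma \ref{L:quasiextremal refinement}) with polynomial losses in $\eta_0$. A second point to verify is that the large-gap condition on $\ell$, which is phrased in terms of $|\widetilde T_j|$, actually translates into the gap \eqref{E:different beta} on $\beta_j$ or $\beta_j^*$. On $\Omega_\ell$ one has $\beta_j\sim2^{\ell_j}$ up to $\eta_0^{-C}$ by semiregularity together with the refinement lower bound \eqref{E:refinement 4}, so choosing $A$ large should absorb these losses.
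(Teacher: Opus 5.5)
Your proposal is correct and is essentially the paper's proof. The paper only remarks that the proof of Proposition \ref{P:weak} implies the theorem, and what it intends is your combination: decompose $\Omega$ into $\pi$-semiregular pieces $\Omega_\ell$ as in that proof, then run the case split and contradiction-by-summation scheme from the proof of Proposition \ref{P:semiregular paraball}. The two points you flag are also passed over without comment in the paper, so your write-up is if anything more careful than the original: one is needing paraball approximations for every refinement of $\Omega_\ell$, the other is translating the gap in $\ell$ into the gap \eqref{E:different beta}.
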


The following is a variant of Lemma \ref{L:regular little interaction}.
\begin{lemma}\label{L:little interaction}
    Let $\varepsilon\in(0,\frac{1}{2}],A>0$. Suppose $\{\scriptX_{E_j}\}_{j=1}^M,\{\scriptX_{E_j^\prime}\}_{j=1}^M$ are both $\varepsilon$-quasiextremal. Suppose there exists $1\leq j\leq M$ such that
    \begin{equation}\label{E:different volume}
        \max\{\frac{|E_j|}{|E_j^\prime|},\frac{|E_j^\prime|}{|E_j|}\}\geq\varepsilon^{-A}.
    \end{equation}
    Then there exist $\{\scriptX_{\widetilde E_j}\}_{j=1}^M,\{\scriptX_{\widetilde E_j^\prime}\}_{j=1}^M$ such that
    \[
    \scriptM(\{\scriptX_{\widetilde E_j}\}_{j=1}^M)\sim\scriptM(\{\scriptX_{E_j}\}_{j=1}^M),\qquad\scriptM(\{\scriptX_{\widetilde E_j^\prime}\}_{j=1}^M)\sim\scriptM(\{\scriptX_{E_j^\prime}\}_{j=1}^M),
    \]
    and
    \[
    \forall1\leq j\leq M\quad|\widetilde E_j\cap\widetilde E_j^\prime|\lesssim\varepsilon^{C_A}\max\{|E_j|,|E_j^\prime|\},
    \]
    where $C_A>0$ is a constant depending on $n,\pi_j,p_j,A$, and $\lim_{A\rightarrow\infty}C_A=\infty$.
\end{lemma}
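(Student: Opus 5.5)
The plan is to reduce to the set-level paraball machinery: the quasiextremal tuple of characteristic functions yields a quasiextremal set $\Omega$, Theorem \ref{T:paraball} approximates it by a paraball, and disparate paraballs have almost disjoint projections by Property \ref{Prop:paraball}.\ref{Prop:paraball little interaction}.

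\textbf{Step 1: pass from functions to a set.} Given the $\varepsilon$-quasiextremal tuple $\{\scriptX_{E_j}\}_{j=1}^M$, let
\[
\Omega=\bigcap_{j}\pi_j^{-1}(E_j),
\]
so that $\scriptM(\{\scriptX_{E_j}\})=|\Omega|$.

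\textbf{Step 2: make the projections comparable to the $E_j$.} Refine $\Omega$ so that each $\pi_j(\Omega)$ is comparable to $E_j$ up to $\varepsilon^{C}$ losses. Since $\pi_j(\Omega)\subset E_j$, we have
\[
|\Omega|\geq\varepsilon\prod_j|E_j|^{1/p_j}\geq\varepsilon\prod_j|\pi_j(\Omega)|^{1/p_j},
\]
so $\Omega$ is $\varepsilon$-quasiextremal. Proposition \ref{P:weak} gives $|\Omega|\lesssim\prod_j|\pi_j(\Omega)|^{1/p_j}$, and combining this with quasiextremality forces
\[
|\pi_j(\Omega)|\gtrsim\varepsilon^{C}|E_j|\quad\text{for all }j
\]
(using $p_j<\infty$; indices with $p_j=\infty$ are dropped, as elsewhere in the paper).

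\textbf{Step 3: approximate by a paraball.} Apply Theorem \ref{T:paraball} to obtain a paraball $\scriptB$ with $|\Omega\cap\scriptB|\gtrsim\varepsilon^{C}|\Omega|$ and $|\pi_j(\scriptB)|\leq|\pi_j(\Omega)|$. Then set
\[
\widetilde\Omega=\Omega\cap\scriptB,\qquad \widetilde E_j=E_j\cap\pi_j(\scriptB).
\]
We get $\widetilde\Omega\subset\bigcap_j\pi_j^{-1}(\widetilde E_j)$, hence
\[
\scriptM(\{\scriptX_{\widetilde E_j}\})\geq|\widetilde\Omega|\gtrsim\varepsilon^{C}\scriptM(\{\scriptX_{E_j}\}).
\]
Here ``$\sim$'' is read with $\varepsilon^{C}$ losses, as is customary in this section. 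Do the same for $E_j'$, obtaining $\scriptB'$.

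\textbf{Step 4: transfer the volume gap to the paraballs.} By Steps 2--3 and Property \ref{Prop:paraball}.\ref{Prop:quasi ext para},
\[
|\pi_j(\scriptB)|\sim_{\varepsilon^{C}}|E_j|\quad\text{for every }j.
\]
Hence \eqref{E:different volume} forces $|\pi_j(\scriptB)|/|\pi_j(\scriptB')|$ to be $\geq\varepsilon^{-A+C}$ or $\leq\varepsilon^{A-C}$. By the explicit formula
\[
|\pi_j(\scriptB)|\sim\Bigl(\prod_{e_i\notin V_j}r_i\Bigr)\rho^{\dim V_j+1}
\]
(or its starred analogue), either the parameters $\rho,\rho'$ differ by $\varepsilon^{-cA}$, or some coordinate-projected volume of $\scriptC,\scriptC'$ or $\scriptC_*,\scriptC_*'$ differs by that much. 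In the latter case, hypothesis \eqref{E:paraball little interaction} of Property \ref{Prop:paraball}.\ref{Prop:paraball little interaction} holds, giving
\[
|\pi_j(\scriptB)\cap\pi_j(\scriptB')|\lesssim\varepsilon^{C_A}\max\{|\pi_j(\scriptB)|,|\pi_j(\scriptB')|\}\lesssim\varepsilon^{C_A}\max\{|E_j|,|E_j'|\}.
\]
In the former case, $\rho^n=|\scriptC||\scriptC_*|$, so the ellipsoid volumes themselves differ, which again triggers the Property with $l=\Id$. The conclusion then follows from $\widetilde E_j\cap\widetilde E_j'\subset\pi_j(\scriptB)\cap\pi_j(\scriptB')$.

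\textbf{Main obstacle.} The hardest step is Step 4: converting a single-index volume disparity into the ellipsoid-disparity hypothesis of Property \ref{Prop:paraball}.\ref{Prop:paraball little interaction}, while keeping track that the exponents lose only $\varepsilon^{C}$ so that $C_A\to\infty$. I would handle it by an explicit case split on $\rho$ versus the $r_i$ or $r_i^*$.
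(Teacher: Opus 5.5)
Your overall route is the paper's: pass to $\Omega=\bigcap_j\pi_j^{-1}(E_j)$, use Proposition \ref{P:weak} to get $\varepsilon^C|E_j|\lesssim|\pi_j(\Omega)|\le|E_j|$, approximate by a paraball via Theorem \ref{T:paraball}, convert the volume gap into an ellipsoid gap, and conclude with Property \ref{Prop:paraball}.\ref{Prop:paraball little interaction}. The gap is in Step 3.

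A single paraball only captures an $\varepsilon^{C}$-fraction of $\Omega$, so you only get $\scriptM(\{\scriptX_{\widetilde E_j}\})\gtrsim\varepsilon^C\scriptM(\{\scriptX_{E_j}\})$. You then reinterpret ``$\sim$'' as allowing $\varepsilon^C$ losses. That is not the paper's convention: implicit constants depend only on $n,\pi_j,p_j$, and $\varepsilon$-dependence is always written out, as in $\varepsilon^{C_A}$. The loss is also not harmless. In the proof of Theorem \ref{T:base case}, this lemma supplies hypothesis \eqref{E:without rwt est} of Lemma \ref{L:summing without a priori est}, namely $\scriptM_\ell\lesssim\prod_j(2^{\ell_jp_j}|E_j^\ell|)^{1/p_j}$. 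With an $\varepsilon^{-C}$ loss the per-block bound becomes $\min\{\varepsilon^{-C}\prod_j\eta_j^a,\ \varepsilon\prod_j\eta_j^{-1}\}$. Its maximum over $\prod_j\eta_j$ is $\varepsilon^{(a-C)/(1+a)}$, which for small $a$ and large $C$ is not summable over the dyadic parameters.

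The missing ingredient is the greedy exhaustion of Lemma \ref{L:greedy}, which is why the paper also invokes Lemma \ref{L:quasiextremal refinement}.
\begin{enumerate}
\item Set $\Omega_1=\Omega$ and $\Omega_{k+1}=\Omega_k\setminus\scriptB_k$, stopping once $|\Omega_k|\le|\Omega|/2$.
\item Each $\Omega_k$ with $|\Omega_k|>|\Omega|/2$ is a refinement of $\Omega$, hence $c\varepsilon$-quasiextremal, so Theorem \ref{T:paraball} gives $\scriptB_k$ with $|\Omega_k\cap\scriptB_k|\gtrsim\varepsilon^C|\Omega|$.
\item These pieces are disjoint, so the process stops after $\lesssim\varepsilon^{-C}$ steps, with $|\Omega\cap\bigcup_k\scriptB_k|\ge|\Omega|/2$.
\item Take $\widetilde E_j=E_j\cap\bigcup_k\pi_j(\scriptB_k)$. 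Then $\scriptM(\{\scriptX_{\widetilde E_j}\})\ge|\Omega|/2$, with no $\varepsilon$-loss.
\item Your Step 2 argument, applied to $\Omega_k\cap\scriptB_k$, shows every $\scriptB_k$ still satisfies $\varepsilon^C|E_j|\lesssim|\pi_j(\scriptB_k)|\le|E_j|$. So your Step 4 applies to each pair $(\scriptB_k,\scriptB'_{k'})$.
\item Summing over the $\lesssim\varepsilon^{-2C}$ pairs gives $|\widetilde E_j\cap\widetilde E_j'|\lesssim\varepsilon^{C_A-2C}\max\{|E_j|,|E_j'|\}$, and $C_A-2C\to\infty$ as $A\to\infty$.
\end{enumerate}
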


\begin{proof}
    Let $\Omega=\cap_{j=1}^M\pi_j^{-1}(E_j),\Omega^\prime=\cap_{j=1}^M\pi_j^{-1}(E_j^\prime)$. By Theorem \ref{T:paraball}, there exist paraballs $\scriptB,\scriptB^\prime$ associated with $\{L_j\}_{j=1}^M$ such that $(\Omega,\scriptB),(\Omega^\prime,\scriptB^\prime)$ satisfy \eqref{E:paraball} and \eqref{E:paraball proj est}. Then
    \begin{equation}
        \varepsilon^C|E_j|\lesssim|\pi_j(\scriptB)|\lesssim|E_j|,\qquad\varepsilon^C|E_j^\prime|\lesssim|\pi_j(\scriptB^\prime)|\lesssim|E_j^\prime|.
    \end{equation}
    Let $(\scriptC,\scriptC_*),(\scriptC^\prime,\scriptC_*^\prime)$ be the associated ellipsoids. We obtain from the structure of paraballs, duality of the ellipsoids, and \eqref{E:different volume} that for some $1\leq j\leq M$,
    \begin{equation}
        \max\left\{\frac{|L_j(\scriptC)|}{|L_j(\scriptC^\prime)|},\frac{|L_j(\scriptC^\prime)|}{|L_j(\scriptC)|},\frac{|L_j(\scriptC_*)|}{|L_j(\scriptC_*^\prime)|},\frac{|L_j(\scriptC_*^\prime)|}{|L_j(\scriptC_*)|},\frac{|\scriptC|}{|\scriptC^\prime|},\frac{|\scriptC^\prime|}{|\scriptC|}\right\}\geq\varepsilon^{-C_A},
    \end{equation}
    where $C_A>0$ is a constant depending on $n,\pi_j,p_j,A$, and $\lim_{A\rightarrow\infty}C_A=\infty$. We finish the proof by invoking Lemmas \ref{L:quasiextremal refinement} and \ref{L:greedy} and Property \ref{Prop:paraball}.\ref{Prop:paraball little interaction}.
\end{proof}

\subsection{The strong-type inequality}\label{S:strong}
For simplicity, we allow a mixed bag of functions and sets in the input of \eqref{E:main ml form}. If a set $E$ is in the input, it is $\scriptX_E$ we are actually feeding to the operator. For example, if $M=2$, then we write $\scriptM(f,E)\coloneqq\scriptM(f,\scriptX_E)$.

First, we claim that it suffices to prove \eqref{E:base case} for \textit{dyadic simple functions} $f_j=\sum_{\ell\in\Z}2^\ell\scriptX_{E_{\ell,j}}$, where $\{E_{\ell,j}\}_\ell$ are disjoint Lebesgue measurable subsets of $\pi_j(\HH^n)$.

\begin{lemma}\label{L:simple}
Suppose \eqref{E:base case} holds uniformly for dyadic simple functions. Then it holds uniformly for compactly supported continuous functions.
\end{lemma}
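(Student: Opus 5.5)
The plan is a standard approximation argument that uses the positivity and multilinearity of $\scriptM$. First, reduce to nonnegative inputs. Since $|\scriptM(f_1,\dots,f_M)|\le\scriptM(|f_1|,\dots,|f_M|)$ and $\||f_j|\|_{p_j}=\|f_j\|_{p_j}$, we may take each $f_j\ge0$ continuous with compact support. The case $p_j=\infty$ is harmless: either $f_j$ is pulled out in sup norm, or the argument below is applied with that index included, since domination by a dyadic step function works equally well in $L^\infty$.

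Next, for each $j$, dominate $f_j$ by a dyadic simple function of comparable norm. Define
\[
g_j\coloneqq\sum_{\ell\in\Z}2^{\ell+1}\scriptX_{E_{\ell,j}},\qquad E_{\ell,j}\coloneqq\{w\in\pi_j(\HH^n):2^\ell\le f_j(w)<2^{\ell+1}\}.
\]
The sets $E_{\ell,j}$ are disjoint and Borel, since $f_j$ is continuous. We have $f_j\le g_j\le 2f_j$ pointwise, so $g_j$ is a dyadic simple function in the sense above (with index shifted by one), and $\|g_j\|_{p_j}\le2\|f_j\|_{p_j}$. Only finitely many $\ell$ need be considered from above because $f_j$ is bounded, but infinitely many small $\ell$ may occur. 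If the hypothesis requires finitely many levels, truncate to $g_j^{(L)}=\sum_{\ell\ge -L}2^{\ell+1}\scriptX_{E_{\ell,j}}$, which increases to $g_j$ as $L\to\infty$.

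Then, by monotonicity of $\scriptM$ in each nonnegative argument,
\[
\scriptM(f_1,\dots,f_M)\le\scriptM(g_1^{(L)},\dots,g_M^{(L)})+\text{(error)}.
\]
The cleaner route is the monotone convergence theorem: $\scriptM(g^{(L)}_1,\dots,g^{(L)}_M)\uparrow\scriptM(g_1,\dots,g_M)$, the integrand being a product of increasing nonnegative functions. Applying the hypothesis to the $g_j^{(L)}$ gives $\scriptM(g^{(L)})\le C\prod_j\|g_j^{(L)}\|_{p_j}\le C2^M\prod_j\|f_j\|_{p_j}$. Letting $L\to\infty$ and using $f_j\le g_j$ yields $\scriptM(f_1,\dots,f_M)\le 2^MC\prod_j\|f_j\|_{p_j}$.

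The only delicate point is confirming that the class of dyadic simple functions in the hypothesis allows countably many levels, or else performing the truncation and monotone limit just described. Beyond that the argument is routine, and the constant degrades only by the factor $2^M$.
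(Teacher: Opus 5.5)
Your proposal is correct and follows the paper's proof: you dominate each nonnegative $f_j$ pointwise by a dyadic simple function within a factor of $2$, then use monotonicity of $\scriptM$ in each argument together with comparability of the $L^{p_j}$ norms. Your extra truncation and monotone-convergence step is a harmless safeguard. The paper does not need it, since its dyadic simple functions $\sum_{\ell\in\Z}2^\ell\scriptX_{E_{\ell,j}}$ already allow countably many levels.
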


\begin{proof}
It suffices to consider nonnegative compactly supported continuous functions $f_j$ defined on $\pi_j(\HH^n)$ for $1\leq j\leq M$. Define
\[
\widetilde f_j\coloneqq\displaystyle\sum_{\ell\in\Z}2^\ell\scriptX_{E_{\ell,j}},\qquad E_{\ell,j}\coloneqq\{2^{\ell-1}<f_j\leq2^\ell\}.
\]
Then $\frac{1}{2}\widetilde f_j\leq f_j\leq\widetilde f_j$, and $\|f_j\|_{L^p}\sim\|\widetilde f_j\|_{L^p}$ uniformly over $j$. Then one has
\begin{equation}
    \scriptM(f_1,\dots,f_{M})\leq\scriptM(\widetilde f_1,\dots,\widetilde f_{M})\lesssim\displaystyle\prod_{j=1}^{M}\|\widetilde f_j\|_{L^{p_j}}\sim\displaystyle\prod_{j=1}^{M}\|f_j\|_{L^{p_j}}.
\end{equation}
\end{proof}

\begin{proof}[Proof of Theorem \ref{T:base case}]
    Let $p_j$ be as in Theorem \ref{T:base case}. Then we have $p_j\in[1,\infty]$. By Lemma \ref{L:simple}, it suffices to prove \eqref{E:base case} for dyadic simple functions. By scaling invariance, we may assume $\|f_j\|_{p_j}=1$ for each $j$.
    
    For each $\ell=(\ell_1,\dots,\ell_M)\in\Z^M$, set $\scriptM_\ell=\scriptM(2^{\ell_1}\scriptX_{E_{\ell_1,1}},\dots,2^{\ell_M}\scriptX_{E_{\ell_M,M}})$. By Proposition \ref{P:weak}, we have
    \[
    \scriptM_\ell\lesssim\displaystyle\prod_{j=1}^M(2^{\ell_jp_j}|E_{\ell_j,j}|)^\frac{1}{p_j}\leq1.
    \]
    Let $\varepsilon,\eta_j\in(0,\frac{1}{2}]$ for $1\leq j\leq M$ be dyadic numbers. Write $\overrightarrow\eta=\{\eta_j\}_{j=1}^M$. Let $\Lambda(\varepsilon,\overrightarrow\eta)$ denote the set of $\ell\in\Z^M$ satisfying
    \begin{gather}
        \label{E:epsilon}\scriptM_\ell\sim\varepsilon, \\
        \label{E:eta}\forall1\leq j\leq M\qquad2^{\ell_jp_j}|E_{\ell_j,j}|\sim\eta_j.
    \end{gather}
    Then $\Omega_\ell$ is $\varepsilon$-quasiextremal, and $\#\Lambda(\varepsilon,\overrightarrow\eta)\lesssim\prod\eta_j^{-1}$. Moreover, $\eta_j\gtrsim\varepsilon^C$ for all $j$. Otherwise, Proposition \ref{P:weak} yields
    \[
    \scriptM_\ell\lesssim\displaystyle\prod_{j=1}^M(2^{\ell_jp_j}|E_{\ell_j,j}|)^\frac{1}{p_j}\lesssim\varepsilon^{\min\frac{C}{p_j}},
    \]
    contradicting \eqref{E:epsilon} for sufficiently large $C$.
    
    Consider the partition $\Z^M=\sqcup_b\Lambda_b$, where $1\leq b\lesssim\log(\varepsilon^{-1})$, and within each $\Lambda_b$, every pair of distinct $\ell,\ell^\prime$ enjoys the large gap property $|\ell_j-\ell_j^\prime|\geq A(\log(\varepsilon^{-1}))^M$ for some $j$ and sufficiently large $A>0$. For fixed $b$, we can invoke Lemma \ref{L:little interaction} to obtain $E_j^\ell\subset E_{\ell_j,j}$ for each $\ell\in\Lambda_b,1\leq j\leq M$ such that $\scriptM_\ell^\prime\sim\scriptM_\ell$, where $\scriptM_\ell^\prime=\scriptM(2^{\ell_1}\scriptX_{E_1^\ell},\dots,2^{\ell_M}\scriptX_{E_M^\ell})$. If $\ell\neq\ell^\prime,\ell_j=\ell_j^\prime$ for some $j$, then
    \[
    |E_j^\ell\cap E_j^{\ell^\prime}|\lesssim\varepsilon^{C_A}|E_{\ell_j,j}|,\qquad\lim_{A\rightarrow\infty}C_A=\infty.
    \]
    By the same argument that shows $\eta_j\gtrsim\varepsilon^C$, we have $|E_j^\ell|\gtrsim\varepsilon^C$. Note that $C_A\geq 3C$ for sufficiently large $A$. Lemma \ref{L:almost disjoint} implies $\sum_{\ell:\ell_j=k}|E_j^\ell|\lesssim|E_{k,j}|$ for each $1\leq j\leq M,k\in\Z$. Summing over $k$ yields $\sum_{\ell}2^{\ell_jp_j}|E_j^\ell|\lesssim1$. Applying Lemma \ref{L:summing without a priori est} with $F_{\ell}=\scriptM_\ell,H_j=1,F_{j,\ell}=2^{\ell_jp_j}|E_j^\ell|$ and recalling \eqref{E:Lp improving} yields
    \begin{equation}
        \displaystyle\sum_{\Lambda(b,\varepsilon,\overrightarrow\eta)}\scriptM_\ell\lesssim\min\{\displaystyle\prod_{j=1}^M\eta_j^a,\varepsilon\displaystyle\prod_{j=1}^M\eta_j^{-1}\},
    \end{equation}
    where $\Lambda(b,\varepsilon,\overrightarrow\eta)\coloneqq\Lambda_b\cap\Lambda(\varepsilon,\overrightarrow\eta)$. Summing over $b$ incurs a $(\log(\varepsilon^{-1}))^M$-loss and yields
    \begin{equation}
        \displaystyle\sum_{\Lambda(\varepsilon,\overrightarrow\eta)}\scriptM_\ell\lesssim\min\{\displaystyle\prod_{j=1}^M\eta_j^a,\varepsilon^b\},
    \end{equation}
    for some positive constants $a,b$. We finish the proof by summing over the dyadic numbers $\eta_j,\varepsilon\in(0,\frac{1}{2}]$.    
\end{proof}

\appendix
\section{Counterexamples}\label{A:counter ex}
\begin{exmp}\label{Ex:beta>alpha}
    Recall Subsection \ref{S:proof ex aniso}. Consider
    \[
    \Omega=\bigcup_{n=1}^N\{(x,y,t-\frac{1}{2}x\cdot y)\in\HH^4:x_1\in[0,n^2],x^\prime\in[0,\frac{1}{n}]^3,(y,t)\in(n,n+1)^5\}.
    \]
    Then $|\Omega|\sim\log N$, each $|\pi_j^{-1}(z)\cap\Omega|\sim1$ for $1\leq j\leq n$, while $|\widetilde\pi_1^{-1}(x^\prime, y,t-\frac{1}{2}x^\prime\cdot y^\prime)\cap\Omega|\sim n^2$ for $(x^\prime,y,t)\in[0,\frac{1}{n}]^3\times(n,n+1)^5$ and $|\widetilde\pi_2^{-1}(y,t-\frac{1}{2}x\cdot y)\cap\Omega|\sim\frac{1}{n}$ for $(y,t)\in(n,n+1)^5,$. Therefore, we have $\alpha_j\sim1$, but $\beta_1\sim\log N,\beta_2\sim\frac{\log N}{N}$, making
    \[
    \beta_1\beta_2^2\ll\displaystyle\prod_{j=1}^3\alpha_j.
    \]
\end{exmp}

\begin{exmp}\label{Ex:fibers<<projections}
    Consider the parallelogram (see Figure \ref{Fig:gen finner})
    \[
    \omega=\{(x+y,\frac{x}{N}):x\in[0,N],y\in[0,1]\},
    \]
    and consider $l(x)=x_1,l^\perp(x)=x_2$ for $x\in\R^2$. Then
    \[
    |l(\omega)|=N+1\gg|\frac{|\omega|}{|l^\perp(\omega)|}=1.
    \]
\end{exmp}

\begin{figure}
\centering
\includegraphics[scale=0.5]{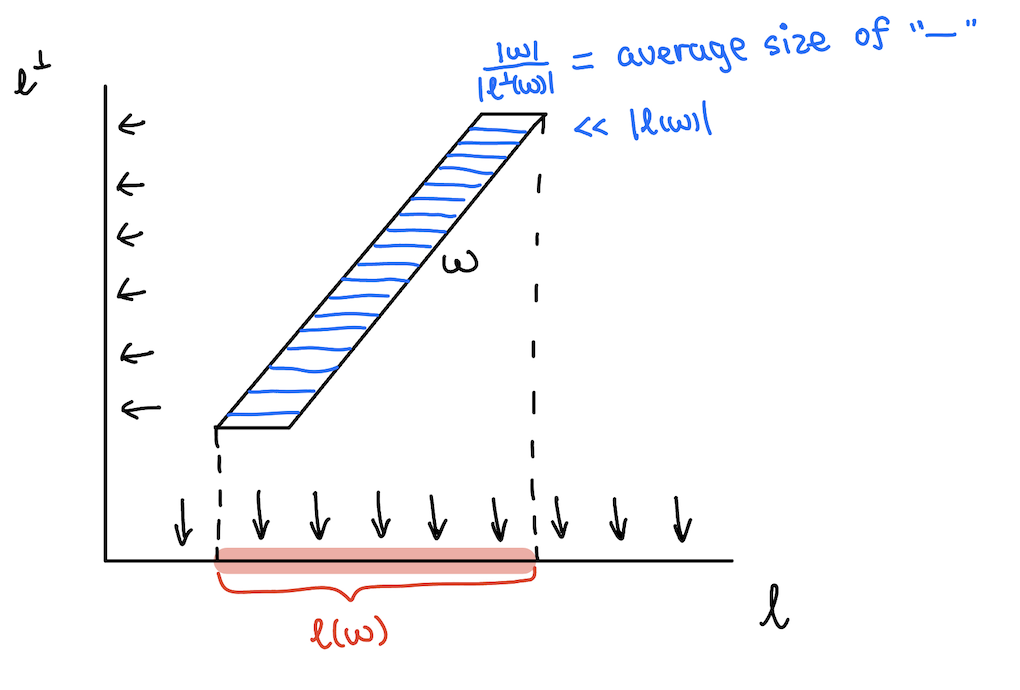}
\caption{}\label{Fig:gen finner}
\end{figure}

\begin{exmp}\label{Ex:counter ex nest gen finner}
    Consider $l_1(x)=(x_3,x_4),l_2(x)=x_4,l_3(x)=x,l_4(x)=x_4$ for $x\in\R^4$, and
    \[
    \omega=\displaystyle\bigcup_{n=1}^N[0,\frac{1}{n}]\times[0,n]^2\times[n,n+1].
    \]
    Then $|\omega|\sim N^2$, and
    \[
    \frac{|\omega|}{|l_1(\omega)|}=\frac{|\omega|}{|l_2(\omega)|}=1,\quad|l_3(\omega)|\sim N^2,\quad|l_4(\omega)|=N.
    \]
    Thus,
    \[
    |\omega|^2\gg\frac{|\omega|}{|l_1(\omega)|}\frac{|\omega|}{|l_2(\omega)|}|l_3(\omega)||l_4(\omega)|.
    \]
\end{exmp}

\begin{exmp}\label{Ex:counter ex reg gen finner}
    Consider $l_1(x)=(x_2,x_3,x_4),l_2(x)=x_4,l_3(x)=(x_2,x_4),l_4(x)=(x_3,x_4)$ for $x\in\R^4$, and
    \[
    \omega=\displaystyle\bigcup_{n=1}^N[0,n]\times[0,\frac{1}{n}]^2\times[n,n+1].
    \]
    Then $|\omega|\sim\log N$, and
    \[
    \frac{|\omega|}{|l_1(\omega)|}\sim\log N,\quad\frac{|\omega|}{|l_2(\omega)|}\sim\frac{\log N}{N},\quad\frac{|\omega|}{|l_3(\omega)|}=\frac{|\omega|}{|l_4(\omega)|}=1.
    \]
    Thus,
    \[
    |\omega|^2\gg\frac{|\omega|}{|l_1(\omega)|}\frac{|\omega|}{|l_2(\omega)|}|l_3(\omega)||l_4(\omega)|.
    \]
\end{exmp}



\end{document}